\title{A Dehn function for $Sp(2n,\mathbb{Z})$}
\author{David Bruce Cohen}
\DeclareMathOperator{\Sp}{Sp}
\DeclareMathOperator{\SpH}{Sp\ensuremath{H}}
\DeclareMathOperator{\fsp}{\ensuremath{\mathfrak{sp}}}
\DeclareMathOperator{\GL}{GL}
\DeclareMathOperator{\SL}{SL}
\DeclareMathOperator{\SO}{SO}
\DeclareMathOperator{\ZZ}{\ensuremath{\mathbb{Z}}}
\DeclareMathOperator{\RR}{\ensuremath{\mathbb{R}}}
\DeclareMathOperator{\QQ}{\ensuremath{\mathbb{Q}}}
\DeclareMathOperator{\TT}{\ensuremath{\mathbb{T}}}
\DeclareMathOperator{\NN}{\ensuremath{\mathbb{N}}}
\DeclareMathOperator{\he}{\ensuremath{\hat{e}}}
\DeclareMathOperator{\hu}{\ensuremath{\hat{u}}}
\DeclareMathOperator{\huZ}{\ensuremath{\hat{u}_{Z}}}
\DeclareMathOperator{\diag}{diag}
\DeclareMathOperator{\Diag}{Diag}
\DeclareMathOperator{\Sym}{Sym}
\DeclareMathOperator{\Cparab}{\ensuremath{C_{\text{parab}}}}
\DeclareMathOperator{\mH}{\ensuremath{\mathcal{H}}}
\DeclareMathOperator{\Ab}{Ab}
\DeclareMathOperator{\bv}{\ensuremath{\overline{v}}}
\DeclareMathOperator{\bvp}{\ensuremath{\overline{v^{\prime}}}}
\DeclareMathOperator{\Lip}{Lip}
\DeclareMathOperator{\diam}{diam}
\DeclareMathOperator{\Hom}{Hom}
\DeclareMathOperator{\Id}{Id}
\DeclareMathOperator{\Area}{Area}
\DeclareMathOperator{\Sol}{Sol}
\DeclareMathOperator{\Isom}{Isom}
\DeclareMathOperator{\Shssr}{\ensuremath{\Sigma_{H_{S,S^{\prime}}(\RR)}}}
\DeclareMathOperator{\Shssz}{\ensuremath{\Sigma_{H_{S,S^{\prime}}(\ZZ)}}}
\DeclareMathOperator{\SSp}{\ensuremath{\Sigma_{\Sp(2p;\ZZ)}}}
\theoremstyle{plain}
\newtheorem{theorem}{Theorem}[section]
\newtheorem{lemma}[theorem]{Lemma}
\newtheorem*{nolabeltheorem}{Theorem}
\newtheorem{conjecture}[theorem]{Conjecture}
\newtheorem{proposition}[theorem]{Proposition}
\newtheorem{corollary}[theorem]{Corollary}
\theoremstyle{definition}
\newtheorem*{definition}{Definition}
\begin{document}

\maketitle
\begin{abstract}
Gromov conjectured that any irreducible lattice in a symmetric space of rank at least $3$ should have at most polynomial Dehn function. We prove that the lattice $\Sp(2p;\ZZ)$ has quadratic Dehn function when $p\geq 5$. By results of Broaddus, Farb, and Putman, this implies that the Torelli group in large genus is at most exponentially distorted.
\end{abstract}
\section{Introduction}
\label{section:Introduction}
\subsection{Statement of main theorem}
\label{subsection:dehnfunctions}
Before stating our main theorem, we must recall the definition of the Dehn function of a finitely presented group.
\paragraph{Dehn functions.} Let $\langle \mathcal{S}|\mathcal{R}\rangle$ be a finite presentation for a group $G$ and let $F(\mathcal{S})$ denote the free group on $\mathcal{S}$.  If $w\in F(\mathcal{S})$ represents the identity in $G$, then $w$ can be written as a product of conjugates of relators, i.e.,
$$w=\prod_{j=1}^{k}w_{j}r_{j}w_{j}^{-1},$$
where the $r_{j}$ are elements of $\mathcal{R}$, and the $w_{j}$ are elements of $F(\mathcal{S})$. The area of $w$ is defined to be the smallest number of relators needed in this sort of expression, i.e.,
$$\Area(w)=\inf\{k:w=\prod_{j=1}^{k}w_{j}r_{j}w_{j}^{-1}\}.$$
\begin{definition}
\label{def:groupdehnfunction}
The Dehn function of $G$ is the function
$$\delta_{G}:\NN\rightarrow \NN$$
whose value at $n$ is the maximum area of any word of length at most $n$:
$$\delta_{G}(n)=\sup\{\Area(w)|\ell(w)\leq n\}.$$
\end{definition}
Of course, this depends on the choice of presentation, but its growth rate does not.  To make this precise, we need the following definition.
\begin{definition}
Let $f,g$ be non-decreasing functions $\NN\rightarrow [0,\infty)$.  We say that $f\preceq g$ if there is a constant $C$ such that
$$f(n)\leq Cg(Cn+C)+Cn+C$$
If $f\preceq g$ and $g\preceq f$, we say that $g\simeq f$.  Observe that $\simeq$ is an equivalence relation.
\end{definition}
If $\delta_{1}$ and $\delta_{2}$ are Dehn functions of two different presentations of $G$, then $\delta_{1}\simeq\delta_{2}$.  Consequently, it makes sense to speak of a group having quadratic Dehn function or exponential Dehn function, and in fact these properties are invariant on passing to finite index subgroups.  We refer to an upper bound on the Dehn function as an isoperimetric inequality.

\paragraph{The symplectic group.}
For $p\in \NN$, the integer symplectic group $\Sp(2p;\ZZ)$ is the group of $2p\times 2p$ integer matrices which preserve a standard skew symmetric bilinear pairing on $\ZZ^{2p}$.  The following is our main theorem.
\begin{theorem}
\label{theorem:MAINTHM}
For $p\geq 5$, the group $\Sp(2p;\ZZ)$ has quadratic Dehn function.
\end{theorem}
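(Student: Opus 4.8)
The plan is to adapt to the symplectic group the higher-rank filling scheme that goes back to Gromov and that Young used to prove $\SL(n;\ZZ)$ has quadratic Dehn function for $n\geq 5$. First I would trade the group for a space. Being a non-uniform lattice in $\Sp(2p;\RR)$, the group $\Sp(2p;\ZZ)$ acts on the symmetric space $X=\Sp(2p;\RR)/U(p)$, a nonpositively curved (indeed CAT(0)) Hadamard manifold of rank $p$; deleting an equivariant, locally finite family of disjoint open horoballs, one for each rational parabolic, produces the neutered space $\Xt$, a simply connected manifold with boundary on which $\Sp(2p;\ZZ)$ acts properly and cocompactly by isometries. By the \v{S}varc--Milnor lemma the group is quasi-isometric to $\Xt$, and by the standard comparison of combinatorial with homotopical filling --- taking $\SSp$ as a fixed cocompact model for $\Xt$ --- Theorem~\ref{theorem:MAINTHM} reduces to the statement that every Lipschitz loop $\gamma\colon S^1\to\Xt$ of length $\ell$ bounds a Lipschitz disk \emph{in $\Xt$} of area $\preceq \ell^2$. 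The matching lower bound costs nothing: $\Sp(2p;\ZZ)$ contains a $\ZZ^2$ (two commuting root unipotents), so it is not hyperbolic, and by the gap theorem --- no finitely presented group has Dehn function strictly between linear and quadratic --- its Dehn function is already $\succeq n^2$. So only the upper bound is at stake.

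\paragraph{Fill in $X$, then push out of the cusps.}
The guiding picture is Gromov's. Since $X$ is CAT(0), $\gamma$ bounds a Lipschitz disk $D$ in $X$ of area $\preceq \ell^2$ and controlled diameter; the only obstruction to finishing is that $D$ may dip into the deleted horoballs, so it need not lie in $\Xt$. For each horoball $B$ that $D$ enters, the crossings cut from $D$ a subsurface whose boundary is a union of loops lying on the horosphere $\partial B$, and one must replace that subsurface, rel those loops, by a disk on $\partial B$ and iterate over all horoballs. Up to bounded distortion $\partial B$ is a horospherical subgroup $H_{S,T}$ --- coarsely, the unipotent radical of a rational parabolic extended by a torus --- whose geometry is recorded by $\Shstr$ and $\Shstz$, with deeper strata the $H_{S,S'}$ modelled by $\Shssr,\Shssz$ (the attendant solvable subgroups being the $\Sol$-type groups). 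Radial projection toward the center of $B$ is exponentially contracting along the unipotent directions, so coning the subsurface out to $\partial B$ is cheap \emph{provided} the loop it leaves on $\partial B$ can be filled, inside $\partial B$, with quadratic area. Thus everything turns on two things: efficient fillings inside the horospherical subgroups, and control of the push-out bookkeeping.

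\paragraph{The main obstacle: quadratic horospherical fillings and the hypothesis $p\geq 5$.}
This is the crux and the step I expect to be hardest. A horospherical subgroup in isolation is essentially solvable and can have exponential Dehn function, so one cannot fill the loop naively inside the horosphere; instead, imitating Young's handling of the unipotent subgroups of $\SL(n;\ZZ)$, I would use the ambient rank --- write a loop in $H_{S,T}(\ZZ)$ as a product of reduced words in one-parameter root subgroups and shuffle these factors past one another via the commutator relations among the root subgroups of $\Sp(2p)$, with a careful accounting giving total cost quadratic in the length. This needs enough independent complementary root directions to perform the shuffles, and --- since the auxiliary filling disks so produced again live in $\Xt$ --- it needs those disks to be pushed off any secondary horoballs they meet without the recursion cascading. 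Making this run is exactly what the hypothesis $p\geq 5$ buys, in precise parallel with the condition $n\geq 5$ for $\SL$, and it is what keeps the constants controlling the parabolic combinatorics (such as $\Cparab$) finite. The whole argument therefore has to be organized intrinsically in $\Xt$ as an induction down the poset of rational parabolics --- from the largest horospheres to their sub-horospheres $H_{S,S'}$ and on to an elementary base case --- so that clearing $D$ off one horoball adds only a linear amount to its intrusion into the others; this interaction between the layers, and the accounting for how deep the excursions run, is where the bulk of the technical work lies.

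\paragraph{Assembly.}
It remains to add the costs. The CAT(0) filling in $X$ contributes $\preceq \ell^2$; the horoball excursions, because $D$ can be taken with controlled geometry, have total complexity $\preceq \ell^2$, and each is cleared --- coned onto its horosphere, then filled inside it with quadratic area --- so that, with the bookkeeping of the previous paragraph, the aggregate push-out cost is again $\preceq \ell^2$. This produces a Lipschitz disk in $\Xt$ of area $\preceq \ell^2$ bounding $\gamma$, hence $\delta_{\Sp(2p;\ZZ)}(n)\preceq n^2$; combined with the lower bound, $\delta_{\Sp(2p;\ZZ)}(n)\simeq n^2$, which is Theorem~\ref{theorem:MAINTHM}.
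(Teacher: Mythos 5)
Your high-level heuristics are the right ones to have in mind (fill the loop in the CAT(0) symmetric space, reduce to horospherical/parabolic data, observe that $p\geq 5$ is what makes the combinatorics work, note the free quadratic lower bound from the $\ZZ^{2}$ and the gap theorem), but the central mechanism you propose --- fill in $X$ and then iteratively push the disk out of each horoball it meets, replacing the subsurface in a horoball by a disk on the bounding horosphere --- is exactly the scheme the paper says does \emph{not} obviously work for $\Sp(2p;\ZZ)$. That push-out argument is what Dru\c{t}u used for $\QQ$-rank one lattices, where the removed horoballs can be taken pairwise disjoint so that clearing one never disturbs another. Here the $\QQ$-rank is $p\geq 5$: horoballs overlap in complicated patterns, and the step where you assert that ``clearing $D$ off one horoball adds only a linear amount to its intrusion into the others'' so that the recursion does not cascade is precisely the unproven crux. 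Your ``induction down the poset of rational parabolics'' is a placeholder for this; no mechanism is given, and it is not at all clear one exists in this literal geometric form. The paper avoids the problem entirely: rather than pushing a filling disk off horoballs, it triangulates a Lipschitz filling disk via Young's \emph{adaptive template} lemma so that each triangle either has bounded size or has all three vertices $\rho$-close to a common maximal parabolic $P_{S,T}$; each such triangle is then handled combinatorially as an $\Omega$-triangle, reduced to relations in smaller diagonal blocks via shortcut manipulations (Theorem~\ref{theorem:parabtodiag}), and the whole process is iterated over block size (Theorems~\ref{theorem:diagtoparab}, \ref{theorem:sp2shortcut}). No ``disk in $\Xt$'' is ever produced or needed.

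Two further substantive gaps. First, the ``quadratic horospherical fillings'' you invoke are not just an $\SL$-style shuffle of root factors: for $\Sp$ the unipotent radicals of maximal parabolics are two-step nilpotent, and the relevant auxiliary solvable groups $\SpH_{S,T}$ require the Cornulier--Tessera criterion (Theorem~\ref{theorem:solvable2}, proved via vanishing of $H_{2}(\mathfrak{u})_{0}$ and $Kill(\mathfrak{u})_{0}$), and they have quadratic Dehn function only when $\#S\geq 3$ --- an asymmetry absent from the $\SL$ case. Second, the parabolics where the symplectic block has codimension less than three, or is trivial, cannot be handled by the combinatorial shuffle at all; the paper needs the Lipschitz $1$-connectedness theorems of \S\ref{section:lipfill} (Theorems~\ref{theorem:lipfillsp}, \ref{theorem:lipfillsl}) to run a second layer of adaptive-template reduction inside $P/K$. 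Your proposal has no analogue of either ingredient, and without them the scheme does not close up.
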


For comparison, Young\cite{RY} proved the following theorem.
\begin{theorem}[Young]
\label{theorem:RY}
If $p\geq 5$, then $\SL(p;\ZZ)$ has quadratic Dehn function.
\end{theorem}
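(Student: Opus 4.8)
The plan is to bound the filling function of a cocompact model space for $\Gamma := \SL(p;\ZZ)$. By reduction theory for arithmetic groups, $\Gamma$ acts properly discontinuously and cocompactly on the \emph{thick part} $\Xt$ of the symmetric space $X = \SL(p;\RR)/\SO(p)$: here $\Xt$ is the complement in $X$ of a $\Gamma$-equivariant, $\Gamma$-finite collection of pairwise disjoint open horoballs, one for each cusp of $\Gamma\backslash X$, the cusps being in bijection with the conjugacy classes of maximal $\QQ$-parabolic subgroups of $\SL(p)$. Since $\Gamma$ is finitely presented and acts geometrically on $\Xt$, the Dehn function $\delta_\Gamma$ is $\simeq$ the filling function of $\Xt$ (and, by Lubotzky--Mozes--Raghunathan, $\Gamma$ is moreover undistorted in $X$ itself). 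Thus it suffices to show that every Lipschitz loop of length $n$ in $\Xt$ bounds a Lipschitz disk of area $\lesssim n^2$ supported in $\Xt$.

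Since $X$ is CAT$(0)$, one first fills such a loop $\gamma$ in $X$ itself: geodesic coning to a basepoint produces a disk $D_0\subset X$ of area $\lesssim n^2$. The whole difficulty is that $D_0$ generically penetrates the deleted horoballs, and must be repaired into a disk supported in $\Xt$ at the cost of at most a constant factor in area. The naive repair --- retract $D_0$ radially out of each horoball onto its bounding horosphere --- is fatal: inside a horoball the metric is exponentially distorted in the radial (height) direction, so retracting a piece of $D_0$ that reaches height $h$ multiplies its area by $e^{\Theta(h)}$, and $h$ may be of order $n$; this only reproves the exponential isoperimetric inequality of Epstein--Thurston.

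The efficient repair is organized by Young's \emph{template} machinery. Discretizing $\gamma$ as a word $w$ of length $\asymp n$ in the elementary generators $e_{ij}(\pm 1)$, one seeks a filling of $w$ (equivalently, a van Kampen diagram over the Steinberg presentation of $\Gamma$ --- the Steinberg relations together with one further relation) of area $\lesssim n^2$. A template is a combinatorial blueprint for such a filling: it prescribes a subdivision of the disk into regions of a few types --- ``straight'' regions filled by CAT$(0)$ geodesic fillings that stay in the thick part; ``horospherical'' regions lying over a single horosphere, filled inside the corresponding maximal parabolic $Q = LU$; and regions filled inside the unipotent radicals $U$. The role of the last type is decisive: each $U$ is nilpotent, hence has polynomial filling function, and this bound persists even deep inside a horoball because $U\cap\Gamma$ is only tamely distorted in $Q\cap\Gamma$. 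The argument then splits into two halves: \emph{(a)} every word $w$ of length $n$ admits a template of total complexity $\lesssim n^2$, produced by routing $w$ through $\Xt$ and replacing each excursion of an $X$-geodesic into a horoball by a comparable-length detour along the bounding horosphere, the discrepancy being absorbed by a bounded-complexity unipotent filling (a ``shortcut''); and \emph{(b)} every template of complexity $A$ is realized by an honest filling of area $\lesssim A$. Combining (a) and (b) gives a filling of $\gamma$ in $\Xt$ of area $\lesssim n^2$, whence $\delta_\Gamma(n)\lesssim n^2$ by the reduction above; the matching lower bound $\delta_\Gamma(n)\succeq n^2$ is immediate, since $\Gamma$ contains $\ZZ^2$ and is therefore not hyperbolic.

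The main obstacle is step (a): arranging the shortcuts past the horoballs so that their total cost is quadratic and so that the shortcut disks fit together coherently over all of $D_0$. This is exactly where the hypothesis $p \geq 5$ enters. It forces the Levi and horospherical subgroups appearing as cross-sections of the cusps --- governed by factors of type $\SL(k)\times\SL(p-k)$ and their unipotent radicals, for $1 \leq k \leq p-1$ --- to be large enough that the required fillings inside the unipotent radicals, and the routing of $w$ around the horoballs, can be carried out with only constant-factor loss rather than the exponential blow-up that dooms the naive approach. Making the shortcut construction precise and uniform, and establishing the quadratic complexity bound of (a), is the technical heart of the proof.
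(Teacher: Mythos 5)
This theorem is not proved in the paper at all: it is Young's result, quoted from \cite{RY} and used as a black box (e.g.\ to fill relations supported in $\GL(S)$ blocks). So there is no in-paper argument for your proposal to match; what you have written is a prose outline of Young's strategy, and as a proof it has a genuine gap that you yourself flag: items (a) and (b) — that every word of length $n$ admits a template of complexity $O(n^2)$, and that a template of complexity $A$ can be realized by a filling of area $O(A)$ — are exactly the content of Young's paper (and of the analogous \S\ref{section:diagtoparab}--\S\ref{section:lipfill} machinery here), and you do not carry either out. Asserting that the shortcut construction ``is the technical heart'' does not discharge it, so the proposal establishes nothing beyond the easy reductions (geometric action on the thick part, CAT(0) coning, the $n^2$ lower bound from non-hyperbolicity).

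Two of the specific claims you lean on are also false and would derail the plan if taken literally. First, for $\QQ$-rank $\geq 2$ (and $\SL(p;\ZZ)$ has $\QQ$-rank $p-1\geq 4$) the removed horoballs are \emph{not} pairwise disjoint, one per cusp; their overlaps are precisely the obstruction that stops Dru\c{t}u-style ``push into the horosphere'' arguments at $\QQ$-rank $1$ and forces the parabolic/normal-form bookkeeping. Second, $U\cap\Gamma$ is \emph{exponentially} distorted in $Q\cap\Gamma$ (and in $\Gamma$), not ``tamely distorted'' — this is exactly why Lubotzky--Mozes--Raghunathan shortcuts are needed at all — and the mechanism that controls fillings deep in a horoball is not the polynomial Dehn function of the nilpotent $U$ (filling a unipotent word inside $U$ at depth $h$ still costs $e^{\Theta(h)}$), but rather that relations among shortcuts are filled inside solvable subgroups of the form $\ZZ^{d_1}\ltimes\ZZ^{d_2}$, which have quadratic Dehn function (Dru\c{t}u, Leuzinger--Pittet, Cornulier--Tessera), together with a Lipschitz $1$-connectedness statement handling the parabolics with a codimension-one diagonal block — the place where $p\geq 5$ is actually used, rather than the vague ``large enough Levi factors'' you invoke. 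If you want a correct account of the architecture, the reductions in \S\ref{section:theorem} and the roles of Theorems \ref{theorem:diagtoparab}, \ref{theorem:parabtodiag} and \ref{theorem:solvable2} in the symplectic case mirror Young's $\SL$ argument step for step.
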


\paragraph{Distortion in Torelli.}
The Torelli group is the subgroup $\mathcal{I}(\Sigma)\subset Mod(\Sigma)$ of the mapping class group of a surface $\Sigma$ which acts trivially on $H_{1}(\Sigma)$, where $\Sigma$ has at most $1$ boundary component (see \cite{mcgprimer} for a comprehensive introduction to mapping class groups of surfaces).  If $H\subset G$ are groups with finite generating sets $S_{H}$ and $S_{G}$ respectively, then the distortion function of $H$ in $G$ is the number of generators of $H$ required to represent a word of length $n$ in the generators of $G$ which lands in $H$.  This function depends on the presentations of $H$ and $G$, but all choices of presentation yield $\simeq$-equivalent distortion functions. Broaddus, Farb, and Putman proved that for genus at least three, the Torelli subgroup is at least exponentially and at most doubly exponentially distorted in $Mod(\Sigma)$\cite[Theorem 1.1]{torelli}. They further showed that if $\Sp(2g;\ZZ)$ has quadratic Dehn function, then $\mathcal{I}(\Sigma)$ is at most exponentially distorted\cite[Remark 3.1]{torelli}, so our theorem has the following corollary.

\begin{corollary}
\label{corollary:torelli}
For a surface $\Sigma$ of genus at least five, the Torelli group $\mathcal{I}(\Sigma)$ is exponentially distorted in the mapping class group $Mod(\Sigma)$.
\end{corollary}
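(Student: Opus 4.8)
The plan is to derive Corollary~\ref{corollary:torelli} by combining Theorem~\ref{theorem:MAINTHM} with the two results of Broaddus, Farb, and Putman quoted above, so there is essentially nothing new to prove beyond the main theorem. First I would record the lower bound: by \cite[Theorem 1.1]{torelli}, for any surface of genus at least three the Torelli group $\mathcal{I}(\Sigma)$ is \emph{at least} exponentially distorted in $Mod(\Sigma)$. Since a surface of genus at least five has genus at least three, this half is immediate and requires no input from the present paper.

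For the matching upper bound I would invoke \cite[Remark 3.1]{torelli}, which states that if $\Sp(2g;\ZZ)$ has quadratic Dehn function, then $\mathcal{I}(\Sigma)$ is \emph{at most} exponentially distorted in $Mod(\Sigma)$ for a genus-$g$ surface $\Sigma$. The hypothesis here is precisely the conclusion of Theorem~\ref{theorem:MAINTHM} with $p = g$, valid as soon as $g \geq 5$. Hence, for every surface of genus at least five, the upper bound applies verbatim.

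Putting the two estimates together, the distortion function of $\mathcal{I}(\Sigma)$ in $Mod(\Sigma)$ is $\simeq$-equivalent to the exponential function, i.e., $\mathcal{I}(\Sigma)$ is exponentially distorted, which is the assertion of the corollary. I do not anticipate a genuine obstacle at this stage: all of the mathematical work is concentrated in Theorem~\ref{theorem:MAINTHM}, whose proof occupies the remainder of the paper, and in the reduction of \cite{torelli} from Torelli distortion to the Dehn function of $\Sp(2g;\ZZ)$. The only point deserving a moment's care is bookkeeping on the genus: Theorem~\ref{theorem:MAINTHM} needs $p \geq 5$ while the lower bound of \cite[Theorem 1.1]{torelli} needs genus $\geq 3$, so both halves are simultaneously available exactly in the range genus $\geq 5$, which is what the corollary claims.
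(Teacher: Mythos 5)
Your proposal is correct and matches the paper's (implicit) argument exactly: the paper simply quotes the lower bound from \cite[Theorem 1.1]{torelli}, the conditional upper bound from \cite[Remark 3.1]{torelli}, and plugs in Theorem~\ref{theorem:MAINTHM}, which is precisely what you do. Nothing further is needed.
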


\subsection{Gromov's conjecture}
\label{subsection:gromov}
Our main theorem verifies a special case of a conjecture of Gromov about lattices in symmetric spaces. We now explain what this conjecture is, why it is widely believed, and how previous authors have attacked it. First though, we must recall some basic definitions about Dehn functions (for Riemannian manifolds), symmetric spaces, lattices, and horoballs.

\paragraph{Dehn functions for Riemannian Manifolds.}
Let $X$ be a complete, simply connected Riemmanian manifold.  Then given a piecewise smooth loop $\gamma:S^{1}\rightarrow X$, we can find a piecewise smooth function $f:D^{2}\rightarrow X$ whose restriction $\partial f$ to the boundary is $\gamma$.  The area of $\gamma$ is defined to be the smallest area of any such filling, i.e.,
$$\Area(\gamma)=\inf\{\Area(f)|\partial f=\gamma\}$$
\begin{definition}
\label{def:spacedehnfunction}
The Dehn function of $X$ is the function
$$\delta_{X}:[0,\infty)\rightarrow [0,\infty]$$
whose value at $r$ is the largest area of any loop of length $r$.
$$\delta_{X}(r)=\sup\{\Area(\gamma):\ell(\gamma)=r\}$$ 
\end{definition}

\paragraph{Remark:} The function $\delta_{X}$ is traditionally called the filling function of $X$ or the isoperimetric inequality of $X$.  However, the term ``isoperimetric inequality" has increasingly been used for any upper bound $f(r)$ on the area of loops of length at most $r$, regardless of whether this bound is sharp.  Our terminology follows \cite[Definition 1.1]{lp} and \cite[\S 2.2]{RY}.  If $G$ is a finitely presented group acting on $X$ properly, cocompactly and by isometries, then $\delta_{X}\simeq \delta_{G}$.  For instance, the Euclidean plane $E^{2}$ clearly has quadratic Dehn function, and $\ZZ^{2}$ has a free cocompact action on $E^{2}$, so $\ZZ^{2}$ has quadratic Dehn function.\\

\paragraph{Symmetric spaces.}
A symmetric space of noncompact type is a Riemannian manifold of the form $X=G/K$ where $G$ is a semisimple real Lie group and $K$ a maximal compact subgroup. The rank of a symmetric space is the largest dimension of any Euclidean subspace. Examples include the following.
\begin{itemize}
\item The space $\SL(p;\RR)/\SO(p)$ which parametrizes positive definite metrics on $\RR^{p}$ is a symmetric space of rank $p-1$.
\item The $p$-th Cartesian power of the hyperbolic plane, $(H^{2})^{p}$, is a symmetric space of rank $p$.
\item The space $\Sp(2p;\RR)/U(p)$ which parametrizes marked symplectic lattices in $\RR^{2p}$ is a symmetric space of rank $p$.
\end{itemize}
All of these spaces are CAT(0), meaning that any two geodesic rays emanating from a point will diverge at least as quickly as two rays in Euclidean space which make the same angle (see \cite{bh} for a thorough exposition of CAT(0) spaces).  This allows us to fill any loop of length $\gamma$ with a disk of area $O(\ell(\gamma)^{2})$ by choosing a basepoint $x_{0}$ in $S^{1}$ and mapping each geodesic segment in $D^{2}$ connecting $x_{0}$ to another point of the boundary $x$ to the geodesic segment from $\gamma(x_{0})$ to $\gamma(x)$.  By the CAT(0) property, this map is $O(\ell(\gamma)^{2})$ Lipschitz, and hence has quadratic area.

A lattice in a symmetric space $X=G/K$ is a discrete subgroup $\Gamma\subset G$ such that the quotient of $X$ by $\Gamma$ has finite volume (any such $\Gamma$ will act properly on $X$).  If $\Gamma$ is not commensurable with a direct product of subgroups, it is said to be irreducible. If the quotient $X/\Gamma$ is compact, then $\Gamma$ of course has quadratic Dehn function, but when $X/\Gamma$ is not compact, it is possible for $\Gamma$ to have larger Dehn function. For instance, $\SL(3;\ZZ)$, which is a lattice in the rank $2$ symmetric space $\SL(3;\RR)/\SO(p)$, has exponential Dehn function \cite[\S 10.4]{wordproc}.  Nonetheless, Gromov's study of the geometry of symmetric spaces lead to the following conjecture (a rough equivalent of this conjecture is implicit in \cite[5.d5c]{gromov}, as we explain below.)

\begin{conjecture}[Gromov]
Let $X$ be a symmetric space of noncompact type.  If $X$ has rank at least $3$, then any irreducible lattice $\Gamma\subset\Isom(X)$ has polynomial Dehn function.
\end{conjecture}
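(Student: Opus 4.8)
The plan is to follow the strategy by which Theorems~\ref{theorem:RY} and~\ref{theorem:MAINTHM} are proved, reducing the conjecture to a family of inductive filling estimates inside horospheres. First one reduces to the non-cocompact arithmetic case: if $\Gamma$ is cocompact then $X/\Gamma$ is a compact manifold, $\Gamma$ acts properly, cocompactly and by isometries on the CAT(0) space $X$, and $\delta_{\Gamma}\simeq\delta_{X}$ is quadratic; if $\Gamma$ is not cocompact then, $X$ having higher rank, Margulis arithmeticity lets us take $\Gamma=\mathbf{G}(\ZZ)$ for a semisimple $\QQ$-group $\mathbf{G}$ of positive $\QQ$-rank. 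By Borel--Serre reduction theory, $X$ contains a $\Gamma$-invariant union $\mathcal{B}$ of disjoint horoball-like neighborhoods of the rational cusps with $\Gamma$ acting cocompactly on $\Xt:=X\setminus\mathcal{B}$, so $\delta_{\Gamma}\simeq\delta_{\Xt}$ and it suffices to fill a piecewise-geodesic loop $\gamma\subset\Xt$ of length $\ell$ by a disk in $\Xt$ of area polynomial in $\ell$.

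Next I would fill $\gamma$ first inside $X$, using the CAT(0) cone construction, by a disk $D$ of area $O(\ell^{2})$ and diameter $O(\ell)$, and then push $D$ off the horoballs it meets. For a horoball $B$ met by $D$, the bounding horosphere $\partial B$ is, up to bounded error, a solvable Lie group $N\rtimes A'$ with $N$ the unipotent radical of a rational parabolic; replacing $D\cap B$ by a disk on $\partial B$ with the same boundary curves costs an amount governed by the relative filling function of $\partial B$ for loops of length $O(\ell)$ together with the depth to which $D$ penetrates $B$, the latter being $O(\ell)$ by CAT(0) geometry.

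The heart of the matter is to show this push-off is polynomial. In general $\partial B$, being solvable, has exponential Dehn function, and a naive push-off is catastrophic --- this is exactly why $\SL(3;\ZZ)$ has exponential Dehn function. Here the rank $\geq 3$ hypothesis must enter: the horospherical group $N\rtimes A'$ carries the action of a Levi factor $L$, itself the real points of a semisimple $\QQ$-group of smaller rank, and $N$ splits into $L$-invariant root layers. Following Young for type $A_{n}$ and the present paper for type $C_{n}$, one builds a combinatorial ``template'' encoding an efficient filling: using the several available unipotent root subgroups one rewrites any short word in $N$ as a short product of commutators and conjugates supported in lower-rank corners, and then applies the conjecture inductively to the arithmetic lattice $L(\ZZ)$ whenever $L$ again has rank $\geq 3$. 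The base of the induction --- the rank $1$ and rank $2$ corners --- must be handled directly, arranging the template so that such corners are only crossed along directions where the relevant sub-horosphere still has polynomial filling, or filling them by hand. Assembling these local fillings along $D$, and bounding the number and depth of horoballs $D$ meets, then yields a disk in $\Xt$ of polynomial area; this is essentially the program sketched in \cite[5.d5c]{gromov}.

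The hard part will be uniformity. The template machinery has so far been engineered separately for type $A_{n}$ and for type $C_{n}$, each exploiting special features of its root system, whereas the full conjecture demands a root-system-theoretic construction of templates valid for every rational Tits system --- including the non-reduced type $BC_{n}$ arising from $\QQ$-forms of orthogonal and unitary groups, the exceptional groups, and reducible $X$ --- together with a precise description of which low-rank corners have exponential relative filling and how the high-rank geometry routes a filling disk around them. Even granting such templates, one must check that the induction terminates with a genuine polynomial rather than a tower, that is, that the degree does not blow up with the rank; and proving the expected sharp statement, that the Dehn function is always quadratic, would require the stronger local estimates of Young and of this paper (valid for $\SL(p;\ZZ)$ and $\Sp(2p;\ZZ)$ only once $p\geq 5$) to persist in all cases, whereas in the known proofs they fail for small $p$ and presumably have analogues for the small-rank instances of the other families.
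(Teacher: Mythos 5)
This statement is a conjecture, not a theorem: the paper does not prove it, and neither does your proposal. What the paper actually establishes is the single special case $\Sp(2p;\ZZ)$, $p\geq 5$ (Theorem \ref{theorem:MAINTHM}), alongside Young's $\SL(p;\ZZ)$, $p\geq 5$, and Dru\c{t}u's $\QQ$-rank $1$ result; the general conjecture remains open. Your text is a reasonable summary of the known program — loop in $\Xt$, CAT(0) cone filling in $X$, push the disk off the horoballs, induct on Levi factors — but the step you label ``the heart of the matter'' is exactly the part that has never been carried out in general, and you assert it rather than prove it. Two specific points where the sketch would fail as written: first, you treat $\mathcal{B}$ as a union of \emph{disjoint} horoball neighborhoods, which is essentially only true in $\QQ$-rank $1$; for higher $\QQ$-rank lattices the horoballs overlap in complicated ways, and the paper explicitly identifies this overlap as the reason the naive push-off procedure is not known to work. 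Second, the ``template'' machinery you invoke for rewriting words in the unipotent radical exists only for the root systems $A_{n}$ (Young) and $C_{n}$ (this paper), each proof exploiting special features of its root system and of the specific solvable subgroups ($H_{S,T}$, respectively $\SpH_{S,T}$) whose quadratic Dehn functions must be established separately (here via Cornulier--Tessera, Theorem \ref{theorem:solvable2}, and the Lipschitz $1$-connectedness results of \S\ref{section:lipfill}); no uniform construction over all rational Tits systems, including non-reduced type $BC_{n}$ and the exceptional groups, is available, and even in the known cases the estimates require large rank ($p\geq 5$) and fail or are unknown for small $p$.

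To your credit, your final paragraph concedes precisely these gaps (uniformity of templates, low-rank corners, degree control in the induction). But conceding them means the document is a research program, not a proof, and it should not be presented as filling in the conjecture. If you want to contribute at the level of this paper, the honest formulation is: the conjecture is verified for irreducible lattices of $\QQ$-rank $1$ in rank $\geq 3$ (Dru\c{t}u), for $\SL(p;\ZZ)$ and $\Sp(2p;\ZZ)$ with $p\geq 5$, and for $\SL(n;\mathcal{O}_{K})$, $\Sp(2n;\mathcal{O}_{K})$ when $K$ has at least three archimedean valuations (Bestvina--Eskin--Wortman); everything beyond that requires new ideas, in particular a root-system-independent substitute for the shortcut/template apparatus and a treatment of overlapping horoballs in higher $\QQ$-rank.
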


Since the groups $\SL(p;\ZZ)$ and $\Sp(2p;\ZZ)$ are lattices in the symmetric spaces $\SL(p;\RR)/SO(p)$ and $\Sp(2p;\RR)/U(p)$ respectively, Theorems \ref{theorem:MAINTHM} and \ref{theorem:RY} verify cases of this conjecture.

\paragraph{Horospheres and the thick part.}
Roughly speaking, the conjecture is believable because if $\Gamma$ is a lattice in a higher rank symmetric space $X$, then $\Gamma$ acts cocompactly on a subspace $X_{thick}\subset X$ (called the thick part of $X$), and given a loop $\gamma$ in $X_{thick}$, one ought to be able to ``push" a filling disk for $\gamma$ in $X$ back into the thick part.  We will now make this precise; references include \cite{bh} and \cite{borelji}.\\

First, we must recall the theory of horoballs in CAT(0) spaces.  If $X$ is a CAT(0) Riemannian manifold, one can compactify $X$ by adding a ``point at infinity" for each asymptotic equivalence class of geodesic rays in $X$ (two geodesic rays $c,\tilde{c}:[0,\infty)\rightarrow X$ are said to be asymptotic if $d(c(t),\tilde{c}(t))=O(1)$).  For instance, if $X$ is two dimensional Euclidean space, then two rays will be asymptotic if and only if they are parallel, so we add one point for each direction.  The added points form a boundary at infinity $\partial X$ for $X$.  Remarkably, Busemann discovered a simple notion of ``distance" between a point $\xi\in \partial X$ and a point $x\in X$.  Given a geodesic ray $c:[0,\infty)\rightarrow X$ representing $\xi \in \partial X$, define the Busemann function $b_{c}:X\rightarrow \RR$ by
$$b_{c}(x)=\lim_{t\rightarrow \infty} d(x,c(t))-t.$$
This function describes a sort of distance between $x$ and $\xi$.  Different choices of $c$ representing the same $\xi$ will yield Busemann functions which differ by some constant function.  Sublevel sets of Busemann functions are called horoballs and level sets are called horospheres.

{\it Example:} In $\RR^{2}$, the Busemann function associated to the geodesic ray $c:t\mapsto (at,bt)$ is given by $b_{c}:(x,y)\mapsto -ax-by$.  Hence, horoballs are half spaces, and horospheres associated to $c$ are just lines perpendicular to $c$.  This picture generalizes in the obvious way to higher dimensions.\\

We care about horoballs because if $\Gamma$ is a lattice in a symmetric space $X$, then $\Gamma$ acts cocompactly on a subspace $X_{thick}\subset X$ given by removing a union of horoballs from $X$.  The geometric observation which led Gromov to his conjecture was that the 5-dimensional Lie group
$$\Sol^{5}=\left\{ \begin{bmatrix}
e^{a} & 0 & 0 & x\\
0 & e^{b} & 0 & y\\
0 & 0 & e^{c} & z\\
0 & 0 & 0 & 1\\
\end{bmatrix}:a,b,c,x,y\in\RR; a+b+c=0\right\}$$
has quadratic Dehn function when equipped with a left invariant metric, while the three dimensional Lie group
$$\Sol^{3}=\left\{ \begin{bmatrix}
e^{a} & 0 & x\\
0 & e^{-a} & y\\
0 & 0 & 1\\
\end{bmatrix}:a,x,y\in\RR; a+b=0\right\}$$
has exponential Dehn function.  These facts are suggestive because $\Sol^{5}$ is quasi-isometric to a horosphere in the rank three symmetric space $(H^{2})^{3}$, while $\Sol^{3}$ is quasi-isometric to a horosphere in the rank two symmetric space $(H^{2})^{2}$.

\paragraph{Pushing filling disks into the thick part.}
If $\Gamma$ is a lattice in a symmetric space $X$, and $w$ is a word in the generators of $\Gamma$, then we can easily convert $w$ into a loop $\gamma:S^{1}\rightarrow X$, which can be taken to have image in $X_{thick}$.  If horospheres in $X$ have quadratic Dehn function, and $f:D^{2}\rightarrow X$ is a filling disk for $\gamma$, then we might hope to push $f$ into the thick part, replacing the region inside any horoball with a region on the boundary horosphere, as in the diagram.  In \cite[5d5c]{gromov}, Gromov suggests that, in light of an earlier claimed proof of Young's theorem, this program must work for $\SL(n;\ZZ)$, and hence that one should expect the Dehn functions of horospheres to control the Dehn functions of lattices. 

\begin{figure}[t]
\labellist
\small\hair 2pt
\pinlabel $\text{horosphere}$ at 36 154
\pinlabel $\text{filling disk}$ at 245 205
\pinlabel $\text{horoball}$ at 30 215
\pinlabel $\text{loop}$ at 152 0
\pinlabel $\triangleleft -$ at 92 0
\pinlabel $-\triangleright$ at 212 0
\endlabellist

\centering
\centerline{\psfig{file=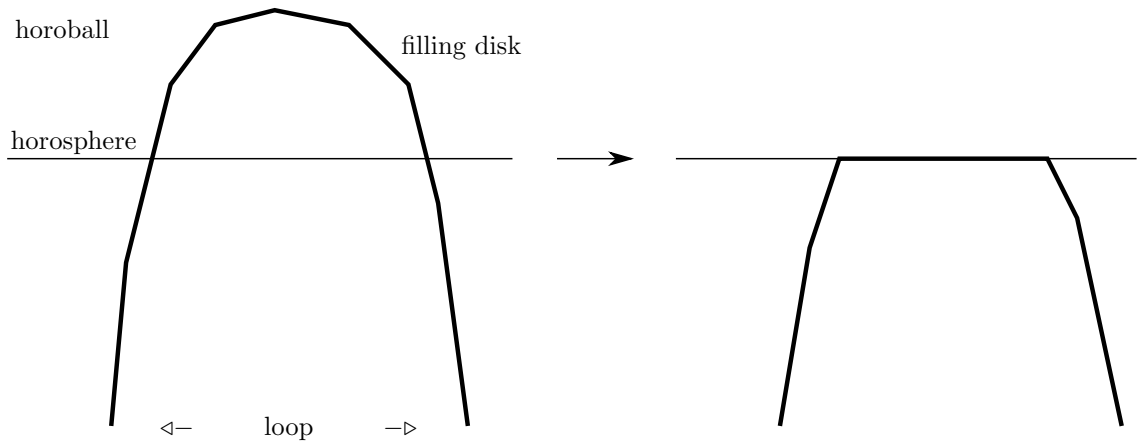,scale=70}}
\caption{Pushing a filling out of a horoball.}
\label{figure:horosphere}
\end{figure}

\paragraph{Known results}
In order to carry out this program, one would first like to know that horospheres in higher rank symmetric spaces have quadratic Dehn function.  This was proved (under some necessary assumptions) by Dru\c{t}u.
\begin{theorem}[Dru\c{t}u \cite{drutu}]
If $X$ is a symmetric space of rank at least $3$, and $c:[0,\infty)\rightarrow X$ is a ray not contained in any rank $1$ or rank $2$ factor of $X$, then any horosphere corresponding to $c$ has quadratic Dehn function.
\end{theorem}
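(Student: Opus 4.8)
The plan is to replace the horosphere by a solvable Lie group and fill loops there by a combing argument. Let $\xi = c(\infty) \in \partial X$, let $P_\xi = M_\xi A_\xi N_\xi$ be the parabolic of $G$ stabilizing $\xi$ (with unipotent radical $N_\xi$, split torus $A_\xi$, and $\mathfrak{n}_\xi$ the Lie algebra of $N_\xi$), and let $L_\xi \subset M_\xi A_\xi$ be the codimension-one subgroup on which the Busemann function $b_c$ is constant. Since $b_c$ is an affine function of the $A_\xi$-coordinate along $c$ in Iwasawa coordinates adapted to $P_\xi$, the group $S = N_\xi \rtimes L_\xi$ acts cocompactly on the horosphere $\mathcal{H} = b_c^{-1}(0)$, so up to quasi-isometry (and a harmless compact factor) $\mathcal{H}$ is the solvable group $S$, and it suffices to give $S$ a quadratic isoperimetric inequality. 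Two structural facts drive the argument: (i) $L_\xi$ is reductive of real rank $\mathrm{rank}(X) - 1 \geq 2$, so modulo compacts it is a symmetric space of rank $\geq 2$ — in particular $\mathrm{CAT}(0)$, hence quadratically fillable — and moreover, when $c$ is not contained in a rank $1$ or rank $2$ factor, the part of $L_\xi$ acting nontrivially on $N_\xi$ still has rank $\geq 2$; this is the geometric content of the hypothesis, and it is exactly what rules out the degenerate case in which $\mathcal{H}$ is quasi-isometric to $\Sol^{3}\times(\text{something})$ and so has exponential Dehn function; (ii) the adjoint action of a maximal flat of $L_\xi$ diagonalizes $\mathfrak{n}_\xi = \bigoplus_\alpha \mathfrak{n}_\alpha$ into root spaces, and $N_\xi$ is exponentially distorted in $S$ precisely along these root directions.

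To fill a loop $\gamma$ in $S$ of length $\ell$, I would comb it to a fixed basepoint: push each point of $\gamma$ first along a geodesic ray in a maximal flat of $L_\xi$ chosen so that every root $\alpha$ appearing in $\gamma$ is strictly positive on it — this shrinks the $\mathfrak{n}_\alpha$-coordinates exponentially while path length grows only linearly — and then fill the resulting loop, which now lies in the undistorted retract $L_\xi \subset S$, using the quadratic $\mathrm{CAT}(0)$ filling there. One estimates the area piece by piece: contracting one unit of $\mathfrak{n}_\alpha$-displacement costs $O(1)$ area, being filled by a parallelogram spanned by the $\alpha$-direction and the flow ray, whose long sides are exponentially long but exponentially thin; summing over $\gamma$, adding the $O(\ell^2)$ from the $\mathrm{CAT}(0)$ filling in $L_\xi$, and absorbing the commutator terms produced by the (bounded-step) nilpotent structure of $N_\xi$, one gets $\Area(\gamma) = O(\ell^2)$. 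For products of symmetric spaces one reduces to the irreducible case by projecting to the de Rham factors and combining fillings.

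The main obstacle is uniformity: one must produce a single combing — equivalently, a bounded, $\gamma$-independent family of flow rays in the flats of $L_\xi$ — contracting every root space that can occur along any loop, with parallelogram-area bounds that do not blow up when two roots are nearly antiparallel or a root is nearly orthogonal to all available flow directions. This is where the rank hypothesis is used: $\mathrm{rank}(L_\xi) \geq 2$ makes the set of directions on which a given root is positive an open half-space rather than a single point, and the non-degeneracy of $\xi$ prevents the relevant roots from lying in a rank-$\leq 2$ subsystem inside one de Rham factor, so these half-spaces meet with an opening angle bounded below independently of $\gamma$. Carrying this out needs a case analysis over the possible root systems of $L_\xi$ on $\mathfrak{n}_\xi$ together with quantitative parallelogram estimates keyed to that combinatorics; I expect essentially all of the work to lie there. (Alternatively, the same input can be packaged via asymptotic cones: one shows every asymptotic cone of $S$ is simply connected with uniformly linear filling, and invokes Gromov's criterion to conclude a quadratic isoperimetric inequality.)
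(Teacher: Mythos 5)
The paper does not prove this theorem; it is cited as a black box (Dru\c{t}u \cite{drutu}) and used as motivation, so there is no ``paper's proof'' to compare against. Evaluating your proposal on its own, the reduction to a solvable group $S = N_\xi \rtimes L_\xi$ acting cocompactly on the horosphere is standard and fine, but the combing step that carries the whole argument has a fatal gap.

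You propose to pick a geodesic ray in a maximal flat of $L_\xi$ ``chosen so that every root $\alpha$ appearing in $\gamma$ is strictly positive on it,'' and then contract the $N_\xi$-coordinates along that ray. The problem is that the split torus inside $L_\xi$ is the \emph{codimension-one} subgroup $A'_\xi \subset A_\xi$ cut out by the Busemann function, and after restricting to $A'_\xi$ the weights of $N_\xi$ need not lie in any open half-space; in general no single direction contracts all of them. This already fails on the first example the paper itself gives: take $X = (H^{2})^{3}$ with $c$ the diagonal ray, which is not contained in any rank-$1$ or rank-$2$ factor. Then $N_\xi \cong \RR^{3}$ with weights $\alpha_{1},\alpha_{2},\alpha_{3}$ (the coordinate functionals), $A'_\xi = \{\alpha_{1}+\alpha_{2}+\alpha_{3}=0\}$, and the horosphere is quasi-isometric to $\Sol^{5}$. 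On $A'_\xi$ one has $\alpha_{1}+\alpha_{2}+\alpha_{3}\equiv 0$, so it is impossible to make all three simultaneously positive; your contracting ray does not exist, yet the theorem (and the paper's motivating discussion of $\Sol^{5}$) asserts a quadratic Dehn function here. Your ``uniformity'' discussion claiming the half-spaces $\{\alpha>0\}$ meet with bounded opening angle is simply false in this case.

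The missing idea is that one must use \emph{several} contracting directions, not one: any proper subfamily of the weights lies in an open half-space of $A'_\xi$ (this is what rank $\geq 3$ and non-degeneracy buy you, as opposed to the $\Sol^{3}$ situation where even a pair of weights is antipodal), so one decomposes the $N_\xi$-part of the loop into pieces supported on subfamilies of root spaces, contracts each piece along its own ray, and then controls the cross-terms and commutators between the pieces. This is the genuinely hard combinatorial/geometric work (cf.\ Allcock's Heisenberg argument, Leuzinger--Pittet, and Dru\c{t}u's asymptotic-cone formulation), and your proposal defers exactly this to a ``case analysis'' without supplying the mechanism that makes it go through. The parenthetical remark about asymptotic cones and Gromov's criterion is closer in spirit to Dru\c{t}u's actual argument, but as written it just names the strategy rather than executing it.
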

Dru\c{t}u used this theorem to verify Gromov's conjecture whenever the lattice $\Gamma$ has $\QQ$-rank $1$.  For lattices of higher $\QQ$-rank, horoballs can overlap in interesting ways, which makes it unclear how to carry out the procedure outlined above. The most important progress in this direction is Young's Theorem \ref{theorem:RY}, which proves that $\SL(p;\ZZ)$ (a lattice in the rank $p-1$ symmetric space $\SL(p;\RR)/SO(p)$) has quadratic Dehn function for $p$ at least $5$.  We must also mention a powerful result of Bestvina, Eskin, and Wortman\cite[Corollary 5]{BEW} which shows a polynomial Dehn function for groups such as $\SL(n;\mathcal{O}_{K})$ or $\Sp(2n;\mathcal{O}_{K})$ where $\mathcal{O}_{K}$ is the ring of integers of a number field $K$ which has at least $3$ archimedean valuations (these groups are lattices in higher rank symmetric spaces.)

\subsection{Outline of proof.}
\label{subsection:RYtheorem}
The goal of this paper is to prove Theorem \ref{theorem:MAINTHM}, which states that $\Sp(2p;\ZZ)$ has quadratic Dehn function when $p$ is sufficiently large.  Although we follow the same basic strategy as \cite{RY}, which proves the analogous result for $\SL$, the proof is in many ways more difficult.  We have tried to summarize the differences near the end of this subsection.  We will now outline the strategy used to prove these theorems, explaining at each step how the proof is different in the symplectic case.

\tikzstyle{block} = [rectangle, draw, node distance=4cm, 
    text width=7em, text centered, rounded corners, minimum height=4em]
\tikzstyle{line} = [draw, -latex']
\tikzstyle{cloud} = [draw, rectangle, node distance=4cm,
    text width=5em, text centered, minimum height=2em]
 
\begin{tikzpicture}[node distance = 2cm, auto]
    \node [cloud] (gammaword) {word in $\Gamma$};
    \node [block, right of=gammaword] (precisereductiontheory) {precise reduction theory};
    \node [cloud, right of=precisereductiontheory] (parab) {product of words in parabolic subgroups};
    \node [block, below of=parab] (comb) {combinatorial techniques};
    \node [cloud, left of=comb] (diag) {product of words in diagonal blocks};
    \node [cloud, below of = comb] (min) {product of words representing elements of minimal diagonal blocks};
    \node [block, left of = min] (fill) {geometric filling};
    \path [line] (gammaword) -- (precisereductiontheory);
    \path [line] (precisereductiontheory) -- (parab);
    \path [line] (parab) -- (comb);
    \path [line] (comb) -- (diag);
    \path [line] (comb) -- (min);
    \path [line] (min) -- (fill);
    \path [line] (diag) -- (precisereductiontheory);
\end{tikzpicture}

\paragraph{The basic procedure.}
The flowchart illustrates the basic procedure used to find fillings of words in $\Gamma$ (where $\Gamma$ is $\SL(p;\ZZ)$ or $\Sp(2p;\ZZ)$).  Beginning with a word in $\Gamma$, one repeats the following procedure for a bounded number of steps.
\begin{itemize}
\item[1:] Using precise reduction theory, reduce a word in $\Gamma$, or a word in a non-minimal diagonal block in $\Gamma$, to a product of normal form words representing elements of parabolic subgroups of $\Gamma$.
\item[2:] Using combinatorial techniques, reduce a normal form word representing an element of a parabolic subgroup to a product of words in non-minimal diagonal blocks and normal form words representing elements of minimal diagonal blocks.
\end{itemize}
At the end of this procedure, one is left with a product of normal form words representing elements of minimal diagonal blocks, which can be filled by a simple geometric argument.

\paragraph{Precise reduction theory.}
Let $G=\SL(p;\RR)$ or $\Sp(2p;\RR)$, take $K$ to be a maximal compact subgroup of $G$, and let $X$ be the symmetric space $G/K$.  Any word $w$ in the generators of $\Gamma$ corresponds to a loop in $X_{thick}$, which has a Lipschitz filling disk in $X$.  If we triangulate this disk in an appropriate way, we can obtain an expression $w=\prod w_{i}$ where each $w_{i}$ is either a word representing an element of a parabolic subgroup or a word of bounded length, and $\sum \ell(w_{i})^{2}$ is at most quadratic in $\ell(w)$.  Recall that a parabolic subgroup is just the stabilizer of some point in $\partial X$.  The triangulation of the filling disk in $X$ is chosen so that the vertices of each triangle are simultaneously close to some point in $\partial X$.  This can be used to produce the desired expression for $w$.

\paragraph{Shortcuts.}
When $\Gamma=\SL(p;\ZZ)$, a parabolic subgroup of $\Gamma$ is (up to conjugacy) just a block upper triangular subgroup.  When $\Gamma=\Sp(2p;\ZZ)$, there is also a ``symplectic block", see \S\ref{section:preliminaries}.  In order to efficiently represent elements of parabolic subgroups, it is necessary to find efficient representations for large powers of unipotent matrices.  This problem was solved by Lubotzky, Mozes, and Raghunathan \cite{lmr}, and the words representing powers of unipotent matrices are called shortcuts.  One then defines a normal form $\Omega$ for elements of parabolics, so that $\Omega(M)$ is a product of a word representing each diagonal block of $M$ followed by a product of a bounded number of shortcuts representing the unipotent part of $M$.

\paragraph{Filling relations involving shortcuts.}  
In both the $\SL$ and $\Sp$ cases, one can fill all Steinberg relations involving shortcuts, and almost all relations between words in diagonal blocks and shortcuts in the unipotent radical.  In the $\SL$ case, shortcuts live in certain solvable subgroups of $\SL(p;\ZZ)$ of the form $\ZZ^{d_{1}}\ltimes\ZZ^{d_{2}}$, and to fill the necessary relations involving shortcuts, one must know that these solvable subgroups have quadratic Dehn function (this was already known, due to work of Dru\c{t}u\cite{drutu}, Leuzinger and Pittet\cite{lp}, and Cornulier and Tessera\cite{dct}).  In the $\Sp$ case, the analogous subgroups have the form $\ZZ^{d}\ltimes N$ where $N$ is two-step nilpotent, and it is harder to show that the required groups have quadratic Dehn function.  In the $\SL$ case, these techniques fail only in the case where a diagonal block has ``codimension one" \cite[\S 8.2]{RY}, but in the $\Sp$ case, they fail when the $\Sp$ block has codimension less than $3$ or more than $p-1$.  Handling these difficulties is the hardest part of this paper.\\

\paragraph{Lipschitz fillings.}
When $\Gamma=\SL(p;\ZZ)$, filling the relations alluded to above is usually enough to reduce a normal form triangle in a parabolic subgroup to a product of words in its diagonal blocks.  However, for a parabolic $P$ which has a diagonal block of size $p-1\times p-1$, these techniques fail, and one instead must shows that reduction theory techniques can be applied to the space $X=P/\SO(p-1)$, hence reducing to the case of smaller parabolic subgroups.  This requires showing that $X$ is Lipschitz 1-connected (see \cite{RYlip}), i.e., that any $\ell$-Lipschitz loop has a $O(\ell)$-Lipschitz filling, which strengthens Dru\c{t}u's result that $X$ has quadratic Dehn function (actually, \cite[Lemma 8.8]{RY} only shows that every $\ell$-Lipschitz loop has a $O(\ell+1)$-Lipschitz filling, but in a homogeneous space this property is manifestly equivalent to Lipschitz 1-connectedness).\\

When $\Gamma=\Sp(2p;\ZZ)$ we must handle analogous difficulties for any parabolic where the $\Sp$ block has codimension less than three, or when it is trivial.  In the case where the $\Sp$ block of a parabolic $P$ is trivial, we show that $P/\SO(p)$ is Lipschitz 1-connected (again strengthening Dru\c{t}u's theorem).  When $P$ is a parabolic with $\Sp$ block $\Sp(2q;\ZZ)\subset P$ of codimension less than three, we show that $\Sp(2q;\RR)\ltimes N/U(q)$ is Lipschitz 1-connected, where $N$ is the unipotent radical of $P$.  This is enough to run the precise reduction theory machinery as in the other case.  Our Lipschitz connectivity results are significantly harder to prove than the ones needed in the $\SL$ case.

\paragraph{Differences between $\SL$ and $\Sp$.} We give here a brief list of the difficulties that arise in generalizing from the $\SL$ case.
\begin{itemize}
\item In $\SL$, all elementary generators are conjugate.  In $\Sp$, this is no longer the case, see \S\ref{section:elementarysymplecticmatrices}.
\item In $\SL$, the unipotent radical of a maximal parabolic is abelian.  In $\Sp$, it is usually 2-step nilpotent.
\item In $\SL$, the maximal parabolic preserving a $k$-dimensional subspace is isomorphic to the maximal parabolic preserving a codimension $k$ subspace.  In $\Sp(2n)$, the maximal parabolic preserving a $k$-dimensional isotropic subspace is not isomorphic to the maximal parabolic preserving an $n-k$ dimensional isotropic subspace.
\item Similarly, the group $H_{S,T}$ considered in \cite{RY} has quadratic Dehn function whenever $S$ or $T$ has size at least three.  The analagous group $SpH_{S,T}$ considered here has quadratic Dehn function only when $S$ has size at least three, see Theorem \ref{theorem:solvable2}.
\end{itemize}

\paragraph{Outline.}
We now describe the organization of this paper.  The basic strategy employed to prove our main theorem is outlined in \S\ref{section:theorem}, which reduces the task of proving Theorem \ref{theorem:MAINTHM} to that of proving several other theorems.  In order to understand this machinery, the reader will need to read the preliminaries described in \S\ref{section:preliminaries}, which explains our notation along with some important facts about the structure of $\Sp(2p;\ZZ)$.  \S\ref{section:shortcuts} explains shortcuts, and proves all the important facts about them, modulo the results of \S\ref{section:solvable}.  \S\ref{section:diagtoparab} explains the results we need from precise reduction theory.  This allows us to prove that words in diagonal blocks (including the block consisting of $\Sp(2p;\ZZ)$ itself) can be broken into products of normal form triangles in parabolic subgroups.  \S\ref{section:parabtodiag} then proves that normal form triangles in parabolics can be reduced to words in diagonal sub-blocks, modulo results proved in \S\ref{section:lipfill}.

The remaining sections are devoted to proving the technical results which power our machinery, and hence contain most of the new ideas needed to go from $\SL$ to $\Sp$.  \S\ref{section:solvable} shows that certain solvable subgroups have quadratic Dehn function, justifying the lemmas of \S\ref{section:shortcuts}.  \S\ref{section:lipfill} shows that certain homogeneous spaces are Lipschitz 1-connected, and thus justifies the use of adaptive templates in \S\ref{section:parabtodiag}.

\paragraph{Acknowledgements}
We wish to thank Robert Young for explaining his results to us.  We also wish to thank Justin Martel for our discussions of the symplectic group, and to thank Yves Cornulier, Mark Sapir, and Kevin Wortman for their helpful comments on a draft version of this paper.  Finally, we wish to thank Andy Putman for his many expositional suggestions.
\section{Main Theorems}
\label{section:theorem}
We will presently reduce our main theorem to Theorems \ref{theorem:parabtodiag}, \ref{theorem:diagtoparab}, and \ref{theorem:sp2shortcut}, which will be proved in other sections. To state these theorems, we first need to recall the definition of the standard maximal parabolic subgroups of the symplectic group along with some notions related to filling.

\subsection{Symplectic linear algebra}
\label{section:symplecticvectorspaces}
Here we briefly recall the theory of symplectic linear algebra.  (An inspired reference is \cite{mcduff}).
\begin{definition}
The symplectic form $\omega$ is the bilinear form on $\RR^{2p}$ given by
$$\omega(v,w)=v^{T}J_{0}w$$
where we view $v$ and $w$ as column vectors and $J_{0}$ is the $2p\times 2p$ matrix
$$\begin{bmatrix} 0 & \Id_{p\times p}\\ -\Id_{p\times p} & 0\end{bmatrix}.$$
\end{definition}

\begin{definition}
The symplectic group $\Sp(2p;\RR)$ is the set of matrices $M\in\GL(2p;\RR)$ satisfying $M^{T}J_{0}M=J_{0}$.
\end{definition}

Note that $M\in\Sp(2p;\RR)$ preserves the symplectic form in the sense that $\omega(Mv,Mw)=\omega(v,w)$.
\begin{definition}
If $W\subset \RR^{2p}$ is a subspace, define $W^{\omega}$ to be
$$\{v\in\RR^{2p}|\omega(v,W)=0\}.$$
We call a subspace $W\subset \RR^{2p}$ isotropic if $W\subset W^{\omega}$, coisotropic if $W\supset W^{\omega}$, and Lagrangian if $W=W^{\omega}$.  We call $W$ symplectic if $W\cap W^{\omega}=0$.
\end{definition}

We label the standard basis of $\RR^{2p}$ by half roots (see \S\ref{section:roots}) as follows.  For $i=1,\ldots,p$, let $z_{[i]}$ be a vector with zeros except in the $i$-th position, where it has a $1$.  Similarly, let $z_{-[i]}$ have all zeros except in the $p+i$-th position, where it has a $1$.

Given $S\subset \mathcal{H}=\{\pm [i]:1\leq i\leq p\}$, let $\RR^{S}$ denote the span of $\{z_{s}\}_{s\in S}$. We say that $S$ is isotropic if $\RR^{S}$ is isotropic, and we say that $S$ is symplectic if $\RR^{S}$ is symplectic (see \S\ref{section:roots}).  Given disjoint $S,T\subset\mathcal{H}$ with $S$ isotropic and $T$ symplectic, we define $P_{S,T}(\ZZ)\subset\Sp(2p;\ZZ)$ to be the subgroup which preserves $\RR^{S}$ and fixes $z_{s}$ for $s\notin S\cup -S\cup T$. We further define $\GL(S;\ZZ)\subset P_{S,T}$ to be the subgroup which acts trivially on $\RR^{T}$, and $\Sp(T;\ZZ)\subset P_{S,T}$ to be the subgroup $\RR^{S}$. $\GL(S;\ZZ)$ and $\Sp(T;\ZZ)$ are called diagonal blocks, and $P_{S,T}$ is called a maximal parabolic subgroup of $\Sp(S\cup -S\cup T;\ZZ)$. All of these subgroups are explained in much greater detail in \S\ref{section:subgroups}.

\subsection{Quadratic breaking}
To find a filling for some relation $w$ in the generators of $\Sp(2p;\ZZ)$, we often proceed by finding a sequence of relations $w=w_{0},w_{1},\ldots,w_{n}$ with $w_{n}$ equal to the empty word and then finding fillings for all of the $w_{i}^{-1}w_{i}$. We now introduce language and notation based on this idea which will be used to state our theorems.

Suppose $\Gamma$ a group with a finite presentation $\langle \mathcal{S}|\mathcal{R}\rangle$ where $\mathcal{S}$ is symmetric (so that $s\in \mathcal{S}\Leftrightarrow s^{-1}\in\mathcal{S}$). Let $\mathcal{S}^{\ast}$ denote the set of words in $\mathcal{S}$. Given $w,w^{\prime}\in\mathcal{S}^{\ast}$, we write their concatenation as $ww^{\prime}$.  A relation is a word $w\in\mathcal{S}^{\ast}$ which represents the identity in $\Gamma$.
\label{section:quadraticbreaking}
\begin{definition}
A homotopy of cost $C$ between two words $w$ and $w^{\prime}$ in $\mathcal{S}^{\ast}$ is a sequence
$$w=w_{0},w_{1},\ldots,w_{n}=w^{\prime}$$ where each $w_{i}$ is related to the next by one of the following operations, and there are only $C$ uses of the first two operations (insertion or deletion of a relator).
\begin{itemize}
\item Insertion of a relator: $w_{i}=uv$ and $w_{i+1}=urv$ for some $u,v\in \mathcal{S}^{\ast}$ and $r\in \mathcal{R}$.
\item Deletion of a relator: $w_{i}=urv$ and $w_{i+1}=uv$ for some $u,v\in \mathcal{S}^{\ast}$ and $r\in \mathcal{R}$.
\item Free expansion: $w_{i}=uv$ and $w_{i+1}=ut^{-1}tv$ for some $u,v\in \mathcal{S}^{\ast}$ and $t\in \mathcal{S}$.
\item Free contraction: $w_{i}=ut^{-1}tv$ and $w_{i+1}=uv$ for some $u,v\in \mathcal{S}^{\ast}$ and $t\in \mathcal{S}$.
\end{itemize}
\end{definition}

Note that each $w_{i}$ necessarily represents the same element. We remark that a homotopy of cost $C$ between $w$ and $w^{\prime}$ can easily be converted to an area $C$ filling of the relation $w^{-1}w^{\prime}$ (i.e., an expression of $w^{-1}w^{\prime}$ as a product of $C$ conjugates of relators).

\begin{definition}
Given $w\in F(\mathcal{S})$ which represents the identity in $G$, we say that $w$ can be broken into relations $\{w_{i}\}_{i=1,\ldots,n}$ in $\mathcal{S}^{\ast}$ at cost $C$ if there is a homotopy of cost $C$ from $w$ to a product $\prod_{i=1}^{n}v_{i}w_{i}v_{i}^{-1}$ for some $v_{i}\in \mathcal{S}^{\ast}$.
\end{definition}

\begin{definition}
Let $\mathcal{C}\subset\mathcal{S}^{\ast}$ be some class of words representing the identity.  When we say that any word $w\in\mathcal{C}$ can be quadratically broken into relations $\{w_{i}\}$ with some property, the meaning is that $w$ can be broken at cost $O(\ell(w)^{2})$ into relations $\{w_{i}\}$ which have the prescribed property and, additionally,$$\sum\ell(w_{i})^{2}=O(\ell(w)^{2}).$$
\end{definition}

Similarly, suppose we are given some operation on words $f:\mathcal{C}\rightarrow \mathcal{S}^{\ast}$ defined on a class of words $\mathcal{C}$, and suppose further that $w$ and $f(w)$ always represent the same element of $\Gamma$. Then we write
$$w\leadsto f(w)$$
if $\ell(f(w))=O(\ell(w))$ and any word $w\in \mathcal{C}$ can be homotoped to $f(w)$ at cost $O(\ell(w)^{2})$.  More generally, we write
$$\delta(w,f(w))=g(w)$$
for some function $g:\mathcal{C}\rightarrow\RR$ if we can always find a homotopy of cost $g(w)$ from any $w\in \mathcal{C}$ to $f(w)$. If every element of $\mathcal{C}$ represents the identity, the expression
$$\delta(w)=g(w)$$
means that every $w\in\mathcal{C}$ has a filling of area $g(w)$.

\begin{definition}
A normal form is a map $\Omega:G\rightarrow \mathcal{S}^{\ast}$ such that $\Omega(g)$ represents $g$ for any $g\in G$. We always wish for a normal form to be efficient, in the sense that $\ell(w)=O(\ell(\Omega(g)))$ for any $w$ representing $g$, unless otherwise indicated.
\end{definition}

\subsection{Generating sets}
Just as the so-called elementary matrices generate $\SL(p;\ZZ)$, there is a natural generating for $\Sp(2p;\ZZ)$ consisting of matrices which we call elementary symplectic generators (denoted $e_{\alpha}$, where $\alpha$ is a root as described in \S\ref{section:roots}). These generators and their properties are described in \S\ref{section:elementarysymplecticmatrices}, but for convenience we will use a larger generating set. Throughout this section, and the rest of the paper, we fix a finite generating set $\SSp$ for $\Sp(2p;\ZZ)$ which contains every elementary symplectic generator and also contains finite generating sets for the subgroups discussed later in \S\ref{section:subgroups}, i.e., $\SSp$ contains a generating set for each subgroup of the form $\GL(S;\ZZ)$, $\Sp(T;\ZZ)$, $N_{S,T}(\ZZ)$, 
 or $\SpH_{S,T}(\ZZ)$ contained in $\Sp(2p;\ZZ)$. This assumption is harmless because there are only finitely many such subgroups.

\subsection{Reduction of the main theorem}
Recall that the following is the main theorem of this paper (Theorem \ref{theorem:MAINTHM}).

\begin{nolabeltheorem}
For $p\geq 5$, the group $\Sp(2p;\ZZ)$ has quadratic Dehn function.
\end{nolabeltheorem}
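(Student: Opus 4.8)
The plan is to reduce the theorem, following the flowchart of \S\ref{subsection:RYtheorem}, to Theorems \ref{theorem:diagtoparab}, \ref{theorem:parabtodiag} and \ref{theorem:sp2shortcut}. The lower bound $\delta_{\Sp(2p;\ZZ)}(n)\succeq n^{2}$ is immediate: for $p\ge 2$ the matrices $\begin{bmatrix}\Id & A\\ 0 & \Id\end{bmatrix}$ with $A$ symmetric and integral form an abelian subgroup of rank $\ge 3$, so $\Sp(2p;\ZZ)$ is not hyperbolic, and a finitely presented group with sub-quadratic Dehn function must be hyperbolic; hence $\delta\succeq n^{2}$. Everything else is the matching upper bound.

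For the upper bound I would fix the generating set $\SSp$ and run a recursion on the diagonal blocks $\GL(S;\ZZ)$ and $\Sp(T;\ZZ)$ of $\Sp(2p;\ZZ)$, ordered by the data $(S,T)$ so that the diagonal blocks $\GL(S;\ZZ),\Sp(T;\ZZ)$ of a proper maximal parabolic $P_{S,T}$ of a given block are strictly smaller than that block. Given a relation $w$ of length $n$, viewed as a normal-form word in the top block $\Sp(2p;\ZZ)$, I maintain throughout an expression of $w$, produced by a homotopy of quadratic cost, as a product $\prod_{i}v_{i}w_{i}v_{i}^{-1}$ of conjugates of relations $w_{i}$, where each $w_{i}$ is a normal-form word attached to some block and $\sum_{i}\ell(w_{i})^{2}=O(n^{2})$. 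The loop alternates two moves. \emph{Step A (precise reduction theory):} for each $w_{i}$ whose block is not minimal, Theorem \ref{theorem:diagtoparab} quadratically breaks $w_{i}$ into normal-form words, each representing an element of a proper maximal parabolic of its block --- obtained by realizing $w_{i}$ as a loop in the thick part of the symmetric space of the block, filling with a CAT(0) disk, and triangulating so that the vertices of each triangle lie near a common point of $\partial X$. \emph{Step B (combinatorial techniques, shortcuts):} for each $w_{i}$ now living in a parabolic $P_{S,T}$, Theorem \ref{theorem:parabtodiag}, using the shortcut calculus of Theorem \ref{theorem:sp2shortcut}, quadratically reduces $w_{i}$ to a product of normal-form words in the diagonal blocks $\GL(S;\ZZ)$ and $\Sp(T;\ZZ)$, filling along the way all Steinberg relations and all relations between diagonal-block words and the unipotent shortcuts appearing in the normal form.

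Each pass replaces blocks by strictly smaller ones, so after a bounded number of passes (at most $O(p)$, since $p$ is fixed) every $w_{i}$ is a normal-form word supported on the generators of a minimal diagonal block, such as $\Sp(2;\ZZ)\cong\SL(2;\ZZ)$ or $\GL(1;\ZZ)$; these blocks are virtually free or finite, hence have linear Dehn function, so (taking the fixed presentation to include their relators) each such $w_{i}$ has a filling of area $O(\ell(w_{i}))$, which one may also see directly from the action of such a block on the thick part of its rank-one symmetric space. Summing the homotopy costs over the boundedly many passes --- each $O(n^{2})$, since the invariant $\sum_{i}\ell(w_{i})^{2}=O(n^{2})$ holds before every pass --- and adding the contribution $\sum_{i}O(\ell(w_{i}))\le\sum_{i}O(\ell(w_{i})^{2})=O(n^{2})$ of the final fillings gives $\Area(w)=O(n^{2})$, which is the theorem modulo the three cited results.

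The genuine difficulties live inside Theorems \ref{theorem:diagtoparab}, \ref{theorem:parabtodiag} and \ref{theorem:sp2shortcut}, and that --- not this reduction --- is where the hypothesis $p\ge 5$ is used; the main obstacle is Step B. As explained in the outline, the shortcut-based combinatorics behind Theorem \ref{theorem:parabtodiag} works only when the $\Sp$-block of $P_{S,T}$ has codimension between $3$ and $p-1$, since it rests on the quadratic Dehn function of solvable groups $\ZZ^{d}\ltimes N$ with $N$ two-step nilpotent --- the groups $\SpH_{S,T}$ of Theorem \ref{theorem:solvable2} --- which fails outside that range. In the exceptional cases ($|S|\le 2$, or $|S|=p$ so the $\Sp$-block is trivial) Theorem \ref{theorem:parabtodiag} is instead established by showing that the relevant homogeneous space --- $P/\SO(p)$, respectively $\Sp(2q;\RR)\ltimes N/U(q)$ --- is Lipschitz $1$-connected (the results of \S\ref{section:lipfill}) and re-running the Step-A machinery on it, so the recursion still descends. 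Beyond faithfully tracking which case of Theorem \ref{theorem:parabtodiag} applies, the one point needing care in the reduction proper is the well-foundedness of the recursion in the symplectic setting: unlike in $\SL$, the parabolic preserving a $k$-dimensional isotropic subspace is not isomorphic to the one preserving a $(p-k)$-dimensional isotropic subspace, so the complexity must be taken to be the block data $(S,T)$ itself, on which both Step A and Step B strictly decrease.
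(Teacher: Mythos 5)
Your overall strategy matches the paper's: alternate between Theorem~\ref{theorem:diagtoparab} (break a relation in a diagonal block into $\Omega$-triangles in proper parabolics) and Theorem~\ref{theorem:parabtodiag} (reduce an $\Omega$-triangle in a parabolic to relations or shortcut words in smaller blocks), with the recursion terminating because the size of the $\Sp$-block strictly decreases each full cycle. Your side remark on the lower bound --- subquadratic Dehn function forces hyperbolicity, which a group containing $\ZZ^{3}$ cannot have --- is correct and not spelled out in the paper, but is indeed the standard reason the theorem is stated as ``quadratic'' rather than ``at most quadratic.''

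The gap is in your account of the terminal step, and it is not cosmetic. You claim that after a bounded number of passes every remaining $w_{i}$ is a ``normal-form word supported on the generators of a minimal diagonal block,'' which you then propose to fill at cost $O(\ell(w_{i}))$ via the linear Dehn function of $\Sp(2;\ZZ)\cong\SL(2;\ZZ)$. That is not what happens. By Theorem~\ref{theorem:parabtodiag}(c), the $\#T=2$ branch leaves an \emph{identity-representing shortcut word} in $\Sp(T)$: a product of shortcuts $\he_{\alpha}(x)$ in the sense of Definition~\ref{definition:shortcutword}, each a word of length $O(\log|x|)$ in the big generating set $\SSp$ whose distorting matrices lie \emph{outside} $\Sp(T;\ZZ)$. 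The paper explicitly cautions that a shortcut word in $\Sp(T)$ is not a word in the generators of $\Sp(T;\ZZ)$. Such a word of length $\ell$ can represent a matrix of $\SL(2;\ZZ)$ with entries of size $\exp(\Theta(\ell))$, so rewriting it in the generators of $\SL(2;\ZZ)$ costs exponentially many letters, and the linear Dehn function of a virtually free group gives no control at the scale of $\ell$. Filling these terminal shortcut words at cost $O(\ell^{2})$ is exactly the content of Theorem~\ref{theorem:sp2shortcut}, which you cite but misplace: you present it as a tool used ``within'' Theorem~\ref{theorem:parabtodiag}, whereas in the paper the two are independent and Theorem~\ref{theorem:sp2shortcut} is invoked separately, in Step 3 of the reduction, once the product of shortcuts in a $\#T=2$ block has been isolated. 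If you ran your argument as written --- linear Dehn function of the block for the terminal fillings --- the proof would fail.
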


We prove this by developing a normal form $\Omega=\Omega_{P_{S,T}}$ (in \S\ref{section:shortcuts}) for each parabolic $P_{S,T}(\ZZ)$, and then (in subsequent sections) showing the theorems stated after the following definition.
\begin{definition}
\label{definition:shortcutword}
\begin{itemize}
\item Suppose $T\subset \mH$ is symplectic.  If we say that $w$ is a word in $\Sp(T)$, we mean that it is a word in the finite alphabet $\{e_{\alpha}\in\Sp(T;\ZZ)\}$.  (Similarly for words in $\SL(S)$ where $S\subset\mH$ isotropic.) Here $e_{\alpha}$ is an elementary symplectic generator as defined in \S\ref{section:elementarysymplecticmatrices}.
\item A shortcut word in a subgroup $H\subset \Sp(2p;\ZZ)$ is a product of words $\he_{\alpha_{1}}(x_{1})\ldots\he_{\alpha_{n}}(x_{n})$ where each $\he_{\alpha_{i}}(x_{i})$ is a ``shortcut" for some $e_{\alpha_{i}}(x_{i})\in H$. A shortcut $\he_{\alpha}(x)$ is a special type of word in $\SSp$ representing the elementary symplectic matrix $e_{\alpha}(x)$. Shortcuts are defined in \ref{section:shortcuts}, and elementary symplectic matrices are defined in \S\ref{section:preliminaries}. The reader is cautioned that a shortcut word in $\Sp(T)$ is not generally a word in $\Sp(T)$ (the shortcuts involved represent elements of $\Sp(T)$, but are not words in the generators of $\Sp(T;\ZZ)$).
\item An $\Omega$ triangle in a parabolic $P_{S,T}$ is a product of words
 $\Omega(p_{1})\Omega(p_{2})\Omega(p_{3})$ where $p_{1},p_{2},p_{3}\in P_{S,T}(\ZZ)$ and $p_{1}p_{2}p_{3}=1$ (here $\Omega$ is understood to be $\Omega_{P_{S,T}}$).
\end{itemize}
\end{definition}

\begin{theorem}
\label{theorem:diagtoparab}
Suppose $T\subset \mH$ is symplectic with $4\leq \# T \leq 2p$.  If $w$ a relation in $Sp(T)$, then $w$ can be broken into a collection of relations $w_{1},\ldots,w_{n}$ such that each $w_{i}$ is an $\Omega$ triangle in some maximal parabolic $P_{S^{\prime},T^{\prime}}$ of $\Sp(T)$.  
\end{theorem}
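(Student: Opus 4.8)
The plan is to pass from the relation $w$ to a loop in the symmetric space attached to $\Sp(T;\ZZ)$, fill that loop with a quadratic Lipschitz disk using the CAT(0) property, and then cut the disk into triangles by precise reduction theory so that each triangle is carried by the orbit map to an $\Omega$ triangle in a standard maximal parabolic. Throughout I identify $\Sp(T;\ZZ)$ with $\Sp(2q;\ZZ)$, where $2q=\#T$ with $q\geq 2$, by relabeling the half-root basis of $\RR^{T}$; write $X=\Sp(2q;\RR)/U(q)$ for the associated symmetric space, which is CAT(0) of rank $q$.

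First I would convert $w$, a word of length $\ell(w)$ in the elementary symplectic generators $e_{\alpha}\in\Sp(T;\ZZ)$, into a loop $\gamma$ of length $O(\ell(w))$ in the thick part $\Xt\subset X$ on which $\Sp(2q;\ZZ)$ acts cocompactly (a word in the generators traces a path that stays boundedly close to a $\Sp(2q;\ZZ)$-orbit, hence to $\Xt$). Since $X$ is CAT(0), the geodesic coning construction recalled in \S\ref{subsection:gromov} produces a filling $f\colon D^{2}\to X$ with $\Lip(f)=O(\ell(w))$ and $\Area(f)=O(\ell(w)^{2})$. I would then make $f$ simplicial for a triangulation of $D^{2}$, with every vertex sent into $\Xt$ and with the sum of squared edge-lengths $O(\ell(w)^{2})$; this is the routine ``filling disk to triangulated disk'' step, and, crucially, it should be done efficiently (large triangles where necessary) rather than by naive unit subdivision of a disk that may wander far into a cusp.

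The heart of the argument is the precise reduction theory of \S\ref{section:diagtoparab}, adapting \cite{RY} and ultimately resting on the shortcut constructions of \cite{lmr}: one arranges the triangulation so that the three vertices of every triangle lie simultaneously near infinity in the direction of one common rational parabolic, which --- because $\Xt$ is the complement in $X$ of a union of horoballs naturally indexed by the maximal rational parabolics --- may be taken to be a standard maximal parabolic $P_{S',T'}$ of $\Sp(T)$ with $S'\cup -S'\cup T'=T$. For such a triangle the orbit map sends its vertices to $g_{1},g_{2},g_{3}\in\Sp(T;\ZZ)$ with each $p_{i}:=g_{i}^{-1}g_{i+1}$ lying in $P_{S',T'}(\ZZ)$, $p_{1}p_{2}p_{3}=1$, and $p_{i}$ of word length $O(\text{side length})$; the efficiency of the normal form $\Omega_{P_{S',T'}}$ built in \S\ref{section:shortcuts} --- which uses shortcuts to represent each $p_{i}$ by a word of length $O(\text{side length})$ even when its unipotent part requires large powers --- then exhibits the triangle boundary as the $\Omega$ triangle $\Omega(p_{1})\Omega(p_{2})\Omega(p_{3})$. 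Reading off the adjacency structure of the triangulation homotopes $w$ to a product of the conjugated triangle boundaries: each interior edge is traversed once in each direction and cancels after boundedly many moves, so the homotopy has cost $O(\ell(w)^{2})$, while $\sum_{i}\ell(w_{i})^{2}=O(\ell(w)^{2})$ follows from the bound on squared edge-lengths together with the efficiency of $\Omega$ (and this sum also bounds the number of triangles).

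The main obstacle is the precise reduction theory step: producing a triangulation that is at once \emph{combinatorially efficient} (squared perimeters summing to $O(\ell(w)^{2})$, not the exponentially large total one gets by naively filling a loop that descends deep into a horoball) and \emph{precise} (each triangle lying entirely near a single rational parabolic, so that all three of its sides become $\Omega$-words in that parabolic). This is genuinely harder than the $\SL$ case of \cite{RY}: the standard parabolics $P_{S',T'}$ of $\Sp$ are not all of the same shape --- their unipotent radicals are typically $2$-step nilpotent, the $\GL$ block and the $\Sp$ block play asymmetric roles, and so one must verify that $\Omega_{P_{S',T'}}$ is efficient and behaves well under the subdivision in every case. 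A secondary point is to check that when the reduction naively outputs a non-maximal parabolic one can enlarge to a maximal parabolic containing it --- or subdivide once more so that the relevant point at infinity is a vertex of the rational Tits building --- without spoiling the quadratic bounds.
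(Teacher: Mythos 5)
Your proposal follows essentially the same route as the paper: represent the relation by a loop in $X=\Sp(2q;\RR)/U(q)$, fill it with an $O(\ell(w))$-Lipschitz disk via CAT(0) coning, triangulate the disk using the adaptive-template lemma (the paper's Lemma~\ref{lemma:adaptivetemplates}, i.e.\ \cite[Corollary 5.3]{RY}) with the triangle scale controlled by the depth function $r$, and then apply the reduction-theoretic Corollaries~\ref{corollary:parabolicedges} and~\ref{corollary:parabolictriangles} to identify each triangle with an $\Omega$ triangle in some maximal parabolic (or a bounded-length relation filled directly). One small correction: you should not insist that every triangulation vertex be sent into $\Xt$ --- the whole point of the adaptive template is that the filling disk may dip arbitrarily deep into horoballs, and it is exactly those deep triangles (whose diameters scale like $r$) whose edges land in a common maximal parabolic, as your own later sentences correctly describe.
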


Theorem \ref{theorem:diagtoparab} is proved in \S\ref{section:diagtoparab}.

\begin{theorem}
\label{theorem:parabtodiag}
Suppose $\emptyset\neq S\subset \mH$ is isotropic, and $T\subset \mH$ is symplectic with $S,T$ disjoint. Let $\Delta$ be an $\Omega$-triangle in $P_{S,T}$.  Then we can homotope $\Delta$ as follows (note that all these homotopies have quadratic cost).
\begin{itemize}
\item[(a)] If $\# T > 2(p-3)$, then $\Delta$ can be quadratically broken into relations $w_{i}$ with each $w_{i}$ an $\Omega$ triangle in some $P_{S_{i},T_{i}}$ with $\# T_{i}$ strictly smaller than $\# T$.
\item[(b)] If $4\leq \# T \leq 2(p-3)$, then $\Delta$ can be homotoped at cost $O(\ell(\Delta)^{2})$ to a relation of length $O(\ell(\Delta))$ in $\Sp(T)$.
\item[(c)] If $\# T = 2$, then $\Delta$ can be homotoped at cost $O(\ell(\Delta)^{2})$ to an identity-representing shortcut word in $\Sp(T)$ of length $O(\ell(\Delta))$.
\item[(d)] If $\# T = 0$, then $\Delta$ can be filled at cost $O(\ell(\Delta)^{2})$.
\end{itemize}
\end{theorem}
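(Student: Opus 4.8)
The plan is to prove all four parts by one common mechanism --- a ``shuffle'' of the $\Omega$-triangle that collects the diagonal-block data on one side and the unipotent data, encoded by shortcuts, on the other --- followed by a part-dependent cleanup. Concretely, using the Levi decomposition $P_{S,T}(\ZZ)=\bigl(\GL(S;\ZZ)\times\Sp(T;\ZZ)\bigr)\ltimes N_{S,T}(\ZZ)$ I would write each vertex of $\Delta$ as $p_i=(g_i,s_i)\,n_i$, and recall from the construction of $\Omega=\Omega_{P_{S,T}}$ in \S\ref{section:shortcuts} that $\Omega(p_i)$ is, after free reduction, a word $w_{\GL}(g_i)$ in the generators of $\GL(S;\ZZ)$, followed by a word $w_{\Sp}(s_i)$ in the generators of $\Sp(T;\ZZ)$ (with the unipotent part of $s_i$ itself recorded by a shortcut when $\#T=2$), followed by a product $\hat n_i$ of boundedly many shortcuts representing $n_i$; each piece has length $O(\ell(\Delta))$. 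The shuffle homotopes $\Delta$, at cost $O(\ell(\Delta)^2)$, to a word $W_{\GL}\,W_{\Sp}\,\hat N$ in which $W_{\GL}$ and $W_{\Sp}$ are length-$O(\ell(\Delta))$ words in the generators of $\GL(S;\ZZ)$ and $\Sp(T;\ZZ)$ representing $g_1g_2g_3$ and $s_1s_2s_3$ respectively, and $\hat N$ is a shortcut word in $N_{S,T}$; since $p_1p_2p_3=1$, all three of these words represent the identity.

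The shuffle itself proceeds by (i) commuting the two diagonal-block words past one another letter by letter --- about $\ell(\Delta)^2$ bounded-cost swaps, since $\GL(S)$ and $\Sp(T)$ act on symplectically orthogonal subspaces and therefore commute; (ii) moving the $O(1)$ shortcuts appearing in the words $\Omega(p_i)$ past these diagonal-block words, using the fact (established in \S\ref{section:shortcuts}) that conjugating a shortcut by a word in diagonal-block generators is homotopic to another shortcut; and (iii) commuting the $O(1)$ shortcuts past one another --- Steinberg relations which, because $N_{S,T}$ is genuinely $2$-step nilpotent, produce commutator shortcuts rather than cancelling. Each shortcut manipulation in (ii) and (iii) costs $O(\ell(\Delta)^2)$, and there are only $O(1)$ of them, so the total cost is $O(\ell(\Delta)^2)$. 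These shortcut costs, and the cost of filling the leftover identity-representing word $\hat N$, are where the hypotheses enter: when $\#S\geq 3$ they are controlled using the quadratic Dehn function of the solvable groups $\SpH_{S,T}$ proved in \S\ref{section:solvable} (cf.\ Theorem~\ref{theorem:solvable2}), while when $\#S\leq 2$ those solvable groups are no longer quadratic and one routes the estimates instead through the Lipschitz $1$-connectivity of $\Sp(T;\RR)\ltimes N_{S,T}(\RR)/U(\#T/2)$ proved in \S\ref{section:lipfill}. The relation $W_{\GL}$ is filled at quadratic cost either directly --- $\GL(S;\ZZ)$ has linear Dehn function when $\#S\leq 2$ --- or, when $\#S\geq 3$, by embedding $\GL(S;\ZZ)\leq\GL(S';\ZZ)\leq\Sp(2p;\ZZ)$ for some $S\subset S'$ with $5\leq\#S'\leq p$ and invoking Young's Theorem~\ref{theorem:RY}, which is exactly where the assumption $p\geq 5$ is used.

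After the shuffle and after filling $W_{\GL}$ and $\hat N$, the triangle $\Delta$ has been homotoped at cost $O(\ell(\Delta)^2)$ to the single word $W_{\Sp}$, a relation of length $O(\ell(\Delta))$ in $\Sp(T)$ --- which is already the conclusion of part (b). For part (a), $\#T>2(p-3)$ forces $\#S\leq 2$ and $4\leq\#T\leq 2p$, so Theorem~\ref{theorem:diagtoparab} breaks $W_{\Sp}$, hence $\Delta$, into $\Omega$-triangles in maximal parabolics $P_{S_i,T_i}$ of $\Sp(T)$, each with $\#T_i<\#T$. For part (c), $\Sp(T)=\Sp(2;\ZZ)$, and $W_{\Sp}$ together with the shortcuts already present from the normal form is an identity-representing shortcut word in $\Sp(2;\ZZ)$ of length $O(\ell(\Delta))$, as required. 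For part (d), $\#T=0$, so there is no residual $\Sp(T)$-factor and $\Delta$ has been filled outright --- except that when $\#S$ is so large that $\Sp(S\cup -S;\ZZ)$ does not sit inside a subgroup of $\Sp(2p;\ZZ)$ already known to have quadratic Dehn function, one fills $\Delta$ instead by running precise reduction theory on the space $P_{S,\emptyset}(\RR)/\SO(\#S)$, whose Lipschitz $1$-connectivity is proved in \S\ref{section:lipfill}.

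The step I expect to be the real obstacle is the borderline regime $\#S\leq 2$ (together with the companion case $\#T=0$ with $\#S$ large), where the algebraic estimates coming from solvable subgroups are simply false, so that the entire argument has to be pushed through the Lipschitz $1$-connectivity statements of \S\ref{section:lipfill}. Establishing those statements is the crux, and it is considerably harder than the corresponding step in \cite{RY} because $N_{S,T}$ is a genuine $2$-step nilpotent group rather than an abelian one.
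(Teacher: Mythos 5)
Your shuffle is exactly the paper's proof of part (b), and it captures the right ingredients throughout; but the way you've organized the argument hides a genuine gap in parts (a) and (d), and glosses over a subtlety in (c).

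The gap: step (ii) of your shuffle --- moving the $O(1)$ shortcuts in $\Omega(p_i)$ past the $\Sp(T)$-block words --- is an invocation of the block-shortcut lemma, Lemma~\ref{lemma:blockshortcut}(b), and that lemma is stated and proved only for $\#T\leq 2(p-3)$. When $\#T>2(p-3)$ (and also when $\#T=0$, $\#S=p$, which kills Lemma~\ref{lemma:blockshortcut}(a)), the shuffle simply does not produce the word $W_{\GL}W_{\Sp}\hat N$. So your cleanup for part (a) --- ``Theorem~\ref{theorem:diagtoparab} breaks $W_{\Sp}$'' --- never gets started: $W_{\Sp}$ is never reached. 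You correctly sense that Lipschitz $1$-connectivity has to enter in this regime, but you treat it as a way to salvage the cost estimates of the shuffle; in fact the paper uses it to replace the second half of the shuffle outright. The actual proof of (a) moves only the $\GL(S)$-words to the left (which works for all $\#S$, using special shortcuts when $\#S=2$ and trivially when $\#S=1$), then switches the remaining shortcuts to live in $K=\ker(P_{S,T}\to\GL(S))$, and then runs precise reduction theory directly on the homogeneous space $K(\RR)/U(T)$ via adaptive templates --- which is where the Lipschitz $1$-connectivity of $\Sp(T;\RR)\ltimes N_{S,T}(\RR)/U(T)$ (Theorem~\ref{theorem:lipfillsp}) is consumed. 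The $\Sp(T)$-words are never commuted past anything; they are absorbed into the reduction-theory machinery. Part (d) with $\#S=p$ is handled analogously via Theorem~\ref{theorem:lipfillsl} on $P_{S,\emptyset}(\RR)/\SO(p)$, again not by a shuffle. In short, (a) and the hard case of (d) are not "shuffle plus cleanup" --- they reproduce the entire \S\ref{section:diagtoparab} argument on a smaller homogeneous space.

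A secondary issue is part (c): when $\#T=2$ the $\Sp(T)$-edge of $\Omega$ is itself a product of $O(\log\ell)$ shortcuts, so moving it past a $\GL(S)$-word of length $O(\ell)$ by naively iterating your step (ii) costs $O(\ell^2\log\ell)$, not $O(\ell^2)$. The paper needs Lemma~\ref{lemma:commutingshortcuts} plus a segmentation trick --- chopping the $\GL(S)$-word into $O(\ell/\log\ell)$ pieces of length $O(\log\ell)$ and swapping piece-by-piece --- to bring this down to quadratic. Your proposal omits this and would, as written, lose a $\log$ factor.
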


Theorem \ref{theorem:parabtodiag} is proved in \S\ref{section:parabtodiag}.

\begin{theorem}
\label{theorem:sp2shortcut}
If $w$ is an identity-representing shortcut word in $\Sp(T)$ where $\#T=2$, then $w$ can be filled at cost $O(\ell(w)^{2})$.
\end{theorem}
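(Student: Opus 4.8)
The plan is to first reduce $w$ to an alternating shortcut word in the two long roots of $\Sp(T)$, and then fill that word by running a ``binary'' version of Dehn's algorithm for $\SL(2;\ZZ)$.

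Since $\#T=2$ and $\RR^{T}$ is symplectic, $T=\{[i],-[i]\}$ for a single $i$, so $\Sp(T;\ZZ)=\SL(2;\ZZ)$ and the only elementary symplectic generators available in $\Sp(T)$ are those for the long roots $\pm 2[i]$; write $\alpha=2[i]$ and $\bar{\alpha}=-2[i]$, so that $e_{\alpha}(x)$ and $e_{\bar{\alpha}}(x)$ are the upper and lower unitriangular matrices with off-diagonal entry $x$ in the relevant $2\times2$ block. Using the shortcut arithmetic of \S\ref{section:shortcuts} --- the relations $\he_{\gamma}(x)\he_{\gamma}(y)\leadsto\he_{\gamma}(x+y)$ and $\he_{\gamma}(0)\leadsto$ (the empty word), each fillable at quadratic cost --- one homotopes $w$ at cost $O(\ell(w)^{2})$ to an alternating shortcut word
$$v=\he_{\alpha}(a_{1})\he_{\bar{\alpha}}(b_{1})\he_{\alpha}(a_{2})\he_{\bar{\alpha}}(b_{2})\cdots$$
with every $a_{j},b_{j}$ nonzero. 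Because a shortcut has length at least a fixed constant times the logarithm of its argument, $v$ has $O(\ell(w))$ syllables and $\sum_{j}(\log|a_{j}|+\log|b_{j}|)=O(\ell(w))$, and of course $v$ still represents the identity of $\SL(2;\ZZ)$. (As the submonoid of $\SL(2;\ZZ)$ generated by $e_{\alpha}(1)$ and $e_{\bar{\alpha}}(1)$ is free, $v$ must contain sign changes among its exponents; exploiting these is the substance of the reduction.)

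The core is a normal form with an insertion estimate. As $\SL(2;\ZZ)$ is virtually free --- hence hyperbolic --- every $g\in\SL(2;\ZZ)$ has a canonical continued-fraction expansion, an alternating word $e_{\alpha}(q_{1})e_{\bar{\alpha}}(q_{2})e_{\alpha}(q_{3})\cdots$ with $\sum_{k}\log|q_{k}|=\Theta(\log\|g\|)$; write $\widehat{\Omega}(g)$ for the corresponding alternating shortcut word, of shortcut-length $O(\log\|g\|)$. One then proves, by induction on $\log|x|$ via the binary expansion of $x$ (using the doubling and Steinberg-with-shortcut relations of \S\ref{section:shortcuts}), the insertion lemma: for $g\in\SL(2;\ZZ)$ and $x\in\ZZ$,
$$\widehat{\Omega}(g)\,\he_{\alpha}(x)\ \leadsto\ \widehat{\Omega}\bigl(g\,e_{\alpha}(x)\bigr)$$
(and symmetrically with $\bar{\alpha}$) at cost $O\bigl(\ell(\widehat{\Omega}(g))\log|x|+(\log|x|)^{2}\bigr)$. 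Granting this, the theorem follows by processing the syllables of $v$ from left to right. The partial products $Q_{0}=1,Q_{1},\ldots,Q_{m}=1$ of $v$ satisfy $\log\|Q_{k}\|=O(\ell(v))$ --- each entry of $Q_{k}$ is at most an exponential of a prefix of $\sum_{j}(\log|a_{j}|+\log|b_{j}|)$ --- so $\ell(\widehat{\Omega}(Q_{k}))=O(\ell(v))$ for all $k$, and inserting one syllable $\he_{\gamma}(c)$ at a time carries $\widehat{\Omega}(Q_{0})$ (the empty word) through $\widehat{\Omega}(Q_{1}),\widehat{\Omega}(Q_{2}),\ldots$ to $\widehat{\Omega}(Q_{m})$ (again the empty word), at total cost $\sum_{k}O\bigl(\ell(v)\log|c_{k}|+(\log|c_{k}|)^{2}\bigr)=O(\ell(v)^{2})$. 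Together with the reduction of $w$ to $v$ this is the desired filling.

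I expect the main obstacle to be the insertion lemma, and specifically the interaction of an incoming syllable with the tail of the running normal form. When $\gamma$ differs from the type of the last partial quotient of $g$, appending the syllable is itself a valid continued fraction and costs nothing, so the work is confined to the case where the types agree, or where a partial quotient is forced to $0$, which triggers a ``carry'' propagating back through $\widehat{\Omega}(g)$. The difficulty --- and the reason this case evades the general machinery of \S\S\ref{section:shortcuts}, \ref{section:parabtodiag} --- is that $\alpha$ and $\bar{\alpha}$ are proportional, so there is no Steinberg relation between $\he_{\alpha}$ and $\he_{\bar{\alpha}}$, and each carry must be resolved using $\SL(2;\ZZ)$'s own Dehn algorithm run in binary: one must show that each elementary continued-fraction reduction --- ultimately, applications of the relation $(e_{\alpha}(1)e_{\bar{\alpha}}(-1)e_{\alpha}(1))^{4}=1$ and its conjugates --- lifts to a shortcut homotopy whose cost is additive, not multiplicative, in the sizes of the partial quotients involved, and that the reductions strictly decrease total binary length so that only $O(\log|x|)$ of them occur per insertion. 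The bounded-complexity relations produced along the way can be filled geometrically in the hyperbolic plane $\HH^{2}=\Sp(2;\RR)/U(1)$, while correction terms pushed into the unipotent radical of a parabolic during a carry are absorbed using the quadratic Dehn functions of the solvable groups $\SpH_{S,T}$ with $\#S\geq 3$ from \S\ref{section:solvable}. This is the analogue, for the hyperbolic group $\SL(2;\ZZ)$, of the codimension-one Lipschitz-filling argument in Young's proof \cite{RY}.
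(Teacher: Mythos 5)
Your route is genuinely different from the paper's. The paper omits a proof and points to \cite[Lemma~3.5]{RY}, which handles a shortcut relation in an $\SL(2;\ZZ)$ diagonal block by the geometric machinery that the rest of both papers use in higher rank: realize the shortcut word as a piecewise--geodesic loop in $\HH^{2}=\Sp(2;\RR)/U(1)$, triangulate a filling disk by an adaptive template, and fill the pieces either as bounded relations (thick part) or, near a cusp, via the quadratic Dehn functions of the solvable groups $\SpH_{S,T}$ (Lemma~\ref{lemma:steinberg}). You instead propose a purely combinatorial continued-fraction normal form $\widehat{\Omega}$ and an insertion lemma. That is an interesting alternative, but it hinges on a bound that the paper's toolkit does not supply.

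Concretely, the insertion lemma's cost bound $O(\ell(\widehat{\Omega}(g))\log|x|+(\log|x|)^{2})$ cannot be reached using Lemma~\ref{lemma:steinberg} as stated. That lemma charges $O((\log|q|)^{2}+(\log|y|)^{2})$ for $\he_{\gamma}(q)\he_{\gamma}(y)\leadsto\he_{\gamma}(q+y)$. So even the very first step of an insertion --- merging the incoming $\he_{\alpha}(x)$ with a terminal syllable $\he_{\alpha}(q_{n})$ --- already costs $O((\log|q_{n}|)^{2}+(\log|x|)^{2})$, which when $|q_{n}|\approx\exp\bigl(\ell(\widehat{\Omega}(g))\bigr)$ is $O(\ell(\widehat{\Omega}(g))^{2})$ and blows your claimed bound; the same thing happens whenever a carry chain reaches a large quotient. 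Summed over the $O(\ell(v))$ insertions this gives $O(\ell(v)^{3})$, not $O(\ell(v)^{2})$. You anticipate this --- ``cost additive, not multiplicative, in the sizes of the partial quotients'' and ``only $O(\log|x|)$ of them occur per insertion'' --- but the first assertion is precisely the missing lemma: you would need a fellow-traveling refinement of Lemma~\ref{lemma:steinberg} showing that $\he_{\gamma}(q)\he_{\gamma}(y)\leadsto\he_{\gamma}(q+y)$ can be done at cost roughly $O(\log|q|\cdot\log|y|)$ when $|y|\leq|q|$ (plausible, since the two shortcut paths fellow-travel into the horoball, but neither stated nor proved in the paper, and not implied by $\SpH_{S,T}$ merely having \emph{quadratic} Dehn function), and the second assertion is not obviously true either, since a single appended syllable can propagate a singularization arbitrarily far back through the tail. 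Until one of these two claims is established, the argument does not close.
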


\begin{proof}
The proof of theorem \ref{theorem:sp2shortcut} is omitted because \cite[Lemma 3.5]{RY} proves the same result for a diagonal block $\SL(2;\ZZ)\subset\SL(p;\ZZ)$, and the same proof applies almost verbatim (one uses our Lemma \ref{lemma:steinberg} in place of \cite[Lemma 7.6]{RY}).
\end{proof}

We now explain how these theorems combine to prove the main theorem (see the flow chart in \S\ref{subsection:RYtheorem}).  Suppose we begin with a relation $w$ in $\Sp(2p;\ZZ)$. We will apply the procedure described below for a finite number of iterations. Each cycle consumes a product of relations $\Pi=w_{1}\ldots w_{N}$ and produces a homotopy of cost $O(\ell(\Pi)^{2})$ to a product of relations $\Pi^{\prime}=w_{1}^{\prime}\ldots w_{N^{\prime}}^{\prime}$ with $\sum \ell(w_{i}^{\prime})^{2}=O(\sum \ell(w_{i})^{2})$ where the relations involved in $\Pi^{\prime}$ have, in some sense, lower complexity. Whenever a relation has the minimal possible complexity, it can be filled. Because the complexity can only assume a finite number of possible values, the procedure can repeat only a finite number of cycles before we are left with the empty word.

\begin{itemize}
\item[1] Consider a relation of length $\ell$ in $\Sp(2p;\ZZ)$. Apply theorem \ref{theorem:diagtoparab} to quadratically break this relation into a product of $\Omega$-triangles in parabolics $P_{S,T}$ with each $T$ having strictly less than $2p$ elements. Note that this homotopy has cost $O(\ell^{2})$ and the sum of $\ell(\Delta)^{2}$ as $\Delta$ ranges over these $\Omega$-triangles is $O(\ell^{2})$.

\item[2] Suppose we have a product of $\Omega$ triangles in various parabolics $P_{S,T}$ with total squared length $O(\ell^{2})$. We break each triangle $\Delta$ in this product into a product of relations in diagonal blocks or identity-representing shortcut words in diagonal blocks as follows.

Let $P_{S,T}$ be the parabolic associated to $\Delta$.  If $\#T=0$, then $\Delta$ can be filled at cost $O(\ell(\Delta)^{2})$ by part (d) of Theorem \ref{theorem:parabtodiag}.  If $\#T=2$, $\Delta$ can be reduced to a product of identity-representing shortcut words in $\Sp(T)$ by Theorem \ref{theorem:parabtodiag}.  If $4\leq \#T\leq 2(p-3)$, $\Delta$ can be reduced to a product of relations in $\Sp(T)$ by part (c) of theorem \ref{theorem:parabtodiag}.  If $\#T>2(p-3)$, then at most three applications of Theorem \ref{theorem:parabtodiag} part (a) produces a homotopy of cost $O(\ell(\Delta)^{2})$ to a product of $\Omega$ triangles in parabolics $P_{S,T^{\prime}}$ with each $T^{\prime}$ of size at most $2(p-3)$ and total squared length $O(\ell(\Delta))^{2}$. These $\Omega$ triangles can then be handled as in the previous cases.

In each case, the given theorem visibly provides either a filling of cost $O(\ell(\Delta)^{2})$ or homotopy of cost $O(\ell(\Delta)^{2})$ to a product
$w_{1}^{\Delta}\ldots w_{N}^{\Delta}$
where each $w_{i}^{\Delta}$ is either a relation in some $\Sp(T)$ with $\#T\geq 4$ or an identity-representing shortcut word in some $\Sp(T)$ with $\#T=2$, and
$$\sum \ell(w_{i})^{2}=O(\ell(\Delta)^{2}).$$
It follows that the total cost of applying these manipulations to all of the $\Delta$ in our original product is $O(\ell^{2})$ and the resulting product of relations and identity-representing shortcut words has total squared length $O(\ell^{2})$. Furthermore, if $T_{0}$ is the largest $T$ of any parabolic $P_{S,T}$ involved in our original product, then all of the symplectic blocks $\Sp(T)$ involved in our final product have size at most $\# T_{0}$.

\item[3] Suppose we are given a product like that produced by Step 2, i.e., a product of relations in symplectic diagonal blocks $\Sp(T)$ (with $\#T\geq 4$) and identity-representing shortcut words in symplectic diagonal blocks $\Sp(T)$ (with $\#T=2$) such that the sum of the squares of their lengths is $O(\ell^{2})$. We will now produce a homotopy of cost $O(\ell^{2})$ to a product of $\Omega$-triangles in parabolics $P_{S,T}$ such that the sum $\sum \ell(\Delta)^{2}$ as $\Delta$ ranges over these $\Omega$-triangles is $O(\ell^{2})$.

For any relation $w$ in a block $\Sp(T)$ such that $\#T>2$, we can apply Theorem \ref{theorem:diagtoparab} to homotope $w$ to a product of $\Omega$ triangles in parabolics $P_{S,T^{\prime}}$ such that, crucially, $\#T^{\prime}$ is strictly smaller than $\#T$. Furthermore, the sum of the squares of the lengths of these $\Omega$ triangles is $O(\ell(w)^{2})$. If $w$ is an identity-representing shortcut word in $\Sp(T)$ with $\#T=2$ it can be filled at cost $O(\ell(w)^{2})$ by Theorem \ref{theorem:sp2shortcut}.

By applying this to every $w$, we obtain a cost $O(\ell^{2})$ homotopy to a product of $\Omega$ triangles in parabolic subgroups $P_{S,T}$, with the sum of the squares of the lengths of these $\Omega$ triangles at most $O(\ell^{2})$, so that we can go back to step 2. Furthermore, if $T_{0}$ is the largest symplectic block involved in our original product, then every $P_{S,T}$ involved in our final product has $\#T$ strictly less than $T_{0}$. .
\end{itemize}

After each repetition of steps 2 and 3, the maximum size of the symplectic blocks involved goes down, hence we obtain a filling for our original relation after a finite number of steps, since symplectic blocks can only have sizes $2,4,\ldots,2p$.
\section{Preliminaries}
\label{section:preliminaries}
In this section we introduce our system of notation, explain the basic structure of $\Sp(2p;\ZZ)$ and its subgroups, and review the basic notions of filling in groups.

\subsection{Roots}
\label{section:roots}
We first discuss roots, a basic notion of Lie theory, as they pertain to the symplectic group.  We first learned the subject from \cite{fh}.  Mostly we will just use roots as labels for various elements and subgroups of $\Sp$, but \S\ref{section:solvable} requires a slightly more sophisticated understanding of weights, roots, and Lie algebras.

\begin{definition}
The Lie algebra $\fsp(2p;\RR)$ is the set of $2p\times 2p$ real matrices $M$ such that
$$M^{T}J_{0}+J_{0}M=0.$$
\end{definition}

For more general background on Lie algebras, we again recommend \cite{fh}.

\begin{definition}
\label{def:diag}
\begin{itemize}
\item For real numbers $a_{1},\ldots,a_{p}$, let $\diag_{\fsp}(a_{1},\ldots,a_{p})$ be the diagonal matrix
$$\begin{bmatrix} a_{1} & \ldots & 0 & 0 & \ldots & 0\\
\ldots & \ldots & \ldots & \ldots & \ldots & \ldots\\
0 & \ldots & a_{p} & 0 & \ldots & 0\\
0 & \ldots & 0 & -a_{1} & \ldots & 0\\
\ldots & \ldots & \ldots & \ldots & \ldots & \ldots\\
0 & \ldots & 0 & 0 & \ldots & -a_{p}\end{bmatrix}\in\fsp(2p;\RR).$$
\item Let $\Diag_{\fsp}=\{\diag_{\fsp}(a_{1},\ldots,a_{p}):a_{1},\ldots,a_{p}\in\RR\}$.  One refers to $\Diag_{\fsp}$ as a Cartan subalgebra of $\fsp(2p;\RR)$.
\item For real numbers $a_{1},\ldots,a_{p}>0$, let $\diag_{\Sp}(a_{1},\ldots,a_{p})$ be the diagonal matrix
$$\begin{bmatrix} a_{1} & \ldots & 0 & 0 & \ldots & 0\\
\ldots & \ldots & \ldots & \ldots & \ldots & \ldots\\
0 & \ldots & a_{p} & 0 & \ldots & 0\\
0 & \ldots & 0 & a_{1}^{-1} & \ldots & 0\\
\ldots & \ldots & \ldots & \ldots & \ldots & \ldots\\
0 & \ldots & 0 & 0 & \ldots & a_{p}^{-1}\end{bmatrix}\in\Sp(2p;\RR).$$
\item Let $\Diag_{\Sp}^{+}=\{\diag_{\Sp}(a_{1},\ldots,a_{p}):a_{1},\ldots,a_{p}\in(0,\infty)\}\subset \Sp(2p;\RR)$.
\item Observe that $\Diag_{\fsp}$ is a vector space.  Let $\{[1],\ldots,[p]\}$ denote a basis for its dual with $[i](\diag_{\fsp}(a_{1},\ldots,a_{p}))=a_{i}$.  Let $\mathcal{H}=\{\pm [i]:i=1,\ldots,p\}$.  We sometimes call $\mathcal{H}$ the set of half roots.  The literature appears not to contain any standard notation for $\mathcal{H}$ or its elements.  Typically we will denote half roots by $s$ or $t$ and subsets of $\mathcal{H}$ by $S$ or $T$.
\item Let $\Phi=\{s-t\neq 0:s,t\in H\}$.  An element of $\Phi$ is called a root.  We will generally denote elements of $\Phi$ by lower case Greek letters, or by expressions like $s-t$.  A root of the form $s-t$ where $t\neq \pm s\in \mathcal{H}$ is called short.  One of the form $2s$ is called long.
\end{itemize}
\end{definition}

Now we will use half roots and roots to label
\begin{itemize}
\item the standard basis of $\RR^{2p}$ (\S\ref{section:standardbasis}),
\item the elementary generators of $\Sp(2p;\ZZ)$(\S\ref{section:elementarysymplecticmatrices}),
\item certain copies of $\GL(q)$ and $\Sp(2q)$ contained in $\Sp(2p)$(\S\ref{section:subgroups}),
\item and maximal parabolic subgroups of these subgroups(\S\ref{section:subgroups}).
\end{itemize}

\subsection{The standard basis}
\label{section:standardbasis}
Recall that we have labeled the standard basis of $\RR^{2p}$ as
$$\{z_{[1]},\ldots,z_{[p]},z_{-[1]},\ldots,z_{-[p]}\},$$
so that for $D\in \Diag_{\fsp}$ we have $Dz_{[i]}=([i]D)z_{[i]}$ and $Dz_{-[i]}=(-[i]D)z_{-[i]}$.

Given $S\subset \mathcal{H}$, recall that $\RR^{S}$ denotes the span of $\{z_{s}\}_{s\in S}$, and we call $S$ isotropic (respectively, symplectic) when $\RR^{S}$ is isotropic (respectively, symplectic).  Note that these properties have easy combinatorial descriptions: $S$ is isotropic if it contains no pair $\{\pm s\}$ and symplectic if $s\in S\Rightarrow -s\in S$. Similarly, if $S,T\subset \mH$ with $T$ symplectic, then $\omega(\RR^{T},\RR^{S})=0$ if and only if $S$ and $T$ are disjoint.

If $T\subset \mathcal{H}$ is symplectic, let $T^{+}$ be the isotropic set $T\cap \{[1],\ldots,[p]\}$, and $T^{-}$ the isotropic set $T\cap\{-[1],\ldots,-[p]\}$.  If $S\subset \mathcal{H}$ is isotropic, let $\pm S$ denote the symplectic set $\{\pm s:s\in S\}$.

\subsection{Elementary symplectic matrices}
\label{section:elementarysymplecticmatrices}
There is a well known generating set of $\Sp(2p;\ZZ)$ consisting of matrices called elementary symplectic matrices.  Typically, these are divided into several ``types" and labeled by indices running from $1$ to $p$.  To avoid worrying about these different types, we shall instead label elementary symplectic matrices by roots.

Suppose $s,t\in \mathcal{H}$ with $s\neq \pm t$.  Then we define a matrix $e_{s-t}\in\Sp(2p;\RR)$ by mandating that for $v\in \RR^{2p}$, we have
$$e_{s-t}v=v+(z_{t}^{\text{T}}v)z_{s}+(z_{s}^{\text{T}}J_{0}v)J_{0}z_{t}.$$
For example, in $\Sp(6)$ we have that
$$e_{[1]-[2]}=
\begin{bmatrix}
1 & 1 & 0 & 0 & 0 & 0\\
0 & 1 & 0 & 0 & 0 & 0\\
0 & 0 & 1 & 0 & 0 & 0\\
0 & 0 & 0 & 1 & 0 & 0\\
0 & 0 & 0 & -1 & 1 & 0\\
0 & 0 & 0 & 0 & 0 & 1
\end{bmatrix}$$
and
$$e_{[1]+[2]}=
\begin{bmatrix}
1 & 0 & 0 & 0 & 1 & 0\\
0 & 1 & 0 & 1 & 0 & 0\\
0 & 0 & 1 & 0 & 0 & 0\\
0 & 0 & 0 & 1 & 0 & 0\\
0 & 0 & 0 & 0 & 1 & 0\\
0 & 0 & 0 & 0 & 0 & 1
\end{bmatrix}.$$

For a long root $\alpha=2s$, define $e_{\alpha}\in\Sp(2p;\RR)$ to be the matrix such that for $v\in \RR^{2p}$, we have $e_{\alpha}v=v+(z_{-s}^{\text{T}}v)z_{s}$.  For example, in $\Sp(6)$ we have
$$e_{2\cdot[1]}=
\begin{bmatrix}
1 & 0 & 0 & 1 & 0 & 0\\
0 & 1 & 0 & 0 & 0 & 0\\
0 & 0 & 1 & 0 & 0 & 0\\
0 & 0 & 0 & 1 & 0 & 0\\
0 & 0 & 0 & 0 & 1 & 0\\
0 & 0 & 0 & 0 & 0 & 1
\end{bmatrix}$$

We often write $e_{\alpha}(x)$ for the element $e_{\alpha}^{x}$.  We call $e_{\alpha}(x)$ an elementary symplectic matrix or just an elementary matrix.  We call $e_{\alpha}$ an elementary generator.  For example in $\Sp(6)$ we have the following.
$$e_{[1]-[2]}(-3)=
\begin{bmatrix}
1 & -3 & 0 & 0 & 0 & 0\\
0 & 1 & 0 & 0 & 0 & 0\\
0 & 0 & 1 & 0 & 0 & 0\\
0 & 0 & 0 & 1 & 0 & 0\\
0 & 0 & 0 & 3 & 1 & 0\\
0 & 0 & 0 & 0 & 0 & 1
\end{bmatrix}$$

We now record some key properties of elementary symplectic matrices, which the skeptical reader can verify either by multiplying matrices or meditating on Lie theory.  Hopefully these properties help to motivate our choice of notation.
\begin{itemize}
\item Elementary symplectic matrices are symplectic.
\item If $D=\diag_{Sp}(e^{a_{1}},\ldots,e^{a_{p}})$, then
$$De_{\alpha}(x)D^{-1}=e_{\alpha}(x^{\prime})$$
where $x^{\prime}=\exp(\alpha(\diag_{\fsp}(a_{1},\ldots,a_{p})))$.
\item $e_{\alpha}(x)e_{\alpha}(y)=e_{\alpha}(x+y)$.
\item If $\alpha+\beta$ is a root, then $[e_{\alpha}(x),e_{\beta}(y)]=e_{\alpha+\beta}(\kappa xy)$ where $\kappa\in \{-2,-1,1,2\}$.  The absolute value of $\kappa$ is $1$ if $\alpha+\beta$ is short and $2$ if $\alpha+\beta$ is long.  Its sign is reversed by switching $\alpha$ and $\beta$.
\item If $\alpha+\beta$ is not a root, and is not equal to zero, then $e_{\alpha}(x)$ commutes with $e_{\beta}(y)$.  The relationship between $e_{\alpha}$ and $e_{-\alpha}$ is more complicated (they generate a group isomorphic to $SL(2;\ZZ)$).
\item The set $\{e_{\alpha}:\alpha\in\Phi\}$ generates $\Sp(2p;\ZZ)$ (see \cite{mennicke},\cite{hr}).
\end{itemize}

\subsection{Labeling subgroups}
\label{section:subgroups}
We now describe and label several important subgroups of $\Sp(2p;R)$ (where $R=\RR$ or $\ZZ$) using roots and half roots.  Here is a brief list of these groups and their significance (the definitions will follow afterward).  Throughout this list, $S$ is an isotropic subset of $\mH$ and $T$ is a symplectic subset of $\mH$.  Each group will come in real and integral flavors, but we typically mean the integer version unless otherwise specified.
\begin{itemize}
\item The group $P_{S,T}(R)$ is a maximal parabolic subgroup of $\Sp(\pm S \cup T;R)$ (defined below, in a way which agrees with the discussion in \S\ref{section:theorem}).  By comparison, a maximal parabolic of $\SL(q;R)\subset\SL(p;R)$ consists of all matrices preserving some proper subspace of $R^{q}\subset R^{p}$.  Section \ref{section:parabtodiag} will describe how to reduce normal form triangles in $P_{S,T}(\mathbb{Z})$ to products of relations in the diagonal blocks of $P_{S,T}(\mathbb{Z})$.
\item The groups $\GL(S;R)$ and $\Sp(T;R)$ are diagonal blocks of the parabolic $P_{S,T}(R)$.  When $\#S\geq 5$, relations in $\GL(S;\ZZ)$ are known to have quadratic fillings by \cite{RY}.  One of our main theorems (theorem \ref{theorem:diagtoparab}) shows that relations in $\Sp(T;\ZZ)$ can be broken into normal form triangles in its maximal parabolic subgroups.
\item The group $N_{S,T}(R)$ is the unipotent radical of $P_{S,T}(R)$.  We denote its center by $Z_{S,T}(R)$ and its abelianization by $A_{S,T}(R)$.
\item The group $\SpH_{S,T}(R)$ is generated by $N_{S,T}(R)$ together with two abelian subgroups $\TT_{S}(R)\subset\GL(S;R)$ and $\TT_{T}(R)\subset\Sp(T;R)$.  This group has several important properties.  First the subgroup $N_{S,T}(R)$ is exponentially distorted.  Second, the subgroup consisting of integer matrices is cocompact; i.e., $\SpH_{S,T}(\ZZ)$ is cocompact in $\SpH_{S,T}(\RR)$.  Third, for sufficiently large $S$ and $T$, this group has quadratic Dehn function.  (We show this in \S\ref{section:solvable}).
\item $\TT_{S}(\ZZ)\subset \GL(S;\ZZ)$ and $\TT_{T}(\ZZ)\subset \Sp(T;\ZZ)$ are abelian groups of maximal rank which are generated by integer matrices.
\end{itemize}

\begin{definition}
Suppose $S\subset \mathcal{H}$ is isotropic and $T\subset\mathcal{H}$ is symplectic, with $S\cap T=\emptyset$.  Then we define $P_{S,T}(R)$ be the set of all $M\in \Sp(2p;R)$ such that $M$ preserves $\RR^{S}$ and acts as the identity on $\RR^{\mathcal{H}\setminus (S\cup -S \cup T)}$.
\end{definition}

For example, if $p=3$ and $S=\{[1]\}$ and $T=\{\pm [2],\pm [3]\}$, we have that 
$P_{S,T}$ consists of all symplectic matrices of the form

$$\begin{bmatrix}
\ast & \ast & \ast & \ast & \ast & \ast\\
0 & \ast & \ast & \ast & \ast & \ast\\
0 & \ast & \ast & \ast & \ast & \ast\\
0 & 0 & 0 & \ast & 0 & 0\\
0 & \ast & \ast & \ast & \ast & \ast\\
0 & \ast & \ast & \ast & \ast & \ast
\end{bmatrix}.$$

We refer to $P$ as a maximal parabolic of the symplectic diagonal block $\Sp(S\cup -S\cup T)$ defined below.  The reader can verify that the set of $\alpha$ such that $e_{\alpha}\in P_{S,T}$ is
$$\{s-t:s\in S,t\in T\}\cup
\{s\pm s^{\prime}\neq 0:s,s^{\prime}\in S\}
\cup \{t-t^{\prime}\neq 0:t,t^{\prime}\in T\}$$
We refer to this set of roots as $\Phi P_{S,T}$.  We note that $P_{S,T}(\ZZ)$ is not quite generated by $\{e_{\alpha}(x):x\in\RR,\alpha\in\Phi P_{S,T}\}$.  It will momentarily become apparent why this is the case.

\begin{definition}
Suppose $S\subset \mH$ is isotropic.  The subgroup $\GL(S;R)\subset \Sp(2p;R)$ is defined to be all $M\in \Sp(2p;R)$ such that $M$ preserves $\RR^{S}$ and $\RR^{-S}$ and acts as the identity on $\RR^{\mH\setminus (S\cup -S)}$.
\end{definition}

For example, if $p=3$ and $S=\{[1]\}$, we get that $\GL(S;\RR)$ consists of matrices of the form

$$\begin{bmatrix}
x & 0 & 0 & 0 & 0 & 0\\
0 & 1 & 0 & 0 & 0 & 0\\
0 & 0 & 1 & 0 & 0 & 0\\
0 & 0 & 0 & x^{-1} & 0 & 0\\
0 & 0 & 0 & 0 & 1 & 0\\
0 & 0 & 0 & 0 & 0 & 1
\end{bmatrix}$$
where $x\neq 0$.  We refer to $\GL(S)$ as a diagonal block of $P_{S,T}$, of which it is clearly a subgroup.  The set of $\alpha$ such that $e_{\alpha}\in \GL(S)$ is
$$\Phi\GL(S):=\{s-s^{\prime}\neq 0:s,s^{\prime}\in S\}.$$
The group generated by
$$\{e_{\alpha}(x):x\in\RR,\alpha\in\Phi \GL(S)\}$$
is $\SL(S;\ZZ)$, i.e, all the matrices in $\GL(S;\ZZ)$ which act by determinant $1$ matrices on $\RR^{S}$.  Of course, $\GL(S;R)$ is isomorphic to $\GL(\#S;R)$ via the obvious map.

\begin{definition}
Suppose $T\subset \mH$ is symplectic.  The subgroup $\Sp(T;R)$ of $\Sp(2p;R)$ is defined to be all $M\in \Sp(2p;R)$ such that $M$ preserves $\RR^{T}$ and acts as the identity on $\RR^{\mH\setminus T}$.
\end{definition}

For example, if $p=3$ and $T=\{\pm[2],\pm[3]\}$, we get that $\Sp(T)$ consists of all symplectic matrices of the form

$$\begin{bmatrix}
1 & 0 & 0 & 0 & 0 & 0\\
0 & \ast & \ast & 0 & \ast & \ast\\
0 & \ast & \ast & 0 & \ast & \ast\\
0 & 0 & 0 & 1 & 0 & 0\\
0 & \ast & \ast & 0 & \ast & \ast\\
0 & \ast & \ast & 0 & \ast & \ast
\end{bmatrix}.$$
We also call $\Sp(T)$ a diagonal block of $P_{S,T}$. The set of $\alpha$ such that $e_{\alpha}\in \Sp(T)$ is
$\{t-t^{\prime}\neq 0:t,t^{\prime}\in T\}$.
We refer to this set as $\Phi \Sp(T)$.  As noted before, the group generated by $\{e_{\alpha}(x):x\in\RR,\alpha\in\Phi \Sp(T)\}$ is all of $\Sp(T;\ZZ)$.  Furthermore, the group generated by
$$\{e_{\alpha}(x):x\in\RR,\alpha\in\Phi \Sp(T)\}$$
is $\Sp(T;\RR)$ which is indeed isomorphic to $\Sp(\RR^{T})$.

\begin{definition}
Suppose $S\subset \mathcal{H}$ is isotropic and $T\subset\mathcal{H}$ is symplectic, with $S\cap T=\emptyset$.
  Then we define $N_{S,T}(R)$ to be the set of all $M\in \Sp(2p;R)$ such that $M$ acts as the identity on $\RR^{S}$, on $\RR^{\mathcal{H}\setminus (S\cup -S \cup T)}$, and on $\RR^{S\cup -S \cup T}/\RR^{S\cup -S}$.
\end{definition}

For example, if $p=3$ and $S=\{[1]\}$ and $T=\{\pm [2],\pm [3]\}$, we have that 
$N_{S,T}$ consists of all symplectic matrices of the form

$$\begin{bmatrix}
1 & \ast & \ast & \ast & \ast & \ast\\
0 & 1 & 0 & \ast & 0 & 0\\
0 & 0 & 1 & \ast & 0 & 0\\
0 & 0 & 0 & 1 & 0 & 0\\
0 & 0 & 0 & \ast & 1 & 0\\
0 & 0 & 0 & \ast & 0 & 1
\end{bmatrix}.$$

Sometimes $N_{S,T}$ is called the unipotent radical of $P_{S,T}$.  We denote the center of $N_{S,T}$ by $Z_{S,T}$. For example, if $S=\{[1]\}$ and $T=\{\pm [2],\pm [3]\}$, then $Z_{S,T}$ consists of all matrices of the form

$$\begin{bmatrix}
1 & 0 & 0 & \ast & 0 & 0\\
0 & 1 & 0 & 0 & 0 & 0\\
0 & 0 & 1 & 0 & 0 & 0\\
0 & 0 & 0 & 1 & 0 & 0\\
0 & 0 & 0 & 0 & 1 & 0\\
0 & 0 & 0 & 0 & 0 & 1
\end{bmatrix}.$$

$Z_{S,T}$ is entirely determined by $S$ and does not depend on $T$, so we will also write it as $Z_{S}$.

The set of $\alpha$ such that $e_{\alpha}\in Z_{S}$ is
$\{s+s^{\prime}:s,s^{\prime}\in S\}$.
We refer to this set as $\Phi Z_{S}$.  The group generated by $\{e_{\alpha}(x):x\in\RR,\alpha\in\Phi Z_{S}\}$ is $Z_{S}(\ZZ)$.  The set of $\alpha$ such that $e_{\alpha}\in N_{S,T}$ is $\Phi Z_{S} \cup \{s-t:s\in S, t\in T\}$, denoted $\Phi N_{S,T}$, and this does generate $N_{S,T}$.

\begin{definition}
Let $S\subset \mH$ be isotropic.  Define $\TT_{S}(\ZZ)\subset \SL(S;\ZZ)$ be a free abelian group of rank $\#S -1$ such that every element is diagonalizable with positive eigenvalues.  (Surprisingly, such subgroups exist\cite[p.\ 236-237]{wordproc}).  Define $\TT_{S}(\RR)$ to be the group generated by all $g^{x}$ such that $g\in \TT_{S}(\ZZ)$ and $x\in \RR$.  (The expression $g^{x}$ makes sense because all eigenvalues of $g$ are positive).
\end{definition}

\begin{definition}
Let $T\subset \mH$ be symplectic.  Define $\TT_{T}(\ZZ)\subset \Sp(T;\ZZ)$ be a free abelian group of rank $\frac{1}{2}\#T$ such that every element is diagonalizable with positive eigenvalues.
\end{definition}

If we wish, we can take $\TT_{T}$ to be generated by matrices which act as
$\begin{bmatrix} 2 & 1 \\ 1 & 1\end{bmatrix}$ on some symplectic $\RR^{\{\pm t\}}\subset \RR^{T}$ and trivially on $\RR^{\mH\setminus\{\pm t\}}$.

\begin{definition}
Let $S,T\subset \mH$ be disjoint with $S$ isotropic and $T$ symplectic.  Define $\SpH_{S,T}$ be be the set of elements of $P_{S,T}$ which map to elements of $\TT_{S}\times \TT_{T}$ when modded out by $N_{S,T}$.
\end{definition}

There are three facts which will make $\SpH_{S,T}$ important to us.  First, $\SpH_{S,T}(\ZZ)$ is a cocompact subgroup of $\SpH_{S,T}(\RR)$ (Lemma \ref{lemma:hstcocompact}).  Second, $N_{S,T}$ is exponentially distorted in $\SpH_{S,T}$.  Finally, $\SpH_{S,T}$ has quadratic Dehn function when $\#S\geq 3$ (Theorem \ref{theorem:solvable2}).  All this implies that an integer elementary matrix $e_{s-t}(x)$ in can be expressed as a word of length $O(\log(x))$ in the generators of $\SpH_{S,T}(\ZZ)$, and any relation involving such words has a quadratic area filling in $\Sp(2p;\ZZ)$, as we exploit in \S\ref{section:shortcuts}.\\

The following definition generalizes the group $H_{S,T}$ defined in \cite{RY}.
\begin{definition}
Let $S,S^{\prime}\subset\mH$ be disjoint with $S\cup S^{\prime}$ isotropic.  Then $H_{S,S^{\prime}}(\ZZ)$ is the group generated by $\TT_{S}(\ZZ)$, $\TT_{S^{\prime}}(\ZZ)$, and $\{e_{s-s^{\prime}}:s\in S, s^{\prime}\in S^{\prime}\}$.  Similarly, $H_{S,S^{\prime}}(\RR)$ is the group generated by
$$\TT_{S}(\RR)\cup\TT_{S^{\prime}}(\RR)\cup\{e_{s-s^{\prime}}(r):s\in S, s^{\prime}\in S^{\prime}, r\in \RR\}.$$
\end{definition}

Observe that the group generated by $\{e_{s-s^{\prime}}(r):s\in S, s^{\prime}\in S^{\prime}, r\in \RR\}$ is abelian.  For instance, if $S=\{[1]\}$ and $S^{\prime}=\{[2],[3]\}$, then this group consists of matrices of the form
$$\begin{bmatrix}
1 & x & y & 0 & 0 & 0\\
0 & 1 & 0 & 0 & 0 & 0\\
0 & 0 & 1 & 0 & 0 & 0\\
0 & 0 & 0 & 1 & 0 & 0\\
0 & 0 & 0 & -x & 1 & 0\\
0 & 0 & 0 & -y & 0 & 1
\end{bmatrix}.$$

\subsection{The structure of $P_{S,T}(\RR)$}
\label{section:pst}
The group $P_{S,T}(\RR)$ is a semidirect product $(\GL(S;\RR)\times\Sp(T;\RR))\ltimes N_{S,T}(\RR)$, and the group $N_{S,T}(\RR)$ is a central extension of $\RR^{S}\otimes\RR^{T}$ by $\Sym^{2}\RR^{S}\cong Z_{S}(\RR)$.  It is crucial for our purposes to have an exact description of the action of $\GL(S;\RR)\times\Sp(T;\RR)$ on $N_{S,T}(\RR)$, and a means of labeling elements of $N_{S,T}(\RR)$ in terms of $\RR^{S}\otimes\RR^{T}$ and $\Sym^{2}\RR^{S}$.  There is a map
$$\Ab:N_{S,T}(\RR)\rightarrow\RR^{S}\otimes\RR^{T}$$
with kernel $Z_{S,T}(\RR)$, given by
$$\Ab(M)=\sum_{t\in T}(M-1)z_{t}\otimes z_{t}.$$
In terms of matrices, $\Ab$ reads off the $\RR^{T}$ entries of the $\RR^{S}$ rows.  For example, if $S=\{[1]\}$ and $T=\{\pm [2],\pm [3]\}$, we have

$$\Ab\left(\begin{bmatrix}
1 & 3 & 7 & 4 & 2 & 1\\
0 & 1 & 0 & 2 & 0 & 0\\
0 & 0 & 1 & 1 & 0 & 0\\
0 & 0 & 0 & 1 & 0 & 0\\
0 & 0 & 0 & -3 & 1 & 0\\
0 & 0 & 0 & -7 & 0 & 1
\end{bmatrix}\right)=z_{[1]}\otimes(3z_{[2]}+7z_{[3]}+2z_{-[2]}+z_{-[3]})$$

This map has a set theoretic section
$u:\RR^{S}\otimes \RR^{T}\rightarrow N_{S,T}$ 
given by
$$u(v\otimes w)z=z+(w^{\text{T}}z)v+\omega(v,z)J_{0}w.$$
In terms of matrices, this is the section given by putting zeroes in the $\RR^{-S}$ columns of the $\RR^{S}$ rows. For example, if $S=\{[1]\}$ and $T=\{\pm [2],\pm [3]\}$, we have

$$u(z_{[1]}\otimes(2z_{[2]}-5z_{-[3]}))=
\begin{bmatrix}
1 & 2 & 0 & 0 & 0 & -5\\
0 & 1 & 0 & 0 & 0 & 0\\
0 & 0 & 1 & -5 & 0 & 0\\
0 & 0 & 0 & 1 & 0 & 0\\
0 & 0 & 0 & -2 & 1 & 0\\
0 & 0 & 0 & 0 & 0 & 1
\end{bmatrix}.$$

(In this notation $u(z_{s}\otimes z_{t})=e_{s-t}$).

We now describe $Z_{S}(\RR)$.  There is a isomorphism $u_{Z}:\Sym^{2}\RR^{S}\rightarrow Z_{S}(\RR)$ given by
$$u_{Z}(vw)z=z+\omega(v,z)w+\omega(w,z)v$$
with inverse given by
$$M\mapsto \frac{-1}{2}\sum_{s\in S}z_{s}\odot (M-1)J_{0}z_{s}$$
In terms of matrices, $u_{Z}$ puts the bilinear form associated to $vw$ in the $\RR^{S}$ by $\RR^{-S}$ block.  For instance, with $S=\{1,2\}$ and $T=\{\pm 3\}$ we have

$$u_{Z}(z_{[1]}^{2}+z_{1}z_{2})=
\begin{bmatrix}
1 & 0 & 0 & 2 & 1 & 0\\
0 & 1 & 0 & 1 & 0 & 0\\
0 & 0 & 1 & 0 & 0 & 0\\
0 & 0 & 0 & 1 & 0 & 0\\
0 & 0 & 0 & 0 & 1 & 0\\
0 & 0 & 0 & 0 & 0 & 1
\end{bmatrix}.$$

The maps $u$ and $u_{Z}$ allow a convenient labeling of elements of $N_{S,T}(\RR)$.  The following proposition explains the basic relations of $P_{S,T}(\RR)$ in terms of this labeling.  In particular, the proposition describes the action of 
$\GL(S;\RR)\times \Sp(T;\RR)$ on $N_{S,T}(\RR)$ and the cocycle in $H_{2}(\RR^{S}\otimes \RR^{T})$ corresponding to the central extension 
$$0\rightarrow \Sym^{2}\RR^{S}\rightarrow N_{S,T}(\RR)\rightarrow \RR^{S}\otimes\RR^{T}\rightarrow 0.$$

\begin{proposition}
\label{proposition:umanip}
Fix disjoint $S,T\subset \mH$ with $S$ isotropic and $T$ symplectic.  Suppose $v,v^{\prime}\in \RR^{S}$ and $w,w^{\prime}\in\RR^{T}$.  Then we have the following equalities.
\begin{itemize}
\item[(a)] $u(v\otimes w)^{-1}=u(-v\otimes w)$ and $u_{Z}(vv^{\prime})^{-1}=u_{Z}(-vv^{\prime})$.
\item[(b)] For $d\in \GL(S)$, we have
$$du(v\otimes w)d^{-1}=u(dv\otimes w)$$
and
$$du_{Z}(vv^{\prime})d^{-1}=u(dv\odot dv^{\prime}).$$
\item[(c)] For $d\in \Sp(T)$, we have
$$du(v\otimes w)d^{-1}=u(v\otimes (d^{\text{T}})^{-1}w),$$
where $d^{\text{T}}$ denotes the transpose of $d$.
\item[(d)] If $v\otimes w,v^{\prime}\otimes w^{\prime}\in\RR^{S}\otimes\RR^{T}$, we have
$$[u(v\otimes w),u(v^{\prime}\otimes w^{\prime})]=u_{Z}(\omega(w,w^{\prime})vv^{\prime})$$
\end{itemize}
\end{proposition}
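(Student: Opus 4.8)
The plan is to verify each identity by direct computation using the explicit formulas for $u$, $u_Z$, and the action of $\GL(S)$ and $\Sp(T)$ on $\RR^{2p}$, exploiting the fact that every map in sight is defined by its action on a vector $z\in\RR^{2p}$. Since $N_{S,T}(\RR)$ injects into $\GL(2p;\RR)$, it suffices to check that the two sides agree as linear maps, i.e., that they send an arbitrary $z$ to the same vector; throughout I will use that $v,v'\in\RR^S$ and $w,w'\in\RR^T$ means $\omega(v,v')=0$ (as $S$ is isotropic) and that $\RR^S$ is $\omega$-orthogonal to $\RR^T$ (as $S\cap T=\emptyset$ with $T$ symplectic), so terms like $\omega(v,w)$ and $w^{\mathrm T}v$ vanish. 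I would also record at the outset the bilinear identity $w^{\mathrm T}z = \omega(J_0^{-1}w, z)$ relating the two types of pairing appearing in the definitions of $u$ and $u_Z$, since matching them up is what makes the brackets in (d) come out cleanly.

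For (a), I would simply compute $u(v\otimes w)\,u(-v\otimes w)\,z$: expanding, the linear-in-$v$ terms cancel in pairs, and the only potential obstruction is a cross term, which is proportional to $\omega(v,v)$ or to $w^{\mathrm T}v$, both zero; the same cancellation handles $u_Z$. For (b), one conjugates: $d\,u(v\otimes w)\,d^{-1}z = d\big(d^{-1}z + (w^{\mathrm T}d^{-1}z)v + \omega(v,d^{-1}z)J_0 w\big)$. Here $d\in\GL(S)$ acts trivially on $\RR^{\mathcal H\setminus(S\cup -S)}$ and preserves $\omega$, so $w^{\mathrm T}d^{-1}z = w^{\mathrm T}z$ because $w\in\RR^T$ and $d^{\mathrm T}$ fixes $\RR^T$; also $\omega(v,d^{-1}z)=\omega(dv,z)$ and $dJ_0w = J_0 w$ since $J_0 w\in\RR^{-T}\subset\RR^{\mathcal H\setminus(S\cup -S)}$. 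This yields $u(dv\otimes w)z$, and the $u_Z$ version is identical in spirit. Part (c) is the mirror image: for $d\in\Sp(T)$ acting trivially on $\RR^S$ and on $\RR^{\mathcal H\setminus T}$, one has $dv=v$, $w^{\mathrm T}d^{-1}z = ((d^{\mathrm T})^{-1}w)^{\mathrm T}z$, and $\omega(v,d^{-1}z)J_0(\cdot)$ reorganizes using $dJ_0 = J_0(d^{\mathrm T})^{-1}$ (equivalently $d^{\mathrm T}J_0 d = J_0$) to produce $u(v\otimes (d^{\mathrm T})^{-1}w)z$.

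The only genuinely substantive computation is (d). I would expand the commutator $[u(v\otimes w),u(v'\otimes w')]$ by composing the four maps applied to $z$ and tracking which terms survive. Most quadratic cross-terms vanish because they carry a factor $\omega(v,v')$, $w^{\mathrm T}v$, $w'^{\mathrm T}v'$, or $\omega(v,w')$-type pairing — all zero by isotropy of $S$ and $S\perp T$. The surviving term is the one mixing the $\omega(v,\cdot)J_0w$ piece of one map with the $w'^{\mathrm T}(\cdot)$ piece of the other, and vice versa; combining these and using $w'^{\mathrm T}(J_0 w) = \omega(\cdot)$-type identities together with $w^{\mathrm T}(J_0 w') = -w'^{\mathrm T}(J_0 w)$ (skew-symmetry of $\omega$ restricted to $\RR^T$, i.e., $w^{\mathrm T}J_0 w'=\omega(w,w')$ under the convention in force) collapses the result to $z + \omega(w,w')\big(\omega(v,z)v' + \omega(v',z)v\big)$, which is exactly $u_Z(\omega(w,w')\,vv')z$ by the defining formula for $u_Z$. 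The main obstacle is purely bookkeeping: keeping the sign conventions for $\omega$, $J_0$, and the $w^{\mathrm T}(\cdot)$ versus $\omega(\cdot,\cdot)$ pairings consistent so that the scalar $\omega(w,w')$ emerges with the correct sign rather than its negative; I would fix conventions once (e.g., decide whether $J_0 z_t = z_{-t}$ or $-z_{-t}$ from the explicit $J_0$) and propagate them mechanically. No deeper idea is needed — the proposition is a coordinate-level unpacking of the semidirect-product and central-extension structure already asserted in \S\ref{section:pst}.
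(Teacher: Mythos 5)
Your proposal is correct and matches the (omitted) intended argument: the paper declares the proof routine, and what it has in mind is exactly the direct computation you describe, applying each operator to an arbitrary $z\in\RR^{2p}$ and using the isotropy of $S$, the disjointness of $S$ and $T$, and the symplecticity of $d$ to kill the cross-terms. One small caveat: the identity you record at the outset, $w^{\mathrm T}z=\omega(J_0^{-1}w,z)$, is off by a sign (with $\omega(a,b)=a^{\mathrm T}J_0b$ and $J_0^{\mathrm T}=-J_0=J_0^{-1}$ one gets $w^{\mathrm T}z=\omega(J_0w,z)$), which is precisely the sort of convention slip you flag as needing careful propagation in part (d).
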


The proof of these formulas is routine, so we omit it.

\section{Shortcuts}
\label{section:shortcuts}
It is well known that the copy of $\ZZ$ generated by some unipotent $e_{\alpha}$ is exponentially distorted in $\Sp(2p,\ZZ)$ (see \cite{lmr} or \cite{riley}).  In order to find words in $\Sp(2p,\ZZ)$ which efficiently represent elements of parabolic subgroups, we will first need to find, for each root $\alpha$ and any $x\in \ZZ$, a word $\he_{\alpha}(x)$ of length $O(\log\vert x\vert)$ representing $e_{\alpha}(x)$.  (Following \cite{RY}, we will call $\he_{\alpha}(x)$ a shortcut).  If we further wish to break a normal form triangle in a maximal parabolic into a product of relations in the symplectic block, we will then need an efficient way to fill relations between different shortcuts, and to fill relations between shortcuts and words in diagonal blocks.\\

Subsections \S\ref{subsection:shortrootshortcuts} through \S\ref{subsection:longrootshortcuts} are devoted to defining shortcuts.  In fact, for each root $\alpha$, we will describe several different shortcuts for $e_{\alpha}(x)$, and show that one can homotope between different shortcuts for $e_{\alpha}(x)$ at quadratic cost. (Having several words representing $e_{\alpha}(x)$ makes it easier to find fillings for words built out of shortcuts. For instance, if a shortcut is a word in the generators of $\GL(S;\ZZ)$ then it commutes at quadratic cost with any word in the generators of $\Sp(\mH\setminus\pm S;\ZZ)$).  In particular, \S\ref{subsection:shortrootshortcuts} handles the case where $\alpha$ is short.  In this case, a shortcut for $e_{\alpha}(x)$ may be defined as a word in the generators of any reasonable $H_{S,S^{\prime}}$ containing $e_{\alpha}$, and \ref{lemma:shortrootswitching} shows that we can switch between different choices of $S$ and $S^{\prime}$ at quadratic cost.  \S\ref{subsection:shortrootspecialshortcuts} handles the same case, but produces shortcuts contained in conjugates of $\Sp(4;\ZZ)$. \S\ref{subsection:longrootshortcuts} produces shortcuts for $e_{\alpha}(x)$ when $\alpha$ is a long root (these are words in the generators of $\SpH_{S,T}(\ZZ)$).

Next, \S\ref{subsection:blockshortcut} and \S\ref{subsection:steinberg} will show that one can fill various relations involving shortcuts.  Since our shortcuts live in groups like $\SpH_{S,T}(\ZZ)$ or $\GL(q;\ZZ)$, the main tool used to fill these relations is the fact that $\GL(q;\ZZ)$ and $\SpH_{S,T}(\ZZ)$ have quadratic Dehn function (for appropriately large $q,S,T$) by Theorem \ref{theorem:RY} and Theorem \ref{theorem:solvable2} respectively.  The remaining subsections explain some consequences of the existence of shortcuts.  \S\ref{subsection:lmr} explains that matrices in $\Sp(T;\ZZ)$ may be efficiently represented by products of shortcuts. Finally \S\ref{subsection:omegapst} describes a normal form for elements of $P_{S,T}(\ZZ)$.

\subsection{Defining $\he_{\alpha}$ for $\alpha$ a short root}
\label{subsection:shortrootshortcuts}
When $\alpha$ is a short root, $e_{\alpha}$ lives inside some embedding of $\SL(p;\ZZ)$ (since $\alpha\in \Phi\GL(S)$ for some maximal isotropic $S\subset \mH$), so we will find shortcuts for $e_{\alpha}(x)$ exactly as in \cite[\S 6.1]{RY}.  Suppose $\alpha=s-s^{\prime}$, and we have $S,S^{\prime}\subset \mathcal{H}$.
\begin{definition}
We say that the pair $(S,S^{\prime})$ is compatible with $\alpha$ if the following hold.
\begin{itemize}
\item $s\in S$ and $s^{\prime}\in S^{\prime}$
\item $S\cap S^{\prime}=\emptyset$ and $S\cup S^{\prime}$ isotropic
\item $\# S \geq 2$ and $\# S^{\prime} \geq 1$
\end{itemize}
\end{definition}
For $S,S^{\prime}\subset \mH$ compatible with $\alpha\in\Phi$, we will construct a shortcut $\he_{\alpha;S,S^{\prime}}$ living in $H_{S,S^{\prime}}$.\\

Let $u:\RR^{S}\otimes \RR^{S^{\prime}}\rightarrow H_{S,S^{\prime}}(\RR)$ be the homomorphism given by setting $u(z_{s}\otimes z_{s^{\prime}})=e_{s-s^{\prime}}$ for any $s\in S$ and $s^{\prime}\in S^{\prime}$.  For $V\in \RR^{S}\otimes \RR^{S^{\prime}}$, we will construct a word $\hu(V)\in\Sigma_{H_{S,S^{\prime}}}^{\ast}$ representing $u(V)$ (the generating set $\Sigma_{H_{S,S^{\prime}}}$ will be explained below).  Our shortcut $\he_{s-s^{\prime};S,S^{\prime}}(x)$ will then be an approximation of $\hu(x z_{s}\otimes z_{s^{\prime}})$ by integer matrices.\\

Let $v_{1},\ldots,v_{m}$ be an eigenbasis for the action of $\TT_{S}$ on $\RR^{S}$ and $w_{1},\ldots,w_{n}$ an eigenbasis for the action of $\TT_{S^{\prime}}$ on $\RR^{S^{\prime}}$.  Choose $d_{i}\in \TT_{S}$ such that $d_{i}v_{i}=ev_{i}$ (where $e=\exp(1)$), and let $\Sigma_{H_{S,S^{\prime}}}$ consist of all $d_{i}^{x}$ with $\| x \|\leq 1$ together with all $u(xv_{i}\otimes w_{j})$ with $\| x \|\leq 1$.  We first define $\hu(V)$ for $V$ of the form $xv_{i}\otimes w_{j}$ as follows.  If $\| x \| \leq 1$, just take $\hu(V)=u(V)$, otherwise let $\hu(V)=d_{i}^{\log(x)}u(v_{i}\otimes w_{j})d_{i}^{-\log(x)}$ if $x>0$ and $d_{i}^{\log(-x)}u(-v_{i}\otimes w_{j})d_{i}^{-\log(-x)}$ if $x<0$.  Extend this to arbitrary $V$ by taking $\hu(\sum x_{ij}v_{i}\otimes w_{j})=\prod \hu(x_{ij}v_{i}\otimes w_{j})$.

\begin{lemma}
\label{lemma:efficientshortcuts}
In the following sense, $\hu(V)$ is an efficient representative of $u(V)$.
\begin{itemize}
\item[(a)] $\ell(\hu(V))=O(\log\Vert V\Vert_{\infty})$
\item[(b)] There exists a constant $\epsilon$ such that, if $w\in \Sigma_{H_{S,S^{\prime}}}^{\ast}$ is some word representing $u(V)$, then $\ell(w)>\epsilon \log\Vert V\Vert_{\infty}$.\\
\end{itemize}
\end{lemma}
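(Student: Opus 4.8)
The plan is to prove (a) and (b) separately, both by reducing to the behavior of the eigenbasis decomposition and the conjugation structure of $\hu$. For part (a), I would first observe that $\hu(V)$ is defined as a product of the words $\hu(x_{ij}v_i\otimes w_j)$ over the $mn$ pairs $(i,j)$, where $x_{ij}$ are the coordinates of $V$ in the tensor eigenbasis $\{v_i\otimes w_j\}$. Since $m,n$ are bounded (they depend only on $\#S$ and $\#S'$, which are bounded), it suffices to bound $\ell(\hu(x_{ij}v_i\otimes w_j))$ for each fixed pair. When $\|x_{ij}\|\le 1$ this word has length $1$; otherwise it is $d_i^{\pm\log|x_{ij}|}u(\pm v_i\otimes w_j)d_i^{\mp\log|x_{ij}|}$, which as a word in $\Sigma_{H_{S,S'}}$ (using that $d_i^{\pm1}\in\Sigma_{H_{S,S'}}$ and $u(\pm v_i\otimes w_j)\in\Sigma_{H_{S,S'}}$) has length $O(\log|x_{ij}|)+1$. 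Since $|x_{ij}|=O(\|V\|_\infty)$ — the change of basis from the standard basis to the eigenbasis is a fixed linear isomorphism — we get $\ell(\hu(x_{ij}v_i\otimes w_j))=O(\log\|V\|_\infty)$ for each pair, and summing over the bounded number of pairs gives (a). One should be slightly careful that $d_i^{\log|x_{ij}|}$ need not be an integer matrix; but this is exactly the point of the last sentence before the lemma — $\he_{\alpha;S,S'}(x)$ is an integer approximation, and the word $\hu$ here is allowed to involve the real generators $d_i^{\log|x_{ij}|}$, so no rounding is needed at this stage.

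For part (b), the idea is a pigeonhole/volume argument on how much a short word can move a vector. Fix the eigenvalue data: each generator in $\Sigma_{H_{S,S'}}$ either is a $u(\cdot)$-type matrix, which adds a bounded amount to the relevant off-diagonal entries, or is a $d_i^{x}$ with $\|x\|\le 1$, which scales the eigendirections by a bounded factor $e^{\pm\|x\|}\le e$. Reading off the relevant matrix entry (the one that records the $v_i\otimes w_j$-coordinate — concretely, in the matrix realization of $H_{S,S'}$, an entry in an $\RR^S$ row and $\RR^{S'}$ column), one sees that after $k$ letters this entry has absolute value at most $C\cdot e^{Ck}$ for a fixed constant $C$: each $u$-letter adds at most $C$ in the current "scale," and each $d_i$-letter multiplies the scale by at most $e$. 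Hence if $w$ has length $k$ and represents $u(V)$, then $\|V\|_\infty\le C e^{Ck}$, which rearranges to $k\ge \epsilon\log\|V\|_\infty$ for suitable $\epsilon>0$ once $\|V\|_\infty$ is large (and for small $\|V\|_\infty$ the inequality is trivially arranged by shrinking $\epsilon$, or one notes $w$ nonempty forces $k\ge1$). This is essentially the standard "unipotent elements are exponentially distorted" estimate, specialized to $H_{S,S'}$, and it is the exact analogue of \cite[\S6.1]{RY}.

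The main obstacle — such as it is — is bookkeeping rather than anything deep: in step (b) one must set up the matrix-entry estimate uniformly over the finitely many compatible pairs $(S,S')$ and over the finitely many eigenbasis choices, and check that the "scale grows by at most $e$ per $d_i$-letter" claim is genuinely uniform (the eigenvalues of the fixed generators $d_i$ are fixed numbers, so the bound $e^{\|x\|}$ with $\|x\|\le 1$ does the job, but one should state it as $e^{C}$ with $C$ the max log-eigenvalue to be safe). In step (a) the only subtlety is the remark above about real versus integer matrices, which is handled by the convention already in place. Everything else — that $\Sigma_{H_{S,S'}}$ is finite, that the change of basis is a fixed isomorphism, that $m,n$ are bounded — is immediate from the definitions in \S\ref{section:subgroups}. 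So I expect the proof to be short, with the distortion estimate in (b) being the part that requires the most care.
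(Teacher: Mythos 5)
Your proof is correct and follows essentially the same route as the paper. For (a) the paper simply asserts it follows from the definition, which your walk-through spells out; for (b) the paper uses the one-liner $\Vert gh\Vert_\infty\le p\Vert g\Vert_\infty\Vert h\Vert_\infty$ to get $\log\Vert w\Vert_\infty\le\ell(w)\bigl(\log p + \sup_{M\in\Sigma_{H_{S,S^{\prime}}}}\log\Vert M\Vert_\infty\bigr)$ and sets $\epsilon$ to the reciprocal of that constant, reaching the same exponential-growth bound as your entry-tracking argument but without the case split between $u$-letters and $d_i$-letters.
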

\begin{proof}
\begin{itemize}
\item[(a)] This follows directly from the definition.
\item[(b)] Observe that if $g,h$ are $p\times p$ real matrices, $\Vert gh \Vert_{\infty}\leq p\Vert g\Vert_{\infty} \Vert h\Vert_{\infty}$.  Hence, for any word $w\in \Sigma_{H_{S,S^{\prime}}}^{\ast}$, we have that
$$\log\Vert w \Vert_{\infty}\leq\ell(w)(\log(p)+
\sup\{\log\Vert M\Vert:M\in \Sigma_{H_{S,S^{\prime}}}\}).$$
Taking $\epsilon=1/(\log(p)+
\sup\{\log\Vert M\Vert:M\in \Sigma_{H_{S,S^{\prime}}}\})$ the result follows.
\end{itemize}\end{proof}

We can now represent a matrix of the form $e_{\alpha}(x)$ by a word in $\Shssr$, but we really want to represent it by a word in the generators of $\Sp(2p;\ZZ)$.  Because $H_{S,S^{\prime}}(\ZZ)$ is cocompact in $H_{S,S^{\prime}}(\RR)$, there exists a constant $C$ such that for any $M\in H_{S,S^{\prime}}(\RR)$, there is a $N\in H_{S,S^{\prime}}(\ZZ)$ such that $MN^{-1}$ can be written as a product of at most $C$ elements of $\Shssr$.  Hence, given a word $w=w_{1}\ldots w_{n}$ in $\Shssr$, we can find a sequence $\{\gamma_{i}\}_{i=0,\ldots,n}\subset H_{S,S^{\prime}}(\ZZ)$ such that $\gamma_{i}^{-1}w_{1}\ldots w_{i}$ is a product of at most $C$ elements of $\Shssr$, $\gamma_{0}=1$ and $\gamma_{n}=w_{1}\ldots w_{n}$.  Thus, each $\gamma_{i-1}^{-1}\gamma_{i}$ is a product of at most $2C+1$ elements of $\Shssr$.  If we fix a sufficiently large finite generating set $\Shssz$ for $H_{S,S^{\prime}}(\ZZ)$, it follows that for all $i$, $\gamma_{i-1}^{-1}\gamma_{i}\in\Shssz$, thus (setting $w_{i}=\gamma_{i-1}^{-1}\gamma_{i}$), we get a word $w^{\prime}=w_{1}^{\prime}\ldots w_{n}^{\prime}$ in $\Shssz$.  We assumed that our generating set for $\Sp(2p;\ZZ)$ contained $\Shssz$, so we get a word $w^{\prime\prime}$ in the generators of $\Sp(2p;\ZZ)$.  We refer to $w^{\prime\prime}$ as an approximation of $w$.

\begin{definition}
For $S,S^{\prime}\subset \mathcal{H}$ with $(S,S^{\prime})$ compatible with some root $\alpha=s-s^{\prime}\in\Phi$ and $x\in \ZZ$, define $\he_{\alpha;S,S^{\prime}}(x)$ to be a word in $H_{S,S^{\prime}}(\ZZ)$ approximating the word $\hu(x z_{s}\otimes z_{s^{\prime}})\in \Sigma_{H_{S,S^{\prime}}}^{\ast}$.
\end{definition}

For each short root $\alpha$, we have produced several shortcuts $\he_{\alpha;S,S^{\prime}}(x)$ (where $(S,S^{\prime})$ ranges over compatible pairs) representing $e_{\alpha}(x)$.  By Lemma \ref{lemma:efficientshortcuts} and the definition of an approximation, we have that $\ell(\he_{\alpha;S,S^{\prime}}(x))=O(\log(x))$.  The following lemma shows that we can homotope between these shortcuts at quadratic cost.
\begin{lemma}
\label{lemma:shortrootswitching}
Suppose $(S_{0},S_{0}^{\prime})$ and $(S_{1},S_{1}^{\prime})$ are compatible with $\alpha=s-s^{\prime}$, then 
$\delta(\he_{\alpha;S_{0},S_{0}^{\prime}}(x),\he_{\alpha;S_{1},S_{1}^{\prime}}(x))=
O(\log(x)^{2})$.
\end{lemma}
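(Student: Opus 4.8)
The plan is to produce, for each $x$, a filling of the relation $\he_{\alpha;S_0,S_0'}(x)^{-1}\he_{\alpha;S_1,S_1'}(x)$ of area $O(\log(x)^{2})$; since each of these two shortcuts has length $O(\log(x))$ (as recorded just after Lemma \ref{lemma:efficientshortcuts}), such a filling converts into a homotopy of cost $O(\log(x)^{2})$ between them. The engine is the following principle: if $H\leq\Sp(2p;\ZZ)$ is a subgroup with quadratic Dehn function and $w$ is a relation of length $\ell$, each of whose letters (viewed as an element of $\SSp$) lies in $H$, then $w$ has a filling of area $O(\ell^{2})$ in $\Sp(2p;\ZZ)$. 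Indeed, rewrite each letter in a fixed finite generating set of $H$ at the cost of boundedly many relators, fill the resulting relation of length $O(\ell)$ inside $H$ at cost $O(\ell^{2})$, and note that each defining relator of $H$, read back in $\SSp$, is a relation of bounded length and hence has bounded area in $\Sp(2p;\ZZ)$. In our situation $H$ will be a subgroup $H_{\hat S,\hat S'}(\ZZ)\cong(\ZZ^{\#\hat S-1}\times\ZZ^{\#\hat S'-1})\ltimes\ZZ^{\#\hat S\cdot\#\hat S'}$, which has quadratic Dehn function as soon as $\#\hat S\geq 3$ or $\#\hat S'\geq 3$ by \cite{RY} (via Dru\c{t}u, Leuzinger--Pittet, and Cornulier--Tessera). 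Recalling that $\he_{\alpha;S,S'}(x)$ is by construction a word all of whose letters lie in $H_{S,S'}(\ZZ)$, everything reduces to realising both shortcuts as words inside a single $H_{\hat S,\hat S'}(\ZZ)$ having a block of size at least three.

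When $(S_0,S_0')$ and $(S_1,S_1')$ are close enough --- roughly, $S_0\cup S_0'\cup S_1\cup S_1'$ is isotropic and no half-root lies on the $S$-side of one pair and the $S'$-side of the other --- one simply enlarges both pairs to a common compatible pair $(\hat S,\hat S')$, adding spare half-roots (available because $p$ is large) so as to force $\#\hat S\geq 3$, and the engine applies immediately. In general one interpolates between $(S_0,S_0')$ and $(S_1,S_1')$ by a chain of compatible pairs in which consecutive pairs differ by a single elementary move --- adding or deleting one half-root to or from $S$, or to or from $S'$ --- arranged so that each consecutive pair sits inside a common compatible pair with a block of size $\geq 3$. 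Transferring a half-root $r$ from the $S$-side to the $S'$-side is realised as a deletion from $S$ followed by an insertion into $S'$, using a spare pivot half-root to keep blocks large; and the genuinely new symplectic feature, namely a half-root occurring as $[i]$ in one of $S_0\cup S_0'$, $S_1\cup S_1'$ and as $-[i]$ in the other --- impossible inside the single maximal isotropic subset available in \cite{RY} --- is handled in exactly the same way, pivoting through a spare half-root not lying in $\pm(S_0\cup S_0'\cup S_1\cup S_1')$. Each move costs $O(\log(x)^{2})$ by the engine, and the number of moves is bounded, so the total cost is $O(\log(x)^{2})$.

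I expect the main obstacle to be the bookkeeping in this routing: one must check that every transition needed to pass from $(S_0,S_0')$ to $(S_1,S_1')$ can be realised by elementary moves each of which keeps both shortcuts inside some $H_{\hat S,\hat S'}(\ZZ)$ of quadratic type, and in particular that the auxiliary tori $\TT_{\hat S}$, $\TT_{\hat S'}$ underlying these subgroups can be chosen compatibly (for instance nested) along the chain, so that enlarging or shrinking a block genuinely keeps the shortcut from the previous pair inside the new subgroup. The presence of several maximal isotropic subsets of $\mH$ is precisely what makes this step more delicate than the corresponding lemma of \cite{RY}. Once the combinatorics is in place, the remaining points --- that the integer approximation changes each shortcut by at most a bounded amount, and that a filling inside a subgroup costs only a constant factor more in $\Sp(2p;\ZZ)$ --- are routine and can be carried out as in \cite{RY}.
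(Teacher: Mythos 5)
Your high-level strategy — a chain of intermediate compatible pairs, each consecutive step filled inside a fixed subgroup with quadratic Dehn function — is exactly the paper's. But your choice of that subgroup differs in a way that turns the ``bookkeeping'' you flag at the end into a genuine gap. You propose to fill each step inside some $H_{\hat S,\hat S'}(\ZZ)$, which requires the shortcut words from the previous pair to be words in the generators of $H_{\hat S,\hat S'}(\ZZ)$. A shortcut $\he_{\alpha;S,S'}(x)$ is built from elements of $\TT_{S}(\ZZ)$ and $\TT_{S'}(\ZZ)$, and these tori are chosen abstractly (one free abelian group of maximal rank in each $\SL(S;\ZZ)$); there is no reason for $\TT_{S}(\ZZ)\subset\TT_{\hat S}(\ZZ)$ when $S\subset\hat S$, hence no reason for $H_{S,S'}(\ZZ)\subset H_{\hat S,\hat S'}(\ZZ)$. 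Without a compatible/nested choice of all the tori — which the paper's framework does not provide and which is not obviously arrangeable since a torus of $\SL(\hat S;\ZZ)$ intersected with $\SL(S;\ZZ)$ need not have rank $\#S-1$ — the ``engine'' does not apply.

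The paper's proof avoids this entirely by using a larger, torus-agnostic subgroup: when $S_{0}\cup S_{0}'\cup S_{1}\cup S_{1}'$ is isotropic, both shortcuts are words in the generators of $\GL(L;\ZZ)$ for any isotropic $L\supset S_{0}\cup S_{0}'\cup S_{1}\cup S_{1}'$ with $\#L\geq 5$ (such $L$ exists since $p\geq 5$), because $H_{S_{i},S_{i}'}(\ZZ)\subset\GL(S_{i}\cup S_{i}';\ZZ)\subset\GL(L;\ZZ)$ regardless of the torus choices. Then one fills using Young's Theorem \ref{theorem:RY}. If you replace $H_{\hat S,\hat S'}(\ZZ)$ by $\GL(L;\ZZ)$ your engine becomes correct and is the paper's. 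For the routing, the paper also does something slightly cleaner than what you describe: it shows the step when the union of the four sets is isotropic, and in the general case passes through the fixed chain
$(S_{0},S_{0}')\to(\{s,s_{0}\},\{s'\})\to(\{s,s_{0},s_{2}\},\{s'\})\to(\{s,s_{2}\},\{s'\})\to(\{s,s_{2},s_{1}\},\{s'\})\to(\{s,s_{1}\},\{s'\})\to(S_{1},S_{1}')$,
where $s_{0}\in S_{0}\setminus\{s\}$, $s_{1}\in S_{1}\setminus\{s\}$, and $s_{2}\notin\{\pm s,\pm s',\pm s_{0},\pm s_{1}\}$. Since the $S'$-side is shrunk to $\{s'\}$ in the first move and never enlarged until the last, no half-root is ever transferred between the $S$- and $S'$-sides, so the separate apparatus you propose for such transfers (and for the $[i]$ vs.\ $-[i]$ collisions) is unnecessary.
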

\begin{proof}
We will first show that we can perform the desired homotopy when $S_{0}\cup S_{0}^{\prime}\cup S_{1}\cup S_{1}^{\prime}$ is isotropic.  By combining six homotopies of this type, the general case will follow.\\
If $S_{0}\cup S_{0}^{\prime}\cup S_{1}\cup S_{1}^{\prime}$ is isotropic, then $\he_{s-s^{\prime};S_{0},S_{0}^{\prime}}(x)$ and $\he_{s-s^{\prime};S_{1},S_{1}^{\prime}}(x)$ live in $\GL(L)$ for some isotropic set $L\supset S_{0}\cup S_{0}^{\prime}\cup S_{1}\cup S_{1}^{\prime}$ with $\# L\geq 5$, so we can switch between them at quadratic cost, i.e.,
$$\delta(  \he_{s-s^{\prime};S_{0},S_{0}^{\prime}}(x) ,  \he_{s-s^{\prime};S_{1},S_{1}^{\prime}}(x) )=
O(\log(x)^{2})$$ (because $\GL(q)$ is a finite index supergroup of $\SL(q)$ which has quadratic Dehn function for $q\geq 5$ by Theorem \ref{theorem:RY}).\\
Now we show that we can switch, at quadratic cost, between shortcuts $\he_{s-s^{\prime};S_{0},S_{0}^{\prime}}(x)$ and $\he_{s-s^{\prime};S_{1},S_{1}^{\prime}}(x)$ for any choice of $(S_{0},S_{0}^{\prime})$ and $(S_{1},S_{1}^{\prime})$ compatible with $\alpha$.  Choose $s_{0}\in S_{0}$ and $s_{1}\in S_{1}$ distinct from $s$, and choose
$s_{2}\in \mathcal{H}\setminus
\{\pm s, \pm s^{\prime}, \pm s_{0}, \pm s_{1}\}$
Then we can homotope through shortcuts with respect to the following sequence of compatible pairs, because the union of the four sets involved in any two consecutive pairs is isotropic:
$$S_{0},S_{0}^{\prime}\rightarrow \{s,s_{0}\},\{s^{\prime}\} \rightarrow
\{s,s_{0},s_{2}\},\{s^{\prime}\} \rightarrow
\{s,s_{2}\},\{s^{\prime}\} \rightarrow$$
$$\{s,s_{2},s_{1}\},\{s^{\prime}\} \rightarrow
\{s,s_{1}\},\{s^{\prime}\} \rightarrow
S_{1},S_{1}^{\prime}$$
Since each step has quadratic cost, the entire homotopy, going from\\
$\he_{s-s^{\prime};S_{0},S_{0}^{\prime}}(x)$ to $\he_{s-s^{\prime};S_{1},S_{1}^{\prime}}(x)$ has quadratic cost.\end{proof}

\begin{definition}
For each $\alpha=s-s^{\prime}$ (where $s^{\prime}\neq \pm s$), fix a compatible pair $S(\alpha),S^{\prime}(\alpha)\subset \mathcal{H}$.  We define $\he_{\alpha}(x)$ to be $\he_{\alpha;S(\alpha),S^{\prime}(\alpha)}(x)$.\\
\end{definition}

\subsection{Defining special shortcuts for $\alpha$ a short root}
\label{subsection:shortrootspecialshortcuts}
At some point we will need to find a shortcut for $e_{[1]-[2]}(x)$ which commutes with words in $\Sp(T;\ZZ)$ when $T=\{\pm [3],\ldots,\pm [p]\}$.  The easiest place to find such a shortcut is inside $\Sp(\{\pm [1],\pm [2]\};\ZZ)$.

\begin{definition}
Let $\alpha=s-t$ be a short root, and $x\in \ZZ$.  We define the special shortcut $\he_{\alpha;\{s\},\{\pm t\}}$ to be an approximation of a word in $\SpH_{\{s\},\{\pm t\}}(\RR)$ of length $O(\log(x))$ representing $e_{\alpha}(x)$.
\end{definition}

This word exists because (writing $S$ for $\{s\}$ and $T$ for $\{\pm t\}$), $\SpH_{S,T}(\ZZ)$ is cocompact in $\SpH_{S,T}(\RR)$ (Lemma \ref{lemma:hstcocompact}) and $N_{S,T}(\RR)$ is exponentially distorted in $\SpH_{S,T}(\RR)$, as we can see by the following argument.  Let $v_{1},v_{2}\in\RR^{T}$ be the eigenbasis of $\TT_{T}$, then for $i=1,2$, we know that $u(z_{s}\otimes v_{i})$ can be represented by a word $\hu(z_{s}\otimes v_{i})$ of length $O(\log\Vert v_{i}\Vert)$.  From this, it easily follows that all of $N_{S,T}$ is exponentially distorted. The following lemma asserts that we can switch to a special shortcut at quadratic cost.

\begin{lemma}
\label{lemma:specialswitching}
$$\delta(\he_{\alpha}(x),\he_{\alpha;\{s\},\{\pm t\}}(x))=O(\log(|x|)^{2})$$
\end{lemma}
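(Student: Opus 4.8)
The goal is to homotope, at cost $O(\log(|x|)^2)$, between the shortcut $\he_{\alpha}(x)$ — which by definition lives in some $H_{S(\alpha),S'(\alpha)}(\ZZ)$ — and the special shortcut $\he_{\alpha;\{s\},\{\pm t\}}(x)$, which lives in $\SpH_{\{s\},\{\pm t\}}(\ZZ)$. Since both words represent the same group element $e_{\alpha}(x)$, the only issue is finding an efficient sequence of elementary homotopies. My strategy is to first reduce, by Lemma \ref{lemma:shortrootswitching}, to the case where $\he_{\alpha}(x)$ is replaced by a conveniently chosen shortcut $\he_{\alpha;S_0,S_0'}(x)$ for a compatible pair $(S_0,S_0')$ that I get to pick; Lemma \ref{lemma:shortrootswitching} says this replacement costs $O(\log(x)^2)$. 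I will choose $S_0 = \{s,s_1,s_2\}$ and $S_0' = \{t'\}$ where $s_1,s_2$ are half-roots so that $S_0\cup S_0'$ is isotropic and disjoint from $\{\pm s,\pm t\}$ (possible since $p\geq 5$), and $t' \neq \pm s, \pm t$; this is a legitimate compatible pair for $\alpha = s - t'$... wait — $\alpha = s-t$, so I instead take $S_0' = \{t\}$? No: $t$ is a half-root with $-t$ also relevant, but for the isotropic-pair shortcut I only need $t \in S_0'$ as an isotropic set, and $\{s,s_1,s_2,t\}$ must be isotropic, which fails iff it contains a $\pm$ pair. So I should pick $s_1,s_2$ with $\{s,t,s_1,s_2\}$ isotropic. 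Then $(S_0,S_0') = (\{s,s_1,s_2\},\{t\})$ is compatible with $\alpha = s-t$ and $\# S_0 \geq 2$, $\#S_0' \geq 1$.

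The heart of the argument is then to homotope $\he_{\alpha;\{s,s_1,s_2\},\{t\}}(x)$ to $\he_{\alpha;\{s\},\{\pm t\}}(x)$ at quadratic cost. The obstacle is that these two shortcuts live in \emph{different} ambient groups — an $\SL$-type group $\GL(\{s,s_1,s_2,t\};\ZZ)$ versus the solvable group $\SpH_{\{s\},\{\pm t\}}(\ZZ)$ — and neither contains the other. My plan is to find a common overgroup with quadratic Dehn function, or failing that, to bridge through $\Sp(\{\pm s,\pm t\};\ZZ)\cong \Sp(4;\ZZ)$ together with Lemma \ref{lemma:hstcocompact}. Concretely: the group $\SpH_{\{s\},\{\pm t\}}(\ZZ)$ sits inside the parabolic $P_{\{s\},\{\pm t\}}(\ZZ)$ of $\Sp(\{\pm s,\pm t\};\ZZ)$; also the $\SL$-shortcut, after a further application of Lemma \ref{lemma:shortrootswitching} to move to the compatible pair $(\{s\},\{t\})$ (but $\#\{s\} = 1 < 2$, so this pair is not compatible — hence I must instead keep at least two half-roots on the left, and the natural minimal choice that still lies inside a copy of $\Sp(4;\ZZ)$-plus-a-bit is problematic). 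I therefore expect the cleanest route is: pick $s_1$ so that $\{s,s_1,t,-t\}$... no, not isotropic. The actual bridge must be: use that $\he_{\alpha;\{s,s_1\},\{t\}}(x)$ is a word in $\GL(\{s,s_1,t\};\ZZ)$, which is a finite-index subgroup of $P_{\{s_1\},\{\pm s,\pm t\}}(\ZZ)$-type data — i.e.\ realize both shortcuts inside a single larger parabolic-related group whose quadratic Dehn function we already know, namely a group of the form $H_{S,S'}$ or $\SpH_{S,T}$ with $\#S\geq 3$ (Theorem \ref{theorem:solvable2}) or $\SL(q;\ZZ)$ with $q\geq 5$ (Theorem \ref{theorem:RY}).

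The cleanest such overgroup: take $S = \{s,s_1,s_2\}$ (with $s_1,s_2$ chosen so $S$ isotropic and disjoint from $\pm t$) and $T = \{\pm t\}$; then $\SpH_{S,T}(\ZZ)$ contains $e_{s-t} = u(z_s\otimes z_t)$, contains the torus $\TT_S$ needed to build the $\SL$-type shortcut $\he_{\alpha;\{s,s_1,s_2\},\{t\}}$, \emph{and} contains (a torus contracting) the copy of $N_{\{s\},\{\pm t\}}$ used to build the special shortcut. By Theorem \ref{theorem:solvable2}, $\SpH_{S,T}(\ZZ)$ has quadratic Dehn function since $\#S = 3\geq 3$. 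Both $\he_{\alpha;\{s,s_1,s_2\},\{t\}}(x)$ and $\he_{\alpha;\{s\},\{\pm t\}}(x)$ are words of length $O(\log|x|)$ in the generators of $\Sp(2p;\ZZ)$ representing $e_\alpha(x)$, and after routine adjustments they become words of comparable length in the generators of $\SpH_{S,T}(\ZZ)$ representing the same element; hence the relation between them has a filling of area $O(\log|x|^2)$ inside $\SpH_{S,T}(\ZZ)$, which is exactly a homotopy of cost $O(\log|x|^2)$. Composing this with the Lemma \ref{lemma:shortrootswitching} step that connects $\he_{\alpha}(x)$ to $\he_{\alpha;\{s,s_1,s_2\},\{t\}}(x)$ gives the claim. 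I expect the main obstacle to be the bookkeeping verifying that the special shortcut, which is a priori only \emph{defined} as ``some length-$O(\log|x|)$ word in $\SpH_{\{s\},\{\pm t\}}(\RR)$ approximated by integers,'' can be taken to lie in (and is efficiently expressible in the generators of) the larger group $\SpH_{S,T}(\ZZ)$ — this is where one uses cocompactness (Lemma \ref{lemma:hstcocompact}) and the exponential-distortion estimate for $N_{S,T}$, exactly as in the approximation construction of \S\ref{subsection:shortrootshortcuts}.
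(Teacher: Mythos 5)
Your proposal is correct and follows essentially the same route as the paper's proof: pick an isotropic set $S$ of size at least $3$ containing $s$ but disjoint from $\{\pm t\}$, observe that both $\he_{\alpha;S,\{t\}}(x)$ and $\he_{\alpha;\{s\},\{\pm t\}}(x)$ are approximations of curves in the common overgroup $\SpH_{S,\{\pm t\}}(\RR)$, invoke Theorem \ref{theorem:solvable2} for the quadratic Dehn function of $\SpH_{S,\{\pm t\}}(\ZZ)$, and then use Lemma \ref{lemma:shortrootswitching} to bridge from $\he_{\alpha;S,\{t\}}(x)$ back to the standard shortcut $\he_{\alpha}(x)$. The exploratory back-and-forth in your write-up eventually lands on precisely the paper's argument.
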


\begin{proof}
Because $\# \{ s\}=1$, if $S\subset \mH$ is an isotropic set of size at least $3$ containing $s$ but not $\pm t$, we have that $\SpH_{S,\{\pm t\}}\supset \SpH_{\{s\},\{\pm t\}}$.  Theorem \ref{theorem:solvable2} asserts that $\SpH_{S,\{\pm t\}}(\ZZ)$ has quadratic Dehn function, thus
$$\delta(\he_{\alpha;S,\{t\}}(x),\he_{\alpha;\{s\},\{\pm t\}}(x))=O(\log(|x|)^{2})$$
since both shortcuts are approximations of curves in $\SpH_{S,\{\pm t\}}(\RR)$.  But we already know that
$$\delta(\he_{\alpha;S,\{t\}}(x),\he_{\alpha}(x))=O(\log(|x|)^{2})$$
by Lemma \ref{lemma:shortrootswitching}.\end{proof}

\subsection{Defining $\he_{\alpha}$ for $\alpha$ a long root}
\label{subsection:longrootshortcuts}
Let $\alpha=2s$ be a long root.  Since $e_{\alpha}$ is not contained in any $GL(S)$, we will need a different strategy.  We would like to define a shortcut for $e_{\alpha}(x)$ as a commutator of shortcuts for short roots, i.e. as $[\he_{s-t}(x_{1}),\he_{s+t}(x_{2})]$.  This is possible only when $x$ is even, otherwise we will need to append $e_{2s}$ itself.
\begin{definition} We say that a triple $(S,T,t)$ with $S,T\subset \mathcal{H}$ is compatible with $\alpha=2s$ if the following hold. 
\begin{itemize}
\item $s\in S$ and $t\in T$
\item $S\cap T= \emptyset$, $S$ isotropic, and $T$ symplectic
\item $\# S\geq 2$ and $\# T\geq 2$
\end{itemize}

We say that a triple $(S,T,t)$ is special if all of these conditions hold except that $\# S=1$, and furthermore $\# T =2$.\\
\end{definition}

For a long root $\alpha=2s$ and a compatible triple $(S,T,t)$, we now define a shortcut for $e_{\alpha}(x)$ which approximates a word in $\SpH_{S,T}(\RR)$.

\begin{definition}
Let $(S,T,t)$ be compatible with $\alpha=2s\in \Phi$.  Then $\he_{\alpha;S,T,t}(x)$ is definied to be $[\he_{s-t;S,T^{+}}(\lfloor\frac{x}{2}\rfloor),\he_{s+t;S,T^{-}}(1)]e_{2s}(\{\frac{x}{2}\})$ where $\lfloor\cdot\rfloor$ denotes integer part, $\{\cdot\}$ denotes fractional part, and $T^{+}$ and $T^{-}$ are as defined in \S\ref{section:preliminaries}.
\end{definition}

We will also need to define shortcuts relative to special triples in order to represent elements of a symplectic diagonal block $\Sp(4;\ZZ)\subset\Sp(2p;\ZZ)$ as words in $\Sp(4;\ZZ)$ when $\#T=4$.  Given a triple $(S,T,t)$ which is special relative to $\alpha=2s$, observe that $e_{2s}$ is exponentially distorted in $\SpH_{S,T}(\RR)$ because it is a commutator of exponentially distorted elements, and recall that integer matrices are cocompact in $\SpH_{S,T}(\RR)$.  Hence there exists a word of length $O(\log|x|)$ in the generators of $\SpH_{S,T}(\RR)$ representing $e_{\alpha}(x)$, so let $\he_{\alpha;S,T}(x)$ be an approximation of this word.\\

Now we wish to prove that we can switch between different shortcuts for $e_{\alpha}(x)$ at quadratic cost.
\begin{lemma}
\label{lemma:longrootswitching}
Suppose $(S,T,t)$ and $(S^{\prime},T^{\prime},t^{\prime})$ are compatible with a long root $\alpha=2s$.  Then we have
$$\delta(\he_{\alpha;S,T,t}(x),\he_{\alpha;S^{\prime},T^{\prime},t^{\prime}}(x))=O(\log\| x \|^{2})$$
\end{lemma}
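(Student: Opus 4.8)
The plan is to follow the template of Lemma~\ref{lemma:shortrootswitching}: reduce the homotopy to a bounded chain of elementary moves on the compatible triple, where each move changes exactly one of $S$, $T$, or the distinguished half-root $t$, and bound the cost of each move separately. Writing $\alpha=2s$, recall that $\he_{\alpha;S,T,t}(x)$ is a commutator $[\he_{s-t;S,T^{+}}(\lfloor x/2\rfloor),\he_{s+t;S,T^{-}}(1)]$ of \emph{short}-root shortcuts followed by a bounded-length correction $e_{2s}(\{x/2\})$, and both $\he_{\alpha;S,T,t}(x)$ and $\he_{\alpha;S',T',t'}(x)$ represent $e_{2s}(x)$; any discrepancy between the two normalizations of $e_{2s}(x)$ costs only $O(1)$ relators.

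First I would dispose of the moves that change only $S$, or only $T$ while keeping $t$ distinguished in both. In each case the two commutators differ only in the compatible pairs used for the short roots $s-t$ and $s+t$: when $S$ is changed to $S'$ the pair $(S,T^{+})$ becomes $(S',T^{+})$, and when $T$ is changed to a symplectic set $U$ still containing $t$ it becomes $(S,U^{+})$; in every case the new pair is still compatible with the relevant short root (one uses $\#S\geq 2$, $\#T^{\pm}\geq 1$, and the fact that $\pm S$ is disjoint from $T$, so $S\cup T^{\pm}$ is isotropic). Since homotopy cost is additive over subwords while every word in sight has length $O(\log\|x\|)$, applying Lemma~\ref{lemma:shortrootswitching} to each of the two commutator factors, and leaving the correction term alone, homotopes $\he_{\alpha;S,T,t}(x)$ to the new shortcut at cost $O(\log\|x\|^{2})$.

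The genuinely new case is a move changing $t$ to $t'$ with $\pm t\neq\pm t'$, keeping $S$ and $T$ fixed: now the short roots $s\pm t$ themselves change, so Lemma~\ref{lemma:shortrootswitching} is not directly available. The key point is that each short-root shortcut appearing in $\he_{\alpha;S,T,t}(x)$ is built using only conjugation by elements of $\TT_{S}$ together with base generators lying in $N_{S,T}$ (conjugation by the torus associated to $T^{\pm}$ is never needed to represent a single elementary matrix), and $\TT_{S}\subset\SpH_{S,T}$, $N_{S,T}\subset\SpH_{S,T}$, and $e_{2s}\in Z_{S}\subset\SpH_{S,T}$. Hence $\he_{\alpha;S,T,t}(x)$ may be taken to be a word of length $O(\log\|x\|)$ in the generators of $\SpH_{S,T}(\ZZ)$ — indeed an approximation of a path in $\SpH_{S,T}(\RR)$, using that $\SpH_{S,T}(\ZZ)$ is cocompact in $\SpH_{S,T}(\RR)$ (Lemma~\ref{lemma:hstcocompact}) — and likewise for $\he_{\alpha;S,T,t'}(x)$. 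Both words represent $e_{2s}(x)$, so $\he_{\alpha;S,T,t}(x)^{-1}\he_{\alpha;S,T,t'}(x)$ is a relation of length $O(\log\|x\|)$ in $\SpH_{S,T}(\ZZ)$; when $\#S\geq 3$, Theorem~\ref{theorem:solvable2} says $\SpH_{S,T}(\ZZ)$ has quadratic Dehn function, so this relation fills at cost $O(\log\|x\|^{2})$, which is exactly the desired homotopy. When $\#S=2$ I would first use the easy moves above to shrink $T$ down to $\{\pm t,\pm t'\}$ and then enlarge $S$ to a size-$3$ isotropic set disjoint from it — possible because $\#(\pm S\cup\{\pm t,\pm t'\})=8<2p$ when $p\geq 5$ — thereby reducing to the case $\#S\geq 3$, and the auxiliary adjustments are reversed afterwards. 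Finally one checks that any two compatible triples for $\alpha$ are joined by a bounded chain of such moves (contract $S$ and $T$ into a small fixed arena, maneuver, re-expand), which is routine bookkeeping with the constraint $2\#S+\#T\leq 2p$ and again uses $p\geq 5$.

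The main obstacle is this $t$-changing move, and specifically the realization that the long-root shortcut — although assembled from short-root shortcuts that a priori live in various groups $H_{S,T^{\pm}}$ — can be rewritten to lie entirely inside the solvable group $\SpH_{S,T}(\ZZ)$, so that its quadratic isoperimetric inequality (Theorem~\ref{theorem:solvable2}) can be brought to bear; a secondary nuisance is the arithmetic needed to keep $\#S\geq 3$ with enough ambient room, which is where the hypothesis $p\geq 5$ enters.
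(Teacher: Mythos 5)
Your proposal is correct and takes essentially the same approach as the paper: the two key ingredients are identical, namely Lemma~\ref{lemma:shortrootswitching} for moves keeping $t$ fixed (the paper's Case~1) and Theorem~\ref{theorem:solvable2} for the $t$-changing move with $S=S'$, $T=T'$, $\#S\geq 3$ (the paper's Case~2), and the general statement follows by chaining. The paper just makes the chain explicit, via $(S,T,t)\to(S'',T'',t)\to(S'',T'',t')\to(S',T',t')$ with $T''=\{\pm t,\pm t'\}$ and $S''=(\mathcal{H}\setminus T'')^{+}$ (of size $p-2\geq 3$), which neatly sidesteps the small-$S$ bookkeeping you address separately.
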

\begin{proof}
{\it Case 1:} If $t=t^{\prime}$, then the result follows from Lemma \ref{lemma:shortrootswitching}.\\
{\it Case 2:} If $S=S^{\prime}$ is of size at least $3$, and $T=T^{\prime}$, then both $\he_{\alpha;S,T,t}(x)$ and $\he_{\alpha;S^{\prime},T^{\prime},t^{\prime}}$ live inside $\SpH_{S,T}$ which has quadratic Dehn function by Theorem \ref{theorem:solvable2}, so we can switch between them at cost $O(\log\|x\|^{2})$.\\
{\it General case:} Without loss of generality, assume $t\neq t^{\prime}$.  Let $T^{\prime\prime}=\{\pm t, \pm t^{\prime}\}$ and $S^{\prime\prime}=(\mathcal{H}\setminus{T})^{+}$.  We can homotope at quadratic cost from $\he_{\alpha;S,T,t}(x)$to $\he_{\alpha;S^{\prime},T^{\prime},t^{\prime}}(x)$ through shortcuts relative to the following sequence of triples.
\[S,T,t{\buildrel \text{Case 1}\over\rightarrow} S^{\prime\prime},T^{\prime\prime},t{\buildrel \text{Case 2}\over\rightarrow}\\
S^{\prime\prime},T^{\prime\prime},t^{\prime}{\buildrel \text{Case 1}\over\rightarrow} S^{\prime},T^{\prime},t^{\prime}\qedhere\]
\end{proof}

We also wish to show that we can switch to a special shortcut at quadratic cost.
\begin{lemma}
\label{lemma:specialshortcutswitching}
Suppose $(S,T,t)$ is compatible with $\alpha=2s$ and $(S^{\prime},T^{\prime},t^{\prime})$ is special relative to $2s$.  Then
 $$\delta(\he_{\alpha;S,T,t}(x),\he_{\alpha;S^{\prime},T^{\prime},t^{\prime}}(x))=O(\log\| x \|^{2})$$
\end{lemma}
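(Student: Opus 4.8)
The plan is to handle this exactly as Lemmas \ref{lemma:longrootswitching} and \ref{lemma:specialswitching} were handled: first use the long-root switching lemma already established to push the symplectic part of the triple onto $\{\pm t'\}$ while enlarging the isotropic part, and then exhibit both shortcuts as approximations of words in a single solvable group known to have quadratic Dehn function.

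Write $\alpha = 2s$; since $(S',T',t')$ is special we have $S' = \{s\}$ and $T' = \{\pm t'\}$ with $t' \neq \pm s$. First I would pick an isotropic set $\hat S \subset \mH$ with $s \in \hat S$, $\hat S \cap \{\pm t'\} = \emptyset$, and $\#\hat S \geq 3$ (possible since $p \geq 5$). Then $(\hat S, \{\pm t'\}, t')$ is compatible with $\alpha$, so Lemma \ref{lemma:longrootswitching} gives $\delta(\he_{\alpha;S,T,t}(x), \he_{\alpha;\hat S,\{\pm t'\},t'}(x)) = O(\log\|x\|^2)$, and it remains to homotope $\he_{\alpha;\hat S,\{\pm t'\},t'}(x)$ to $\he_{\alpha;S',T',t'}(x)$ at quadratic cost. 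For this I would observe that $\SpH_{\{s\},\{\pm t'\}} \subset \SpH_{\hat S,\{\pm t'\}}$: since $\#\{s\} = 1$ the abelian group $\TT_{\{s\}}$ has rank $0$ and is trivial, so $\SpH_{\{s\},\{\pm t'\}}$ is generated by $N_{\{s\},\{\pm t'\}}$ together with $\TT_{\{\pm t'\}}$; the latter is one of the generating subgroups of $\SpH_{\hat S,\{\pm t'\}}$, and the former lies in $N_{\hat S,\{\pm t'\}} \subset \SpH_{\hat S,\{\pm t'\}}$ because $\Phi N_{\{s\},\{\pm t'\}} = \{2s\}\cup\{s-u : u\in\{\pm t'\}\}$ is visibly contained in $\Phi N_{\hat S,\{\pm t'\}}$ (using $s\in\hat S$, so $2s\in\Phi Z_{\hat S}$). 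Granting this, both $\he_{\alpha;\hat S,\{\pm t'\},t'}(x)$ and $\he_{\alpha;S',T',t'}(x)$ are approximations of length $O(\log|x|)$ words in $\SpH_{\hat S,\{\pm t'\}}(\RR)$ representing the same element $e_{2s}(x)$; as $\#\hat S \geq 3$, this group is cocompact in its real points (Lemma \ref{lemma:hstcocompact}) and has quadratic Dehn function (Theorem \ref{theorem:solvable2}), so exactly as in the proof of Lemma \ref{lemma:specialswitching} the two approximations are joined by a homotopy of cost $O((\log|x|)^2)$. Chaining the two homotopies gives the claim.

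The step I expect to be the main obstacle is the inclusion $\SpH_{\{s\},\{\pm t'\}} \subset \SpH_{\hat S,\{\pm t'\}}$ and its use: this is precisely where ``specialness'' (triviality of $\TT_{\{s\}}$ when $\#S'=1$) enters, and one must also check that the word underlying the special shortcut --- a priori only a word in a fixed bounded generating set of the smaller group --- can legitimately be read as a word in $\SpH_{\hat S,\{\pm t'\}}$ and then fed into Theorem \ref{theorem:solvable2}; here the convention that $\SSp$ contains generating sets for all subgroups of the form $\SpH_{S,T}(\ZZ)$ is what makes this legitimate. Everything else is a routine assembly of the long-root switching lemma.
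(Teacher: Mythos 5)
Your proof is correct and mirrors the paper's own argument: the paper likewise chooses an enlarged isotropic set $S''=(\mathcal{H}\setminus\{\pm s,\pm t'\})^{+}\cup\{s\}$ (a particular instance of your $\hat S$), switches between the special shortcut and the $\SpH_{S'',T'}$-shortcut at quadratic cost via Theorem \ref{theorem:solvable2}, and then invokes Lemma \ref{lemma:longrootswitching}. You merely chain the two homotopies in the opposite order, and your explicit justification of the inclusion $\SpH_{\{s\},\{\pm t'\}}\subset\SpH_{\hat S,\{\pm t'\}}$ is exactly the observation the paper relies on (and records in the proof of Lemma \ref{lemma:specialswitching}).
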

\begin{proof}
Since $S^{\prime}$ is special, we know $S^{\prime}=\{s\}$ and $T^{\prime}=\{\pm t\}$.  Let $S^{\prime\prime}$ be $(\mathcal{H}\setminus\{\pm s,\pm t\})^{+}\cup \{s\}$.  Then we can homotope from $\he_{\alpha;S^{\prime},T^{\prime},t^{\prime}}(x)$ to $\he_{\alpha;S^{\prime\prime},T^{\prime},t^{\prime}}(x)$ at quadratic cost since both live in $\SpH_{S^{\prime\prime},T^{\prime}}$.  But by the previous lemma, we can homotope from $\he_{\alpha;S^{\prime\prime},T^{\prime},t^{\prime}}(x)$ to $\he_{\alpha;S,T,t}(x)$ at quadratic cost.\end{proof}

In light of these facts, we make the following definition.
\begin{definition}
For each $s\in \mH$, choose a triple $(S(s),T(s),t(s))$ compatible with $2s$ and define $\he_{2s}(x)$ to be $\he_{2s;S(s),T(s),t(s)}(x)$.
\end{definition}

\subsection{The block-shortcut lemma}
\label{subsection:blockshortcut}
Let $S\subset \mH$ be isotropic and $T\subset \mH$ symplectic and suppose $\gamma$ is a word in $\SL(S;\ZZ)$ or $\Sp(T;\ZZ)$ representing some matrix $M$, and $\alpha\in\Phi N_{S,T}$.  Then $M e_{\alpha}(x)M^{-1}=_{G} \prod_{\beta}e_{\beta}(xM_{\alpha,\beta})$, where $\beta$ ranges over all roots in $\alpha+\Phi\SL(S)$ if $M\in\SL(S;\ZZ)$ and over all roots in $\alpha+\Phi\Sp(T)\cup\Phi Z_{S}$ if $M\in\Sp(T;\ZZ)$, and the $M_{\alpha,\beta}$ are integers depending on $M$, $\alpha$, $\beta$, and the order of multiplication in the product.  The following ``block-shortcut" lemma will allow us to move words in $\SL(S;\ZZ)$ or $\Sp(T;\ZZ)$ past products of shortcuts for elements of $N_{S,T}(\ZZ)$.

\begin{lemma}
\label{lemma:blockshortcut}
We can move shortcuts past words in diagonal blocks at quadratic cost as follows.
\begin{itemize}
\item[(a)] Suppose $S\subset \mathcal{H}$ isotropic with $\# S\leq p-1$ and $\alpha\in\Phi$ is a root with $\alpha=s-t$ for some $s\in S$ and $t\notin S$.  If $\gamma$ is a word in $\Sigma_{SL(S;\ZZ)}$ representing some matrix $M\in SL(S;\ZZ)$, and $M e_{\alpha}(x)M^{-1}=\prod_{\beta\in\alpha+\Phi\SL(S)}e_{\beta}(xM_{\alpha\beta})$ then we have
$$\delta(\gamma \he_{\alpha}(x)\gamma^{-1},\prod_{\beta\in\alpha+\Phi\SL(S)}\he_{\beta}(xM_{\alpha\beta}))=O(\ell(\gamma)^{2}+(\log|x|)^{2}).$$
\item[(b)] Suppose $T\subset \mathcal{H}$ symplectic with $\# T\leq 2(p-3)$ and $\alpha$ is a root of the form $s-t$ for some $s\notin T$ and $t\in T$.  If $\gamma$ is a word in $\Sigma_{\Sp(T;\ZZ)}$ representing some matrix $M\in \Sp(T;\ZZ)$, and $M e_{\alpha}(x)M^{-1}=\prod_{\beta\in\alpha+\Phi\Sp(T)\cup\Phi Z_{S}}e_{\beta}(xM_{\alpha\beta})$ then we have
$$\delta(\gamma e_{\alpha}(x)\gamma^{-1},\prod_{\beta\in\alpha+\Phi\Sp(T)\cup\Phi Z_{S}}e_{\beta}(xM_{\alpha\beta}))=O(\ell(\gamma)^{2}+(\log|x|)^{2})$$
\end{itemize}
\end{lemma}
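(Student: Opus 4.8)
The assertion of each part is, once unwound, a bound on the area of the single relation
$$R=\gamma\,\he_{\alpha}(x)\,\gamma^{-1}\Bigl(\prod_{\beta}\he_{\beta}(xM_{\alpha\beta})\Bigr)^{-1}.$$
The first step is to note that $\ell(R)=O(\ell(\gamma)+\log|x|)$: each shortcut has length $O(\log(\cdot))$ by Lemma \ref{lemma:efficientshortcuts}, the number of factors $\beta$ is bounded, and every entry of $M$ satisfies $\log|M_{\alpha\beta}|=O(\ell(\gamma))$ since matrix entries grow at most exponentially in word length. Hence it is enough to realise $R$ as a relation in some finitely presented subgroup $G_{0}\subseteq\Sp(2p;\ZZ)$ which contains $\gamma$ and has quadratic Dehn function: then $\Area(R)=O(\ell(R)^{2})=O(\ell(\gamma)^{2}+(\log|x|)^{2})$, the cross term being swallowed via $2\ell(\gamma)\log|x|\leq\ell(\gamma)^{2}+(\log|x|)^{2}$. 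The switching lemmas \ref{lemma:shortrootswitching}, \ref{lemma:longrootswitching}, \ref{lemma:specialswitching} and \ref{lemma:specialshortcutswitching} then buy us the freedom to replace each shortcut occurring in $R$, at quadratic cost, by any convenient representative living in $G_{0}$; so everything comes down to choosing $G_{0}$ in each case.

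For part (a) with $\alpha=s-t$ and $-t\notin S$ (equivalently $\alpha\notin\Phi Z_{S}$), every root in sight — $\alpha$ itself and each target $\beta=s'-t$ with $s'\in S$ — is short, so each shortcut can be taken inside $H_{L\setminus\{t\},\{t\}}$ for an isotropic $L\supseteq S\cup\{t\}$. Since $\#S\leq p-1$ and $p\geq5$ we may take $\#L\geq5$; then $G_{0}=\GL(L;\ZZ)$, a finite-index supergroup of $\SL(L;\ZZ)$, has quadratic Dehn function by Theorem \ref{theorem:RY}, and $\gamma\in\SL(S;\ZZ)\subseteq G_{0}$, which settles this case.

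The remaining cases — part (a) with $\alpha\in\Phi Z_{S}$, and all of part (b) — are where I expect the real work to be. The obstruction is that the only natural home for the shortcuts here is an $\SpH_{S',T'}$ with $\#S'\geq3$ (which has quadratic Dehn function by Theorem \ref{theorem:solvable2}), yet $\SpH_{S',T'}$ does not contain the whole diagonal block $\SL(S)$ or $\Sp(T)$, and that block can itself have exponential Dehn function (e.g.\ $\Sp(4;\ZZ)$), so the naive ``everything inside one good group'' argument fails. The plan is instead to exploit that $\gamma$ meets the shortcuts only cheaply. Using $\#T\leq2(p-3)$ (resp.\ $\#S\leq p-1$) one can choose the isotropic set $S'\supseteq S$ of size $\geq3$ and a symplectic set $T'$ so that $S'$ is disjoint from $T$ (resp.\ from $T'$), whence $\gamma$, lying in $\Sp(T)$ or $\SL(S)$, commutes with the torus $\TT_{S'}\subseteq\GL(S')$ and merely normalises $N_{S',T'}$ and $Z_{S'}$ (acting on $Z_{S'}\cong\Sym^{2}\ZZ^{S'}$ through $\Sym^{2}$ of its action on $\ZZ^{S'}$, by Proposition \ref{proposition:umanip}); when $\#S=1$ the block is trivial and $R$ is empty. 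Conjugating a shortcut by $\gamma$ therefore lets $\gamma$ slide past its many torus letters essentially for free, leaving only a bounded number of elementary-matrix letters to conjugate; the bounded-complexity relations this produces are then disposed of by the already-proved case of part (a) for the short roots that appear and by a direct filling inside $\SpH_{S',T'}$ for the long-root and $Z_{S}$ contributions. The bulk of the argument is making this uniform and verifying that, once the cocycle corrections coming from the set-theoretic section $u$ (and the appearance of $(M^{\mathrm T})^{-1}$ rather than $M$ for $M\in\Sp(T)$) are accounted for, the roots $\beta$ produced are exactly those in $\alpha+\Phi\SL(S)$, respectively $\alpha+\Phi\Sp(T)\cup\Phi Z_{S}$, with the stated coefficients.
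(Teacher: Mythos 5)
Your opening reduction — bound $\ell(R)$ and then fill $R$ inside a subgroup $G_{0}$ with quadratic Dehn function — and your treatment of the easy case are correct and match the paper: when $-t\notin S$ (so $\alpha\notin\Phi Z_{S}$ and every target root $\beta=s'-t$ is short), both $\gamma$ and all the shortcuts can be switched into a large $\GL(L;\ZZ)$ with $L$ isotropic of size $\geq 5$, and Theorem~\ref{theorem:RY} finishes. You also correctly diagnose \emph{why} the remaining cases are hard: the natural home $\SpH_{S',T'}$ of the long-root and $\Phi Z_{S}$ shortcuts does not contain the whole diagonal block, and that block by itself may have exponential Dehn function.

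The gap is in the hard cases, and it is substantive. Your plan hinges on $\gamma$ commuting with the torus that powers the shortcuts — you write that $\gamma$ ``commutes with the torus $\TT_{S'}\subseteq\GL(S')$'' where $S'\supseteq S$. For part (b) this is fine, since $\gamma\in\Sp(T;\ZZ)$ does commute with $\GL(S')$ when $S'$ is disjoint from $T$. But for part (a), $\gamma\in\SL(S;\ZZ)$ emphatically does \emph{not} commute with $\TT_{S'}$ when $S'\supseteq S$: $\TT_{S'}$ acts nontrivially on $\RR^{S}$, the very space on which $\gamma$ acts. So the ``slide $\gamma$ past the torus letters for free'' step collapses in exactly the case you most need it (and the long root $\alpha=2s$, which is the hardest subcase, falls squarely into part (a)). The paper avoids this by introducing a \emph{different} contracting element $D$, taken from $\TT_{T}$ in part (a) and from $\TT_{S}$ in part (b) — i.e., always from the torus that \emph{does} commute with $\gamma$ — and using $D$ only to shrink the $\RR^{T}$ (resp.\ $\RR^{S}$) vector inside $u(xv\otimes w)$, not to define the shortcut. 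This gives a four-panel rectangle homotopy: the top and bottom panels are free because $D$ commutes with $\gamma$; the left and right panels are filled by the quadratic Dehn function of $\SpH_{S,T}$ (Theorem~\ref{theorem:solvable2}); and the middle panel is ``skinny'' because, after conjugating by $D$, every prefix $\gamma_{i}u(xv\otimes Dw)\gamma_{i}^{-1}$ stays in a bounded neighborhood of the identity. Your sketch omits this construction, and without it the claim that ``only a bounded number of elementary-matrix letters need conjugating'' does not control the cost, since the coefficients $M_{\alpha\beta}$ have size $\exp(O(\ell(\gamma)))$ and must themselves be re-packaged as shortcuts. There is also a smaller omission: the intermediate case $-t\in S\setminus\{s\}$ (so $\alpha\in\Phi Z_{S}$ is short) is handled in the paper by the clean trick of rewriting $e_{\alpha}(x)=[e_{s-t'}(x),e_{s'+t'}]$ with $t'\notin\pm S$, thereby reducing to the already-settled short-root case; your proposal lumps it with the long-root case without giving a mechanism.

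In short: the proposal has the right skeleton and correctly isolates the difficulty, but the central step for part (a) is wrong as stated, and the key construction (contraction by an element of the \emph{complementary} torus, plus the skinny-rectangle filling) is missing.
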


\begin{proof}
\begin{itemize}
\item[(a)] Without loss of generality, $\#S\geq 4$.  (Otherwise, we can add elements to $S$ without changing the setup).\\
{\it Case 1:} If $-t\notin S$, let $T=\{t\}$ and homotope as follows (each step being quadratic cost by the given reason).\\

\begin{tabular}{l l}
$\gamma \he_{\alpha}(x)\gamma^{-1}$ & \\
$\leadsto\gamma \he_{\alpha;S,T}(x)\gamma^{-1}$ & Lemma \ref{lemma:shortrootswitching} (switching)\\
$\leadsto\prod_{\beta\in\alpha+\Phi\SL(S)}\he_{\beta;S,T}(xM_{\alpha\beta})$ & since $\SL(S\cup T;\ZZ)$ has quadratic Dehn function\\
$\leadsto\prod_{\beta\in\alpha+\Phi\SL(S)}\he_{\beta}(xM_{\alpha\beta})$ & by switching\\
\end{tabular}\\

{\it Case 2:} For $-t\in S\setminus \{s\}$, let $s^{\prime}=-t$ and choose $t^{\prime}\in \mathcal{H}$ such that $\pm t^{\prime}\notin S$.  Then $e_{\alpha}(x)=[e_{s-t^{\prime}}(x),e_{s^{\prime}+t^{\prime}}]$, and all these unipotents have shortcuts living in $\SpH_{S,T}(\ZZ)$ (where $T=\{\pm t^{\prime}\}$), so we can homotope\\

\begin{tabular}{l l}
$\gamma \he_{\alpha}(x)\gamma^{-1}$ & \\
$\leadsto\gamma[\he_{s-t^{\prime}}(x), \he_{s^{\prime}+t^{\prime}}]\gamma^{-1}$ &
Theorem \ref{theorem:solvable2}.\\
$\leadsto[\gamma \he_{s-t^{\prime}}(x) \gamma^{-1},\gamma e_{s^{\prime}+t^{\prime}}\gamma^{-1}]$ & free insertion\\
$\leadsto[ \prod_{\beta\in(s-t^{\prime})+\Phi_{S}}\he_{\beta}(xM_{\alpha\beta}),\prod_{\beta\in(s+t^{\prime})+\Phi_{S}}\he_{\beta}(xM_{\alpha\beta})]$ & by case 1\\
$\leadsto\prod_{\beta\in\alpha+\Phi_{S}}\he_{\beta}(xM_{\alpha\beta})$ & Theorem \ref{theorem:solvable2}\\
\end{tabular}\\

{\it Case 3:} The case where $-t=s$ (i.e. $\alpha$ is long) is the hardest, and here we will need to directly emulate the proof of \cite[Lemma 7.3]{RY}.  Observe that $\he_{\alpha;S,T,t^{\prime}}(x)$ (where $t^{\prime}\in T=(\mathcal{H}\setminus S)^{+}$) has the form $[\he_{s-t^{\prime};S,T}(\lfloor\frac{x}{2}\rfloor),\he_{s-t^{\prime}}]e_{2s}(\{\frac{x}{2}\})$.  By previous cases, we can transform the $\gamma \he_{s\pm t^{\prime}}\gamma^{-1}$ words into products of shortcuts living in $\SpH_{S,T}$, so it suffices to do the same for $\gamma e_{2s}\gamma^{-1}$.\\

In the following finite sequence of homotopies, each stage $w_{i}$ is a word in some subgroup $H_{i}$ of $\Sp(2p;\RR)$ in which integer matrices are cocompact, and moreover, each pair of consecutive stages $w_{i},w_{i+1}$ is contained in some common group $H_{i,j}\subset \Sp(2p;\RR)$ in which integer matrices are cocompact.  By approximating each stage $w_{i}$ by a word in $\SSp$, we obtain an actual homotopy through words in $\Sp(2p;\ZZ)$.  The area of this approximation will be bounded by some multiple of the area of the original homotopy.\\

Let $T={\pm t}$, let $q=\#S$ and let $v_{1},\ldots,v_{q}$ be an eigenbasis for the action of $\TT_{S}$ on $\RR^{S}$.  Choose some $w_{1},w_{2}$ part of an eigenbasis for $\TT_{T}$ with $\omega(w_{1},w_{2})=1$ (see, e.g., \cite[Lemma 2.18]{mcduff}).  At bounded cost, we can homotope $e_{2s}$ to a product $\prod_{i,j=1}^{q} u_{Z}(x_{ij}v_{i}v_{j})$.  (We think of each $u(x_{ij}v_{i}v_{j})$ as being in some compact generating set for $Z_{S}(\RR)$).  At bounded cost, we can homotope this word to
$$\prod_{i,j=1}^{q}[u(x_{ij}v_{i}\otimes w_{1}),u(v_{j}\otimes w_{2})]$$
Thus, we can homotope at bounded cost from $\gamma e_{2s} \gamma^{-1}$ to
$$\prod_{i,j=1}^{q}
[\gamma u(x_{ij}v_{i}\otimes w_{1})\gamma^{-1},\gamma u(v_{j}\otimes w_{2})\gamma^{-1}].$$
Now, we can homotope each $\gamma u(xv\otimes w)\gamma^{-1}$ to $\hu((Mxv)\otimes w)$ as in figure \ref{figure:rectanglehomotopy1}, where $D$ is a word of length $O(\ell)$ in $\TT_{T}$ such that
$$\| Dw \|\leq \sup\{\|N\|_{\infty}:N\in\SSp\}^{-\ell(\gamma)}$$
(compare \cite[figure 4]{RY}).

\begin{figure}[t]
\labellist
\small\hair 2pt

\pinlabel {$\triangleleft$} [c] at 245 335
\pinlabel {$\triangle$} [c] at 496 168
\pinlabel {$\triangle$} [c] at 497 168
\pinlabel {$\triangleleft$} [c] at 245 0
\pinlabel {$\triangle$} [c] at 1 168
\pinlabel {$\triangle$} [c] at 0 168
\pinlabel {$\nwarrow$} [c] at 67 270
\pinlabel {$\nearrow$} [c] at 429 268
\pinlabel {$\searrow$} [c] at 435 62
\pinlabel {$\swarrow$} [c] at 67 67
\pinlabel {$\triangleleft$} [c] at 245 209
\pinlabel {$\triangle$} [c] at 370 168
\pinlabel {$\triangleleft$} [c] at 245 127
\pinlabel {$\triangle$} [c] at 129 168

\pinlabel {$\hu(xv\otimes w)$} [l] at 5 168
\pinlabel {$D$} at 74 273
\pinlabel {$\gamma$} at 245 321
\pinlabel {$D$} at 425 273
\pinlabel {$\hu(Mxv\otimes Dw)$} [r] at 487 168
\pinlabel {$D$} at 425 60
\pinlabel {$\gamma$} at 245 12
\pinlabel {$D$} at 74 60
\pinlabel {$u(xv\otimes Dw)$} [l] at 132 168
\pinlabel {$\gamma$} at 245 220
\pinlabel {$u(Mxv\otimes Dw)$} [r] at 368 168
\pinlabel {$\gamma$} at 245 116

\endlabellist
\centering
\centerline{\psfig{file=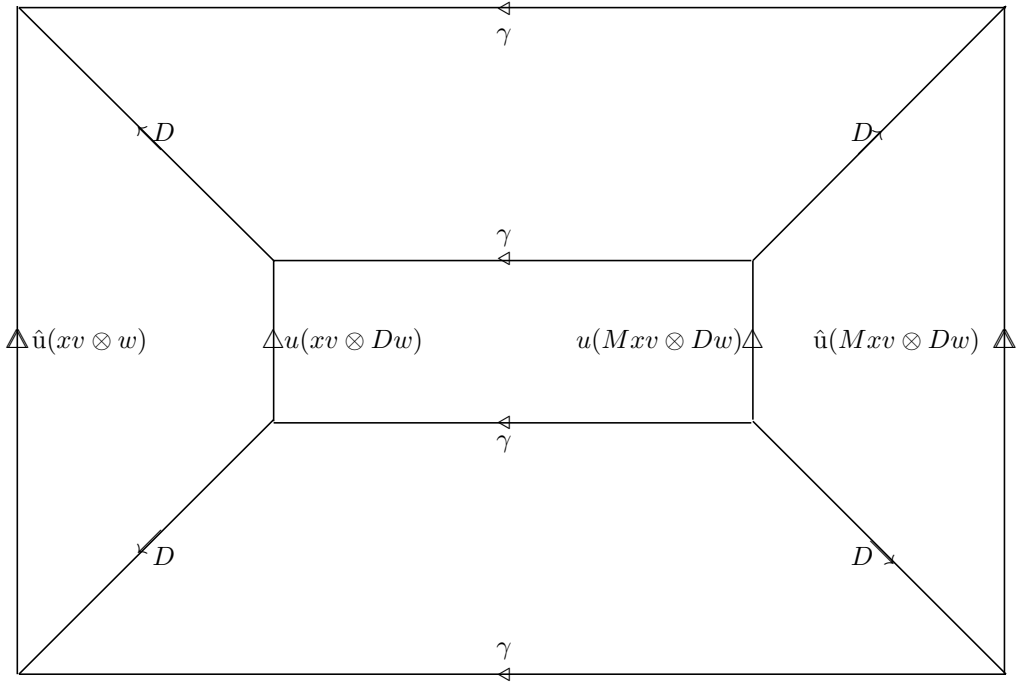,scale=75}}
\caption{A homotopy from $\gamma\hu(xv\otimes w)\gamma^{-1}$ to $\hu(Mxv\otimes w)$}
\label{figure:rectanglehomotopy1}
\end{figure}

We fill the top and bottom trapezoids because $D$ is a word in $\TT_{T}$ and $\gamma$ is a word in $\SL(S)$.  We can fill the left and right trapezoids because $\SpH_{S,T}$ has quadratic Dehn function.  We can fill the middle rectangle because it is skinny (for each prefix $\gamma_{i}$ of $\gamma$, we know that $\gamma_{i}u(xv\otimes Dw)\gamma_{i}^{-1}$ represents a matrix of norm less than $1$ by our choice of $D$).\\

We obtain a product of a bounded number of $\hu$, which we can homotope to the desired product of shortcuts because $\SpH_{S,T}$ has quadratic Dehn function.

\item[(b)] The proof is similar, but we must choose $D$ from $\TT_{S}$ instead of $\TT_{T}$, and similarly, instead of $\hu$ as defined above, we must choose representatives of $u(V)$ which use $\TT_{T}$ matrices for distortion instead of $\TT_{S}$ matrices.

\end{itemize}\end{proof}

\subsection{Steinberg relations}
\label{subsection:steinberg}
Now we will show that shortcuts can be moved past each other at quadratic cost (sometimes leaving more shortcuts behind, of course).
\begin{lemma}
\label{lemma:steinberg}
\begin{itemize}
\item[(a)] For $\alpha\in\Phi$, we have
$$\delta(\he_{\alpha}(x)\he_{\alpha}(y),\he_{\alpha}(x+y))=O(\log|x|^{2}+\log|y|^{2}).$$
\item[(b)] For $\alpha+\beta\neq 0$, we have
$$\delta([\he_{\alpha}(x),\he_{\beta}(y)],\he_{\alpha+\beta}(\kappa xy))=O(\log|x|^{2}+\log|y|^{2}),$$
where $\kappa=\pm 1$ for $\alpha+\beta$ short, $\kappa=\pm 2$ for $\alpha+\beta$ long, and $\kappa=0$ if $\alpha+\beta\notin \Phi$, and we interpret $\he(0)$ as the empty word.
\end{itemize}
\end{lemma}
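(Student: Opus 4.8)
The plan is, in both parts of the lemma, to fill the relation inside a single subgroup of $\Sp(2p;\ZZ)$ that is already known to have quadratic Dehn function, after using the switching lemmas of \S\S\ref{subsection:shortrootshortcuts}--\ref{subsection:longrootshortcuts} to move all of the shortcuts occurring in the relation into that subgroup. There are two affordable target groups: a copy of $\GL(q;\ZZ)$ with $q\geq 5$ (quadratic Dehn function by Theorem \ref{theorem:RY}, since $\GL(q;\ZZ)$ contains $\SL(q;\ZZ)$ with finite index), and a copy of $\SpH_{S,T}(\ZZ)$ with $\#S\geq 3$ (quadratic Dehn function by Theorem \ref{theorem:solvable2}). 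Once the relevant identity ($e_{\alpha}(x)e_{\alpha}(y)=e_{\alpha}(x+y)$, or the commutator identity) is exhibited inside such a group among words of length $O(\log|x|+\log|y|)$, it fills automatically at cost $O((\log|x|+\log|y|)^{2})=O(\log|x|^{2}+\log|y|^{2})$, which is the asserted bound.

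For part (a): if $\alpha$ is short its support is isotropic, so I would enlarge it to an isotropic $L\subseteq\mH$ with $\#L\geq 5$ (possible since $p\geq 5$) and use Lemma \ref{lemma:shortrootswitching} to homotope each of $\he_{\alpha}(x),\he_{\alpha}(y),\he_{\alpha}(x+y)$ at quadratic cost to a shortcut lying in $\GL(L;\ZZ)$; if $\alpha=2s$ is long, use Lemma \ref{lemma:longrootswitching} to place all three shortcuts inside some $\SpH_{S,T}(\ZZ)$ with $s\in S$ and $\#S\geq 3$. For part (b) I would split on the type of $\alpha+\beta$. If $\alpha+\beta$ is short and $\alpha,\beta$ are short, there is a common middle half-root $u$ with $\alpha=s-u$, $\beta=u-t$; the short-root conditions, together with the fact that $\alpha+\beta$ is neither $0$ nor long, force $\{s,u,t\}$ to contain no $\pm$ pair, so it spans an isotropic space and one proceeds as in (a) inside a $\GL(L;\ZZ)$ with $\#L\geq 5$. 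If $\alpha+\beta=2s$ is long, the only decomposition of $2s$ into a sum of roots is $\alpha=s-t,\beta=s+t$, and then $e_{\alpha},e_{\beta},e_{2s}$ all lie in $N_{S,\{\pm t\}}$ for any isotropic $S\ni s$ disjoint from $\{\pm t\}$; taking $\#S\geq 3$ and recalling that $\he_{2s}$ is, by definition, a bounded product of short-root shortcuts together with a single $e_{2s}$, all lying in $\SpH_{S,\{\pm t\}}$, Lemmas \ref{lemma:shortrootswitching} and \ref{lemma:longrootswitching} move the whole relation into $\SpH_{S,\{\pm t\}}(\ZZ)$, where it fills. Finally, if $\alpha+\beta\notin\Phi$ (so $\kappa=0$ and $e_{\alpha},e_{\beta}$ commute), a short case analysis on the half-roots shows that either the supports of $\alpha$ and $\beta$ jointly span an isotropic space, or $e_{\alpha}$ and $e_{\beta}$ both lie in a common $Z_{S}$ with $\#S\geq 3$; in the first case one fills $[\he_{\alpha}(x),\he_{\beta}(y)]$ in $\GL(L;\ZZ)$ with $\#L\geq 5$, in the second in $\SpH_{S,T}(\ZZ)$.

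I expect the main obstacle to be the root combinatorics just sketched: checking, for every pair $(\alpha,\beta)$ admitted by (b), that one of the two good configurations really is available, and in particular disposing of the cases where one of $\alpha,\beta$ is long by first rewriting a long-root shortcut through its defining commutator of short-root shortcuts and recombining inside an $\SpH_{S,T}(\ZZ)$ with $\#S\geq 3$. Moving a shortcut into a prescribed subgroup at each step is justified, exactly as in the proof of Lemma \ref{lemma:blockshortcut}(a) Case 3, by the cocompactness of the integer points in the relevant real groups: a homotopy through a finite chain of such real groups, consecutive members of which share a common overgroup with quadratic Dehn function, can be approximated by an honest homotopy through words of $\Sp(2p;\ZZ)$ of comparable cost. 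A last bookkeeping point is to match the constant $\kappa$ and the parity correction $e_{2s}(\cdot)$ built into the definition of $\he_{2s}$, so that the two sides of each long-sum relation literally coincide before filling.
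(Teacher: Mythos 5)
Your overall strategy for this lemma — homotope every shortcut appearing in the relation into a single subgroup already known to have quadratic Dehn function ($\GL(q;\ZZ)$ for $q\geq 5$, or $\SpH_{S,T}(\ZZ)$ with $\#S\geq 3$) via the switching lemmas, then fill inside that subgroup — is exactly what the paper does for part (a) and for most Weyl-orbits of pairs $(\alpha,\beta)$ in part (b). Your case analysis is also more explicit than the paper's terse ``tedious exercise,'' which is a plus. But there is a genuine gap: your three-way split on the type of $\alpha+\beta$ silently drops the orbit in which $\alpha+\beta$ is short but one of $\alpha,\beta$ is long, namely $\alpha=2s$, $\beta=-s+t$ with $t\neq\pm s$ (so $\alpha+\beta=s+t$). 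This is precisely the one exceptional orbit the paper singles out, and it cannot be handled by the ``put everything in one good group'' strategy at all: $e_{2s}$ requires $s$ to lie in the isotropic set $S$ of any $\SpH_{S,T}$ or $\SL(L)$ containing it (since $2s\in\Phi Z_S$ forces $s\in S$, and $2s\notin\Phi\GL(L)$ for any isotropic $L$), whereas $e_{-s+t}$ requires $-s$ to lie in the isotropic set, and no isotropic set contains both $s$ and $-s$.

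Your proposed fallback — expand $\he_{2s}(x)$ via its defining commutator $[\he_{s-t'},\he_{s+t'}]e_{2s}(\cdot)$ and ``recombine inside an $\SpH_{S,T}(\ZZ)$'' — does not repair this. After expansion you must commute $\he_{-s+t}(y)$ past $\he_{s-t'}$ and $\he_{s+t'}$, and the same obstruction recurs: the half-roots $\{s,-s\}$ appear on opposite sides, so the roots $s-t'$, $s+t'$, $-s+t$ can never all be placed in a common $N_{S',T'}$, $\SL(L)$, or $\SpH_{S',T'}$. The paper instead invokes Lemma~\ref{lemma:blockshortcut}: one places $\he_{\beta;S',T'}(y)$ inside an $\SL(L)$ (taking $L$ an isotropic set containing the half-roots of $\beta$) and $\he_{\alpha;L,T,t''}(x)$ inside $\SpH_{L,T}$, and uses the conjugation/contraction mechanism of the block-shortcut lemma — conjugate by a long word in $\TT_T$ to shrink the unipotent down to bounded size, commute there, then blow back up — rather than a single-subgroup filling. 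This is a qualitatively different tool, and your proposal, as written, does not reach for it.

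A smaller issue: your dichotomy for $\alpha+\beta\notin\Phi$ (``either the supports jointly span an isotropic space, or both lie in a common $Z_S$'') is not exhaustive. For instance $\alpha=2[1]$, $\beta=[2]-[3]$ commute and $\alpha+\beta\notin\Phi$, yet the joint support $\{[1],-[1],[2],[3]\}$ is not isotropic and $e_{[2]-[3]}$ lies in no $Z_S$. The correct observation (which the paper's ``tedious exercise'' covers) is that both elementary matrices lie in $N_{S,T}$ for $S\supset\{[1],[2]\}$ and $T\ni\pm[3]$, and one uses the special shortcut $\he_{[2]-[3];\{[2]\},\{\pm[3]\}}$ of \S\ref{subsection:shortrootspecialshortcuts} to place the $\beta$-shortcut inside $\SpH_{S,T}$. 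So the right dichotomy for $\kappa=0$ is ``both shortcuts can be made to lie in a common $\SL(L)$ or a common $\SpH_{S,T}$,'' which your two named configurations do not capture.
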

\begin{proof}
\begin{itemize}
\item[(a)] This follows by switching into any $\SpH_{S,T}$.
\item[(b)] It is a tedious excercise to verify that $e_{\alpha}$ and $e_{\beta}$ both live in some $\SpH_{S,T}$ or $\SL(S)$ with quadratic Dehn function unless $\alpha=2s$ and $\beta=-s+t$ for some $t\neq \pm s$.  (There are eleven cases to check, corresponding the fact that the Weyl group of $\Sp$ has twelve orbits on the set of unordered pairs of roots).\\
In this case, choose $S^{\prime}$ containing $s$ and $T^{\prime}$ containing $t$ with $S=S^{\prime}\cup T^{\prime}$ isotropic, and further choose $T$ symplectic disjoint from $S$.  Observe that $\he_{\beta;S^{\prime},T^{\prime}}(y)$ lives inside $\SL(S)$, while $\he_{\alpha;S,T,t^{\prime}}(x)$ lives in $\SpH_{S,T}$, so we can apply the previous lemma.
\end{itemize}\end{proof}

A simple corollary of this lemma, used in section \ref{section:parabtodiag}, is that we can fill products of shortcuts in any $N_{S,T}$ which represent the identity.
\begin{lemma}
\label{lemma:ptd3}
Suppose $S\subset \mH$ isotropic and $T\subset \mH$ symplectic, and we have a product $w=\he_{\alpha_{1}}(x_{1})\ldots\he_{\alpha_{n}}(x_{n})$
 representing the identity with each $\alpha_{i}\in\Phi N_{S,T}$.  Then we can fill $w$ at cost at most $C_{n}\ell(w)^{2}$, where $C_{n}$ depends only on $n$.
\end{lemma}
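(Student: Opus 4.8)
The plan is to exploit that $N_{S,T}$ is two-step nilpotent with centre $Z_{S}$: first push $w$, by Steinberg moves, into a normal form with one shortcut per root, and then observe that all the exponents appearing in that normal form are forced to vanish.

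The first step is a bookkeeping observation about $\Phi N_{S,T}=\Phi Z_{S}\cup\{s-t:s\in S,\,t\in T\}$. Call a root in $\{s-t:s\in S,\,t\in T\}$ \emph{degree one} (these are exactly the roots whose shortcuts have nontrivial image under the homomorphism $\Ab\colon N_{S,T}\to\RR^{S}\otimes\RR^{T}$) and a root in $\Phi Z_{S}$ \emph{degree two} (its shortcuts are central). A short check with the explicit root description shows: for $\alpha,\beta\in\Phi N_{S,T}$ the sum $\alpha+\beta$ is a root only when $\alpha$ and $\beta$ are both degree one, and in that case $\alpha+\beta\in\Phi Z_{S}$; in particular, if either of $\alpha,\beta$ is degree two then $\alpha+\beta\notin\Phi$. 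Combined with Lemma \ref{lemma:steinberg}, this means I can: (i) transpose two adjacent shortcuts $\he_{\alpha}(x)\he_{\beta}(y)$ at cost $O(\ell(w)^{2})$, at the price of creating at most one extra shortcut, whose root is degree two and whose argument (of the form $\pm\kappa xy$) has logarithm $O(\ell(w))$; (ii) merge two adjacent shortcuts with the same root via Lemma \ref{lemma:steinberg}(a); (iii) slide any degree-two shortcut past anything at cost $O(\ell(w)^{2})$ (since its root sums to no root).

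Given this, I would bubble-sort the $n$ shortcuts of $w$ with respect to a total order on $\Phi N_{S,T}$ in which all degree-two roots come last, using (iii) to evacuate any degree-two shortcut spawned by a transposition to the far right, and using (ii) to merge equal roots. Arguments remain of logarithmic size $O(\ell(w))$ throughout (each is a sum of $O(n^{2})$ terms of the shape $\pm x_{i}$ or $\pm\kappa x_{i}x_{j}$), and only $O(n^{3})$ applications of Lemma \ref{lemma:steinberg} are needed, so the total cost so far is $C_{n}\ell(w)^{2}$, and $w$ has been homotoped to a word $W_{1}W_{2}$ where $W_{1}=\prod_{s\in S,\,t\in T}\he_{s-t}(y_{s,t})$ and $W_{2}=\prod_{\gamma\in\Phi Z_{S}}\he_{\gamma}(z_{\gamma})$ each have one factor per root. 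Now I would push the exponents to zero in two stages. First, since $W_{1}W_{2}$ represents the identity and $\Ab$ is a homomorphism with kernel $Z_{S}$, applying $\Ab$ gives $\sum_{s,t}y_{s,t}\,z_{s}\otimes z_{t}=0$, so every $y_{s,t}=0$ by linear independence of the $z_{s}\otimes z_{t}$; hence $W_{1}$ is a fixed word of bounded length representing the identity, which can be deleted at bounded cost. Second, the surviving word $W_{2}$ represents the identity in the free abelian group $Z_{S}(\ZZ)$, whose generators $e_{\gamma}$ ($\gamma\in\Phi Z_{S}$) map to linearly independent elements of $\Sym^{2}\RR^{S}$, so every $z_{\gamma}=0$ and $W_{2}$ too is a bounded-length identity word, deleted at bounded cost. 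This leaves the empty word, with total cost $C_{n}\ell(w)^{2}$ for a constant $C_{n}$ depending only on $n$.

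The only real obstacle is making the degree bookkeeping of the second paragraph airtight: it is precisely what guarantees (a) that no shortcut whose root lies outside $\Phi N_{S,T}$ is ever produced, so every Steinberg move lands in a case of Lemma \ref{lemma:steinberg} governed by an ambient subgroup with quadratic Dehn function, and (b) that each commutator contributes at most a single, central shortcut, which is what keeps the shortcut count — and hence the sorting cost — polynomial in $n$. Everything else (the two linear-independence computations, immediate from the explicit formulas for $\Ab$ and $u_{Z}$, and the ``bounded length, hence bounded area'' deletions, which use only that $\Sp(2p;\ZZ)$ is finitely presented) is routine.
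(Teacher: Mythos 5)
Your proof is correct and follows essentially the same strategy as the paper: both arguments use the Steinberg lemma to push shortcuts past one another (exploiting that $\Phi N_{S,T}$ is graded in two degrees, with degree-two roots central and the sum of two degree-one roots landing in $\Phi Z_S$ or outside $\Phi$), then kill the resulting one-shortcut-per-root word by observing that the abelianization and center arguments must vanish. The paper phrases the bookkeeping as ``move all $\alpha$-shortcuts left, fill them since their arguments sum to zero, then repeat,'' while you phrase it as a full bubble sort followed by applying $\Ab$ and linear independence, but these are the same computation.
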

\begin{proof}
Fix $n$, so that we write $O(1)$ for quantities which depend only on $n$ and not on $\ell(w)$.  Observe that every commutator $[e_{\alpha},e_{\beta}]$ lives in $Z_{S,T}$, and that $w$ projects to a word representing the identity in $A_{S,T}$.  Suppose $\alpha\in S-T$, and $\he_{\alpha}(x_{i_{1}}),\ldots,\he_{\alpha}(x_{i_{k}})$ are the $\alpha$ shortcuts appearing in $w$.  Then because $w$ projects to the identity in $A_{S,T}$, we know $x_{i_{1}}+\ldots+x_{i_{k}}=0$.

For each $\alpha\in S-T$, move all the $\he_{\alpha}(x_{i})$ shortcuts to the left with Lemma \ref{lemma:steinberg}, possibly introducing $O(1)$ shortcuts of the form $\he_{\beta}(x)$ where $\beta\in\Phi Z_{S,T}$, then fill the resulting product $\he_{\alpha}(x_{i_{1}})\ldots\he_{\alpha}(x_{i_{k}})$ with the same lemma.  This process will result in a product of $O(1)$ shortcuts for elements of $Z_{S,T}$, which can be filled by the same process, i.e., for each $\alpha\in\Phi Z_{S,T}$, move all the $\he_{\alpha}(x_{i})$ to the left and fill using Lemma \ref{lemma:steinberg}.\end{proof}

\subsection{Lubotzky, Mozes, and Raghunathan}
\label{subsection:lmr}
The following theorem says that any matrix in the diagonal block $\Sp(T)$ can be efficiently expressed as a product of shortcuts of the form $\he_{\alpha}(x)$ where $\alpha\in \Phi\Sp(T)$ (i.e., $e_{\alpha}\in\Sp(T)$).  This is due to Lubotzky, Mozes, and Raghunathan\cite{lmr}, but in order to obtain the precise statement of the theorem, we give a proof based on \cite{zakiryanov}, which proves bounded generation for $\Sp(2p;\ZZ)$.  This result is used implicitly in the next subsection, when we define a normal form for elements of $P_{S,T}$.

\begin{theorem}
\label{theorem:LMRSp}
Let $T\subset \Phi$ be symplectic.  If $w$ is a word in the generators of $\Sp(2p;\ZZ)$ representing some $M\in\Sp(T;\ZZ)$, then we can find a sequence of roots $\alpha_{1},\ldots,\alpha_{k}\in T$ and integers $x_{1},\ldots,x_{k}$ such that
$$e_{\alpha_{1}}(x_{1})\ldots e_{\alpha_{k}}(x_{k})=M$$
and
$$\ell(\he_{\alpha_{1}}(x_{1})\ldots\he_{\alpha_{k}}(x_{k}))=O(\ell(w))$$
\end{theorem}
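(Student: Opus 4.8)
The plan is to combine the bounded generation of $\Sp(2q;\ZZ)$ by elementary symplectic matrices, due to Zakiryanov \cite{zakiryanov} (a refinement of \cite{lmr}), with the trivial exponential bound on $\|M\|$ that holds because $w$ is a bounded-length word in a fixed generating set. First I would record the norm bound: if $\Sigma=\SSp$ is our fixed finite generating set and $C_{0}=\log\!\big((2p)\cdot\max_{g\in\Sigma}\|g\|_{\infty}\big)$, then submultiplicativity $\|AB\|_{\infty}\le (2p)\|A\|_{\infty}\|B\|_{\infty}$ gives $\log\|M\|_{\infty}\le C_{0}\,\ell(w)$, since $w$ represents $M$. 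Next, since $M\in\Sp(T;\ZZ)$ acts trivially on $\RR^{\mH\setminus T}$ and preserves $\RR^{T}$, it is block diagonal with a $\#T\times\#T$ symplectic block of norm at most $\|M\|_{\infty}$; via the canonical isomorphism $\Sp(T;\ZZ)\cong\Sp(\#T;\ZZ)$ carrying $\{e_{\alpha}:\alpha\in\Phi\Sp(T)\}$ onto the standard elementary generators, it therefore suffices to treat an arbitrary $M\in\Sp(2q;\ZZ)$ with $2q=\#T$.

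The key step is an \emph{effective} form of bounded generation: there should be a constant $\nu=\nu(q)$ such that every $M\in\Sp(2q;\ZZ)$ can be written as $M=e_{\beta_{1}}(x_{1})\cdots e_{\beta_{k}}(x_{k})$ with $k\le\nu$, the $\beta_{i}$ standard symplectic roots, and $\log|x_{i}|=O(\log\|M\|_{\infty})$ for every $i$. For this I would run Zakiryanov's reduction, which produces a factorization with $k$ bounded by performing a bounded sequence of elementary row and column operations, each step applying the Euclidean algorithm or a B\'ezout/Chinese-remainder step to entries of intermediate matrices, and then check by induction along the reduction that every intermediate matrix has entries of size polynomial in $\|M\|_{\infty}$. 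The Euclidean-algorithm steps are automatic, since gcds and quotients are no larger than their inputs; the steps invoking strong approximation or the Chinese remainder theorem require a short estimate showing that the modular lifts produced may be chosen polynomially bounded. Consequently each $|x_{i}|$ is polynomial in $\|M\|_{\infty}$, so $\log|x_{i}|=O(\log\|M\|_{\infty})$. (Alternatively one could quote the metric statement of \cite{lmr} directly, but following \cite{zakiryanov} keeps $\nu$ and the root types $\beta_{i}$ explicit.)

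Finally I would assemble the shortcut word. Pulling the $\beta_{i}$ back to roots $\alpha_{i}\in\Phi\Sp(T)$ we have $e_{\alpha_{1}}(x_{1})\cdots e_{\alpha_{k}}(x_{k})=M$, and by the defining property of shortcuts (Lemma \ref{lemma:efficientshortcuts} for short roots, together with the long-root construction of \S\ref{subsection:longrootshortcuts}) one has $\ell(\he_{\alpha_{i}}(x_{i}))=O(1+\log|x_{i}|)$. Hence
$$\ell(\he_{\alpha_{1}}(x_{1})\cdots\he_{\alpha_{k}}(x_{k}))=\sum_{i=1}^{k}\ell(\he_{\alpha_{i}}(x_{i}))\le \nu\cdot O\big(1+\log\|M\|_{\infty}\big)\le \nu\cdot O\big(1+C_{0}\,\ell(w)\big)=O(\ell(w)),$$
where we used $k\le\nu$. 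This gives the desired conclusion.

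The main obstacle is the effective bounded generation of the second paragraph: bounded generation itself may be used as a black box, but verifying that the integers occurring at every stage of Zakiryanov's reduction stay polynomially bounded in $\|M\|_{\infty}$ requires inspecting each reduction step individually, the delicate ones being precisely those that invoke strong approximation or the Chinese remainder theorem rather than the plain Euclidean algorithm, where one must bound the size of the chosen lifts.
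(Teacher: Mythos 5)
Your first and third paragraphs match the paper's proof: the norm bound $\log\|M\|_\infty \leq C_0\,\ell(w)$ from submultiplicativity, and the assembly of the shortcut word from $\ell(\he_\alpha(x))=O(1+\log|x|)$, are exactly right. The gap is in your second paragraph, and it is real.

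You want a uniform bound $k\leq\nu(q)$ on the \emph{number} of elementary factors, with each $|x_i|$ polynomial in $\|M\|_\infty$. But the theorem is invoked (e.g.\ in the lemma of \S\ref{subsection:omegapst}) for $\#T=2$, where $\Sp(T;\ZZ)\cong\SL(2;\ZZ)$, and $\SL(2;\ZZ)$ does \emph{not} have bounded elementary generation. So no such $\nu$ exists in the base case, and your sketch has no way to handle it. Note also that the theorem statement never claims $k$ bounded, and the paper's proof does not produce a bounded $k$: it runs the Euclidean algorithm on the $2\times 2$ block, yielding $k=O(\log\|M\|_\infty)$ elementary factors $e_{\pm\alpha}(\lfloor a_i/b_i\rfloor)$ whose individual shortcut lengths vary, and then invokes Lam\'e's estimate to see that the lengths \emph{telescope}:
$$\sum_{i}\ell\!\left(\he_{(-1)^{i+1}\alpha}(\lfloor a_i/b_i\rfloor)\right)=\sum_i O\!\left(\log\tfrac{a_i}{a_{i+1}}+1\right)=O(\log\|M\|_\infty).$$
This telescoping is the essential idea and the one your proposal is missing; it is strictly weaker than bounded generation with polynomial entries (and correctly so, because the latter is false here). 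For $\#T>2$ the paper does follow Zakiryanov's reduction, but only as far as a bounded number of elementary multiplications with polynomially bounded arguments that peel the matrix down to a $2\times 2$ symplectic block; it does \emph{not} need, and does not prove, effective bounded generation of the full $\Sp(2q;\ZZ)$ with controlled entries (which would in any case require effective Linnik/Dirichlet-type inputs, well beyond the ``short estimate'' you gesture at for the CRT steps). Since you must still dispose of the residual $\SL(2;\ZZ)$ block and bounded generation is unavailable there, you end up needing the paper's Euclidean-telescoping argument regardless, and once you have it the effective-bounded-generation machinery becomes redundant.

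Concretely: replace the goal of bounded $k$ by the goal of bounding $\sum_i\ell(\he_{\alpha_i}(x_i))$; do the Euclidean algorithm with Lam\'e's bound for the $\#T=2$ case; then for $\#T>2$ iterate the four-step Zakiryanov reduction (unimodularization via CRT, make a pivot $1$, clear the column, clear the row), tracking only that each step uses $O(1)$ elementary matrices with arguments polynomial in the current norm, until you land in $\Sp(\pm t)$ and appeal to the base case.
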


\begin{proof}
First note that there is some constant $C>0$ such that
$$\ell(w)\geq C\log(\| M \|_{\infty}),$$
as in the proof of \ref{lemma:efficientshortcuts}, so it suffices to show that our product of shortcuts has length $O(\log(\| M \|_{\infty}))$.

We begin with the case $\#T=2$, so that $T=\{\pm \alpha\}$ for some $\alpha\in \Phi$.  Recall that the Euclidean algorithm determines the greatest common divisor of two positive integers by iteratively reducing one by the other.  More precisely, given $a>b\in \NN$, define sequences $a_{i}$ and $b_{i}$ by letting $a_{0}=a$, $b_{0}=b$, and
$$a_{i+1}=b_{i}$$
$$b_{i+1}=a_{i}-b_{i}\lfloor\frac{a_{i}}{b_{i}}\rfloor$$
so that $b_{i+1}$ is the remainder when we divide $b_{i}$ into $a_{i}$.  Lam\'e \cite{lame} proved that for some $k=O(\log(b))$ we get $a_{k+1}=1$ and $b_{k+1}=0$.  Observe that
$$\begin{bmatrix}b_{i+1}\\a_{i+1}\end{bmatrix}=
\begin{bmatrix}1 & -  \lfloor\frac{a_{i}}{b_{i}}\rfloor\\ 0 & 1 \end{bmatrix}
\begin{bmatrix}a_{i} \\ b_{i}\end{bmatrix}.$$

Now, suppose that $M\in \Sp(T)$ is
$$\begin{bmatrix}a & c\\ b & d\end{bmatrix}$$
and take $z_{\alpha},z_{-\alpha}$ as a basis for $\RR^{T}$, so that
$$e_{\alpha}= \begin{bmatrix}1 & 1\\ 0 & 1\end{bmatrix}$$
$$e_{-\alpha}=e_{\alpha}^{T}$$
We see that
$$e_{(-1)^{k+1}\alpha}(\lfloor\frac{a_{k}}{b_{k}}\rfloor)
e_{(-1)^{k}\alpha}(\lfloor\frac{a_{k-1}}{b_{k-1}}\rfloor)
\ldots
e_{\alpha}(\lfloor\frac{a_{0}}{b_{0}}\rfloor)
\begin{bmatrix} a & c\\ b & d\end{bmatrix}$$
$$=\begin{bmatrix} 1 & c^{\prime}\\ 0 & 1\end{bmatrix}$$
for some integer $c^{\prime}$.

Since $a_{i+1}=b_{i}$, we know that
$\lfloor\frac{a_{i}}{b_{i}}\rfloor=O(\frac{a_{i}}{a_{i+1}})$.
Thus, $$\ell( e_{(-1)^{k+1}\alpha}(\lfloor\frac{a_{k}}{b_{k}}\rfloor)
e_{(-1)^{k}\alpha}(\lfloor\frac{a_{k-1}}{b_{k-1}}\rfloor)
\ldots
e_{\alpha}(\lfloor\frac{a_{0}}{b_{0}}\rfloor))$$
$$=\sum_{i=0}^{k}O(\log\frac{a_{i+1}}{a_{i}}+1)$$
$$=O(\log(\text{max}\{a,b,c,d\}))=O(\ell(w)).$$
This implies that $\log(c^{\prime})=O(\ell(w))$ as well, so that $w$ can be expressed as a product of shortcuts with total length $O(\ell(w))$.\\

Now we assume $\#T>2$ and reduce to the previous case (our attack is copied from \cite{zakiryanov}).  Let $w$ be a word representing some matrix $M$ in $\Sp(T)$.  We will multiply $M$ by $O(1)$ elementary matrices of the form $e_{\alpha}(O(\ell(w)))$ and obtain a matrix $M^{\prime}\in \Sp(\pm t)$ for some $t\in T$.  Then $\log\Vert M^{\prime}\Vert_{\infty}=O(\ell(w))$, so we will be done by the previous case.

Beginning with $M$, we repeat the following four step procedure until we obtain the desired $M^{\prime}$.  (Each step will be accomplished by multiplying by a bounded number of elementary matrices each of which has $\|\cdot\|_{\infty}$-norm of order $\exp(O(\ell(w)))$.  Furthermore, each step increases the $\|\cdot\|_{\infty}$ by at most some constant factor).
\begin{itemize}
\item[1:] Choose $t$ in $T$.  Reduce $M$ to a matrix $M_{1}$ such that the projection of $M_{1}z_{t}$ on $\RR^{T\setminus \{-t\}}$ is unimodular (i.e., its coordinates are coprime).
\item[2:] Reduce $M_{1}$ to a matrix $M_{2}$ such that $M_{2}z_{t}$ has $z_{t}$ coordinate $1$.
\item[3:] Reduce $M_{2}$ to a matrix $M_{3}$ such that $M_{3}z_{t}=z_{t}$
\item[4:] Reduce $M_{3}$ to a matrix in $\Sp(T\setminus\{\pm t\})$
\end{itemize}

{\it Step 1:} For each $s\in T$, define $m_{s}$ to be the $z_{s}$ coordinate of $Mz_{t}$.  Let $p_{1},\ldots,p_{a}$ be the primes which divide all of $\{m_{s}:s\in T\setminus \{-t\}\}$ and let $q_{1},\ldots,q_{b}$ be the primes which divide all of $\{m_{s}:s\in T\setminus \{\pm t\}\}$ but not $m_{t}$.  Note that $m_{-t}$ cannot be divisible by any $p_{i}$, because $Mz_{t}$ is unimodular.  The Chinese remainder theorem provides a $c\in \NN$ of size $O(\| M \|_{\infty}^{2p})$ such that $c$ is divisible by all the $q_{i}$, but congruent to $1$ mod all of the $p_{i}$.  Then we know that the collection
$$\{m_{s}:s\in T\setminus \{\pm t\}\}\cup \{m_{t}+cm_{-t}\}$$
is coprime (any common factor must be either a $p_{i}$ or $q_{i}$, but $m_{t}+cm_{-t}$ cannot be divisible by these primes).  Hence we can multiply $M$ on the left by $e_{-2t}(\omega(z_{t},z_{-t})c)$ and obtain the desired $M_{1}$.\\

{\it Step 2:} Now, for each $s \in T$, let $m_{s}$ be the $z_{s}$ coordinate of $M_{1}z_{t}$.  We know that there exist constants $a_{s}$ of size $O(\| M_{1}\|_{\infty}^{2p})$ such that
$$\sum_{s\in T\setminus\{-t\}}a_{s}m_{s}=1.$$
Hence we have that
$$m_{-t}+\sum_{s\in T\setminus\{-t\}}a_{s}m_{s}(1-m_{-t})=1.$$
Letting $\kappa_{s}=\omega(z_{-s},z_{s})$, we see that
$$M_{1}^{\prime}:=\prod_{s\in T\setminus\{-t\}}e_{-s-t}(\kappa_{s}a_{s}(1-m_{-t}))M_{1}z_{s}$$
has entries of size $O(\|M_{1}\|_{\infty}^{4p^{2}})$ and has $z_{-t}$ coordinate equal to
$$m_{-t}+\sum_{s\in T\setminus\{-t\}}a_{s}m_{s}(1-m_{-t})
+\sum_{s\in (T\setminus\{\pm t\})^{+}}\omega(z_{s},z_{-s}) a_{s}a_{-s}m_{t}.$$
$$=1+\sum_{s\in (T\setminus\{\pm t\})^{+}}\pm a_{s}a_{-s}m_{t}$$
(the sign $\omega(z_{s},z_{-s})$ is only correct if one uses the natural order of multiplication, but it is entirely irrelevant).  Thus if we let
$$M_{1}^{\prime\prime}=e_{-2t}(\sum_{s\in (T\setminus\{-t\})^{+}}\pm a_{s}a_{-s})M_{1}^{\prime},$$
we see that $M_{1}^{\prime\prime}z_{t}$ has $z_{-t}$ coordinate $1$ (for the appropriate choice of signs) and
$$\|M_{1}^{\prime\prime}\|_{\infty}=O(\|M_{1}\|_{\infty}^{8p^{2}}).$$
Finally, we let
$$M_{2}=e_{2t}(1-x)M_{1}^{\prime\prime},$$
where is the $z_{t}$ coordinate of $M_{1}^{\prime\prime}$.\\

{\it Step 3:} For each $s \in T$, let $m_{s}$ be the $z_{s}$ coordinate of $M_{2}z_{t}$, noting that $m_{t}=1$. Letting
$$M_{2}^{\prime}=\prod_{s\in T\setminus\{t\}} e_{t-s}(m_{s})M_{3}$$
we see that
$$M_{2}^{\prime}z_{t}
=z_{t}+\sum_{s\in (T\setminus\{\pm t\})^{+}}\pm m_{s}m_{-s}m_{t}z_{-t}$$
(where the signs are irrelevant and depend on the order of multiplication).  As expected, we multiply this matrix by
$$e_{-2t}(\pm \sum_{s\in (T\setminus\{\pm t\})^{+}}\pm m_{s}m_{-s}m_{t}z_{-t})$$
to obtain $M_{3}$.\\

{\it Step 4:} We know that a matrix $A\in \Sp(T)$ will lie in $\Sp(T\setminus \{\pm t\})$ if and only if $Az_{t}=z_{t}$ and $z_{t}^{\text{T}}A=z_{t}^{\text{T}}$.  If $m_{s}$ denotes the $z_{s}$ coordinate of $z_{t}^{\text{T}}A$, we can multiply $M_{3}$ on the right by matrices of the form $e_{t-s}(m_{s})$ and then by one matrix of the form $e_{2t}(x)$ to produce $M_{4}$ with the desired property.\end{proof}

\subsection{Defining a normal form for $P_{S,T}$.}
\label{subsection:omegapst}
One of the most important roles of shortcuts is that they allow us to efficiently represent elements of the unipotent radical of $P_{S,T}$, which, combined with the following representation for words in the diagonal blocks, lets us define a normal form $\Omega_{P_{S,T}}$ for $P_{S,T}$ itself.

\begin{lemma}
We say a word $w$ is a $C$-efficient representation of $g\in \Sp(2p,\ZZ)$ if $\ell(w)<C\ell(w^{\prime})$ for any word $w^{\prime}$ representing $g$.  There is some $C>0$ such that the following hold.
\begin{itemize}
\item If $S\subset \mH$ isotropic and $\# S\geq 3$, then any $g\in \GL(S)$ has a $C$-efficient representation by a word in $\GL(S)$. (Recall definition
\ref{definition:shortcutword}).
\item If $T\subset \mH$ symplectic and $\# T\geq 4$, then $T$ has a $C$-efficient representation by a word in $\Sp(T)$.
\item If $S\subset \mH$ isotropic and $\# S=2$, then any $g\in \GL(S)$ has a $C$-efficient representation by a shortcut word in $\GL(S)$.
\item If $T\subset\mH$ symplectic and $\# T=2$, than any $g\in \Sp(T)$ has a $C$-efficient representation by a shortcut word in $\Sp(T)$.
\end{itemize}
\end{lemma}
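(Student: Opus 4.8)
The plan is to derive all four cases from a single lower bound together with four case-by-case upper bounds, all uniform over the finitely many admissible isotropic $S$ and symplectic $T$. The lower bound is the submultiplicativity estimate already used to prove Lemma~\ref{lemma:efficientshortcuts}(b): there is an $\epsilon>0$ depending only on $\SSp$ so that any word $w'$ in $\SSp$ representing a matrix $g$ satisfies $\ell(w')\geq\epsilon\log\|g\|_\infty$, since writing $w'=w'_1\cdots w'_m$ with each $w'_i\in\SSp$ gives $\log\|g\|_\infty\leq m\bigl(\log(2p)+\sup\{\log\|M\|_\infty:M\in\SSp\}\bigr)$. Granting, in each case, a word (or shortcut word) of the prescribed type and of length $O(\log\|g\|_\infty)$ representing $g$, we take $C$ to be $1/\epsilon$ times the largest of the implied constants (a maximum over finitely many $S,T$); then for any representative $w'$ of $g$ and the word $w$ supplied by the upper bound, $\ell(w')\geq\epsilon\log\|g\|_\infty\geq\ell(w)/C$, which is exactly $C$-efficiency.

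For the upper bound when $\#S\geq 3$ or $\#T\geq 4$, recall that $\GL(S;\ZZ)$ contains $\SL(S;\ZZ)\cong\SL(\#S;\ZZ)$ with index two, and $\Sp(T;\ZZ)\cong\Sp(\#T;\ZZ)$; these are lattices in the semisimple groups $\SL(\#S;\RR)$ and $\Sp(\#T;\RR)$, of real ranks $\#S-1\geq 2$ and $\#T/2\geq 2$ respectively. By the theorem of Lubotzky, Mozes, and Raghunathan~\cite{lmr}, such lattices are undistorted, so with respect to a fixed finite generating set of the subgroup the word length of $g$ is comparable to the Riemannian distance from $1$ to $g$ in the ambient Lie group, which for an integer matrix is $\Theta(\log\|g\|_\infty)$. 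Hence $g$ is represented by a word in the chosen generators of $\GL(S;\ZZ)$, resp.\ $\Sp(T;\ZZ)$, of length $O(\log\|g\|_\infty)$; when $g$ acts on $\RR^S$ with determinant $-1$ we first left-multiply by one fixed generator of $\GL(S;\ZZ)$ to land in $\SL(S;\ZZ)$, which changes $\|g\|_\infty$ by a bounded factor.

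The cases $\#S=2$ and $\#T=2$ are different in character: the copy of $\SL(2;\ZZ)$ inside the block is exponentially distorted in $\Sp(2p;\ZZ)$, so a plain word in its elementary generators can be exponentially long, and we must use shortcuts. The case $\#T=2$ is immediate from Theorem~\ref{theorem:LMRSp}: applied to a minimal-length representative $w$ of $g\in\Sp(T;\ZZ)$, it yields a shortcut word $\he_{\alpha_1}(x_1)\cdots\he_{\alpha_k}(x_k)$ in $\Sp(T)$ of length $O(\ell(w))$. For $\#S=2$ one runs the identical Euclidean-algorithm argument (the $\#T=2$ step in the proof of Theorem~\ref{theorem:LMRSp}) inside the block $\SL(S;\ZZ)$: Lam\'e's bound on the continued-fraction expansion writes $g$ (after left-multiplying by a fixed generator in the determinant-$(-1)$ case) as $e_{\alpha_1}(x_1)\cdots e_{\alpha_k}(x_k)$ with $\alpha_i\in\Phi\GL(S)$, $k=O(\log\|g\|_\infty)$ and $\sum_i\log|x_i|=O(\log\|g\|_\infty)$; replacing each $e_{\alpha_i}(x_i)$ by the shortcut $\he_{\alpha_i}(x_i)$ of length $O(\log|x_i|)$ (available since $\alpha_i$ is short and $p\geq5$ leaves room for a compatible pair) yields a shortcut word in $\GL(S)$ of total length $O(\log\|g\|_\infty)$.

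I expect the main obstacle to be nothing more than bookkeeping: checking that $\epsilon$ and the four implied constants can be chosen independently of $S$ and $T$ (automatic, since $\SSp$ is fixed and there are only finitely many admissible $S,T$), and handling the determinant-$(-1)$ cosets of the $\GL$-blocks, which costs only a single fixed generator each. The one external input a careful reader should trace is the comparison, via \cite{lmr}, between word length on $\SL(\#S;\ZZ)$ or $\Sp(\#T;\ZZ)$ and $\log\|\cdot\|_\infty$; this is exactly where the hypotheses $\#S\geq3$ and $\#T\geq4$ (real rank at least two) are used.
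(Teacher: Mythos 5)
Your proof is correct and matches the paper's approach, which in fact consists of only three sentences citing Lubotzky--Mozes--Raghunathan \cite{lmr}, Theorem \ref{theorem:LMRSp} for the $\Sp(T)$ cases, and Riley \cite[Theorem 4.1]{riley} for the $\GL(S)$ cases; you have simply unpacked those citations, with the lower bound via submultiplicativity of $\|\cdot\|_\infty$ and the upper bounds via LMR non-distortion (rank $\geq 2$) or the Euclidean algorithm (the $2\times 2$ blocks). The one small variation is that for $\#S=2$ you run the Lam\'e continued-fraction step from the proof of Theorem \ref{theorem:LMRSp} directly inside $\SL(S;\ZZ)\cong\SL(2;\ZZ)$ rather than deferring to Riley, but this is the same computation.
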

\begin{proof}
This follows from the result of Lubotzky Mozes and Raghunathan \cite{lmr}.  Of course, Lemma \ref{theorem:LMRSp} suffices for the $\Sp(T)$ cases.  The reader may read the proof of \cite[Theorem 4.1]{riley} for the $\GL(S)$ cases.\end{proof}

Now we define $\Omega_{P_{S,T}}$.  Let $S,T\subset \mH$ with $S$ isotropic and $T$ symplectic.  Let $\phi_{\GL}:P_{S,T}\rightarrow \GL(S)$ denote projection and similarly for $\phi_{\Sp}:P_{S,T}\rightarrow \Sp(T)$.\\

If $\#S\geq 3$ and $\#T\geq 4$, define the normal form
$$\Omega_{P_{S,T}}:P_{S,T}(\ZZ)\rightarrow \Sigma_{\Sp(2p;\ZZ)}^{\ast}$$
by setting $\Omega(g)=den$, where $d$ and $e$ and geodesics in $\GL(S)$ and $\Sp(T)$ respectively representing $\phi_{\GL}(g)$ and $\phi_{\Sp}(g)$, and $n$ is a product
$$\prod_{\alpha\in \Phi N_{S,T}}\he_{\alpha}(x_{\alpha})$$
which represents $(\phi_{\GL}(g)\phi_{\Sp}(g))^{-1}g$ efficiently.\\

If $\#S=2$, we define $\Omega$ similarly, except we must take $d$ to be a shortcut word in $\GL(S)$. Similarly, if $\# T=2$, we must take $e$ to be a shortcut word in $\Sp(T)$.

\section{From diagonal blocks to parabolics}
\label{section:diagtoparab}
The goal of this section is to prove Theorem \ref{theorem:diagtoparab}. For simplicity, here is a restatement of this theorem.

\begin{nolabeltheorem}
Suppose $p\geq q\geq 2$.  If $w$ a relation in $\Sp(2q;\ZZ)$ (i.e., a word in the generators of $\Sp(\{\pm [1],\ldots,\pm [q]\};\ZZ)$ representing the identity), then $w$ can be quadratically broken into a collection of relations $w_{1},\ldots,w_{n}$ such that each $w_{i}$ is either a bounded length relation in $\Sp(2q)$ or an $\Omega$ triangle in some maximal parabolic of $\Sp(2q)$.
\end{nolabeltheorem}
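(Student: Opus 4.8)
The plan is to carry out the ``precise reduction theory'' sketched in \S\ref{subsection:RYtheorem}, adapting \cite{RY} from $\SL$ to the symmetric space $X=\Sp(2q;\RR)/U(q)$ on which $\Gamma:=\Sp(2q;\ZZ)$ acts. First I would fix $\varepsilon>0$ small enough that $\Gamma$ acts cocompactly on the thick part $\Xt=\Xt(\varepsilon)$, the complement of a $\Gamma$-invariant union of disjoint open horoballs, and pick a basepoint $x_0\in\Xt$. A relation $w$ of length $\ell$ in the generators of $\Sp(2q;\ZZ)$ converts, in the usual way, into a closed loop $\gamma_w\colon S^1\to\Xt$ of length $O(\ell)$ passing through the points $w_i x_0$, where $w_i$ is the length-$i$ prefix of $w$. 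Since $X$ is $\mathrm{CAT}(0)$, coning $\gamma_w$ off $x_0$ yields a filling disk $f\colon D^2\to X$ with $\partial f=\gamma_w$, of area $O(\ell^2)$ and diameter $O(\ell)$.

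The heart of the argument is to discretize $f$ into a combinatorial filling whose $2$-cells are either bounded-length relations in $\Sp(2q)$ or relations in maximal parabolics. I would triangulate $D^2$ with $O(\ell^2)$ triangles, adapted to $f$ so that $f$ has image of controlled size on each simplex and -- the key reduction-theoretic point -- so that for every $2$-simplex $\tau=(v_0,v_1,v_2)$ there is either a bounded subset of $\Xt$ containing all the $f(v_j)$, or a point $\xi_\tau\in\partial X$ such that all the $f(v_j)$ lie deep inside a common horoball centred at $\xi_\tau$. In the first case, cocompactness of the $\Gamma$-action on $\Xt$ exhibits the loop around $\tau$, translated into $\Gamma$, as a bounded-length relation in $\Sp(2q)$. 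In the second, $\mathrm{Stab}(\xi_\tau)$ is conjugate to a maximal parabolic $P_{S,T}$, and the reduction-theoretic picture near the boundary face of $\Xt$ associated to $\xi_\tau$, on which that parabolic acts cocompactly, lets me read the loop around $\tau$ off (after conjugating into standard position) as a relation $p_1p_2p_3=1$ in $P_{S,T}(\ZZ)$, the matrix norms of the $p_j$ -- hence the lengths of their shortcut representatives from \S\ref{section:shortcuts} -- being bounded in terms of the depths of the $f(v_j)$.

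It then remains to replace, for each $\tau$, the relation $p_1p_2p_3=1$ by the $\Omega$-triangle $\Omega_{P_{S,T}}(p_1)\Omega_{P_{S,T}}(p_2)\Omega_{P_{S,T}}(p_3)$. Using the shortcut machinery of \S\ref{section:shortcuts} -- in particular Theorem \ref{theorem:LMRSp} for efficient factorizations inside a symplectic block, Theorem \ref{theorem:RY} for the $\GL$ blocks, and the switching lemmas together with the block-shortcut Lemma \ref{lemma:blockshortcut} for assembling the unipotent part -- a word representing $p_j$ can be homotoped at quadratic cost to $\Omega_{P_{S,T}}(p_j)$ built from the normal form of \S\ref{subsection:omegapst}; note this uses only material developed before \S\ref{section:parabtodiag}, so there is no circularity with Theorem \ref{theorem:parabtodiag}. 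Concatenating the boundaries of all $O(\ell^2)$ triangles then reconstructs $w$ up to a homotopy of cost $O(\ell^2)$, writing $w$ as a product of bounded-length relations in $\Sp(2q)$ and $\Omega$-triangles $\Delta$ in maximal parabolics of $\Sp(2q)$; the adaptedness of the triangulation ensures $\sum\ell(\Delta)^2=O(\ell^2)$, which is precisely quadratic breaking. For small $q$ the same scheme applies, the parabolics $P_{S,T}$ in question then being small enough to cause no trouble.

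The step I expect to be the main obstacle is the construction of this adapted triangulation together with the length bookkeeping near the horoballs: a triangle of bounded geometric size, once projected onto a horosphere, can correspond to an element of $P_{S,T}(\ZZ)$ of exponentially large matrix norm -- this is the exponential distortion of $P_{S,T}(\ZZ)$ in $\Gamma$ that shortcuts exist to absorb -- so one must guarantee simultaneously that the total homotopy cost and the sum $\sum\ell(\Delta)^2$ stay $O(\ell^2)$. This is also where the symplectic case genuinely departs from \cite{RY}: the unipotent radicals $N_{S,T}$ are two-step nilpotent rather than abelian, and the maximal parabolics $P_{S,T}$ lack the $k\leftrightarrow q-k$ self-duality of the $\SL$ case, so the reduction-theoretic estimates and the passage to $\Omega_{P_{S,T}}(p_j)$ must be redone with the central layer $Z_{S,T}\subset N_{S,T}$ tracked explicitly.
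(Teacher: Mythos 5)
Your proposal follows essentially the same route as the paper: loop in $\Sp(2q;\RR)/U(q)$, CAT(0) cone-off to get a quadratic-area Lipschitz disk, adaptively triangulate so that triangles are either of bounded size or sit deep in a common "thin" region with controlled relative scale, then invoke Lubotzky--Mozes--Raghunathan to control the word lengths of the resulting group elements, and use $\Omega$-words as edge labels. The paper frames the reduction theory in terms of Siegel sets, the depth function $r$, and the explicit maps $\rho, \phi$ (Theorem~\ref{theorem:siegelsets} through Corollary~\ref{corollary:parabolictriangles}) rather than your horoball/stabilizer-of-$\xi_\tau$ picture, and cites the adaptive template lemma (\cite[Corollary 5.3]{RY}, reproduced as Lemma~\ref{lemma:adaptivetemplates}) to construct the triangulation whose properties you describe informally; these are differences of presentation, not of substance.

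One point worth flagging: in your third paragraph you say ``a word representing $p_j$ can be homotoped at quadratic cost to $\Omega_{P_{S,T}}(p_j)$'' and invoke the block-shortcut and switching machinery for this. In the paper's proof this step does not exist and does not need to: the $\Omega$-words are \emph{chosen} as the edge labels from the outset, so the product of $\Omega$-triangles read off the dual graph already equals $w$ after a cheap homotopy supported near the boundary of the triangulated square, with the sum $\sum\ell(\Delta)^2=O(\ell^2)$ guaranteed directly by Corollary~\ref{corollary:parabolictriangles} together with the adaptive template bound on $\sum(\text{perimeter})^2$. If you instead start from some other word for $p_j$ and try to homotope it inside $P_{S,T}(\ZZ)$ to $\Omega(p_j)$ at quadratic cost, you are in danger of needing precisely the content of Theorem~\ref{theorem:parabtodiag}, so the circularity you worried about would be real, not avoided. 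The fix is simply to delete that step. Your final paragraph's concern about two-step nilpotency of $N_{S,T}$ and the loss of the $k\leftrightarrow q-k$ duality is well-placed for the paper as a whole but is actually not relevant to this particular theorem: Theorem~\ref{theorem:diagtoparab} only produces $\Omega$-triangles in parabolics and does not manipulate the unipotent layer, so it goes through by the $\SL$-case template with only cosmetic changes; the nilpotency issues surface in Theorem~\ref{theorem:parabtodiag} and the Lipschitz-filling section, not here.
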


This looks a little different than the original statement of the theorem, but it is actually the same because we can immediately fill all of the bounded length relations at cost $O(\ell(w)^{2})$.

\paragraph{An outline of the proof of Theorem \ref{theorem:diagtoparab}.}
The proof of the theorem proceeds by taking a relation $w$, representing it by a loop in the symmetric space $\mathcal{E}=\Sp(2q;\RR)/U(q)$, filling this loop with a Lipschitz disk (since $\mathcal{E}$ is CAT(0)), triangulating this disk so that each triangle lies inside some horoball (or in the thick part), and using this triangulation to break $w$ as desired.

We begin by proving some results about the basic structure of $\mathcal{E}$. Theorem \ref{theorem:siegelsets} describes a fundamental domain $\mathcal{S}$ for the action of $\Sp(2p;\ZZ)$ on $\mathcal{E}$, and hence a map $\rho:\mathcal{E}\rightarrow \Sp(2p;\ZZ)$.  We then prove a series of technical results culminating in \ref{corollary:parabolicedges}, which says that if the distance between points $x,y\in\mathcal{E}$ is small relative to their distance from the thick part, then $\rho(x)^{-1}\rho(y)$ lies in some maximal parabolic. (Essentially, $x$ and $y$ lie in a horoball centered at some $\xi\in\partial X$, and $\rho(x)^{-1}\rho(y)$ must fix $\rho(y)^{-1}\xi$).  We then apply the adaptive template lemma \ref{lemma:adaptivetemplates}\cite[Corollary 5.3]{RY} to show that a Lipschitz disk in $\mathcal{E}$ can be triangulated in such a way that the edges have length short enough for corollary \ref{corollary:parabolicedges} to apply, but long enough that
$$\ell(\Omega(\rho(x)^{-1}\rho(y)))=O(d(x,y)).$$
This implies that $w$ can be broken into a product of $\Omega$ triangles as desired.

It is worth noting that this theorem is analogous to \cite[Lemma 3.1]{RY}, and the proof is nearly identical. The results of subsection \ref{subsection:reductiontheory} are due to various authors including Ji and Macpherson \cite{jimac}.  The adaptive template concept is due to Young.

\subsection{Siegel Sets.}
\label{subsection:siegelsets}
Let $\Phi^{+}$ be the set of positive roots, i.e., all roots which are nonnegative integer linear combinations of the set of simple roots
$$\Sigma=\{ [1]-[2],\ldots,[q-1]-[q],2[q]  \}.$$
Let $N$ be the subgroup generated by $\{e_{\alpha}|\alpha\in \Phi^{+}\}$.  For example, if $q=3$, then $N$ consists of all symplectic matrices of the form
$$\begin{bmatrix}
1 & \ast & \ast & \ast & \ast & \ast\\
0 & 1    & \ast & \ast & \ast & \ast\\
0 & 0    & 1    & \ast & \ast & \ast\\
0 & 0    & 0    & 1    & 0    & 0   \\
0 & 0    & 0    & \ast & 1    & 0   \\
0 & 0    & 0    & \ast & \ast & 1   
\end{bmatrix}$$
Let $A_{\epsilon}=\{\diag_{\Sp}(a_{1},\ldots,a_{q})|a_{i}>\epsilon a_{i+1}; a_{q}>\sqrt{\epsilon}\}$.  (Note that $\diag_{\Sp}(a_{1},\ldots,a_{q})\in A_{\epsilon}$ if and only if $\sigma(\log(a))>\log(\epsilon)$ for all $\sigma\in \Sigma$).  The following theorem (see \cite[Prop. 4.4]{jimac}) is classical.

\begin{theorem}
\label{theorem:siegelsets}
For some $\epsilon\in(0,1)$ and compact $N^{+}\subset N$, the set $\mathcal{S}=[N^{+}A_{\epsilon}]_{\mathcal{E}}$ is a coarse fundamental domain, i.e.,
\begin{itemize}
\item $\Gamma\mathcal{S}=\mathcal{E}$
\item For $x\in \mathcal{E}$, there are only finitely many $\gamma\in \Gamma$ with $x \in \gamma\mathcal{S}$.
\end{itemize}
\end{theorem}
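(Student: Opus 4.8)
The plan is to carry out the classical reduction theory of Borel and Harish-Chandra, specialized to the symplectic group, tracking the one place where the symplectic structure intervenes. Write $G=\Sp(2q;\RR)$, $K=U(q)$, $A=\Diag_{\Sp}^{+}$, and let $N$ be as in the statement, so that $B=NA$ is the standard minimal (Borel) parabolic of $G$ and $\mathcal{S}=[N^{+}A_{\epsilon}]_{\mathcal{E}}$ is a Siegel set. The starting point is the Iwasawa decomposition $G=NAK$, which for $\Sp$ one obtains either by intersecting the $\SL(2q;\RR)$ decomposition $\SL(2q;\RR)=N_{\SL}A_{\SL}\SO(2q)$ with $G$, or directly by Gram--Schmidt applied to a symplectic frame. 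It shows every point of $\mathcal{E}$ is represented by some $na$ with $n\in N$, $a\in A$, so the entire content of the theorem concerns how the arithmetic group $\Gamma=\Sp(2q;\ZZ)$ tiles the $A$-direction up to the compact factor $N^{+}$.

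For the surjectivity $\Gamma\mathcal{S}=\mathcal{E}$ I would realize $\mathcal{E}$ as the space of inner products on $\RR^{2q}$ compatible with $\omega$ (equivalently, positive definite symmetric symplectic matrices $Y$), on which $\Gamma$ acts by $Y\mapsto \gamma^{\mathrm{T}}Y\gamma$. Given $Y$, build a \emph{symplectically reduced} integral symplectic basis: first pick $v_{1}\in\ZZ^{2q}$ minimizing the length $Y[v]$ over nonzero integer vectors; then pick $w_{1}\in\ZZ^{2q}$ with $\omega(v_{1},w_{1})=1$ minimizing $Y[w_{1}]$; then pass to the symplectic complement $\{v_{1},w_{1}\}^{\omega}$ and iterate. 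The minimizing vectors exist by discreteness of the lattice, and the extremality inequalities $Y[v_{k}]\le Y[v]$ (over admissible $v$), translated through the root data $\Sigma=\{[1]-[2],\dots,[q-1]-[q],2[q]\}$, are precisely what forces the $A$-component of $Y$ in this basis to lie in $A_{\epsilon}$ for a suitable $\epsilon=\epsilon(q)\in(0,1)$; this is the symplectic analogue of Hermite/Minkowski reduction, and it is cleanest to phrase it inductively through the maximal parabolic $P_{\{[1]\},\{\pm[2],\dots,\pm[q]\}}$ with Levi factor $\GL(1)\times\Sp(2q-2)$. Changing to the reduced basis is an element $\gamma\in\Gamma$, and since $N(\ZZ)\backslash N(\RR)$ is compact, post-composing with an element of $N(\ZZ)\subset\Gamma$ brings the resulting $N$-component into a fixed compact $N^{+}$; hence $\gamma\cdot Y\in\mathcal{S}$.

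For the local finiteness statement I would deduce it from the \emph{Siegel property}: the set $\{\gamma\in\Gamma:\gamma\mathcal{S}\cap\mathcal{S}\neq\emptyset\}$ is finite. Indeed, if $\gamma$ carries $[n_{1}a_{1}]$ to $[n_{2}a_{2}]$ with $n_{i}\in N^{+}$ and $a_{i}\in A_{\epsilon}$, then $g_{2}^{-1}\gamma g_{1}\in K$ where $g_{i}=n_{i}a_{i}$; writing this out and using the uniform bounds on the entries of the $n_{i}$ and on the ratios $a_{j}/a_{j+1}$ coming from $N^{+}$ and $A_{\epsilon}$ forces every entry of $\gamma$ into a bounded interval depending only on $\epsilon$ and $N^{+}$. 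Since $\gamma\in\Sp(2q;\ZZ)$ has integer entries, only finitely many such $\gamma$ exist. The second bullet follows: given $x$, fix $\gamma_{0}$ with $x\in\gamma_{0}\mathcal{S}$ (possible by surjectivity); any other $\gamma$ with $x\in\gamma\mathcal{S}$ has $\gamma_{0}^{-1}\gamma\,\mathcal{S}\cap\mathcal{S}\neq\emptyset$, so lies in the finite set above.

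The main obstacle is the reduction step in the second paragraph --- making ``symplectically reduced $\Rightarrow$ lies in a Siegel set'' precise. In the $\GL$ case this is textbook Hermite/Minkowski reduction; for $\Sp$ one must check that successive minimization with respect to a \emph{symplectic} basis (not an arbitrary basis) still controls \emph{every} simple root, including the long root $2[q]$, and that the cross-terms of $N$ interacting with $\omega$ do not obstruct absorbing the unipotent part into a compact set. Since all of this is classical (see \cite[Prop.~4.4]{jimac} and the references there), in the paper itself I would simply cite it rather than reproduce the estimates; the sketch above is only to record the shape of the argument.
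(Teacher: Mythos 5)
The paper gives no proof of this statement: it simply cites it as classical (\cite[Prop.\ 4.4]{jimac}), which is exactly what you say you would do at the end of your sketch, so the two ``proofs'' coincide. Your summary of the underlying Borel--Harish-Chandra/Siegel argument---Iwasawa decomposition, symplectic Minkowski reduction of the associated positive form to get $\Gamma\mathcal{S}=\mathcal{E}$, and the Siegel finiteness property to get the local finiteness---is a faithful outline of the classical proof and correctly flags the one place (successive minimization relative to a \emph{symplectic} basis controlling the long root $2[q]$) where the symplectic case genuinely diverges from $\GL$.
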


\begin{definition}
\label{def:siegelrhodepth}
\begin{itemize}
\item Fix $N^{+}$ and $\epsilon$ as in the theorem.  We call $\mathcal{S}=[N^{+}A_{\epsilon}]_{\mathcal{E}}$ a Siegel set.
\item Define $\rho:\mathcal{E}\rightarrow \Gamma$ to be some function such that $x\in \rho(x)\mathcal{S}$ for all $x$.  (For each $x$, there are only finitely many choices).
\item Define $\phi:\mathcal{E}\rightarrow A$ by $[x]_{\mathcal{E}}=[\rho(x)n\phi(x)]_{\mathcal{E}}$ for some $n\in N^{+}$.  (This is well defined because the map $A_{\epsilon}\times N^{+}\rightarrow \mathcal{S}$ is injective, since a product of $a\in A_{\epsilon}$ and $n\in N^{+}$ has the same diagonal entries as $a$).
\item Define $\mathcal{M}$ to be the metric space $\mathcal{E}/\Gamma$, and define the depth function $r:\mathcal{E}\rightarrow [0,\infty)$ by $r(x)=d_{\mathcal{M}}([x]_{\mathcal{M}},1)$.  Observe that this function is $1$-Lipschitz by definition.
\end{itemize}
\end{definition}

As we shall see, for any $x\in\mathcal{E}$, we have that $\phi(x)$ describes the geometry of the lattice $\ZZ^{2q}x\subset\RR^{q}$.  Thus, the following lemma implies that the geometry of this lattice cannot change very fast as we vary $x$.

\begin{lemma}
\label{lemma:philip}
$d_{A}(\phi(x),\phi(y))<d(x,y)+O(1)$.
\end{lemma}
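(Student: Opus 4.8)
The strategy is to show that $\phi(x)$ is, up to a uniformly bounded error, determined by the intrinsic geometry of the symplectic lattice $\Lambda_{x}\subset\RR^{2q}$ attached to $x$ --- concretely, by the logarithms of its successive minima --- and that this data varies at most $1$-Lipschitzly on $\mathcal{E}$. Write $x=g_{x}U(q)$ and let $\Lambda_{x}:=g_{x}^{-1}\ZZ^{2q}$, viewed as a lattice in $\RR^{2q}$ carrying the ambient Euclidean metric and the restriction of $\omega$; since $g_{x}\in\Sp(2q;\RR)$ this is again unimodular for $\omega$, and it is well defined up to the $U(q)$-ambiguity, i.e.\ up to isometry. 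For $\gamma\in\Gamma$ one has $\Lambda_{\gamma x}=(\gamma g_{x})^{-1}\ZZ^{2q}=g_{x}^{-1}\ZZ^{2q}=\Lambda_{x}$, so the isometry class of $\Lambda_{x}$ depends only on $x$, not on the choice of $\rho(x)$, and is $\Gamma$-invariant. Writing $\mu_{1}(x)\le\cdots\le\mu_{2q}(x)$ for its successive minima, the self-duality $\Lambda_{x}^{\ast}\cong\Lambda_{x}$ (via $J_{0}$) and a transference estimate give $\mu_{i}(x)\,\mu_{2q+1-i}(x)=1+O(1)$, so the full shape is recorded, up to bounded error, by $\mu(x):=(\log\mu_{1}(x),\dots,\log\mu_{q}(x))\in\RR^{q}$.

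First I would verify that $\mu$ is $1$-Lipschitz. Putting $g_{x}^{-1}g_{y}=k_{1}\exp(\diag_{\fsp}(s_{1},\dots,s_{q}))k_{2}$ in Cartan form, one has $d(x,y)=\Vert\diag_{\fsp}(s)\Vert\ge\max_{i}|s_{i}|$ in the standard normalization of the metric on $\mathcal{E}$. The linear map $g_{x}^{-1}g_{y}$ sends $\Lambda_{y}$ onto $\Lambda_{x}$ and is $e^{\max_{i}|s_{i}|}$-bi-Lipschitz, so it distorts every successive minimum by a factor in $[e^{-\max_{i}|s_{i}|},e^{\max_{i}|s_{i}|}]$; hence $\Vert\mu(x)-\mu(y)\Vert_{\infty}\le\max_{i}|s_{i}|\le d(x,y)$.

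It remains to compare $\phi(x)$ with $\mu(x)$, which is where reduction theory enters. By definition $\rho(x)^{-1}x=[n\,\phi(x)]_{\mathcal{E}}$ with $n\in N^{+}$ and $\phi(x)=\diag_{\Sp}(a_{1},\dots,a_{q})\in A_{\epsilon}$; the lattice $\phi(x)^{-1}n^{-1}\ZZ^{2q}$ is isometric to $\Lambda_{x}$ (they differ by $\rho(x)\in\Gamma$), and since $N^{+}$ is compact its successive minima lie within a factor $C(\epsilon,N^{+})$ of those of the orthogonal lattice $\phi(x)^{-1}\ZZ^{2q}$, which are the sorted multiset $\{a_{i}^{\pm1}\}$. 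Because $\phi(x)\in A_{\epsilon}$, and $A_{\epsilon}$ has logarithm a bounded neighborhood of a fixed Weyl chamber, it follows that $\log\phi(x)$ and $\mu(x)$ differ by a \emph{fixed} element of the Weyl group (accounting for the reciprocals and the reordering) together with a uniformly bounded additive term. Since Weyl elements act isometrically for the metric $d_{A}$ (taken here, as in this reduction theory, to be the supremum metric on the diagonal exponents, which is precisely what makes the coefficient of $d(x,y)$ equal to $1$), we conclude $d_{A}(\phi(x),\phi(y))\le\Vert\mu(x)-\mu(y)\Vert_{\infty}+O(1)\le d(x,y)+O(1)$. I expect the one delicate point to be this final comparison --- the classical fact that the diagonal part of a point in a Siegel set is reduced --- which in the symplectic case forces one to track the self-duality linking $\mu_{i}$ and $\mu_{2q+1-i}$ and to check that the Weyl element involved is constant over the (bounded) Siegel chamber; everything else is soft.
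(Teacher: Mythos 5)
Your argument is correct; where the paper simply defers to the discussion in Ji--MacPherson \cite[\S 5]{jimac}, you have unpacked that reference into a self-contained proof via successive minima of the symplectic lattice attached to $x$, which is indeed the content of that discussion. Two small points you have implicitly handled but which are worth making explicit: first, in comparing the successive minima of $\phi(x)^{-1}n^{-1}\ZZ^{2q}$ with those of $\phi(x)^{-1}\ZZ^{2q}$, compactness of $N^{+}$ alone does not suffice --- you also need the Siegel-set inequalities on $\phi(x)$ to bound the conjugate $\phi(x)^{-1}N^{+}\phi(x)$ (this is why your constant is $C(\epsilon,N^{+})$ and not merely $C(N^{+})$); second, the paper never actually pins down the metric $d_{A}$, so the coefficient $1$ in front of $d(x,y)$ depends on normalization exactly as you flag. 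For what it is worth, neither of the downstream uses of this lemma (in Lemma \ref{lemma:horoballlatticegeometry} and Corollary \ref{corollary:parabolicedges}) actually requires the constant $1$: any linear bound $d_{A}(\phi(x),\phi(y))\leq C\,d(x,y)+O(1)$ would do, since the constants $C_{1}$, $\Cparab$ there are free to be chosen afterwards, so the normalization issue is harmless.
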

\begin{proof}
This follows from the discussion in \cite[\S 5]{jimac}.\end{proof}

\subsection{Technical results of reduction theory.}
\label{subsection:reductiontheory}
\begin{definition}
\label{def:rshort}
Given $x\in X$ and $r\in\mathbb{R}$, let $V(x,r)\subset \mathbb{Z}^{2p}$ denote the subgroup of $\mathbb{Z}^{2p}$ generated by vectors $v$ such that $\Vert v\tilde{x} \Vert\leq r$, where $\tilde{x}\in G$ is any lift of $x$.
\end{definition}

The importance of this definition is that if $r(x)\gg 0$ and $d(x,y)$ is less than some constant times $r(x)$, then $V(x,r)$ and $V(y,r)$ will be the same for some $r$ determined by $x$.  Our starting point for showing this is the following lemma (the reader is advised to write out the matrices explicitly if the proof seems confusing):
\begin{lemma}
\label{lemma:rshort}
There is a constant $C>1$ such that
\begin{itemize}
\item[(a)] Given $n\in N^{+}$, a real number $r>0$, and $\diag_{\Sp}(a_{1},\ldots,a_{p})\in A^{+}$ with $a_{i}C^{-1}>r>a_{i+1}C$ for some $i$, we have that $V(na,r)=\langle z_{i+1},\ldots,z_{2p}\rangle$, i.e., the span of the last $2p-i$ standard basis vectors in $\mathbb{Z}^{2p}$.
\item[(b)] If $n\in N^{+}$ and $\diag_{\Sp}(a_{1},\ldots,a_{p})\in A^{+}$ with $a_{p}>C$, then $V(na,1)=\langle z_{p},\ldots,z_{2p}\rangle$, i.e., the span of the last $p$ standard basis vectors in $\mathbb{Z}^{2p}$.
\end{itemize}
\end{lemma}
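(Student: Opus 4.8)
The plan is a direct computation with the matrix $na$, once the standard basis is ordered conveniently; the statement concerns only the lattice $\ZZ^{2q}\cdot na$, so nothing deep is involved. First I would re-index the basis of $\RR^{2q}$ in the order in which its vectors occur along the complete flag stabilised by $NA$, namely $\zeta_{1}=z_{[1]},\dots,\zeta_{q}=z_{[q]},\zeta_{q+1}=z_{-[q]},\zeta_{q+2}=z_{-[q-1]},\dots,\zeta_{2q}=z_{-[1]}$. Using the defining formula for $e_{s-t}$ and its long-root analogue, one checks case by case on the three shapes $e_{[i]-[j]}$, $e_{[i]+[j]}$, $e_{2[i]}$ that every positive-root generator is upper unitriangular in the $\zeta$-basis, hence so is every $n\in N^{+}$; and in this basis the diagonal of $a=\diag_{\Sp}(a_{1},\dots,a_{q})$ reads $(a_{1},\dots,a_{q},a_{q}^{-1},\dots,a_{1}^{-1})$, which is nonincreasing since $a\in A^{+}$ --- write $a'_{1}\ge\dots\ge a'_{2q}$ for these entries. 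Because $\{z_{i+1},\dots,z_{2q}\}$ and $\{\zeta_{i+1},\dots,\zeta_{2q}\}$ generate the same subgroup of $\ZZ^{2q}$, it is enough to prove $V(na,r)=\langle\zeta_{i+1},\dots,\zeta_{2q}\rangle$ (and, in (b), $V(na,1)=\langle\zeta_{q+1},\dots,\zeta_{2q}\rangle$). I would fix $M$ with $|n_{kj}|\le M$ for all $n\in N^{+}$ (possible as $N^{+}$ is compact) and take $C:=2qM+2$; also, since changing the lift of $x=[na]$ amounts to right multiplication by an element of $U(q)\subset O(2q)$, which preserves all the relevant norms, I may compute with $na$ directly.

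For the inclusion $\supseteq$, I would estimate $\zeta_{k}\cdot na$ for $k\ge i+1$: upper unitriangularity gives $\zeta_{k}n=\zeta_{k}+\sum_{j>k}n_{kj}\zeta_{j}$, so $\zeta_{k}na=a'_{k}\zeta_{k}+\sum_{j>k}n_{kj}a'_{j}\zeta_{j}$, and since the diagonal is nonincreasing, $\|\zeta_{k}na\|\le(1+2qM)\,a'_{i+1}$. In part (a), $r>a_{i+1}C$ forces $a'_{i+1}=a_{i+1}<r/C$, so $\|\zeta_{k}na\|<r$; in part (b), $k\ge q+1$ and $a_{q}>C$ force $a'_{k}\le a'_{q+1}=a_{q}^{-1}<1/C$, so $\|\zeta_{k}na\|<1$. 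As $V(na,\cdot)$ is a subgroup, the whole span is contained in it. For the inclusion $\subseteq$, I would argue by contradiction: if $v\in\ZZ^{2q}$ has $\|v\,na\|\le r$ but lies outside the span, let $k\le i$ (in part (b), $k\le q$) be minimal with nonzero $\zeta_{k}$-coordinate $v_{k}$; since $v$ has vanishing $\zeta_{l}$-coordinate for $l<k$ and $n$ is upper unitriangular, the $\zeta_{k}$-coordinate of $vn$ is exactly $v_{k}$, hence that of $v\,na$ is $v_{k}a'_{k}$, of modulus at least $a'_{k}=a_{k}\ge a_{i}>rC\ge r$ (as $v$ is a nonzero integer vector and $k\le i\le q$), contradicting $\|v\,na\|\le r$; part (b) is identical with $r=1$ and $a'_{k}=a_{k}\ge a_{q}>C$. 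Combining the two inclusions yields the lemma.

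The only step that needs genuine care is the bookkeeping in the first paragraph: choosing the basis order so that $NA$ becomes literally upper triangular with nonincreasing diagonal, since that is exactly the structural feature driving both inclusions. In particular part (b) is the endpoint case $i=q$ of part (a), where the diagonal entry just past the gap is $a_{q}^{-1}$, automatically $<1<a_{q}$, so the two-sided gap hypothesis degenerates to the single inequality $a_{q}>C$; everything after the basis setup is the two elementary norm estimates above.
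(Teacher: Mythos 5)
Your re-indexing of the standard basis so that $N^{+}$ becomes upper unitriangular and the diagonal of $a$ reads $(a_{1},\dots,a_{q},a_{q}^{-1},\dots,a_{1}^{-1})$ is a clean way to organize what is otherwise the same computation as the paper's (bound the $k$-th row of $na$ for $k$ above and below the gap), and the reduction to the representative $na$ itself, the identification of the two spans, and the overall two-inclusion structure are all correct. However, there is a genuine gap in the step you yourself flag as the structural heart of the argument: the claim that the diagonal $(a'_{1},\dots,a'_{2q})$ is nonincreasing because $a\in A^{+}$. The paper's $A^{+}$ is $A_{\epsilon}$ with $\epsilon\in(0,1)$, so the defining inequalities only give $a_{i}>\epsilon\,a_{i+1}$ and $a_{q}^{2}>\epsilon$; consecutive ratios may dip below $1$, and the sequence need not be monotone. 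This breaks both of your estimates as stated: in the $\supseteq$ direction you use $a'_{j}\le a'_{i+1}$ for $j>i$, when in fact you only get $a'_{j}\le\epsilon^{-(j-i-1)}a'_{i+1}$; in the $\subseteq$ direction you use $a_{k}\ge a_{i}$ for $k\le i$, when you only get $a_{k}\ge\epsilon^{\,i-k}a_{i}$. Consequently your explicit choice $C=2qM+2$ is too small.

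The repair is exactly what the paper does: absorb a factor of $\epsilon^{-O(q)}$ into $C$ (the paper takes $C$ larger than $p\,\epsilon_{\mathcal{S}}^{-p}\sup_{n\in N^{+}}\Vert n\Vert_{\infty}$ and separately $C>\epsilon_{\mathcal{S}}^{-p}$). So the argument is salvageable with a different constant, but as written it invokes a false premise and would not compile into a correct proof; you should either prove monotonicity holds for the particular $A^{+}$ (it does not) or replace each ``nonincreasing'' step with the $\epsilon$-quasi-nonincreasing bound and enlarge $C$ accordingly.
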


\begin{proof}
This is an analogue of \cite[Lemma 4.6]{RY}, and in fact it follows from that lemma, but we give a proof anyways, starting with part (b).  We are thinking of $z_{j}$ as a row vector because we are multiplying by $z_{j}$ on the left.  Note that $z_{j}na$ is the $j$th row of $na$.  Suppose that $j>p$, and the $j$th row of $n$ is
$$[0,\ldots, 0,1,n_{j1},\ldots, n_{j2p}]$$
so the $j$th row of $na$ will be
$$[0,\ldots, 0,a_{j-p}^{-1},n_{j1}a_{j-p+1}^{-1},\ldots, n_{j2p}a_{p}^{-1}]$$
Since $a_{2p}=a_{p}^{-1}$, we know by the defining inequalities for elements of $A^{+}$ that $a_{k}<a_{p}^{-1}\epsilon_{\mathcal{S}}^{-p}$.  If we choose $C$ larger than $p\epsilon_{\mathcal{S}}^{-p}\sup_{\tilde{n}\in N^{+}}\Vert\tilde{n}\Vert_{\infty}$, then each entry of $\Vert z_{j}na \Vert$ will have absolute value less than $\frac{1}{p}$, and hence this vector will have norm less than $1$.

On the other hand, if $j<p$, then $z_{j}na$ will have $j$-th coordinate $a_{j}$.  Hence, given $v\notin \langle z_{i+1},\ldots,z_{2p}\rangle$ we have that the $j$-th coordinate of $vna$ is at least $a_{j}$ in absolute value, where $j<p$ is the first nonzero coordinate of $v$.  Consequently,
$\Vert vna \Vert > a_{j} > \epsilon_{\mathcal{S}}^{p}a_{p}$,
so if $C>\epsilon_{\mathcal{S}}^{-p}$, then $\Vert vna \Vert$ will be greater than $1$ as desired.

The proof of (a) is similar.\end{proof}

The following corollary follows immediately because for $\gamma\in\Gamma$, $x\in\mathcal{E}$ and $r>0$, we have $V(\gamma x,r)=V(x,r)\gamma^{-1}$ by definition. 
\begin{corollary}
\label{corollary:rshort}
There is a constant $C>1$ such that
\begin{itemize}
\item[(a)] If $x\in \mathcal{E}$ and $\phi(x)=\diag_{\Sp}(a_{1},\ldots,a_{p})\in A^{+}$ with $a_{i}C^{-1}>r>a_{i+1}C$ for some $i$, then $V(x,r)=\langle z_{i+1},\ldots,z_{2p}\rangle\rho(x)^{-1}$.
\item[(b)] If $x\in \mathcal{E}$ and $\phi(x)=\diag_{\Sp}(a_{1},\ldots,a_{p})\in A^{+}$ with $a_{p}>C$, then $V(x,1)=\langle z_{p},\ldots,z_{2p}\rangle\rho(x)^{-1}$.
\end{itemize}
\end{corollary}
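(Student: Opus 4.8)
The plan is to deduce this immediately from Lemma \ref{lemma:rshort} using the equivariance identity $V(\gamma x,r)=V(x,r)\gamma^{-1}$ noted just above the statement. I would take $C$ to be the constant produced by Lemma \ref{lemma:rshort}.

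First I would unwind the definitions of $\rho$ and $\phi$ from Definition \ref{def:siegelrhodepth}: there is an $n\in N^{+}$ with $[x]_{\mathcal{E}}=[\rho(x)\,n\,\phi(x)]_{\mathcal{E}}$, and I write $a=\phi(x)=\diag_{\Sp}(a_{1},\ldots,a_{p})$. Next I would note that $V(\cdot,r)$ is genuinely a function on $\mathcal{E}$ and not merely on $G$: since the maximal compact $U(p)$ sits inside the Euclidean isometries of $\RR^{2p}$, replacing a lift $\tilde{x}$ by $\tilde{x}k$ with $k\in K$ does not change the set of $v$ with $\Vert v\tilde{x}\Vert\le r$, hence does not change $V$. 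Granting this, applying $V(\gamma x,r)=V(x,r)\gamma^{-1}$ with $\gamma=\rho(x)$ and with the point $[na]_{\mathcal{E}}$ in place of $x$ gives $V(x,r)=V(na,r)\,\rho(x)^{-1}$, where on the right $na$ is regarded as an element of $G$.

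It then remains only to substitute Lemma \ref{lemma:rshort}. In case (a) the hypothesis $a_{i}C^{-1}>r>a_{i+1}C$ is precisely the hypothesis of part (a) of that lemma, so $V(na,r)=\langle z_{i+1},\ldots,z_{2p}\rangle$ and hence $V(x,r)=\langle z_{i+1},\ldots,z_{2p}\rangle\rho(x)^{-1}$; in case (b) the hypothesis $a_{p}>C$ is the hypothesis of part (b), giving $V(na,1)=\langle z_{p},\ldots,z_{2p}\rangle$ and therefore $V(x,1)=\langle z_{p},\ldots,z_{2p}\rangle\rho(x)^{-1}$. There is essentially no obstacle: the only thing needing care is the well-definedness of $V(\cdot,r)$ on $\mathcal{E}$ together with its transformation rule under $\Gamma$, which is exactly the remark preceding the statement, and once that is in hand the corollary is a one-line consequence of Lemma \ref{lemma:rshort}.
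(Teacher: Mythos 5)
Your argument is correct and matches the paper's own reasoning: the paper also observes the equivariance identity $V(\gamma x,r)=V(x,r)\gamma^{-1}$ and states that the corollary follows immediately from Lemma \ref{lemma:rshort}. You merely spell out the application of the identity with $\gamma=\rho(x)$ (and note the well-definedness of $V(\cdot,r)$ on $\mathcal{E}$, which the paper leaves implicit in Definition \ref{def:rshort}), which is exactly the intended argument.
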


For $j=1,...,p-1$, and $c>0$, let $B_{j}(c)$ be the set of points in $\mathcal{E}$ such that $([j]-[j+1])\log\phi(x)>\log(c)$
Similarly, let $B_{p}(c)$ be the set of $x\in\mathcal{E}$ such that
$2[p]\log\phi(x)>\log(c)$.

\begin{lemma}
\label{lemma:horoballlatticegeometry}
\begin{itemize} There is a constant $C$ such that if $c\gg C$, then we have the following.
\item[(a)] Suppose $1\leq j \leq p-1$.  Let $s(x)$ be the geometric mean of the $j$ and $j+1$st entries of $\phi(x)$, i.e.,
$$s(x)=\exp(\frac{1}{2}([j+1]+[j])\log\phi(x))$$
Then $V(x,s(x))$ is constant on each connected component of $B_{j}(c)$.
\item[(b)] $V(x,1)$ is constant on each connected component of $B_{p}(c)$.
\end{itemize}
\end{lemma}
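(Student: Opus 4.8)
The plan is to prove that the map $x\mapsto V(x,s(x))$ in part (a), and the map $x\mapsto V(x,1)$ in part (b), is \emph{locally constant} on $B_j(c)$ (resp.\ $B_p(c)$) as soon as $c$ exceeds a constant depending only on the constant $C$ of Corollary \ref{corollary:rshort}. Since a locally constant function is constant on each connected component of its domain, this suffices.

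The first ingredient is a pointwise computation. Suppose $c>C^{2}$ and let $x\in B_j(c)$, so that $\phi(x)=\diag_{\Sp}(a_1,\ldots,a_p)$ with $a_j/a_{j+1}>c$. Since $a_j/s(x)=s(x)/a_{j+1}=\sqrt{a_j/a_{j+1}}>\sqrt c>C$, we have $a_jC^{-1}>s(x)>a_{j+1}C$, so Corollary \ref{corollary:rshort}(a) (with $i=j$) gives $V(x,s(x))=\langle z_{j+1},\ldots,z_{2p}\rangle\rho(x)^{-1}$. More is true: the same corollary shows $V(x,r)=\langle z_{j+1},\ldots,z_{2p}\rangle\rho(x)^{-1}$ for \emph{every} $r$ in the interval $I(x):=(a_{j+1}C,\,a_jC^{-1})$, whose endpoints have ratio exceeding $c/C^{2}$. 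Writing $W:=V(x,s(x))$, this says that $W$ is generated by lattice vectors $v\in\mathbb{Z}^{2p}$ with $\|v\tilde x\|\le s(x)$, whereas every $v\in\mathbb{Z}^{2p}\setminus W$ has $\|v\tilde x\|>a_jC^{-1}$; the ratio between these two scales is at least $\sqrt c/C$ and can be made arbitrarily large by enlarging $c$. For part (b) the analogous statement, with the same proof, comes from Corollary \ref{corollary:rshort}(b) together with (a) for $i=p$: when $c$ is large, $x\in B_p(c)$ forces $a_p>\sqrt c$ and $a_1>C$, so $r=1$ lies in the long interval $(a_1^{-1}C,\,a_pC^{-1})$ on which $V(x,\cdot)$ is the constant lattice $\langle z_{p+1},\ldots,z_{2p}\rangle\rho(x)^{-1}$, with the same gap.

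Next I would establish local constancy around a fixed $x_0\in B_j(c)$. Set $r_0:=s(x_0)$ and $W:=V(x_0,r_0)$, and choose a finite subset $v_1,\ldots,v_k$ of $\{v:\|v\tilde x_0\|\le r_0\}$ generating $W$. On a small ball $U\ni x_0$ we may write a continuously varying lift as $\tilde y=\tilde x_0\,g(y)$ with $\|g(y)\|,\|g(y)^{-1}\|\le 2$. Then for $y\in U$: each $\|v_m\tilde y\|\le 2\|v_m\tilde x_0\|\le 2r_0$, so $W\subseteq V(y,2r_0)$; conversely, any $v$ with $\|v\tilde y\|\le 2r_0$ has $\|v\tilde x_0\|\le 2\|v\tilde y\|\le 4r_0$, and once $c$ is large enough that $a_j(x_0)C^{-1}>4r_0$ (i.e.\ $\sqrt c>4C$) the pointwise step forces such a $v$ to lie in $W$; hence $V(y,2r_0)\subseteq W$, so $V(y,2r_0)=W$ for all $y\in U$. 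Finally, by Lemma \ref{lemma:philip} the diagonal entries of $\phi$ vary by at most a bounded factor over the bounded set $U$, so for $c$ large the interval $I(y)$ still contains both $2r_0$ and $s(y)$; since $V(y,\cdot)$ is constant on $I(y)$ by the pointwise step applied at $y$, we conclude $V(y,s(y))=V(y,2r_0)=W=V(x_0,s(x_0))$, which is the required local constancy. Part (b) runs identically with $r_0=1$ held fixed throughout; the only use of Lemma \ref{lemma:philip} there is to check that $y\in U$ still satisfies $a_p(y)>2C$, so that Corollary \ref{corollary:rshort}(b) applies at $y$.

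The main obstacle is that $V(x,r)$ is assembled from \emph{infinitely many} lattice vectors, so one cannot simply perturb finitely many strict inequalities. This is resolved by combining two facts: (i) the uniform comparison $\tfrac12\|v\tilde x_0\|\le\|v\tilde y\|\le 2\|v\tilde x_0\|$ valid \emph{simultaneously for all} $v\in\mathbb{Z}^{2p}$ when $y$ is near $x_0$, which confines the ``$r_0$-short'' vectors at $y$ to a finite set of ``$4r_0$-short'' vectors at $x_0$; and (ii) the \emph{spectral gap} furnished by Corollary \ref{corollary:rshort} --- the statement that $V(x,\cdot)$ is constant on the long interval $I(x)$ --- which guarantees that this finite set already lies in $W$. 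A secondary nuisance is that $\rho$, hence $\phi$ and $s$, need not be continuous; Lemma \ref{lemma:philip} bounds their oscillation on bounded sets, and enlarging $c$ makes $I(x)$ long enough to absorb that oscillation.
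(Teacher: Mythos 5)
Your proof is correct and is a careful elaboration of the same strategy the paper's terse proof relies on: Lemma \ref{lemma:philip} plus Corollary \ref{corollary:rshort}, upgraded to local constancy of the lattice map. The elaboration is genuinely needed, because Corollary \ref{corollary:rshort} alone gives $V(x,s(x))=\langle z_{j+1},\dots,z_{2p}\rangle\rho(x)^{-1}$ and $V(y,s(y))=\langle z_{j+1},\dots,z_{2p}\rangle\rho(y)^{-1}$ expressed through \emph{different} $\rho$-values, and the equality of these two lattices is not immediate from the corollary; your route --- the intrinsic definition of $V(x,r)$, the constancy of $V(x,\cdot)$ across the long interval $I(x)$, and the simultaneous factor-of-two comparison of $\|v\tilde{x}_0\|$ and $\|v\tilde{y}\|$ valid for every $v\in\ZZ^{2p}$ --- is the right way to close that gap.

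One inaccuracy in part~(b): you invoke Corollary \ref{corollary:rshort}(a) ``for $i=p$,'' but that item is stated only for $i\le p-1$ (there is no $a_{p+1}$ among $a_1,\dots,a_p$), and the claimed constancy of $V(x,\cdot)$ on $(a_1^{-1}C,\,a_pC^{-1})$ actually fails near its left endpoint whenever $a_1\gg a_p$. For instance with $p=2$, $n=\Id$, $a_1=10^{10}$, $a_2=100$, $r=10^{-5}$, one has $\|z_4 na\|=10^{-2}>r$ while $\|z_3 na\|=10^{-10}\le r$, so $V(na,r)=\langle z_3\rangle\neq\langle z_3,z_4\rangle$, yet $r$ lies in your interval. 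The correct constancy interval for part~(b) should be read off the proof of Lemma~\ref{lemma:rshort}(b): it has the shape $(K\,a_p^{-1},\,K^{-1}a_p)$ for a constant $K$ depending only on $p$, $\epsilon_{\mathcal{S}}$ and $N^{+}$. Since that interval still contains $1$, $2$ and $4$ once $a_p>\sqrt{c}$ is large, the rest of your part~(b) argument goes through unchanged with this substitution.
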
 
\begin{proof}
We will prove (a) first.  Fix any constant $C_{1}>0$.  It suffices to find $c$ such that $V(x,s(x))$ is constant on a ball of radius $C_{1}$ around any point of $B_{j}(c)$.  Take $x,y\in\mathcal{E}$ with $d(x,y)<C_{1}$, so that the distance between $\phi(x)$ and $\phi(y)$ is at most some constant $C_{0}$ by Lemma \ref{lemma:philip}.  Then for all $i$, we have $|[i]\log(\phi(x))-[i]\log(\phi(y))|$ is bounded by some constant that does not depend on $c$.  In particular, taking $i=j,j+1$, we see that if $c$ is sufficiently large, $V(x,s(x))$ and $V(y,s(y))$ will be the same by corollary \ref{corollary:rshort}.

The proof of (b) is similar.\end{proof}

\begin{corollary}
\label{corollary:parabolicedges}
There are constants $0<\Cparab<1$ and $C_{2}>0$ such that if $x,y\in \mathcal{E}$ with $r(x)>C_{2}$ and $d(x,y)<\Cparab r(x)$, then $\rho(y)\in\rho(x)P$ for some maximal parabolic $P$ of $\Gamma$.
\end{corollary}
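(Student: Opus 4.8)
The plan is to show that $x$ and $y$ lie in a single connected component of one of the standard horoball regions $B_{j_{0}}(c)$, and then read off the parabolic from Lemma~\ref{lemma:horoballlatticegeometry} and Corollary~\ref{corollary:rshort}. First I would fix, once and for all, a constant $c$ larger than the implicit constant of Lemma~\ref{lemma:horoballlatticegeometry} and larger than $C^{2}$, where $C$ is the constant of Corollary~\ref{corollary:rshort}. Then I would invoke the standard reduction-theory fact (implicit in \S\ref{subsection:reductiontheory}; cf.\ \cite{jimac}) that the depth function is coarsely dominated by the largest simple root value: there are $\lambda>0$ and $C_{3}>0$ with $\max_{\sigma\in\Sigma}\sigma(\log\phi(z))\ge\lambda\,r(z)-C_{3}$ for every $z\in\mathcal{E}$, where $\Sigma=\{[1]-[2],\ldots,[p-1]-[p],2[p]\}$ is the set of simple roots. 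Let $\sigma_{0}\in\Sigma$ attain this maximum at $z=x$, and let $j_{0}\in\{1,\ldots,p\}$ be the corresponding index, so that $B_{j_{0}}(c)=\{z\in\mathcal{E}:\sigma_{0}(\log\phi(z))>\log c\}$.

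The next step is to propagate the inequality $\sigma_{0}(\log\phi(x))>\log c$ — valid as soon as $C_{2}>(\log c+C_{3})/\lambda$ — along the (unique, since $\mathcal{E}$ is CAT(0)) geodesic $[x,y]$. By Lemma~\ref{lemma:philip}, every $z\in[x,y]$ satisfies $d_{A}(\phi(x),\phi(z))<d(x,z)+O(1)\le d(x,y)+O(1)$, and since $\sigma_{0}$ is one of finitely many fixed linear functionals this yields $\sigma_{0}(\log\phi(z))\ge\sigma_{0}(\log\phi(x))-C'\bigl(d(x,y)+O(1)\bigr)$ for a uniform constant $C'$. Combining this with $\sigma_{0}(\log\phi(x))\ge\lambda\,r(x)-C_{3}$ and $d(x,y)<\Cparab\,r(x)$ gives $\sigma_{0}(\log\phi(z))\ge(\lambda-C'\Cparab)\,r(x)-C_{3}-O(1)$. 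Choosing $\Cparab<\min\{1/2,\lambda/(2C')\}$ and then $C_{2}$ large enough (depending on $c,\lambda,C_{3},C'$) forces $\sigma_{0}(\log\phi(z))>\log c$ for all $z\in[x,y]$; hence the connected set $[x,y]$ lies in $B_{j_{0}}(c)$, so $x$ and $y$ lie in the same connected component of $B_{j_{0}}(c)$.

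Now I would apply Lemma~\ref{lemma:horoballlatticegeometry}, which gives $V(x,s(x))=V(y,s(y))$ for the geometric-mean function $s$ when $j_{0}<p$, and $V(x,1)=V(y,1)$ when $j_{0}=p$. Membership in $B_{j_{0}}(c)$ forces $a_{j_{0}}/a_{j_{0}+1}>c>C^{2}$ (resp.\ $a_{p}^{2}>c>C^{2}$) for both $x$ and $y$, and the hypothesis of Corollary~\ref{corollary:rshort} that $\phi$ lie in $A^{+}$ holds automatically since $\phi$ takes values in $A_{\epsilon}$; so Corollary~\ref{corollary:rshort}(a) (resp.\ (b)) evaluates both sides, giving $W_{0}\rho(x)^{-1}=W_{0}\rho(y)^{-1}$ where $W_{0}$ is the standard coordinate sublattice $\langle z_{j_{0}+1},\ldots,z_{2p}\rangle$ (resp.\ $\langle z_{p},\ldots,z_{2p}\rangle$) of $\ZZ^{2p}$. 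Right-multiplying by $\rho(y)$ shows that $\rho(x)^{-1}\rho(y)$ stabilizes $W_{0}$, and a direct check (exactly as in the $\SL$ case of \cite{RY}) identifies the stabilizer of this coordinate sublattice in $\Sp(2p;\ZZ)$ with a standard maximal parabolic $P_{S,T}(\ZZ)$ of $\Gamma$ (concretely, the stabilizer of the isotropic subspace $\RR^{\{[1],\ldots,[j_{0}]\}}$, resp.\ $\RR^{\{[1],\ldots,[p-1]\}}$). Hence $\rho(x)^{-1}\rho(y)\in P_{S,T}(\ZZ)$, i.e.\ $\rho(y)\in\rho(x)P_{S,T}(\ZZ)$, as desired.

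The only genuine content is ordering the choices of constants correctly — $c$ first, then $\Cparab$ in terms of $\lambda$ and $C'$, then $C_{2}$ — so that the geodesic $[x,y]$ cannot leave the single horoball region $B_{j_{0}}(c)$; everything else is direct bookkeeping with Lemmas~\ref{lemma:philip} and~\ref{lemma:horoballlatticegeometry} and Corollary~\ref{corollary:rshort}, in exact parallel with \cite{RY}. The one point that deserves care is the reduction-theory comparison $r(z)\lesssim\max_{\sigma\in\Sigma}\sigma(\log\phi(z))$ invoked at the outset; if one prefers not to cite it as a black box, it can be derived from Corollary~\ref{corollary:rshort}, the description of the Siegel set $\mathcal{S}$, and the $1$-Lipschitz bound on $r$, by estimating the distance from $z$ to the compact part of $\mathcal{S}$ on which all simple roots are $O(1)$.
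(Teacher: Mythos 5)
Your proposal is correct and follows essentially the same route as the paper: find a dominant simple root $\sigma_x$ via the comparison $r\lesssim\max_\sigma\sigma\circ\log\phi$, propagate the resulting lower bound using Lemma~\ref{lemma:philip}, land $x$ and $y$ in a common $B_j(c)$, and conclude with Lemmas~\ref{lemma:horoballlatticegeometry} and Corollary~\ref{corollary:rshort}. You go slightly beyond the paper's terse proof in two respects worth keeping: you track the inequality along the entire geodesic $[x,y]$ (rather than only at the two endpoints), which is what actually certifies that $x$ and $y$ lie in the \emph{same connected component} of $B_{j_0}(c)$; and you spell out the final identification via Corollary~\ref{corollary:rshort} showing $\rho(x)^{-1}\rho(y)$ stabilizes a coordinate sublattice, a step the paper leaves implicit.
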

\begin{proof}
Of course, the goal is to find $C_{1}$ and $C_{2}$ which force $x$ and $y$ to be in the same component of some $B_{j}(c)$.  Recall that $r(x)=\|\log\phi(x)\|_{2}$, and let
$$g(x)=\sup\{\epsilon_{\mathcal{S}}\}\cup\{|\sigma\log\phi(x)|:\sigma\in\Sigma\}$$
so that the entries of $\log\phi(x)$ are bounded by $pg(x)$, and thus $r(x)^{2}\leq 2p(pg(x))^{2}$.  It follows that $r(x)=O(g(x))$, and so there must be a constant $\varepsilon>0$ such that for all $x$ with $r(x)>\epsilon_{\mathcal{S}}$, there is some $\sigma_{x}\in\Sigma$ with $|\sigma_{x}\log\phi(x)|>\varepsilon r(x)$.  We are done because $\sigma\log\phi(y)\geq \sigma\log\phi(x)-d(x,y)+O(1)$ for any $\sigma\in\Sigma$ and $x,y\in \mathcal{E}$, so if we choose $C_{2}$ large enough and $C_{1}$ small enough, we can ensure that $\sigma_{x}\log\phi(x)$ and $\sigma_{x}\log\phi(y)$ are both large enough that we can apply the lemma.\end{proof}

\begin{corollary}
\label{corollary:parabolictriangles}
There are constants $0<C_{1}<C_{2}<1$ and $C_{3},C_{4}>0$ such that if $x,y,z\in\mathcal{E}$ with
\begin{itemize}
\item $r(x)>C_{3}$
\item $diam(\{x,y,z\})<C_{2}r(x)$
\item each of $d(x,y),d(x,z),d(y,z)$ is at least $C_{1}r(x)$
\end{itemize}
then $\rho(x)^{-1}\rho(y),\rho(y)^{-1}\rho(z),\rho(z)^{-1}\rho(x)\in P$ for some maximal parabolic $P$, and the perimeter in $\Gamma$ of $\rho(x),\rho(y),\rho(z)$ is at most $C_{4}$ times the perimeter of $x,y,z$.
\end{corollary}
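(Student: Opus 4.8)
The plan is to re-run the proof of Corollary \ref{corollary:parabolicedges}, but for the three vertices simultaneously; the whole point is to pick a \emph{single} simple root detecting the deep-cusp behaviour of all of $x$, $y$, $z$, which forces the three ``edge'' parabolics to coincide. First I would fix the constant $C$ supplied by Corollary \ref{corollary:rshort} and Lemma \ref{lemma:horoballlatticegeometry} and then fix $c$ large (so that $c>C^{2}$, which lets Corollary \ref{corollary:rshort} apply at scale $s(\,\cdot\,)$ throughout $B_{j}(c)$, and $c\gg C$ in the sense of Lemma \ref{lemma:horoballlatticegeometry}). As in the proof of Corollary \ref{corollary:parabolicedges} there is an $\varepsilon>0$ so that, whenever $r(x)>\epsilon_{\mathcal{S}}$, some simple root $\sigma\in\Sigma$ satisfies $|\sigma\log\phi(x)|>\varepsilon r(x)$; since $\phi(x)$ lies in the Siegel set its coordinate $\sigma\log\phi(x)$ is bounded below by a fixed constant, so once $r(x)$ is large this reads $\sigma\log\phi(x)>\varepsilon r(x)$, hence $x\in B_{j}(c)$ for $j=j(\sigma)$ the index attached to $\sigma$, provided $C_{3}$ is large enough that $\varepsilon C_{3}>\log c$.

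Next I would use the diameter hypothesis to trap $x$, $y$, $z$ in one component of $B_{j}(c)$. By Lemma \ref{lemma:philip} the function $\sigma\log\phi$ is Lipschitz up to an additive constant, so along the geodesic from $x$ to either of $y$, $z$ one has $\sigma\log\phi(\,\cdot\,)\geq\sigma\log\phi(x)-O(\diam\{x,y,z\})-O(1)$, which is at least $\varepsilon r(x)/2-O(1)$ once $C_{2}$ is small enough, and hence exceeds $\log c$ once $r(x)>C_{3}$; so both geodesics stay inside $B_{j}(c)$ and $x$, $y$, $z$ lie in a common component. By Lemma \ref{lemma:horoballlatticegeometry} the lattice $V(\,\cdot\,,s(\,\cdot\,))$ (or $V(\,\cdot\,,1)$ when $j=p$) is constant on that component, and by Corollary \ref{corollary:rshort} its value at a point $u$ is a fixed coordinate subgroup of $\ZZ^{2p}$ translated on the right by $\rho(u)^{-1}$. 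Equating the values at $x$, $y$, $z$ shows that $\rho(x)^{-1}\rho(y)$, $\rho(y)^{-1}\rho(z)$ and $\rho(z)^{-1}\rho(x)$ all preserve that coordinate subgroup; it spans a coisotropic coordinate subspace, so its stabilizer in $\Sp(2p;\ZZ)$ is a maximal parabolic $P=P_{S,T}$ (of $\Sp(2p)$, with $\#S$ and $\#T$ determined by $j$), which is the required $P$.

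For the perimeter estimate, fix $\gamma=\rho(x)^{-1}\rho(y)$ and write $\rho(x)^{-1}x=[n_{0}\phi(x)]_{\mathcal{E}}$, $\rho(y)^{-1}y=[n_{1}\phi(y)]_{\mathcal{E}}$ with $n_{0},n_{1}\in N^{+}$ (Theorem \ref{theorem:siegelsets}). Then $\gamma=n_{0}\phi(x)\,h\,(n_{1}\phi(y))^{-1}$ where $h=(n_{0}\phi(x))^{-1}\rho(x)^{-1}\rho(y)\,n_{1}\phi(y)\in\Sp(2p;\RR)$ satisfies $d([h]_{\mathcal{E}},[1]_{\mathcal{E}})=d(x,y)$, so $\log\|h\|_{\infty}=O(d(x,y))=O(r(x))$. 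Since $r$ is $1$-Lipschitz and $d(x,y)<C_{2}r(x)$ we have $r(y)=\Theta(r(x))$ and $\log\|\phi(x)\|_{\infty},\log\|\phi(y)\|_{\infty}=O(r(x))$, and since $N^{+}$ is compact this gives $\log\|\gamma\|_{\infty}=O(r(x))$. By the theorem of Lubotzky, Mozes and Raghunathan (\cite{lmr}; see also \S\ref{subsection:lmr} and Theorem \ref{theorem:LMRSp}), the word length in $\Sp(2p;\ZZ)$ of an element is $O$ of the logarithm of its norm, so $d_{\Gamma}(\rho(x),\rho(y))=O(r(x))$. Finally the lower bound $d(x,y)\geq C_{1}r(x)$ upgrades this to $d_{\Gamma}(\rho(x),\rho(y))=O(d(x,y))$, and summing the same estimate over the three pairs yields the perimeter bound with an appropriate $C_{4}$.

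I expect the main obstacle to be exactly the coincidence of the three parabolics: this is what forces us to reopen the proof of Corollary \ref{corollary:parabolicedges} and keep track of which parabolic it produces, and it is why the three hypotheses are phrased relative to one scale $r(x)$. The perimeter bound is more routine, but it genuinely needs the lower bound $d(x,y)\geq C_{1}r(x)$: the map $\rho$ is far from coarsely Lipschitz along a cusp — a large unipotent moves a deep point only a bounded distance while having large word length — so $d_{\Gamma}(\rho(x),\rho(y))$ cannot be controlled by $d(x,y)$ alone.
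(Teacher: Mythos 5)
Your proof is correct and uses the same ingredients as the paper's: reopen the argument behind Corollary \ref{corollary:parabolicedges} to trap all three vertices in a single component of some $B_{j}(c)$, identify $P$ as the stabilizer of the resulting coordinate lattice, and then get the perimeter bound by bounding $\log\Vert\rho(x)^{-1}\rho(y)\Vert_{\infty}=O(r(x))=O(d(x,y))$ and invoking Lubotzky--Mozes--Raghunathan. One simplification you could have used: once $\rho(x)^{-1}\rho(y)\in P$ and $\rho(x)^{-1}\rho(z)\in P$ are established by two applications of Corollary \ref{corollary:parabolicedges} anchored at the same point $x$ (so the same $\sigma_{x}$ and hence the same $P$ is produced both times), the third containment $\rho(y)^{-1}\rho(z)=(\rho(x)^{-1}\rho(y))^{-1}(\rho(x)^{-1}\rho(z))\in P$ is automatic since $P$ is a subgroup; this is what the paper's terse invocation of the corollary is implicitly relying on, and it spares you from re-tracking the component of $B_{j}(c)$ along the $yz$-edge. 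For the perimeter, the paper just applies the triangle inequality to $d([\rho(x)]_{\mathcal{E}},[\rho(y)]_{\mathcal{E}})$ rather than decomposing $\gamma=n_{0}\phi(x)h(n_{1}\phi(y))^{-1}$; both are fine, though yours makes the dependence on the cocompactness of $N^{+}$ and on the comparison $r(x)\asymp\Vert\log\phi(x)\Vert_{2}$ more visible.
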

\begin{proof}
By corollary\ref{corollary:parabolicedges}, we can find $C_{3}$ and $C_{2}$ which ensure that each edge lies in a maximal parabolic.  Now observe that
$$d([\rho(x)]_{\mathcal{E}},[\rho(y)]_{\mathcal{E}})\leq d([\rho(x)]_{\mathcal{E}},x)+d(x,y)+d(y,[\rho(y)]_{\mathcal{E}})\\
\leq r(x)+r(y)+d(x,y)+O(1)=O(r(x)+1)$$
so $d([\rho(x)]_{\mathcal{E}},[\rho(y)]_{\mathcal{E}})$ will be $O(d(x,y))$ (since $d(x,y)>C_{1}r(x)$) and similarly for the other edges.  This implies the desired result by Lubotzky-Mozes-Raghunathan\cite{lmr}.\end{proof}

\subsection{The proof of Theorem \ref{theorem:diagtoparab}.}
\label{subsection:diagtoparabproof}
The following lemma allows us to produce triangulations in which triangles either satisfy the conditions of the previous corollary, or have bounded perimeter.

\begin{lemma}
\label{lemma:adaptivetemplates}
If $N$ is a power of $2$ and
$h:[0,N]^{2}\rightarrow [1,\infty)$ is $1$-Lipschitz then $[0,N]^{2}$ admits a triangulation such that the following conditions all hold.
\begin{itemize}
\item If $x$ and $y$ are the endpoints of some edge, then
$$\min(\frac{1}{6}h(x),\frac{N}{2})\leq d(x,y)\leq \sqrt{2}h(x).$$
\item There are $O(N^{2})$ triangles.
\item The sum of the squared perimeter of all the triangles plus the sum of the squared edge lengths of all the triangles is $O(N^{2})$.
\end{itemize}
\end{lemma}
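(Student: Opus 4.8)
The plan is to build the triangulation by an \emph{adaptive dyadic subdivision} of $[0,N]^2$ and then to triangulate the resulting cells, following the strategy of Young \cite[Corollary 5.3]{RY}. Starting from $[0,N]^2$ itself, I would recursively subdivide a dyadic subsquare $Q$ of side $s$ into its four congruent subsquares precisely when $s$ is still large compared to $\min_{Q}h$ --- concretely, when $s>c\min_Q h$ for a fixed constant $c$ to be chosen at the end --- and declare $Q$ a leaf otherwise. Since $h\geq 1$ and $N$ is a power of $2$, every square of side $1$ is automatically a leaf, so the recursion terminates and the leaves tile $[0,N]^2$ by dyadic squares. The key consequence of the $1$-Lipschitz hypothesis is a \emph{two-sided} comparison on each leaf: if $Q$ is a leaf of side $s<N$, the stopping rule gives $s\leq c\min_Q h$, while the fact that its parent $Q^+$ (side $2s$) was subdivided gives $\min_{Q^+}h<2s/c$; since $\diam(Q^+)=2\sqrt{2}\,s$ and $h$ is $1$-Lipschitz, this pins $h|_Q$ inside an interval of the form $[\,s/c',\ C's\,]$ with constants depending only on $c$. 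In other words, on every leaf the value of $h$ is comparable to the side length of that leaf.

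Next I would \emph{balance} the quadtree, refining further until any two leaves sharing part of an edge differ in side length by at most a factor of two. This is a standard operation: it only increases the number of leaves by a constant factor, and it preserves the two-sided comparison above up to adjusting constants (a balancing refinement only makes leaves smaller, and only finitely often near any given cell). I would then triangulate each leaf cell by coning from its centre to its corners and to the midpoints of those sides it shares with a strictly finer neighbour. In a leaf of side $s$ this produces $O(1)$ triangles, each with all edge lengths in $[\,s/2,\ \sqrt{2}\,s\,]$ and perimeter $O(s)$, and the triangulations of adjacent cells agree along their common boundary.

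It then remains to check the three bulleted conditions. For the edge condition, an edge with endpoints $x,y$ lies in a leaf cell $Q$ of some side $s$, so $d(x,y)\in[s/2,\sqrt{2}\,s]$; combining this with the comparison $h|_Q\asymp s$ gives $\min(\tfrac{1}{6}h(x),\tfrac{N}{2})\leq d(x,y)\leq\sqrt{2}\,h(x)$ once the constant $c$ and the coning template are tuned, the truncation by $\tfrac{N}{2}$ absorbing the single exceptional case in which $h\geq N$ on all of $[0,N]^2$ and no subdivision ever occurs. For the count, the leaves are disjoint dyadic squares in $[0,N]^2$, so $\sum_{\mathrm{leaves}}(\mathrm{side})^2=N^2$; hence there are at most $N^2$ leaves and $O(N^2)$ triangles. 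For the last condition, each leaf of side $s$ contributes $O(s^2)$ to the sum of squared perimeters and $O(s^2)$ to the sum of squared edge lengths (an edge is shared by at most two cells), so both sums are $O\bigl(\sum_{\mathrm{leaves}}(\mathrm{side})^2\bigr)=O(N^2)$.

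The routine parts are the counting and the squared-length estimates, which follow from $\sum(\mathrm{side})^2=N^2$ alone. The delicate point --- and the one I expect to require the most care --- is arranging that the edge lengths land in the \emph{specific} window $[\,\min(\tfrac{1}{6}h(x),\tfrac{N}{2}),\ \sqrt{2}\,h(x)\,]$: the Lipschitz ``slack'' of $h$ over a cell and over one further level of subdivision must be budgeted against the spread of edge lengths in the coning template, which forces a careful choice of the threshold $c$, of how midpoints are inserted at hanging nodes, and of the behaviour along $\partial[0,N]^2$. These are exactly the points handled in Young's proof of \cite[Corollary 5.3]{RY}, and the same argument applies here verbatim.
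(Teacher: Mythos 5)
Your proposal is essentially the same as the paper's, whose ``proof'' of this lemma consists solely of the citation to \cite[Corollary 5.3]{RY}; you correctly reconstruct the adaptive dyadic quadtree construction underlying Young's argument and, like the paper, defer the delicate constant bookkeeping to that reference. One small caution if you were to flesh this out: the naive quadtree with $2{:}1$ balancing and coning from cell centres does not by itself hit the specific window $[\,\min(\tfrac{1}{6}h(x),\tfrac{N}{2}),\sqrt{2}\,h(x)\,]$ (the centre-to-hanging-midpoint edges of length $s/2$ fall short of the lower bound once the Lipschitz slack over $Q^{+}$ is accounted for), so the ``tuning'' you flag is a genuine structural choice of template rather than a one-parameter adjustment -- which is exactly what Young's Corollary 5.3 supplies.
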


\begin{proof}
This is \cite[Corollary 5.3]{RY}.\end{proof}

We are finally in a position to prove Theorem \ref{theorem:diagtoparab}.  Let $w$ be a relation in $\Gamma$.  Choose $t$ the smallest power of $2$ greater than $\ell(w)$.  Define a map from the boundary of $[0,t]^{2}$ to $\mathcal{E}$ constant on the top and sides, and sending the bottom to a closed $1$-Lipschitz curve representing $w$.  Extend this map over $[0,t]^{2}$ by mapping line segments going from the midpoint of the top to other points of the boundary to geodesic segments in $\mathcal{E}$.  This yields a $O(1)$-Lipschitz map.  Now applying the previous lemma to an appropriate multiple of $r$ yields a triangulation such that each triangle has edges of length as in the corollary.  But then applying the corollary and representing edges in parabolics by $\Omega$-words gives use the desired $w_{i}$.

\section{From parabolics to diagonal blocks}
\label{section:parabtodiag}
This section is devoted to proving the Theorem \ref{theorem:parabtodiag}, as promised in section \ref{section:theorem}.  Recall that a shortcut word in a subgroup $H$ of $\Sp(2p;\ZZ)$ is a product of shortcuts $\he_{\alpha}(x)$ such that $e_{\alpha}(x)\in H$.  We now recall the statement of \ref{theorem:parabtodiag}.

\begin{nolabeltheorem}
Suppose $\emptyset\neq S\subset \mH$ is isotropic, and $T\subset \mH$ is symplectic with $S,T$ disjoint.  Let $\Delta$ be an $\Omega$-triangle in $P_{S,T}$.  Then we can homotope $\Delta$ at cost $O(\ell(\Delta)^{2})$ as follows (note that all these homotopies have quadratic cost).
\begin{itemize}
\item[(a)] If $\# T > 2(p-3)$, then $\Delta$ can be quadratically broken into a collection of relationss $\{ w_{i}\}$ with each $w_{i}$ an $\Omega$ triangle in some $P_{S_{i},T_{i}}$ with $\# T_{i}$ strictly smaller than $\# T$.
\item[(b)] If $4\leq \# T \leq 2(p-3)$, then $\Delta$ can be homotoped at cost $O(\ell(\Delta)^{2})$ to a relation of length $O(\ell(\Delta))$ in $\Sp(T)$.
\item[(c)] If $\# T = 2$, then $\Delta$ can be homotoped at cost $O(\ell(\Delta)^{2})$ to an identity-representing shortcut word of length $O(\ell(\Delta))$ in $Sp(T)$.
\item[(d)] If $\# T = 0$, then $\Delta$ can be filled at cost $O(\ell(\Delta)^{2})$.
\end{itemize}
\end{nolabeltheorem}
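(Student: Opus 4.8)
\emph{Setup.} Write each factor of $\Delta=\Omega(p_1)\Omega(p_2)\Omega(p_3)$ (where $p_1p_2p_3=1$ in $P_{S,T}(\ZZ)$) in the normal form of \S\ref{subsection:omegapst} as $\Omega(p_i)=d_ie_in_i$, with $d_i$ representing $\phi_{\GL}(p_i)\in\GL(S;\ZZ)$, $e_i$ representing $\phi_{\Sp}(p_i)\in\Sp(T;\ZZ)$, and $n_i$ a product of shortcuts for elements of $N_{S,T}(\ZZ)$. The plan is to separate the diagonal-block data from the unipotent data by shuffling --- which succeeds in the ``generic'' cases (b) and (c) --- and to handle the two extreme regimes (a) and (d) with the reduction-theory machinery of \S\ref{section:diagtoparab}.

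\emph{Cases (b), (c) and the easy part of (d).} Assume $\#S\leq p-1$ and $\#T\leq 2(p-3)$; this covers case (b), case (c), and case (d) whenever $\#S\leq p-1$. Then both parts of Lemma \ref{lemma:blockshortcut} are available. Since $\GL(S)$ and $\Sp(T)$ act on the complementary coordinate blocks $\RR^{\pm S}$ and $\RR^{T}$ they commute at quadratic cost, and conjugating an $N_{S,T}$-shortcut past a word in $\GL(S;\ZZ)$ or $\Sp(T;\ZZ)$ again produces an $N_{S,T}$-shortcut word at quadratic cost by Lemma \ref{lemma:blockshortcut} (shortcut-word parts of the normal form being handled instead by Lemma \ref{lemma:steinberg}). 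Performing these moves shuffles $\Delta$ at cost $O(\ell(\Delta)^{2})$ into $d\,e\,n$, where $d=d_1d_2d_3$ represents $\phi_{\GL}(p_1p_2p_3)=1$ and so is a relation in $\GL(S)$, $e=e_1e_2e_3$ is a relation in $\Sp(T)$, and $n$ is an identity-representing shortcut word in $N_{S,T}$; and Lemma \ref{lemma:ptd3} fills $n$ at quadratic cost. It remains to fill $d$, which can be done at quadratic cost: for $\#S\leq 2$ this is essentially Theorem \ref{theorem:sp2shortcut} (or trivial when $\#S=1$); for $\#S\geq 5$ it is Theorem \ref{theorem:RY}; and for $\#S\in\{3,4\}$ one enlarges $S$ to an isotropic $L\subset\mH$ with $\#L=5$ (possible since $p\geq 5$) and fills $d$ inside $\GL(L;\ZZ)\subset\Sp(2p;\ZZ)$, which has quadratic Dehn function. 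If $\#T=0$ we are done, which is (d) for $\#S\leq p-1$; otherwise $\Delta$ has been homotoped at quadratic cost to $e$, a relation of length $O(\ell(\Delta))$ in $\Sp(T)$ --- conclusion (b) when $\#T\geq 4$, and the identity-representing shortcut word of (c) when $\#T=2$.

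\emph{Case (a) and the Siegel subcase of (d).} When $\#T>2(p-3)$, Lemma \ref{lemma:blockshortcut}(b) fails and the shuffle breaks down; likewise, when $\#T=0$ and $\#S=p$, Lemma \ref{lemma:blockshortcut}(a) fails. In these regimes I instead run the argument of \S\ref{section:diagtoparab} relative to the parabolic itself: realize $\Delta$ as a loop in the thick part of the homogeneous space $P_{S,T}(\RR)/K'$ (respectively $P_{S,\emptyset}(\RR)/\SO(p)$), fill it by a Lipschitz disk using the Lipschitz $1$-connectedness of that space proved in \S\ref{section:lipfill}, triangulate by the adaptive-template Lemma \ref{lemma:adaptivetemplates}, and read off a quadratic breaking of $\Delta$ into $\Omega$-triangles in the maximal parabolic subgroups of $P_{S,T}$ (respectively $P_{S,\emptyset}$) --- exactly as in the proof of Theorem \ref{theorem:diagtoparab}, of which this is a relative version. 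Each such sub-parabolic either has a strictly smaller symplectic block, or merely splits a $\GL$-factor of size $\leq\#S\leq 2$ (respectively $\leq p$); iterating on the latter has bounded depth, and after discarding bounded $\GL(1)$-factors and then applying Theorem \ref{theorem:diagtoparab} (in case (a)) or reducing the resulting $\GL(k;\ZZ)$-relations with $k<p$ in $\GL$-style (in case (d)), $\Delta$ is either quadratically broken into $\Omega$-triangles in parabolics $P_{S_i,T_i}$ with $\#T_i<\#T$, which is (a), or filled, which completes (d).

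\emph{Main obstacle.} The bookkeeping in cases (b) and (c) --- checking that the shuffle terminates with only diagonal-block data and that each step costs $O(\ell(\Delta)^2)$ --- is routine but lengthy. The real difficulty is case (a) together with the Siegel subcase of (d): everything there rests on the Lipschitz $1$-connectedness of homogeneous spaces of the form $\Sp(2q;\RR)\ltimes N_{S,T}(\RR)/U(q)$ (for $\#S\leq 2$) and $P_{S,\emptyset}(\RR)/\SO(p)$, which is proved only in \S\ref{section:lipfill} and is, as the introduction warns, the hardest part of the paper; one must also verify that the adaptive-template argument run relative to a parabolic still outputs $\Omega$-triangles of strictly lower complexity.
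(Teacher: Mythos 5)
Your proposal captures the paper's overall architecture: shuffle the diagonal-block data out of the unipotent data in the generic cases (b), (c), and $\#S<p$ of (d), and run the adaptive-template reduction theory of \S\ref{section:diagtoparab} against a Lipschitz-$1$-connected homogeneous space in case (a) and the $\#S=p$ subcase of (d). You also correctly identify the central role of the Lipschitz $1$-connectedness theorems of \S\ref{section:lipfill}.

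However, there is a genuine gap in case (c), which is not ``routine but lengthy bookkeeping'' as you claim. When $\#T=2$, $e_i$ is a \emph{shortcut word} in $\Sp(T)$, i.e.\ a concatenation of $n$ shortcuts $\he_{\pm 2t}(x_k)$, and $n$ is \emph{unbounded}: the continued-fraction representation of an element of $\SL(2;\ZZ)$ can have $n=O(\ell(e_i))$ terms. Commuting $e_i$ past the word $d_j$ shortcut-by-shortcut (via Lemma \ref{lemma:commutingshortcuts}, or via Lemma \ref{lemma:steinberg} as you suggest) costs $\sum_k O(\ell(d_j)^2+\ell(\he_{2t}(x_k))^2)=O(n\,\ell(d_j)^2+\ell(e_i)^2)$, which can be cubic in $\ell(\Delta)$. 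The paper repairs this with a specific device: for each shortcut $\he_{2t}(x_k)$, split $d_j$ into $O(\ell(d_j)/\ell(\he_{2t}(x_k)))$ segments of length at most $\ell(\he_{2t}(x_k))$ and commute segment-by-segment, yielding $O(\ell(d_j)\ell(\he_{2t}(x_k)))$ per shortcut and $O(\ell(d_j)\ell(e_i))$ overall. A matching care is taken when moving $e_i$ past the shortcuts in $n_j$: the paper moves all of $e_i$ past each $\he_\alpha$ as a block, after switching both into a common $\SpH_{S',T'}$ and invoking Lemma \ref{lemma:ptd1}, rather than shortcut-by-shortcut. Without these ideas the shuffle does not terminate at quadratic cost. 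A smaller imprecision in your case (a): Theorem \ref{theorem:lipfillsp} proves Lipschitz $1$-connectedness of $K(\RR)/U(T)$ for $K=\Sp(T)\ltimes N_{S,T}=\ker(P_{S,T}\to\GL(S))$, not of $P_{S,T}(\RR)$ modulo a compact subgroup; one must first move the $\GL(S)$-block words out (filling their product, or when $\#S=2$ switching them to special shortcuts living in $\Sp(\pm S)$) and homotope the remaining word into $K$, as the paper does in the ``Handling the $\GL$-block words'' and ``Homotoping into $K$'' paragraphs, before the adaptive-template argument can be applied.
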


The proof is organized as follows.  In subsection \ref{subsection:movingdiag}, we explain how the results of \S\ref{section:shortcuts} allow us to move words or shortcut words in $\GL(S)\times \Sp(T)$ past normal form words in $N_{S,T}$.  The remaining subsections proves parts (a) through (d) of the theorem successively.  Part (a) is proved using the adaptive template ideas of \cite{RY} (much as we did in \S\ref{section:diagtoparab}).  Parts (b) and (c) are proved using just the techniques of \S\ref{section:shortcuts}.  Part (d) is also proved using adaptive template techniques, combined with Young's result on the Dehn function of $\GL(n;\ZZ)$.  The use of these techniques in parts (a) and (d) is justified by the results of section \ref{section:lipfill}.

\subsection{Moving diagonal block words}
\label{subsection:movingdiag}
$\Omega$-words have the form $den$ where $d$ is a word (or shortcut word) in $\GL(S)$, $e$ is a word or shortcut word in $\Sp(T)$ and $n$ is a product of shortcuts $\prod_{\alpha\in \Phi_{S,T}}\he_{\alpha}(x_{\alpha})$ (in some order).  In order to prove the theorem, we will need the following lemma, which allow us to move these words past each other.  The main tool used in proving this result is Lemma \ref{lemma:blockshortcut}.

\begin{lemma}
\label{lemma:ptd1}
Let $S,T\subset \mH$ be disjoint with $S$ isotropic and $T$ symplectic.  Suppose $\alpha\in \Phi N_{S,T}$, and $d$ is a word representing some $M\in \GL(S)\times\Sp(T)$ with
$$d(\prod_{\alpha\in \Phi N_{S,T}}e_{\alpha}(x_{\alpha}))d^{-1}=\prod_{\alpha\in \Phi N_{S,T}}e_{\alpha}(x_{\alpha}^{\prime}).$$
Assume that one of the following conditions holds.

\begin{itemize}
\item[(a)] $d$ is a word in the generators of $\GL(S)$, with $3 \leq \# S \leq p-1$.
\item[(b)] $d$ is a word in the generators of $\Sp(T)$, with $4 \leq \# T \leq 2(p-3)$.
\item[(c)] $d$ is a shortcut word in $\GL(S)$, with $\# S=2$.
\item[(d)] $d$ is a shortcut word in $\Sp(T)$, with $\# T=2$.
\end{itemize}

Then
$$\delta\left(d\left(\prod_{\alpha\in \Phi N_{S,T}}e_{\alpha}(x_{\alpha})\right)d^{-1},\prod_{\alpha\in \Phi N_{S,T}}e_{\alpha}(x_{\alpha}^{\prime})\right)=O\left(\ell(d)^{2}+\left(\sum\log\|x_{\alpha}\|\right)^{2}\right).$$
\end{lemma}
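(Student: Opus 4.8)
\emph{Proof plan.} The first step is to get into the key situation. In each of the four cases $d$ is a word, or shortcut word, in a single diagonal block, so $M$ lies in $\GL(S)$ (cases (a),(c)) or in $\Sp(T)$ (cases (b),(d)); in particular it commutes with everything in the other factor. Inserting cancelling pairs $d^{-1}d$ between consecutive factors of the middle product is a sequence of free expansions, hence free, so it is enough to rewrite each conjugate $d\,\he_{\alpha}(x_{\alpha})\,d^{-1}$, $\alpha\in\Phi N_{S,T}$, as a bounded product of shortcuts $\he_{\beta}$ with $\beta\in\Phi N_{S,T}$ and exponents of logarithmic size $O(\ell(d)+\sum_{\alpha}\log\|x_{\alpha}\|)$ (conjugation by $\GL(S)\times\Sp(T)$ preserves $\Phi N_{S,T}$ and multiplies each exponent by an entry of $M$, of log-size $O(\ell(d))$), and then to collect the resulting product into the prescribed normal form using the Steinberg relations of Lemma \ref{lemma:steinberg} and Lemma \ref{lemma:ptd3}; since only $O(1)$ shortcuts are ever in play, this collection step costs $O((\ell(d)+\sum_{\alpha}\log\|x_{\alpha}\|)^{2})$. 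So everything reduces to rewriting the individual conjugates. In cases (a) and (b), where $d$ is an honest word in $\Sigma_{\GL(S)}$ with $3\le\#S\le p-1$, respectively in $\Sigma_{\Sp(T)}$ with $4\le\#T\le 2(p-3)$, this rewriting is precisely Lemma \ref{lemma:blockshortcut}(a), respectively (b), applied to each of the $O(1)$ values of $\alpha$, and we are done.

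\emph{Cases (c) and (d).} Now $d$ is a shortcut word and $M$ lies in a block ($\GL(2;\ZZ)$, resp.\ $\Sp(2;\ZZ)\cong\SL(2;\ZZ)$) that is only exponentially distorted, so Lemma \ref{lemma:blockshortcut} does not apply. The plan is to split $\Phi N_{S,T}$ into the height-one part $\{s-t:s\in S,\ t\in T\}$ and the central part $\Phi Z_{S}$. Using the defining identity for long-root shortcuts $\he_{2s}(x)$ (after switching to a convenient one by Lemma \ref{lemma:longrootswitching}), together with the Steinberg identity expressing the short central shortcut $\he_{s+s'}(x)$ as $[\he_{s-t}(x),\he_{s'+t}(1)]$ up to quadratic cost, rewrite every $Z_{S}$-shortcut as a commutator of height-one shortcuts plus a bounded-length word in $Z_{S}$; since conjugation by $d$ distributes over commutators, it remains only to move $d$ past a product of ``$s-t_{0}$'' shortcuts, past a product of ``$s+t_{0}$'' shortcuts, and past a bounded-length word in $Z_{S}$, where $T=\{t_{0},-t_{0}\}$. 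For a batch of ``$s-t_{0}$'' shortcuts, choose an isotropic $L\subset\mH$ with $S\cup\{t_{0}\}\subseteq L$, $-t_{0}\notin L$, and $\#L\ge 5$ (possible since $p\ge 5$), so that $\GL(L;\ZZ)$ has quadratic Dehn function by Theorem \ref{theorem:RY}; the ``$s-t_{0}$'' shortcuts and the (short-root) constituent shortcuts of $d$ all switch into $\GL(L)$ at quadratic cost by Lemma \ref{lemma:shortrootswitching}, and then moving the \emph{whole} of $d$ past the \emph{whole} batch is a single homotopy inside $\GL(L)$, costing $O((\ell(d)+\sum_{\alpha}\log\|x_{\alpha}\|)^{2})$, after which one collects to normal form in $\GL(L)$ and switches out. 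The ``$s+t_{0}$'' batch is symmetric (take $-t_{0}\in L'$), and conjugating a bounded-length word by $d$ costs $O(\ell(d)^{2})$. Case (d) runs the same way — simplified by the fact that $\Sp(T)$ acts trivially on $Z_{S}$, so $d$ commutes past the central pile outright — with the minor modification that $d$'s constituents are long-root rather than short-root shortcuts, so one first expands them into short-root shortcuts via the long-root identity before routing the batch-moves through copies of $\GL(L)$.

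\emph{Main obstacle.} The difficulty is entirely in cases (c) and (d). A shortcut word $d$ representing $M$ has $\Theta(\log\|M\|)$ constituent shortcuts, so pushing the $N_{S,T}$-shortcuts through $d$ one constituent at a time — the naive analogue of Lemma \ref{lemma:blockshortcut} — would cost $\Theta(\log^{3})$ rather than $\Theta(\log^{2})$, and the number of shortcuts produced could even grow geometrically. The role of the layer decomposition above is precisely to confine \emph{all} the interaction between $d$ and a given layer of $N_{S,T}$ to one homotopy inside a single subgroup already known to be quadratic; this is forced on us because $\SpH_{S,T}$ itself fails to be quadratic when $\#S=2$ (Theorem \ref{theorem:solvable2} requires $\#S\ge 3$), which is why we route through the linear groups $\GL(L)$. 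The real work is in checking that conjugation by $d$ keeps the height-one and central layers genuinely separate, that the long-root shortcut identities do not leak height-one data back into the centre, and that every resulting sub-move really does fit inside some $\GL(L)$ with $\#L\ge 5$ or into a bounded-length computation; once that bookkeeping is in place, all the quantitative bounds follow from Lemma \ref{lemma:blockshortcut}, Lemma \ref{lemma:steinberg}, Lemma \ref{lemma:ptd3}, and the quadratic Dehn function of $\GL(L;\ZZ)$.
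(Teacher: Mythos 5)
Your treatment of cases (a) and (b) matches the paper: insert cancelling pairs of $d^{\pm 1}$, apply Lemma~\ref{lemma:blockshortcut}(a) or (b) to each $d\,\he_{\alpha}(x_{\alpha})\,d^{-1}$, and collect with Lemma~\ref{lemma:ptd3}. For case (c) you take a genuinely different route. The paper, for each fixed $\alpha$, switches the constituent shortcuts of $d$ so that $d$ becomes a word in some $\GL(L)$ with $e_{\alpha}\notin\GL(L)$, and then simply invokes Lemma~\ref{lemma:blockshortcut}(a) with $L$ playing the role of $S$; the commutator decomposition of the $Z_{S}$-shortcuts is already done once and for all inside that lemma. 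You instead externalize that decomposition and route the height-one batch-moves through a $\GL(L)$ that \emph{contains} the relevant half-root $t_{0}$, using the quadratic Dehn function of $\GL(L;\ZZ)$ directly. This is morally the content of Case~1 of the proof of Lemma~\ref{lemma:blockshortcut}(a), so you are essentially re-deriving that lemma rather than citing it; it works but is more labour. One step you assert without proof is that conjugating a bounded-length word (the $e_{2s}(\{x/2\})$ tails) by $d$ costs $O(\ell(d)^{2})$; the cleanest justification is precisely Lemma~\ref{lemma:blockshortcut}(a) applied after switching $d$ into $\GL(L)$, which you were trying to avoid.

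Case (d) has a genuine gap. There the constituents of $d$ are \emph{long-root} shortcuts $\he_{\pm 2t_{0}}$, and a long root $2t_{0}=t_{0}-(-t_{0})$ lies in no $\GL(L)$ with $L$ isotropic, so $d$ cannot be switched into any $\GL(L)$. Your proposed fix -- expand each $\he_{2t_{0}}(y)$ via the long-root identity into $[\he_{t_{0}-t'}(\lfloor y/2\rfloor),\he_{t_{0}+t'}(1)]\,e_{2t_{0}}(\{y/2\})$ and then ``route the batch-moves through copies of $\GL(L)$'' -- does not repair this: $\he_{t_{0}-t'}$ requires $t'\in L$ and $\he_{t_{0}+t'}=\he_{t_{0}-(-t')}$ requires $-t'\in L$, which is impossible for isotropic $L$, so the expanded word still fits in no single $\GL(L)$. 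And if instead you commute the batch past $d$ one constituent shortcut at a time, there are $\Theta(\ell(d))$ constituents, so the cost is $\Theta(\ell^{3})$, not $O(\ell^{2})$. The paper handles (d) by switching the $\he_{\pm 2t_{0}}$ to special shortcuts living in $\SpH_{\{t_{0}\},\{\pm t''\}}(\ZZ)\subset\Sp(\{\pm t_{0},\pm t''\};\ZZ)$, viewing $d$ as a word in the generators of that $\Sp(4)$-block, and invoking Lemma~\ref{lemma:blockshortcut}(b). Crucially, the proof of Lemma~\ref{lemma:blockshortcut}(b) does not use the (unknown, and likely non-quadratic) Dehn function of the $\Sp(4)$-block; it uses the skinny-rectangle argument with a contracting diagonal from $\TT_{S}$, and this is exactly the mechanism your $\GL(L)$-routing cannot replace -- it is forced on us because neither $\Sp(4;\ZZ)$ nor $\SpH_{S,T}$ with $\#S=2$ is available as a quadratic-Dehn reservoir.
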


\begin{proof}
We will tackle the conditions one at a time.

\begin{itemize}
\item[(a)] Let $x_{\alpha}^{\beta}$ be such that
$$de_{\alpha}(x_{\alpha})d^{-1}=_{G}\prod_{\beta\in \Phi N_{S,T}}\he_{\beta}(x_{\alpha}^{\beta}).$$
Homotope as follows (with each step being quadratic cost for the given reason).\\

\begin{tabular}{l l}
$d(\prod_{\alpha\in \Phi N_{S,T}}\he_{\alpha}(x_{\alpha}))d^{-1}$ & \\
$\leadsto\prod_{\alpha\in \Phi N_{S,T}}d\he_{\alpha}(x_{\alpha})d^{-1}$ & free insertion\\
$\leadsto\prod_{\alpha\in \Phi N_{S,T}}\prod_{\beta\in \Phi N_{S,T}}\he_{\beta}(x_{\alpha}^{\beta})$ & Lemma \ref{lemma:blockshortcut}\\
$\leadsto\prod_{\alpha\in \Phi N_{S,T}}\he_{\alpha}(x_{\alpha}^{\prime})$ & Lemma \ref{lemma:ptd3}.\\
\end{tabular}\\

\item[(b)] We may assume that $\pm S\cup T=\mH$ (since a word in the generators of $\GL(S)$ is, without loss of generality, also a word in the generators of $\GL(S^{\prime})$ if $S^{\prime}\supset S$).  Since $\# T\leq 2(p-3)$, we can use the block-shortcut Lemma \ref{lemma:blockshortcut} exactly as in part (a).

\item[(c)] For each $\alpha\in\Phi N_{S,T}$, we can move $d$ past $\he_{\alpha}(x_{\alpha})$ by switching the shortcuts involved in $d$ to some pair $(S, \{s^{\prime}\})$ where $\alpha\notin\Phi\GL(S\cup \{s^{\prime}\})$, then applying the block shortcut lemma.  By doing this in turn for each $\alpha$, then filling the resulting product of $O(1)$ shortcuts with \ref{lemma:ptd3}, we obtain the desired homotopy.

\item[(d)] The proof is the same as part (c).
\end{itemize}\end{proof}

\subsection{Proof of Theorem \ref{theorem:parabtodiag}, part (a).}
Now we will prove part (a) of the theorem, which is perhaps the hardest part. Recall that we are given $\Delta$, an $\Omega$ triangle in $P_{S,T}$ with $T>2(p-3)$, and we wish to quadratically break $\Delta$ into a collection of words $\{ w_{i}\}$, so that each $w_{i}$ is a word in a parabolic with smaller $\Sp$ block and the total squared length of the $w_{i}$ is at most quadratic in $\ell(w)$.  We proceed through the following steps.

\begin{itemize}
\item Homotope $\Delta$ by moving all the $\GL$ part words to the left with the previous lemma and filling their product.
\item Homotope the resulting word into $K=ker(P_{S,T}\rightarrow \GL(S))$.
\item Thinking of the resulting word as a loop in $\mathcal{E}=K(\RR)/U(T)$, apply the adaptive template Lemma \ref{lemma:adaptivetemplates} to break it into a collection $\{w_{i}\}$ of $\Omega$ triangles in parabolics $P_{S_{i},T_{i}}$ where $T_{i}$ is a proper subset of $T$ (as desired).
\end{itemize}

\paragraph{Handling the $\GL$-block words.} Let
$$\Delta=d_{1}e_{1}n_{1}d_{2}e_{2}n_{2}d_{3}e_{3}n_{3},$$
where each $d_{i}$ is a word (or shortcut word) in $\GL(S)$, each $e_{i}$ is a word in $\Sp(T)$ and each $n_{i}$ is a product $\prod_{\alpha\in \Phi_{S,T}}\he_{\alpha}(x_{\alpha})$.  We will homotope $\Delta$ to a word of the form $e_{1}n_{1}^{\prime}e_{2}n_{2}^{\prime}e_{3}n_{3}^{\prime}$ (where the $n_{i}^{\prime}$ are also products of a bounded number of shortcuts for elements of $N_{S,T}$) as follows.\\

If $\# S\geq 3$, each $d_{i}$ is a word in $\GL(S)$, so by repeatedly applying Lemma \ref{lemma:ptd1} and the fact that generators of $\GL(S)$ commute with generators of $\Sp(T)$, we can move the $d_{i}$ to the left, homotoping $\Delta$ to a word
$$d_{1}d_{2}d_{3}e_{1}n_{1}^{\prime}e_{2}n_{2}^{\prime}e_{3}n_{3}^{\prime},$$
where each $n_{i}^{\prime}$ is a product of a bounded number of shortcuts $\he_{\alpha}(x_{\alpha})$ where $\alpha\in \Phi N_{S,T}$.  We know $d_{1}d_{2}d_{3}$ represents the identity and lives in some copy of $\GL(p)$, so it can be filled by \cite{RY}.

If $\# S=2$, switch each shortcut $\he_{s-s^{\prime}}(x)$ involved in $d_{i}$ to a special shortcut of the form $\he_{\alpha;\{s\},\{\pm s^{\prime}\}}(x)$.  This special shortcut is contained in $\Sp(\pm S)$, whose generators commute with those of $\Sp(T)$, so we can commute each $d_{i}$ with each $e_{i}$.  Furthermore, for each $\alpha\in N_{S,T}$. switch each shortcut in $d_{i}$ 

If $\# S=1$, there is nothing to do.\\

\paragraph{Homotoping into $K$.} We have homotoped $\Delta$ to a word $\Delta^{\prime}=e_{1}n_{1}^{\prime}e_{2}n_{2}^{\prime}e_{3}n_{3}^{\prime}$.  We wish to homotope $\Delta^{\prime}$ to a word in $K=\ker(P_{S,T}\rightarrow\GL(S)$ (i.e., a word in some finite generating set of $K$, say the one given by taking the union of a generating set of $\Sp(T)$ and a generating set of $N_{S,T}$).  Obviously the $e_{i}$ are already in $K$, so it suffices to put the $n_{i}^{\prime}$ into it as well.  Suppose $\he_{\alpha}(x)$ is a shortcut appearing in some $n_{i}^{\prime}$.  If $\alpha$ is short, we can use Lemma \ref{lemma:shortrootswitching} to switch $\he_{\alpha}(x)$ to either $\he_{T^{+},S}(x)$ or $\he_{T^{-},S}(x)$.  The resulting shortcut lies in $K$ because the distorting matrices will lie in $\GL(T^{\pm})\subset\Sp(T)$.  If $\alpha$ is long, we can just switch to a special shortcut $\he_{\alpha;\{s\},\{\pm t\},t}(x)$, whose distorting matrices lie in $\Sp(\pm t)\subset\Sp(T)$.  By abuse of notation, let $\Delta$ denote the word in $K$ resulting from our manipulations.\\

\paragraph{Applying adaptive templates.} Let
$$\mathcal{E}=\Sp(T;\RR)\ltimes N_{S,T}(\RR)/U(T),$$
where $U(T)\subset \Sp(T;\RR)$ is a maximal compact subgroup, and let $X$ be the symmetric space $\Sp(T)/U(T)$.  Let $r:\mathcal{E}\rightarrow \RR^{\geq 0}$ be the composition of projection $\mathcal{E}\rightarrow X$ with the depth function $X\rightarrow \RR^{\geq 0}$.  By Theorem \ref{theorem:lipfillsp}, proved in section \ref{section:lipfill}, any $C$-Lipschitz loop in $\mathcal{E}$ has a $O(C)$-Lipschitz filling.

Pad $\Delta$ so that its length is a power of $2$.  Let $\gamma:[0,N]\rightarrow\mathcal{E}$ be a ($1$-Lipschitz) loop representing $\Delta$.  Extend $\gamma$ over the boundary of $[0,N]^{2}$ by making it constant on the other three sides, let $f:[0,N]^{2}\rightarrow\mathcal{E}$ be a $O(1)$-Lipschitz filling of $\gamma$, and choose $0<C_{0}<\frac{\Cparab}{\Lip(f)\sqrt{2}}$ independent of $f$ (where $\Cparab$ is the constant of corollary \ref{corollary:parabolicedges}, as it applies to $X$).

Let $h:[0,N]^{2}\rightarrow[1,\infty)$ be given by
$$x\mapsto\max(1,C_{0}r(x)).$$
Then $h$ is certainly $1$-Lipschitz (since $\Cparab<1$) so we can apply Lemma \ref{lemma:adaptivetemplates} to obtain a triangulation of $[0,N]^{2}$ with $O(N^{2})$ triangles such that an edge with vertices $x$ and $y$ has length at most $\Cparab r(x)$ (when $r(x)>1$) and at least $\frac{1}{6}$, and the sum of the squares of all the edge lengths plus the diameters of all triangles is $O(N^{2})$.

Let $\phi:K\rightarrow \Sp(T)$ be projection.  Extend the function $\rho:X\rightarrow \Sp(T;\ZZ)$ of definition \ref{def:siegelrhodepth} to
$$\rho:\mathcal{E}\rightarrow K(\ZZ)$$
$$\rho:g\rightarrow \rho(\phi(g))n$$
where $n\in N_{S,T}(\ZZ)$ is the closest element to $\phi(g)^{-1}g\in N_{S,T}(\RR)$.  If $x_{1},x_{2},x_{3}$ are the vertices of a triangle in our triangulation, then by corollary \ref{corollary:parabolicedges}, either all three $\phi(\rho(x_{i})^{-1}\rho(x_{j}))$ lie in some (proper) maximal parabolic of $\Sp(T)$, or they are all bounded.  In order to break $\Delta$ into words as desired, we need a word representing each edge.  If some edge has vertices $x$ and $y$ and $g=\rho(x)^{-1}\rho(y)$ with $\phi(g)$ bounded, we represent $g$ by a word $\Omega(\phi(g))\prod_{\alpha\in\Phi N_{S,T}}\he_{\alpha}(x_{\alpha})$ where $\Omega(\phi(g))$ is a product of a bounded number of elementary generators in $\Sp(T)$. On the other hand, if $g$ lives in some parabolic $P_{S^{\prime},T^{\prime}}\subset \Sp(T)$, we represent $g$ as a word $den\tilde{n}$ where $d$ is a word (or shortcut word) in $\GL(S^{\prime})$, $e$ is a word in $\Sp(T^{\prime})$, $n$ is a product $\prod_{\alpha\in\Phi N_{S^{\prime},T^{\prime}}}\he_{\alpha}(x_{\alpha})$, and $\tilde{n}$ is a product $\prod_{\alpha\in\Phi N_{S,T}}\he_{\alpha}(x_{\alpha})$.  (In either case, our representation has length bounded by some constant times $d_{\mathcal{E}}(x,y)$).

We have broken $\Delta$ into a product of triangles (i.e., words representing the identity which have the form $w_{1}w_{2}w_{3}$, where each $w_{i}$ is a representative of some $\rho(x)^{-1}\rho(y)\in K(\ZZ)$ as above).  Furthermore by corollary \ref{corollary:parabolictriangles}, for each triangle $w_{1}w_{2}w_{3}$, we either have that the $w_{i}$ lie in some $P_{S^{\prime},T_{\prime}}$ or that they have bounded length.  We can fill any triangle whose edges have bounded $\phi$ image by using Lemma \ref{lemma:steinberg} to move the elementary generators in $\Sp(T)$ to the left, filling the resulting bounded word in $\Sp(T)$, then filling the remaining product of shortcuts in $N_{S,T}$ with Lemma \ref{lemma:ptd3}.

Any triangle whose edges have $\phi$ image in $P_{S^{\prime},T^{\prime}}$ looks like 
$$d_{1}e_{1}n_{1}\tilde{n}_{1} d_{2}e_{2}n_{2}\tilde{n}_{2} d_{3}e_{3}n_{3}\tilde{n}_{3}.$$
(Where $d_{i},e_{i},n_{i},\tilde{n}_{i}$ are as explained above).  We can move the $d_{i}$ to the left and fill $d_{1}d_{2}d_{3}$ by the standard argument.  This leaves a word of the form
$$e_{1}n_{1}^{\prime}\tilde{n}_{1}^{\prime} e_{2}n_{2}^{\prime}\tilde{n}_{2}^{\prime} e_{3}n_{3}^{\prime}\tilde{n}_{3}^{\prime}$$
with $n_{i}^{\prime}$ of the form $\prod_{\alpha\in\Phi N_{S^{\prime},T^{\prime}}}\he_{\alpha}(x_{\alpha})$ and $\tilde{n}_{i}^{\prime}$ of the form $\prod_{\alpha\in\Phi N_{S,T}}\he_{\alpha}(x_{\alpha})$.  If we can homotope this word to an $\Omega$-triangle in $P_{S\cup S^{\prime},T^{\prime}}$, we will be finished.

Suppose $\he_{\alpha}(x)$ is a shortcut appearing in $n_{i}^{\prime}$ with $\alpha\notin \Phi N_{S\cup S^{\prime},T^{\prime}}$.  Then $\alpha$ must lie in $S-S^{\prime}$, so write $\alpha=s-s^{\prime}$.  We will move $\he_{\alpha}(x)$ to the left as follows.  We can use Lemma \ref{lemma:steinberg} to move $\he_{\alpha}(x)$ past any shortcut in $n_{j}$ or $n_{j}^{\prime}$, perhaps leaving behind another shortcut $\he_{\beta}(y)$ with $\beta\in \Phi N_{S\cup S^{\prime},T^{\prime}}$.  If we switch $\he_{\alpha}(x)$ to a special shortcut $\he_{\alpha;\{s\},\{\pm s^{\prime}\}}$, we can move it past $e_{j}$ because this special shortcut is a word in the generators of $\Sp(\{\pm s, \pm s^{\prime}\})$, which commute with the generators of $\Sp(T^{\prime})$.

By moving all of these $\he_{\alpha}(x)$ (such that $\alpha\in S-S^{\prime}$ to the left, we end up with a product of a bounded number of shortcuts representing unipotents in $\GL(S\cup S^{\prime})$ followed by an $\Omega$-triangle in $P_{S^{\prime},T^{\prime}}$.  We will be done if we can fill this product of shortcuts.  Choose a Lagrangian $S^{\prime\prime}\supset S\cup S^{\prime}$ and switch each shortcut to lie in $\GL(S^{\prime\prime})$.  We can fill the resulting word by \cite{RY}.

\subsection{Proof of Theorem \ref{theorem:parabtodiag}, part (b).}
We now turn to part (b) of the theorem, which is by far the easiest part.  Without loss of generality, $2\# S + T=2p$ (otherwise we could just expand $S$).  By our assumptions on $S,T$, we know $\Delta$ is of the form
$$d_{1}e_{1}n_{1}d_{2}e_{2}n_{2}d_{3}e_{3}n_{3},$$
where $d_{i}$ a word in $GL(S)$, $e_{i}$ a word in $Sp(T)$, and $n_{i}$ is a product of a bounded number of shortcuts for elements of $N_{S,T}$.  By moving the $d_{i}$ and $e_{i}$ to the left (applying Lemma \ref{lemma:ptd1}), we homotope at quadratic cost to
$$d_{1}d_{2}d_{3}e_{1}e_{2}e_{3}n^{\prime}$$
where $n^{\prime}$ is a product of a bounded number of shortcuts for elements of $N_{S,T}$.  We can fill $d_{1}d_{2}d_{3}$ because it lives inside some $\GL(S^{\prime})$ where $S^{\prime}\supset S$ has size at least $5$.  We can fill $n^{\prime}$ by Lemma \ref{lemma:ptd3}.  Hence we are left with $e_{1}e_{2}e_{3}$, a word in $\Sp(T)$, as desired.

\subsection{Proof of Theorem \ref{theorem:parabtodiag}, part (c)}
The proof of (c) is essentially the same, except that the $e_{i}$ are now shortcut words instead of actual words.  Assume, without loss of generality that $\#S\geq 3$, and let $T=\{\pm t\}$, so that $e_{i}$ is a product of words of the form $\he_{2t}(x)$ and $\he_{-2t}(x)$.  If we wish to proceed as in part (b), it must be shown that we can move the $e_{i}$ past the $d_{j}$ and $n_{j}$. We will need the following lemma.\\

\begin{lemma}
\label{lemma:commutingshortcuts}
If $d$ is a word in the generators $\SL(S)$ then
$$\delta(d\he_{2t}(x),\he_{2t}(x)d)=O(\ell(d)^{2}+\ell(\he_{2t}(x))^{2}).$$
\end{lemma}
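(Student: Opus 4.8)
\end{lemma}

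\begin{proof}
The plan is to replace $\he_{2t}(x)$ by a representative supported on coordinates disjoint from $\pm S$, commute $d$ across it for (almost) free, and then switch back. First I would choose a half root $t'\in\mH$ with $\pm t'\notin\pm S\cup\{\pm t\}$ --- there is room for this since $\#S\le p-2$ in the situation where the lemma is applied. Then $(\{t\},\{\pm t'\},t')$ is a triple special relative to the long root $2t$, so Lemma~\ref{lemma:specialshortcutswitching} furnishes a homotopy of cost $O((\log|x|)^2)$ from $\he_{2t}(x)$ to the special shortcut $\he_{2t;\{t\},\{\pm t'\}}(x)$; since $\log|x|=O(\ell(\he_{2t}(x)))$ this stays within the claimed budget.

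The reason for switching is that $\he_{2t;\{t\},\{\pm t'\}}(x)$ is, by construction, a word in the generators of $\SpH_{\{t\},\{\pm t'\}}(\ZZ)$, and each such generator is a matrix acting as the identity on $\RR^{\mH\setminus\{\pm t,\pm t'\}}$; meanwhile every letter of $d$ is a generator of $\SL(S;\ZZ)$, hence a matrix acting as the identity on $\RR^{\mH\setminus\pm S}$. Since $\pm S$ and $\{\pm t,\pm t'\}$ are disjoint, each letter of $d$ commutes with each letter of the special shortcut as a matrix, so each such pair of generators of $\Sp(2p;\ZZ)$ satisfies a relation of bounded area. Commuting $d$ past the special shortcut one transposition at a time therefore costs $O(\ell(d)\,\ell(\he_{2t;\{t\},\{\pm t'\}}(x)))=O(\ell(d)\log|x|)=O(\ell(d)^2+(\log|x|)^2)$. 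Finally I would run Lemma~\ref{lemma:specialshortcutswitching} in reverse to turn the special shortcut back into $\he_{2t}(x)$, again at cost $O((\log|x|)^2)$; adding the three contributions gives a homotopy from $d\he_{2t}(x)$ to $\he_{2t}(x)d$ of cost $O(\ell(d)^2+(\log|x|)^2)=O(\ell(d)^2+\ell(\he_{2t}(x))^2)$.

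The step I would be most careful about is the disjoint-support commutation: one must check that the chosen special shortcut genuinely lies in a subgroup acting trivially off the $\{\pm t,\pm t'\}$ coordinates (so that the matrices really do commute) and that the ambient dimension leaves room to pick $t'$ outside $\pm S\cup\{\pm t\}$. Beyond that the argument is entirely a matter of summing quadratic costs and quoting the switching lemmas of \S\ref{section:shortcuts}, so I expect no real difficulty here; the substantive content of this section lies in parts (a) and (d) of Theorem~\ref{theorem:parabtodiag}.
\end{proof}
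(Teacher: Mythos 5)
Your route is genuinely different from the paper's. You replace $\he_{2t}(x)$ by the special shortcut $\he_{2t;\{t\},\{\pm t'\}}(x)$, whose letters are supported entirely on the coordinates $\{\pm t,\pm t'\}$ disjoint from $\pm S$, so that $d$ slides past letter-by-letter at bounded cost per transposition. The paper keeps the shortcut partially supported on $\pm S$: it uses Lemma~\ref{lemma:steinberg} to rewrite $\he_{2t}(x)$ as a commutator of short-root shortcuts times $e_{2t}(2\{x/2\})$, pushes $d$ past the short-root factors via the block-shortcut machinery (Lemma~\ref{lemma:ptd1}, hence Lemma~\ref{lemma:blockshortcut}), and reassembles with Lemma~\ref{lemma:ptd3}.

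The gap is at the very point you flag as the one to ``be most careful about.'' The choice of $t'$ with $\pm t'\notin\pm S\cup\{\pm t\}$ requires $\#S\le p-2$, and you assert without justification that this holds ``in the situation where the lemma is applied.'' It does not. In the proof of Theorem~\ref{theorem:parabtodiag}(c), the standing `without loss of generality' enlargement (the same one used in part~(b), taking $2\#S+\#T=2p$) puts one in the case $\#S=p-1$, $T=\{\pm t\}$; this also literally occurs at the first pass of the algorithm, when $\Delta$ sits in a maximal parabolic of $\Sp(2p;\ZZ)$ with $\#T=2$. Then $\pm S\cup\{\pm t\}=\mathcal{H}$ and no admissible $t'$ exists. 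The paper's route works precisely for $3\le\#S\le p-1$, which is the range the application needs; yours stops one short. To repair it you would have to handle $\#S=p-1$ by a separate argument (essentially the paper's commutator expansion), which removes most of the simplification you gain from the disjoint-support trick.
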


\begin{proof}
Homotope as follows:\\

\begin{tabular}{l l}
$\he_{2t}(x) d$ & \\
$\leadsto [\he_{s-t}(\lfloor\frac{x}{2}\rfloor),e_{s+t}]e_{2t}(2\{\frac{x}{2}\})d$ &
Lemma \ref{lemma:steinberg}\\
$\leadsto [\he_{s-t}(\lfloor\frac{x}{2}\rfloor),e_{s+t}]de_{2t}(2\{\frac{x}{2}\})$ &
\\
$\leadsto d[\prod_{\alpha\in S-t}\he_{\alpha}(x_{\alpha}),\prod_{\alpha\in S+t}\he_{\alpha}(y_{\alpha})]$ & Lemma \ref{lemma:ptd1}\\
$\leadsto d\he_{2t}(x)$ & Lemma \ref{lemma:ptd3}\\
\end{tabular}\\

where we have
$$\prod_{\alpha\in S-t}e_{\alpha}(x_{\alpha})=de_{s-t}(\lfloor\frac{x}{2}\rfloor)d^{-1}$$
and
\[\prod_{\alpha\in S+t}e_{\alpha}(y_{\alpha})=
de_{s+t}d^{-1}.\qedhere\]
\end{proof}

If we tried to naively apply the lemma to move $e_{i}$ past $d_{j}$, it would cost $O(\ell(e_{i})^{2}+n\ell(d_{j})^{2})$, where $n=O(\log(\ell(e_{i})))$ is the number of shortcuts involved in $e_{i}$.  To improve from $O(\ell^{2}\log(\ell))$ to $O(\ell^{2})$, we employ the following trick to move $d_{j}$ past $\he_{2t}(x)$ at cost $O(\ell(d_{j})\ell(\he_{2t}(x))$. Divide $d_{j}$ into $O(\ell(d_{j})/\ell(\he_{2t}(x)))$ segments $d_{jk}$ of length at most $\ell(\he_{2t}(x))$.  Then the lemma tells us that we can move any $d_{jk}$ past $\he_{2t}(x)$ at cost
$$O(\ell(\he_{2t}(x))^{2}+\ell(d_{jk})^{2})=O(\ell(\he_{2t}(x))^{2}).$$
Repeating the trick for all the segments moves $d_{j}$ past $\he_{2t}(x)$ at cost $O(\ell(\he_{2t}(x))\ell(d_{j}))$, and hence we can move $d_{j}$ past $e_{i}$ at cost
$$O(\ell(e_{i})\ell(d_{j}))=O((\ell(e_{i})+\ell(d_{j}))^{2})$$
as desired.

Now all we have to do is homotope $n_{j}e_{i}$ to $e_{i}n_{j}^{\prime}$, where $n_{j}^{\prime}$ is a product of a bounded number of shortcuts for unipotents in $N_{S,T}$.  We move $e_{i}$ past each $\he_{\alpha}(x)$ appearing in $n_{j}$ by switching all of the shortcuts $\he_{\pm 2t}(y)$ appearing in $e_{i}$ to lie in some $\Sp(T^{\prime})$ and switching $\he_{\alpha}(x)$ to lie in some $\SpH_{S^{\prime},T_{\prime}}$ with $\#S^{\prime}\geq 3$, then applying Lemma \ref{lemma:ptd1}.  To do this, just choose some $s^{\prime}\in S$ not appearing in $\alpha$, and take $T^{\prime}=\{\pm t, \pm s^{\prime}\}$ and $S^{\prime}=S\setminus \{s^{\prime}\}$.  (We can apply the lemma because we know that $\alpha\in S^{\prime}-T$ or $\alpha\in Z_{S^{\prime}}$).

\subsection{Proof of Theorem \ref{theorem:parabtodiag}, part (d).}
Now we prove part (d) of the theorem.  If $\#S < p$, the proof is similar to part (b).  We know $\Delta$ has the form
$$d_{1}n_{1}d_{2}n_{2}d_{3}n_{3}$$
where $d_{i}$ is a word in $S$, and by the Lemma \ref{lemma:blockshortcut} we can move the $d_{i}$ to the left, and fill the entire resulting word using Lemma \ref{lemma:ptd3} and the fact that $\GL(q)$ has quadratic Dehn function for $q\geq 5$.

When $\#S = p$, we can switch all the shortcuts involved in the $n_{i}$ to live inside $P_{S,\emptyset}$, so that without loss of generality $\Delta$ is a word in $P_{S,\emptyset}$.  Theorem \ref{theorem:lipfillsl} states that $P_{S,\emptyset}(\RR)$ is Lipschitz $1$-connected, so we can use proceed as in part (a).  The results we used about $\Sp(T)/U(T)$ all have analogues for the symmetric space $\SL(S)/SO(S)$ proven in \cite[\S 4, \S 5]{RY}.  In particular, we quadratically break $\Delta$ into a collection of words of the form $w_{1}w_{2}w_{3}$ representing the identity, where each $w_{i}$ is of the form $dn$, with $d$ a word in the diagonal blocks of a parabolic subgroup $P$ of $\GL(S)$ and $n$ a product of a bounded number of shortcuts for elements of the unipotent radical of $P$.  (A parabolic subgroup of $\GL(S)$ is the stabilizer of some $\RR^{S^{\prime}}$, where $S^{\prime}\subset S$).  It is clear that such $w_{1}w_{2}w_{3}$ have quadratic fillings by just applying Lemma \ref{lemma:ptd1}, so we have proved the theorem.
\section{$\SpH_{S,T}$ has quadratic Dehn function}
\label{section:solvable}
In this section we will show the following theorem:

\begin{theorem}
\label{theorem:solvable2} Assume that $S,T\subset \mathcal{H}$ are disjoint sets of half roots, with $S\perp T$, $S$ isotropic, and $T$ symplectic.  If $\# S \geq 3$ and $\#T \geq 1$, then the group $\SpH_{S,T}(\ZZ)$ has quadratic Dehn function.
\end{theorem}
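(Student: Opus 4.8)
The plan is to reduce the statement to a filling problem in the real Lie group $G=\SpH_{S,T}(\RR)$ and then carry out an explicit filling by efficient normal forms (a quadratic ``combing'') in the spirit of \cite{RY}. By Lemma~\ref{lemma:hstcocompact}, $\SpH_{S,T}(\ZZ)$ is cocompact in $G$, so it suffices to show that every relation of length $\ell$ in a suitable finite generating set has a filling of area $O(\ell^{2})$. Recall that $G=(\TT_{S}(\RR)\times\TT_{T}(\RR))\ltimes N_{S,T}(\RR)$, where $N_{S,T}(\RR)$ is the two-step nilpotent central extension
$$0\rightarrow Z_{S}(\RR)\rightarrow N_{S,T}(\RR)\rightarrow A_{S,T}(\RR)\rightarrow 0$$
with $A_{S,T}(\RR)\cong\RR^{S}\otimes\RR^{T}$ and $Z_{S}(\RR)\cong\Sym^{2}\RR^{S}$; the action of $\TT_{S}\times\TT_{T}$ on $N_{S,T}$ and the commutator cocycle are all described by Proposition~\ref{proposition:umanip}. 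The crucial structural point is that $\TT_{S}$ has rank $\#S-1\geq 2$, that $\TT_{T}$ may have rank as low as $1$, and that $Z_{S}$ is acted on by $\TT_{S}$ \emph{alone}; this is exactly why the hypothesis constrains only $S$.

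Next I would fix, for each $g\in G(\ZZ)$, an efficient normal form $\Omega(g)=\Omega(a)\,\hat n$, where $g=an$ with $a\in\TT_{S}(\ZZ)\times\TT_{T}(\ZZ)$ and $n\in N_{S,T}(\ZZ)$, $\Omega(a)$ is a geodesic word in $\TT_{S}(\ZZ)\times\TT_{T}(\ZZ)$, and $\hat n$ is a product of $O(\log\Vert n\Vert)$ shortcuts: write $\Ab(n)\in\RR^{S}\otimes\RR^{T}$ in the $\TT_{S}\times\TT_{T}$-eigenbasis and represent it by a product of $\hu$-shortcuts as in Lemma~\ref{lemma:efficientshortcuts}, then append $\huZ$-shortcuts for the central part. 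One checks $\ell(\Omega(g))=O(\ell_{G}(g))$ where $\ell_{G}$ is word length. The Dehn bound then follows by the standard argument: writing a relation $w=s_{1}\cdots s_{\ell}$ and setting $g_{i}=s_{1}\cdots s_{i}$, one homotopes $w$ through $\Omega(g_{1})s_{2}\cdots s_{\ell}$, $\Omega(g_{2})s_{3}\cdots s_{\ell}$, $\dots$, down to the empty word; each step costs $O(\ell_{G}(g_{i}))=O(i)$ provided one knows $\delta(\Omega(g)s,\Omega(gs))=O(\ell_{G}(g)+1)$ for every generator $s$, and summing yields $O(\ell^{2})$.

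Establishing $\delta(\Omega(g)s,\Omega(gs))=O(\ell_{G}(g)+1)$ is the heart of the argument and the main obstacle. It reduces to a short list of ``move'' lemmas: commuting a $\TT$-generator past a block of shortcuts, changing which diagonal element is used to unfold a given shortcut, and --- the genuinely new ingredient over the $\SL$ case --- moving and absorbing central shortcuts $\huZ(\cdot)$. For the moves living at the $A_{S,T}$-level one adapts the arguments of \cite{RY} closely, using that $\TT_{S}$ has at least two independent expanding/contracting directions so that shortcuts can be rerouted among weight spaces cheaply; this is precisely Young's hypothesis applied to the weight system $\{\lambda_{i}\mu_{j}\}$ of $\TT_{S}\times\TT_{T}$ on $\RR^{S}\otimes\RR^{T}$, and it needs $\#S\geq 3$. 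For the central moves, the key observation is that $Z_{S}\cong\Sym^{2}\RR^{S}$ carries the $\TT_{S}$-weight system $\{\lambda_{i}\lambda_{j}\}$, which is again in sufficiently general position once $\#S\geq 3$, so that loops supported in $\TT_{S}\ltimes Z_{S}$ are filled quadratically by the same elementary-rectangle decomposition; and the identity $[u(v\otimes w),u(v^{\prime}\otimes w^{\prime})]=u_{Z}(\omega(w,w^{\prime})vv^{\prime})$ of Proposition~\ref{proposition:umanip}(d) lets one pass freely between central shortcuts and commutators of $A_{S,T}$-shortcuts at quadratic cost.

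The delicate point --- and where most of the new work lies --- is bookkeeping the central defects: every invocation of a commutator relation among $A_{S,T}$-shortcuts produces a $Z_{S}$-term, and the moves must be organized so that these terms are created and cancelled locally and never accumulate faster than quadratically. This is also where $\#S\geq 3$ is genuinely necessary rather than merely convenient: for $\#S=2$ the torus $\TT_{S}$ has rank $1$, $\SpH_{S,T}$ contains a copy of $\Sol^{3}$, and both the $A_{S,T}$-level and $Z_{S}$-level rerouting moves fail, with the Dehn function becoming exponential. Carrying out this bookkeeping carefully, in parallel for the abelianization and the centre, is the content of \S\ref{section:solvable}.
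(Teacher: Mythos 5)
Your opening step matches the paper: reduce via Lemma~\ref{lemma:cocompactdehnfunction} and Lemma~\ref{lemma:hstcocompact} to a filling problem in the compactly presented real group $\SpH_{S,T}(\RR)$. After that your route diverges sharply from the paper's. You propose to build an explicit efficient normal form and carry out a combing argument with hand-crafted ``move'' lemmas, carefully tracking the $Z_{S}$-defects spawned by commutators; the paper instead conjugates $\SpH_{S,T}(\RR)$ to a standard solvable form $U\rtimes A$ (Lemma~\ref{lemma:hstdiagonal}) and invokes the Cornulier--Tessera theorem \cite[Theorem E]{dct3}, which reduces the quadratic Dehn function of such a group to a short list of algebraic conditions: no principal weight zero, no quasi-opposite principal weights, and vanishing of $H_{2}(\mathfrak{u})_{0}$ and $\mathrm{Kill}(\mathfrak{u})_{0}$. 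These are then checked by a brief weight computation. Your approach is in spirit a by-hand reproof of Cornulier--Tessera in this special case; the paper's approach trades all the bookkeeping for a citation and a Lie-algebra cohomology check.

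That said, as a proof your proposal has a real gap. The sentence asserting that central defects ``are created and cancelled locally and never accumulate faster than quadratically'' is exactly the theorem to be proved, and nothing in the sketch establishes it. The vague justification that the weight system is ``in sufficiently general position once $\#S\geq 3$'' also misses a genuine subtlety that the paper's computation exposes: when $\#S=4$ one has $s_{1}+s_{2}+s_{3}+s_{4}=0$ in $\Hom(A,\RR)$, so $(\mathfrak{u}\otimes\mathfrak{u})_{0}$ is spanned by elements $X_{s_{1}+s_{2}}\otimes X_{s_{3}+s_{4}}$ and is \emph{not} zero. The paper kills the resulting $H_{2}$ and $\mathrm{Kill}$ classes only by exploiting $\#T\geq 1$ to introduce a $t\in T$ and write $X_{s_{1}+s_{2}}\wedge X_{s_{3}+s_{4}}$ as a $d_{3}$-boundary. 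Your combing argument would have to replicate this cancellation in its defect bookkeeping, but nothing in your sketch isolates this case or uses $\#T\geq 1$ beyond making $A_{S,T}\neq 0$, so as written the $\#S=4$ case would likely fail or at least require a separate unstated argument.
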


Our attack proceeds by first showing that $\SpH_{S,T}(\ZZ)$ is cocompact in $\SpH_{S,T}(\RR)$. A powerful theorem of Cornulier and Tessera\cite{dct3} gives some simple criteria for a solvable Lie group to have quadratic Dehn function, and we show that these criteria hold for $\SpH_{S,T}(\RR)$ (or rather, some conjugate).

This section is organized as follows. \S\ref{subsection:solpreliminaries} explains the concept of compact presentation and Dehn functions for compactly presented groups, and provides some lemmas explaining our approach.  Lemma \ref{lemma:cocompactdehnfunction} states that a compactly presented group has the same Dehn function as any cocompact discrete subgroup, and Lemma \ref{lemma:hstcocompact} confirms that $\SpH_{S,T}(\ZZ)$ is cocompact in $\SpH_{S,T}(\RR)$. Lemma \ref{lemma:hstdiagonal} describes a group $G$ conjugate to $\SpH_{S,T}(\RR)$ in $\Sp(2p;\RR)$ which will be slightly easier to work with. Finally, \S\ref{subsection:dct} explains Cornulier and Tessera's theorem and verifies that it applies to $G$. Throughout this section $S\subset \mH$ is taken to be isotropic of size at least $3$ and $T\subset \mH$ is symplectic of size at least $1$.

\subsection{Basic setup}
\label{subsection:solpreliminaries}
We now explain Dehn functions for compactly presented groups, prove that $\SpH_{S,T}(\ZZ)$ is cocompact in the solvable group $\SpH_{S,T}(\RR)$, and define a conjugate $G$ of $\SpH_{S,T}(\RR)$ which is slightly easier to work with.
\begin{definition}
\label{def:compactlypresented}
A locally compact group $G$ is said to be compactly presented if we can find a compact generating set $\Sigma$ and a natural number $k$ so that every relation in $\Sigma^{\ast}$ is a product of conjugates of relations of length at most $k$.  We call a relation of length at most $k$ a relator.  The Dehn function of $G$ is defined by letting $\delta_{G}(\ell)$ be the maximum number of conjugates of relators needed to fill a relation of length $\ell$ in $\Sigma^{\ast}$.  Up to the usual constants, this is well defined and does not depend on our choice of $\Sigma$ or $k$.
\end{definition}

\begin{lemma}
\label{lemma:cocompactdehnfunction}
Suppose $\Gamma$ is a discrete cocompact subgroup of a compactly presented group $G$.  If $G$ has quadratic Dehn function, then so does $\Gamma$.
\end{lemma}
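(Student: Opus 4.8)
Suppose $\Gamma$ is a discrete cocompact subgroup of a compactly presented group $G$. If $G$ has quadratic Dehn function, then so does $\Gamma$.

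My plan would be...\textbf{The plan.} The statement is a standard fact from the coarse geometry of compactly presented groups, so the plan is to reduce it to the principle that the Dehn function is a quasi-isometry invariant within the class of compactly presented (resp. finitely presented) groups. First I would recall that since $\Gamma$ is discrete and cocompact in $G$, the inclusion $\Gamma\hookrightarrow G$ is a quasi-isometry: choose a compact symmetric generating set $\Sigma$ for $G$ and a compact set $K$ with $G=\Gamma K$, and check that $\Gamma$ with a suitable finite generating set (for instance $\{\gamma\in\Gamma : \gamma K\cap \Sigma^{2}K\neq\emptyset\}$, which is finite by discreteness plus compactness) has word metric bi-Lipschitz to the restriction of the word metric of $G$. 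In particular $\Gamma$ is finitely generated; finite presentability will come out of the filling estimate below rather than being assumed.

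Next I would set up the filling comparison. Fix the compact presentation $\langle \Sigma \mid \mathcal{R}\rangle$ of $G$, with $\mathcal{R}$ the relators of length $\le k$ and $\delta_G(\ell)\le C\ell^2 + C\ell + C$. Given a word $w$ in the generators of $\Gamma$ of length $n$ representing the identity, push it forward to a loop in the Cayley graph of $G$ of length $O(n)$; fill it in $G$ using $O(n^2)$ conjugates of relators from $\mathcal{R}$. Now I need to convert this $G$-filling back into a $\Gamma$-filling. The mechanism is the one already used several times in this paper (e.g.\ the ``approximation'' construction in \S\ref{section:shortcuts}): a van Kampen diagram over $\langle\Sigma\mid\mathcal{R}\rangle$ for $w$ gives a cellular disk with $O(n^2)$ faces; choose for each vertex $v$ of the diagram an element $\gamma_v\in\Gamma$ with $d_G(v,\gamma_v)=O(1)$ (using cocompactness, with $\gamma_v=1$ for boundary vertices once we take the basepoint in $\Gamma$), and replace each edge by the $\Gamma$-word $\gamma_u^{-1}\gamma_v$, which lies in a fixed finite subset of $\Gamma$ because $d_G(\gamma_u,\gamma_v)=O(1)$. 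Each face of the diagram then becomes a loop in the Cayley graph of $\Gamma$ of uniformly bounded length, representing the identity; such loops lie in a finite set, so each has area bounded by a uniform constant in any fixed presentation of $\Gamma$ containing those loops among its relators (and this finite set of loops, together with $\Sigma$, furnishes a finite presentation of $\Gamma$, establishing finite presentability en route). Summing over the $O(n^2)$ faces gives $\Area_\Gamma(w)=O(n^2)$.

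\textbf{Main obstacle.} The only genuinely delicate point is bookkeeping the constants so that the bounded-length loops coming from the faces really do range over a \emph{finite} set independent of $w$: one must check that edge-labels $\gamma_u^{-1}\gamma_v$ stay in a fixed finite subset of $\Gamma$, which requires the uniform bound $d_G(u,v)\le k$ on edges of the diagram (true since relators have length $\le k$ and one subdivides accordingly) combined with $d_G(v,\gamma_v)\le D$ for a single constant $D$ from cocompactness. Once that is in place, everything else is routine diagram-chasing. I would also remark that this argument is symmetric in $G$ and $\Gamma$ and does not use quadratic-ness at all — it shows $\delta_\Gamma\simeq\delta_G$ for \emph{any} cocompact pair — but since the lemma only asks for the quadratic implication, I would state just that, citing the general principle (e.g.\ \cite{bh} or the references on Dehn functions already in the bibliography) for readers who want the full equivalence.
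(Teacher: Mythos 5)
Your argument is correct and is the standard van Kampen--diagram reconstruction proof that the Dehn function is a quasi-isometry invariant among compactly presented groups, specialized to the case of a discrete cocompact subgroup. The paper, however, does not prove the lemma at all: its entire ``proof'' is the one-line citation ``This is proved in \cite{dct},'' i.e.\ it defers to Cornulier and Tessera. So you have supplied an argument where the paper supplies a reference. The content of your argument---push the $\Gamma$-loop into $G$, fill with a diagram of $O(n^2)$ faces over a compact presentation, approximate each vertex by a nearby element of $\Gamma$ using cocompactness, relabel each edge by a bounded element $\gamma_u^{-1}\gamma_v$, and observe that the face boundaries now form a finite set of bounded-length $\Gamma$-relations---is exactly what one would find unwound in such a reference, and your remark that the argument proves the full $\simeq$-equivalence (not merely the quadratic implication) is also correct. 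One small bookkeeping caveat worth stating explicitly if this were to be written up: you should fix the finite generating set of $\Gamma$ and the finite relator set of $\Gamma$ \emph{before} doing the construction, and then verify that the face loops land in that relator set; as phrased, the relator set is chosen post hoc to contain the face loops, which is fine for the existence of a quadratic presentation but requires one extra sentence (that changing the finite presentation of $\Gamma$ changes the Dehn function only up to $\simeq$) to conclude for an arbitrary finite presentation. This is routine and does not affect correctness.
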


\begin{proof}
This is proved in \cite{dct}.\end{proof}

Let $n_{S}=\#S$ and $2n_{T}=\#T$, and let $\mH_{0}=S\cup -S\cup T$ We start with the following observation.
\begin{lemma}
\label{lemma:hstcocompact}
$\SpH_{S,T}(\ZZ)$ is a cocompact subgroup of $\SpH_{S,T}(\RR)$.
\end{lemma}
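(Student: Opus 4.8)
The plan is to realize $\SpH_{S,T}(\RR)$ as an extension and apply twice the elementary fact that, if $1\to N\to G\to Q\to 1$ is a short exact sequence of (Lie) groups and $\Gamma$ is a closed subgroup of $G$ with $\Gamma\cap N$ cocompact in $N$ and $\pi(\Gamma)$ closed and cocompact in $Q$, then $\Gamma$ is cocompact in $G$. (Given $g\in G$, write $\pi(g)=\pi(\gamma)q$ with $\gamma\in\Gamma$ and $q$ in a fixed compact transversal of $\pi(\Gamma)$ in $Q$, lift $q$ to a compact subset of $G$, and absorb the leftover element of $N$ into $\Gamma\cap N$ times a compact transversal of $\Gamma\cap N$ in $N$.) I will apply this with $G=\SpH_{S,T}(\RR)$, $N=N_{S,T}(\RR)$, $Q=\TT_S(\RR)\times\TT_T(\RR)$, and $\pi$ the projection of \S\ref{section:pst} (whose section is the inclusion $\GL(S;\RR)\times\Sp(T;\RR)\hookrightarrow P_{S,T}(\RR)$), with $\Gamma=\SpH_{S,T}(\ZZ)$.

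First I would observe that $\TT_S(\ZZ)\times\TT_T(\ZZ)\subset\GL(S;\ZZ)\times\Sp(T;\ZZ)$ consists of integer matrices normalizing the algebraic group $N_{S,T}$, hence normalizing $N_{S,T}(\ZZ)$; since $\SpH_{S,T}(\ZZ)$ is generated by $\TT_S(\ZZ)$, $\TT_T(\ZZ)$ and $N_{S,T}(\ZZ)$, this gives $\SpH_{S,T}(\ZZ)=(\TT_S(\ZZ)\times\TT_T(\ZZ))\ltimes N_{S,T}(\ZZ)$, so $\SpH_{S,T}(\ZZ)\cap N_{S,T}(\RR)=N_{S,T}(\ZZ)$ and $\pi(\SpH_{S,T}(\ZZ))=\TT_S(\ZZ)\times\TT_T(\ZZ)$. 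For the base, $\TT_S(\RR)$ is by definition generated by the real powers $g^x$ of elements of the finitely generated abelian group $\TT_S(\ZZ)$, hence is the continuous image of some $\RR^n$ under $(x_i)\mapsto g_1^{x_1}\cdots g_n^{x_n}$ for generators $g_1,\dots,g_n$ of $\TT_S(\ZZ)$, and $\TT_S(\ZZ)$ is then the image of the cocompact lattice $\ZZ^n\subset\RR^n$. A continuous homomorphism carries cocompact subgroups to cocompact subgroups, so $\TT_S(\ZZ)$ is cocompact in $\TT_S(\RR)$, and likewise $\TT_T(\ZZ)$ in $\TT_T(\RR)$; thus $\pi(\SpH_{S,T}(\ZZ))=\TT_S(\ZZ)\times\TT_T(\ZZ)$ is discrete (it lies in $\Sp(2p;\ZZ)$) and cocompact in $Q$.

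It remains to see that $N_{S,T}(\ZZ)$ is cocompact in $N_{S,T}(\RR)$, which I would again deduce from the extension fact, now applied to the central extension $0\to Z_S(\RR)\to N_{S,T}(\RR)\to\RR^S\otimes\RR^T\to 0$ of \S\ref{section:pst}: here $Z_S(\ZZ)\cong\Sym^2\ZZ^S$ is a cocompact lattice in $Z_S(\RR)\cong\Sym^2\RR^S$, and $\Ab(N_{S,T}(\ZZ))=\ZZ^S\otimes\ZZ^T$ (each $e_{s-t}$ maps to $z_s\otimes z_t$) is a cocompact lattice in $\RR^S\otimes\RR^T$, so $N_{S,T}(\ZZ)$ — which equals the integer points, being generated by the $e_\alpha$, $\alpha\in\Phi N_{S,T}$, with integral structure constants — is cocompact; alternatively one may just invoke the classical fact that a unipotent $\QQ$-group has cocompact integer points. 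Assembling the two applications of the extension fact completes the proof. I do not expect a genuine obstacle: everything rests on the explicit descriptions already established in \S\S\ref{section:subgroups}--\ref{section:pst}, and the only mildly fiddly point is keeping the integral and real avatars of $N_{S,T}$, $\TT_S$ and $\TT_T$ aligned so that the extension fact applies verbatim.
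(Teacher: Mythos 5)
Your proof is correct and follows essentially the same route as the paper: it exploits the decomposition $1\to N_{S,T}(\RR)\to\SpH_{S,T}(\RR)\to\TT_S(\RR)\times\TT_T(\RR)\to 1$ and deduces cocompactness of $\SpH_{S,T}(\ZZ)$ from cocompactness of $N_{S,T}(\ZZ)$ in $N_{S,T}(\RR)$ and of $\TT_S(\ZZ)\times\TT_T(\ZZ)$ in the quotient. The paper packages this as a fiber bundle over a compact base with compact fiber rather than via your iterated-extension lemma, and simply asserts the two ingredient cocompactness facts that you verify in detail, but the underlying argument is the same.
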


\begin{proof}
Observe that $\TT_{S}(\ZZ)\times \TT_{T}(\ZZ)$ is a cocompact subgroup of $\TT_{S}(\RR)\times \TT_{T}(\RR)$, and $N_{S,T}(\ZZ)$ is a cocompact subgroup of $N_{S,T}(\RR)$.  Hence the map
$$\SpH_{S,T}(\RR)/\SpH_{S,T}(\ZZ)\rightarrow
\TT_{S}(\RR)\times\TT_{T}(\RR)/\TT_{S}(\ZZ)\times\TT_{T}(\ZZ)$$
is a fiber bundle over a compact space with compact fiber $N_{S,T}(\RR)/N_{S,T}(\ZZ)$.  Such a bundle most have compact total space, which proves the lemma.\end{proof}

We would like to show that $\SpH_{S,T}(\RR)$ has quadratic Dehn function.  For notational reasons, it is easier to work with a certain conjugate of this group.
\begin{lemma}
\label{lemma:hstdiagonal}
$\SpH_{S,T}(\RR)$ is isomorphic to the group $G$ given by all $A\in P_{S,T}(\RR)$ which map to positive diagonal matrices in $\SL(S;\RR)\times\Sp(T;\RR)$ under the map
$P_{S,T}(\RR)\mapsto \GL(S;\RR)\times \Sp(T;\RR)$. 
\end{lemma}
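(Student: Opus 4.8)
The plan is to realize the isomorphism as conjugation by a suitable element of the Levi factor of $P_{S,T}(\RR)$. Recall from \S\ref{section:pst} that $P_{S,T}(\RR)=(\GL(S;\RR)\times\Sp(T;\RR))\ltimes N_{S,T}(\RR)$; write $q\colon P_{S,T}(\RR)\to\GL(S;\RR)\times\Sp(T;\RR)$ for the resulting projection, whose kernel is $N_{S,T}(\RR)$. By the definition of $\SpH_{S,T}$ we have $\SpH_{S,T}(\RR)=q^{-1}(\TT_S(\RR)\times\TT_T(\RR))$, while by hypothesis $G=q^{-1}(D)$, where $D$ is the subgroup of positive diagonal matrices in $\SL(S;\RR)\times\Sp(T;\RR)$. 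For any $\ell_0\in\GL(S;\RR)\times\Sp(T;\RR)\subset P_{S,T}(\RR)$, conjugation $c_{\ell_0}$ is an automorphism of $P_{S,T}(\RR)$ preserving the normal subgroup $N_{S,T}(\RR)$ and inducing conjugation by $\ell_0$ on the quotient, so $c_{\ell_0}(q^{-1}(B))=q^{-1}(c_{\ell_0}(B))$ for every subgroup $B$ of the Levi. Hence it suffices to produce $\ell_0$ with $c_{\ell_0}(\TT_S(\RR)\times\TT_T(\RR))=D$; then $c_{\ell_0}$ restricts to an isomorphism $\SpH_{S,T}(\RR)\xrightarrow{\ \sim\ }G$.

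So I am reduced to conjugating $\TT_S(\RR)$ onto the positive diagonal subgroup of $\SL(S;\RR)$ by some $g_0\in\GL(S;\RR)$ and $\TT_T(\RR)$ onto the positive diagonal subgroup of $\Sp(T;\RR)$ by some $h_0\in\Sp(T;\RR)$. In each case the relevant torus is generated by $\RR$-diagonalizable, pairwise commuting matrices with positive eigenvalues, so one fixed conjugation simultaneously diagonalizes a $\ZZ$-basis of $\TT_S(\ZZ)$ (resp.\ of $\TT_T(\ZZ)$); since $\TT_{\bullet}(\RR)=\langle g^{x}:g\in\TT_{\bullet}(\ZZ),\,x\in\RR\rangle=\exp\big(\mathrm{span}_{\RR}\{\log g:g\in\TT_{\bullet}(\ZZ)\}\big)$, this conjugation carries $\TT_{\bullet}(\RR)$ into the connected positive diagonal group of the relevant type. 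I expect the only real obstacle is checking that the image is the \emph{entire} positive diagonal subgroup rather than a proper subtorus --- equivalently, that the logarithmic eigenvalue vectors of a $\ZZ$-basis of $\TT_{\bullet}(\ZZ)$ are $\RR$-linearly independent (a free abelian subgroup of $\RR^{r}$ of rank $r$ can span a proper subspace, so the rank count alone does not suffice, and if $\TT_{\bullet}(\RR)$ were a proper subtorus then $\SpH_{S,T}(\RR)$ would have strictly smaller dimension than $G$ and the lemma would fail). For $\TT_T$ this is transparent if one uses the generators of \S\ref{section:subgroups} that act as $\bigl[\begin{smallmatrix}2&1\\1&1\end{smallmatrix}\bigr]$ on the distinct symplectic planes $\RR^{\{\pm t\}}$: their logarithms have pairwise disjoint support. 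For $\TT_S$ one invokes the construction cited at \cite[p.~236--237]{wordproc}, which (up to finite index) realizes $\TT_S(\ZZ)$ as a unit group of a totally real number field; the Dirichlet unit theorem says exactly that its logarithmic image is a full-rank lattice in the trace-zero hyperplane, so $\TT_S(\RR)$ is a full maximal $\RR$-split torus of $\SL(S;\RR)$.

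Granting this, the simultaneous diagonalization produces $g_0\in\GL(S;\RR)$ with $g_0\TT_S(\RR)g_0^{-1}$ equal to the connected group with Lie algebra all trace-zero diagonal matrices --- that is, the positive diagonal subgroup of $\SL(S;\RR)$ --- and likewise $h_0\in\Sp(T;\RR)$ with $h_0\TT_T(\RR)h_0^{-1}$ the positive diagonal subgroup of $\Sp(T;\RR)$. Setting $\ell_0=(g_0,h_0)$, we conclude $c_{\ell_0}(\SpH_{S,T}(\RR))=q^{-1}\big(c_{\ell_0}(\TT_S(\RR)\times\TT_T(\RR))\big)=q^{-1}(D)=G$, which is the assertion of the lemma.
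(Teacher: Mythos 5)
Your proposal is correct and takes essentially the same approach as the paper: conjugation by an element of the Levi factor $\GL(S;\RR)\times\Sp(T;\RR)$ that simultaneously diagonalizes $\TT_S$ and $\TT_T$, which automatically respects $N_{S,T}$ and hence carries $\SpH_{S,T}(\RR)$ onto $G$. The extra care you take in verifying that the conjugated image of $\TT_\bullet(\RR)$ is the \emph{full} positive diagonal subgroup (via the Dirichlet unit theorem for $\TT_S$ and the explicit disjoint-support generators for $\TT_T$) is a genuine subtlety that the paper elides behind the phrase ``basic linear algebra,'' and it does need the construction cited at \cite[p.~236--237]{wordproc} rather than the abstract rank of $\TT_\bullet(\ZZ)$ alone.
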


\begin{proof}
By basic linear algebra, there exists $M\in \GL(S)\times\Sp(T)\subset \Sp(\mH_{0})$ with the following properties.
\begin{itemize}
\item $M$ carries the standard basis for $\RR^{S}$ to an eigenbasis for the action of $\TT_{S}$ on $\RR^{S}$.
\item $M$ carries the standard basis for $\RR^{T}$ to a symplectic eigenbasis for the action of $\TT_{T}$ on $\RR^{T}$.
\end{itemize}
Observe that conjugation by $M$ preserves $N_{S,T}$, and takes
$\TT_{S}\times\TT_{T}$ to $\Diag_{\Sp}\cap\GL_{S}\times\Sp_{T}$.  Hence conjugation by $M$ yields an isomorphism from $\SpH_{S,T}(\RR)$ to $G$.\end{proof}

We will refer to the group of positive diagonal matrices in $\SL(S;\RR)$ as $D$, and refer to the group of diagonal matrices in $\Sp(T;\RR)$ as $E$, so that $G=(D\times E)\ltimes N_{S,T}(\RR)$.

\subsection{The statement of Cornulier and Tessera's theorem}
\label{subsection:dct}
We wish to show that, by a theorem of Cornulier and Tessera, the group $G$ of Lemma \ref{lemma:hstdiagonal} has quadratic Dehn function. In order to understand the statement of the theorem, we must introduce some notation involving weights. Throughout this subsection, $D$ will denote the diagonal subgroup of $\SL(S;\RR)$ and $E$ the diagonal subgroup of $\Sp(T;\RR)$. Essentially all of the forthcoming definitions can be found in \cite[\S 1]{dct3}.

Suppose $U\rtimes A$ is a solvable real Lie group, with $U$ nilpotent, connected, and simply connected, and $A\cong \RR^{n}$ for some $n$. Let $\mathfrak{u}$ denote the Lie algebra of $U$ (see \cite[\S 8]{fh}). It is well known that in this case, exponentiation yields a diffeomorphism from $\mathfrak{u}$ to $U$.  Given $v\in\mathfrak{u}$ (the Lie algebra of $U$) and $a\in A$, we define $a\cdot v$ by
$$\exp(a\cdot v)=a\exp(v)a^{-1}.$$

\begin{definition}
For $\alpha\in \Hom(A,\RR)$, let
$$\mathfrak{u}_{\alpha}:=\{v\in\mathfrak{u}:a\cdot v=\exp(\alpha(a))v\}.$$
If $\mathfrak{u}_{\alpha}\neq\{0\}$, we call $\alpha$ a weight, and denote the set of weights by $\mathcal{W}_{\mathfrak{u}}$.
\end{definition}

A weight $\alpha\in\mathcal{W}_{\mathfrak{u}}$ is called principal if $(\mathfrak{u}/[\mathfrak{u},\mathfrak{u}])_{0}\neq \{0\}$. The group $U\rtimes A$ is called standard solvable if $0$ is not a principal weight. We say that $\alpha,\beta\in \mathcal{W}_{\mathfrak{u}}\setminus\{0\}$ are quasi-opposite if $\alpha=t\beta$ for some $t<0$. Observe that weights give a grading on $\mathfrak{u}$, in the sense that
$$\mathfrak{u}=\bigoplus_{\alpha\in\mathcal{W}_{\mathfrak{u}}}\mathfrak{u}_{\alpha}.$$

\begin{definition}
Suppose $\mathfrak{g}$ a Lie algebra graded in a vector space $\mathcal{W}$ (so that $\mathfrak{g}=\bigoplus_{\alpha\in\mathcal{W}}\mathfrak{g}_{\alpha}$). We define the grading on $\mathfrak{g}\otimes\mathfrak{g}$ as
$$(\mathfrak{g}\otimes\mathfrak{g})_{\alpha}=
\bigoplus_{\beta+\gamma=\alpha}\mathfrak{g}_{\beta}\otimes\mathfrak{g}_{\gamma}.$$
\end{definition}

As addition is associative, this grading induces a grading on $\mathfrak{g}^{\otimes n}$, which descends to a grading on $\mathfrak{g}\wedge\mathfrak{g}$ and $\Sym^{2}\mathfrak{g}$ and so on.

\begin{definition}
Let $H_{2}(\mathfrak{g})$ be the vector space $\ker(d_{2})/\text{im}(d_{3})$, where the maps
$$\mathfrak{g}\wedge\mathfrak{g}\wedge\mathfrak{g}
{\buildrel d_{3}\over\rightarrow}\mathfrak{g}\wedge\mathfrak{g}
{\buildrel d_{2}\over\rightarrow}\mathfrak{g}$$
are given by
$$d_{3}(x\wedge y\wedge z)=[x,y]\wedge z+[y,z]\wedge x+[z,x]\wedge y,$$
$$d_{2}(x\wedge y)=[x,y].$$
\end{definition}

Note that $H_{2}(\mathfrak{g})$ is graded because $d_{2}$ and $d_{3}$ respect the grading on $\mathfrak{g}$.

\begin{definition}
For a graded Lie algebra $\mathfrak{g}$, let $Kill(\mathfrak{g})$ denote $\Sym^{2}\mathfrak{g}$ modulo the subspace spanned by all vectors of the form $[x,y]\odot z-[x,z]\odot y$. 
\end{definition}

Note again that $Kill(\mathfrak{g})$ inherits a grading. We are now prepared to state the theorem.

\begin{theorem}
\label{theorem:dctbig}
Suppose $U\rtimes A$ standard solvable, and $\mathfrak{u}/[\mathfrak{u},\mathfrak{u}]$ has no quasi-opposite weights. If
$$H_{2}(\mathfrak{u})_{0}=Kill(\mathfrak{u})_{0}=0,$$
then $U\rtimes A$ has quadratic Dehn function (as a compactly presented group).
\end{theorem}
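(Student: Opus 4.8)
The plan is to follow the method of Cornulier and Tessera for Dehn functions of standard solvable groups. Write $G=U\rtimes A$ with $A\cong\RR^{n}$ acting semisimply on the simply connected nilpotent group $U$, and fix a compact generating set adapted to the weight grading: $X=X_{A}\cup\bigcup_{\alpha\in\mathcal{W}_{\mathfrak u}}X_{\alpha}$, where $X_{A}$ is a compact symmetric neighborhood of $0$ in $A$ and each $X_{\alpha}$ is one in $\mathfrak u_{\alpha}$ (transported into $U$ by $\exp$). Since a connected Lie group is compactly presented, the first step is to check that $G$ admits a presentation over $X$ whose relators, up to bounded length, are of three kinds: (i) the relations of the vector group $A$; (ii) the scaling relations $a\exp(v)a^{-1}=\exp(e^{\alpha(a)}v)$ for $v\in X_{\alpha}$, $a\in X_{A}$; and (iii) the Baker--Campbell--Hausdorff relations inside $U$, encoding the vector-space structure of each $\mathfrak u_{\alpha}$ and the brackets $[\mathfrak u_{\alpha},\mathfrak u_{\beta}]\subseteq\mathfrak u_{\alpha+\beta}$. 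Finitely many such relators suffice because $U$ is nilpotent of finite step, so it is enough to fill words in this presentation.

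The second step is a normal-form reduction carried out at quadratic cost. The scaling relations (ii) make every element $\exp(tv)$ with $v\in X_{\alpha}$, $\alpha\neq 0$, representable by a ``shortcut'' $a^{-m}v'a^{m}$ of length $O(\log|t|)$ (the abstract prototype of the words $\he_{\alpha}(x)$ of \S\ref{section:shortcuts}), and one bootstraps through the lower central series of $U$; here the hypothesis that $G$ is standard (i.e.\ $0$ is not a principal weight) guarantees $\mathfrak u/[\mathfrak u,\mathfrak u]$, and hence all of $\mathfrak u$, lives in nonzero weights, so shortcuts reach everything. Given a word $w$ one then sorts it, by repeated use of (ii) and (iii), into a normal form: an $A$-block followed by one shortcut block per weight, in a fixed order. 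Swapping a block of length $L$ past a region of length $L'$ costs $O(LL')$ — commutators produced in the process land in $\mathfrak u_{\alpha+\beta}$ and are absorbed into that block via its own shortcuts, and the ``no quasi-opposite weights in $\mathfrak u/[\mathfrak u,\mathfrak u]$'' hypothesis ensures no abelian-level commutator falls into the undistorted weight-$0$ part — and $\sum LL'=O(\ell(w)^{2})$, so the whole reduction is quadratic.

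The third and central step fills a null-homotopic word already in normal form. Being trivial in $G$, it has trivial $A$-part, and after cancelling same-weight blocks it reduces to a loop supported in $U$ at bounded $A$-height. Decomposing this loop along its $A$-height profile and contracting each ``peak'' of height $h$ to bounded size by conjugating with an $A$-element of norm $O(h)$ costs $O(h\cdot(\text{peak length}))$; summing over peaks gives $O(\ell(w)^{2})$. What remains is a ground-level word in bounded $U$-generators equal to the identity, to be filled using only bounded relators. The obstruction to doing this at quadratic cost is a pair of classes in the degree-$0$ graded pieces of two invariants of $\mathfrak u$: an antisymmetric obstruction in $H_{2}(\mathfrak u)_{0}$ (relations among the bracket relations (iii) that land in weight $0$ and cannot be scaled away by $A$) and a symmetric one in $Kill(\mathfrak u)_{0}$ (from the interplay of the conjugation action with the quadratic part of Baker--Campbell--Hausdorff — compare the central-extension cocycle of Proposition~\ref{proposition:umanip}). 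The hypotheses $H_{2}(\mathfrak u)_{0}=Kill(\mathfrak u)_{0}=0$ annihilate both, so the ground-level loop bounds a disk of $O(\ell(w)^{2})$ bounded relators. Combining steps one through three gives the upper bound $\delta_{G}(\ell)=O(\ell^{2})$, which is the substance of the statement; a matching lower bound, when $G$ is non-elementary, is routine from the isoperimetric gap theorem.

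The main obstacle is the third step: establishing the precise dictionary between the group-theoretic relations-among-relations that survive the push-down and the Lie-algebra homology classes in $H_{2}(\mathfrak u)_{0}$ and $Kill(\mathfrak u)_{0}$, and verifying that their vanishing yields a \emph{quadratic} (not merely polynomial) filling. This is where the careful algebraic-filling estimates of Cornulier--Tessera are needed: one works entirely over the graded presentation, bounds area by counting ``corners'' in a combinatorial filling, and must rule out any hidden weight-$0$ contribution coming from higher brackets, which would otherwise reintroduce $\Sol^{3}$-type exponential distortion.
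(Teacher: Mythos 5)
The paper does not prove this statement at all: it is quoted verbatim from Cornulier--Tessera and the ``proof'' is the citation \cite[Theorem E]{dct3}. Your proposal, by contrast, attempts to reconstruct their argument, and while your outline is broadly faithful to their strategy (graded compact presentation over $X_{A}\cup\bigcup_{\alpha}X_{\alpha}$, logarithmic shortcuts through the nonzero weight spaces, quadratic-cost sorting into a normal form, and a final filling of weight-zero ``ground level'' loops obstructed by $H_{2}(\mathfrak{u})_{0}$ and $Kill(\mathfrak{u})_{0}$), it is not a proof: the entire content of the theorem sits in your third step, and there you only assert the conclusion. The claim that the group-theoretic relations-among-relations surviving the push-down are classified exactly by classes in $H_{2}(\mathfrak{u})_{0}$ and $Kill(\mathfrak{u})_{0}$, and that vanishing of these graded pieces converts a polynomial filling into a genuinely quadratic one, is precisely what Cornulier and Tessera spend the bulk of their work establishing (via decompositions into tame subgroups, Gromov's trick, and careful area estimates for the welding relations); you acknowledge this by deferring to ``the careful algebraic-filling estimates of Cornulier--Tessera,'' which means the proposal ultimately appeals to the very result being proved.

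Two smaller points would also need repair if you wanted to make the sketch rigorous. First, the reduction ``trivial $A$-part, cancel same-weight blocks, reduce to a loop in $U$ at bounded $A$-height'' is too coarse: commutators between blocks of different nonzero weights can land in weight-zero pieces of $[\mathfrak{u},\mathfrak{u}]$ (this is exactly the situation exploited in \S\ref{subsection:dctapplied} when $\#S=4$), and these are not removable by $A$-conjugation, so the peak-contraction cost estimate does not go through as stated without invoking the homological hypotheses already at that stage. Second, the hypothesis of no quasi-opposite weights in $\mathfrak{u}/[\mathfrak{u},\mathfrak{u}]$ is the SOL obstruction; it rules out exponential lower bounds but does not, by itself, guarantee that your block-swapping step avoids undistorted weight-zero contributions, contrary to the parenthetical justification you give. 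For the purposes of this paper the correct move is simply to cite \cite[Theorem E]{dct3}, as the author does.
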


\begin{proof}
This is \cite[Theorem E]{dct3}.
\end{proof}

\subsection{Application of Cornulier and Tessera's theorem}
\label{subsection:dctapplied}
We now show that Cornulier and Tessera's theorem applies to the group $G$ with $U=N_{S,T}(\RR)$ and $A=D\times E$ (recall that $D=G\cap\SL(S;\RR)$ and $E=G\cap\Sp(T;\RR)$). We will first describe the weights $\mathcal{W}_{\mathfrak{u}}$, then show that there are no quasi-opposite weights and that $Kill(\mathfrak{u})_{0}$ and $H_{2}(\mathfrak{u})_{0}$ vanish.

\paragraph{Computation of weights.}
We begin by describing $\Hom(A,\RR)$, which is a real vector space in the obvious way. It is easily seen that $\Hom(E,\RR)$ is spanned by the $t\in T$, in the sense that any $\alpha\in\Hom(E,\RR)$ can be written as
$$a\mapsto\sum c_{t}t(\log(a))$$
for some constants $c_{t}\in \RR$. In fact, this decomposition is unique, so $T$ gives a basis for $\Hom(E,\RR)$.  It is also true that $S$ spans $\Hom(D,\RR)$, but it is not quite a basis because there is a relation given by
$$\sum s(\log(a))=0$$
for all $a\in D$ (because $D\subset\SL(S;\RR)$, so $\log(a)$ is trace free for $a\in D$).

As a subset of $\sp(2p;\RR)$, the Lie algebra $\mathfrak{u}$ consists of matrices of the form $n-1$ where $n\in N_{S,T}(\RR)$. Write $X_{\alpha}$ for $e_{\alpha}-1$. Any common eigenvector of $A$ in $\mathfrak{u}$ is a multiple of some $X_{\alpha}$ with $\alpha\in \Phi N_{S,T}$ (this follows from the fact that such an $X_{\alpha}$ is in fact an eigenvector of the $A$ action with eigenvalue $\alpha$, and $\{X_{\alpha}:\alpha\in\Phi N_{S,T}\}$ spans $\mathfrak{u}$). It follows that the set of weights $\mathcal{W}_{\mathfrak{u}}$ is given by
$$\Phi N_{S,T}=\{s-t:s\in S, t\in T\}\cup \{s+s^{\prime}:s,s^{\prime}\in S\}.$$
The set of weights of $\mathfrak{u}/[\mathfrak{u},\mathfrak{u}]$ is given by
$$\{s-t:s\in S, t\in T\}.$$

\paragraph{Verifying the assumptions.}
We now show that Theorem \ref{theorem:dctbig} applies to our situation, thus proving Theorem \ref{theorem:solvable2}. Recall that we assumed $\#S\geq 3$ and $\#T\geq 1$. Clearly, $0$ is not a weight of $\mathfrak{u}/[\mathfrak{u},\mathfrak{u}]$, so $G$ is in fact standard solvable. Furthermore, $\mathfrak{u}/[\mathfrak{u},\mathfrak{u}]$ has no quasi-opposite weights. If $\# S\neq 4$, then there do not exist $\alpha,\beta\in\mathcal{W}_{\mathfrak{u}}$ such that $\alpha+\beta=0$, so $(\mathfrak{u}\otimes\mathfrak{u})_{0}=0$, and thus the subquotients $H_{2}(\mathfrak{u})_{0}$ and $Kill(\mathfrak{u})_{0}$ vanish as desired. It follows that from Theorem \ref{theorem:dctbig} that $G$ has quadratic Dehn function.

Now suppose $\#S=4$. It is now the case that $(\mathfrak{u}\otimes\mathfrak{u})_{0}$ is spanned by vectors of the form $X_{s_{1}+s_{2}}\otimes X_{s_{3}+s_{4}}$, where $s_{1},\ldots,s_{4}\in S$ are distinct (recall that $s_{1}+s_{2}+s_{3}+s_{4}$ vanishes in $\Hom(A,\RR)$). But for any $t\in T\neq \emptyset$, we see that
$$X_{s_{1}+s_{2}}\wedge X_{s_{3}+s_{4}}
=d_{3}(X_{s_{1}+t}\wedge X_{s_{2}-t}\wedge X_{s_{3}+s_{4}})$$
$$X_{s_{1}+s_{2}}\odot X_{s_{3}+s_{4}}
=[X_{s_{1}+t},X_{s_{2}-t}]\odot X_{s_{3}+s_{4}}
-[X_{s_{1}+t},X_{s_{3}+s_{4}}]\odot X_{s_{2}-t},$$
since the last term is zero. It follows that 
$$H_{2}(\mathfrak{u})_{0}=Kill(\mathfrak{u})_{0}=0,$$
and we have established Theorem \ref{theorem:solvable2}.

\section{Lipschitz fillings}
\label{section:lipfill}
In this section, we prove two theorems promised in \S\ref{section:parabtodiag}, each of which states that $\ell$-Lipschitz loops in a certain homogeneous space have $O(\ell)$-Lipschitz fillings. In each theorem statement, the reader may think of $S^{1}$ and $D^{2}$ as the unit circle and disk respectively in the Euclidean plane.

\begin{theorem}
\label{theorem:lipfillsp}
Suppose $S$ and $T$ are disjoint subsets of $\mathcal{H}$ with $S$ isotropic, $T$ symplectic, and $\#T\geq 6$.  Let $G=ker(P_{S,T}\rightarrow GL(S))$, and let $U(T)\subset Sp(T)\subset G$ be a maximal compact subgroup.  Let $X$ be the homogeneous space $G/U(T)$. There is a constant $c$ such that (for $L\geq 1$) any $L$-Lipschitz map $S^{1}\rightarrow X$ can be extended to a $cL$-Lipschitz map of the disk $D^{2}\rightarrow X$.
\end{theorem}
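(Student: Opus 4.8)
The plan is to realize $X = G/U(T)$ as a metric fiber bundle over the symmetric space $Y = \Sp(T;\RR)/U(T)$ with nilpotent fibers modeled on $N_{S,T}(\RR)$, and to fill loops by filling the base-projection in $Y$ (cheap, since $Y$ is CAT(0)) and then correcting in the fibers (the hard part). Concretely, $G = \Sp(T;\RR)\ltimes N_{S,T}(\RR)$, so projection $\pi\colon G \to \Sp(T;\RR)$ induces a submersion $X \to Y$ with fiber $N_{S,T}(\RR)$; I would first fix a smooth $G$-equivariant trivialization of this bundle over each chart, giving a notion of ``horizontal lift'' of paths. Given an $L$-Lipschitz loop $\gamma\colon S^1 \to X$, let $\bar\gamma$ be its projection to $Y$; since $Y$ is CAT(0), cone $\bar\gamma$ off to a basepoint to get an $O(L)$-Lipschitz disk $\bar f\colon D^2 \to Y$. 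Now I need to lift $\bar f$ to a map $f\colon D^2 \to X$ agreeing with $\gamma$ on the boundary, controlling the Lipschitz constant in the fiber directions.

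The fiber-filling step is where the real work lies, and it is essentially a quantitative (Lipschitz, not just coarse) version of the fact that $N_{S,T}$ is exponentially distorted but $G$ is not too distorted — the same phenomenon exploited in \S\ref{section:solvable} via the Cornulier--Tessera criterion. The idea: as one moves in $Y$ toward the thick part, the fiber metric on $N_{S,T}(\RR)$ expands/contracts along the weight directions $s-t$ and $s+s'$ at exponential rates governed by the weights $\alpha \in \Phi N_{S,T}$. Filling a loop that wanders a fiber displacement $v \in N_{S,T}$ costs roughly $\log\|v\|$ in $G$ because one can conjugate by a diagonal element of $\Sp(T)$ to shrink $v$, fill the (now short) loop, and expand back — this is exactly the shortcut construction of \S\ref{section:shortcuts}. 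The hypothesis $\#T \geq 6$ is what guarantees that, after passing to the center $Z_S \cong \Sym^2\RR^S$ via the commutator $[u(v\otimes w), u(v'\otimes w')] = u_Z(\omega(w,w')vv')$, there is enough room in $\RR^T$ (three symplectic planes) to express any central element as a product of a bounded number of commutators of elements that each individually admit cheap Lipschitz fillings — i.e. the relevant $H_2$ and $Kill$ obstruction classes vanish, mirroring the $\#S\geq 3$ / $\#S\neq 4$ bookkeeping of Theorem \ref{theorem:solvable2}. So I would: (i) over the portion of $D^2$ mapping near the thick part, lift $\bar f$ horizontally and absorb the boundary fiber-data directly, getting an $O(L)$ bound; (ii) over horoball regions, subdivide $D^2$ along level sets of the depth function $r$, and on each dyadic annulus where $r$ is comparable to some scale, use the exponential expansion to ``park'' the accumulated fiber displacement at a scale where a horizontal lift of an $O(L)$-Lipschitz disk suffices, then transport it back; the total Lipschitz cost telescopes because the expansion rates are linear in $r$ while the scales are exponential. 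Homogeneity of $X$ then lets me conclude that $O(L+1)$-Lipschitz fillings are in fact $O(L)$-Lipschitz, as remarked in the introduction.

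I would organize the write-up as: (1) recall the semidirect-product structure and fix the bundle trivialization and an explicit left-invariant metric, with estimates on how the fiber metric distorts as a function of the $Y$-coordinate (this is a computation with $\diag_{\Sp}$ and the formulas in Proposition \ref{proposition:umanip}); (2) a ``fiber lemma'' stating that a loop contained in a single fiber $N_{S,T}(\RR)$ sitting over a point at depth $r$ admits an $O(L)$-Lipschitz filling in $X$, using $\#T\geq 6$ to split into abelian-part and central-part contributions and handling each by the conjugation-shrink-fill-expand argument; (3) the global assembly: cone in the base, triangulate/annulate $D^2$ adapted to $r$, apply the fiber lemma scale-by-scale, and sum. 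The main obstacle is step (2) combined with the scale-by-scale bookkeeping in step (3): one must ensure the Lipschitz constants of the fillings on adjacent annuli match up along their shared boundaries and that the geometric series of costs converges to $O(L)$ rather than $O(L\log L)$ — precisely the kind of ``skinny rectangle'' and trapezoid-filling argument used in the proof of Lemma \ref{lemma:blockshortcut}, but now carried out at the level of Lipschitz maps of disks rather than combinatorial homotopies of words.
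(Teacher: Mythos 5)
Your high-level framing captures the right intuitions: the semidirect-product structure of $G$, contract-and-fill via conjugation by $\Sp(T)$-elements, and the need for enough symplectic planes in $T$ to express central elements of $N_{S,T}$ as products of commutators of cheaply-fillable unipotents (Allcock's idea, which the paper explicitly cites). But the organizing route you propose is not the one the paper takes, and the part you flag as the main obstacle is precisely where your plan lacks a working mechanism. The paper does not cone the base projection and attempt a global lift; instead it invokes the normal-form-triangle reduction of Lemma~\ref{lemma:nftriangles} (from \cite{RY}), which says it suffices to exhibit a normal form $\Omega$ for which every $\Omega$-triangle admits a filling whose Lipschitz constant is comparable to its diameter. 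The $\Omega$-words are built so that each triangle boundary is three $\Sp(T)$-geodesics interlaced with three bounded-length products of distorted $\hu$-curves, so the ``cone in the base, correct in the fiber'' step happens triangle-by-triangle. This sidesteps entirely the scale-matching across dyadic annuli that you correctly identify as problematic: there is no canonical horizontal lift for $X\to Y$, and your annular decomposition gives no control over how fiber data transitions across annulus boundaries; without a concrete mechanism, the ``telescoping to $O(L)$ rather than $O(L\log L)$'' is an assertion, not an argument.

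Once the reduction to $\Omega$-triangles is made, the paper's filling algorithm is an explicit sequence of homotopies: a conjugation lemma (Lemma~\ref{lemma:lipconjugation}) proved by a skinny-rectangle template, commutation and splitting lemmas for isotropic pairs (Lemmas~\ref{lemma:isorect},~\ref{lemma:isotri}), an Allcock-type holonomy computation (Lemma~\ref{lemma:allcock}, Figures~\ref{figure:allcock} and~\ref{figure:holonomysquare}) that resolves the problem that $\Sp(T)$ cannot simultaneously contract symplectically-paired vectors, and a Lipschitz Steinberg relations lemma (Lemma~\ref{lemma:lipsteinberg}). Your guess that $\#T\geq 6$ is about having ``three symplectic planes'' is correct, but the mechanism is the Steinberg manipulations (parts (c2) and (e) of Lemma~\ref{lemma:lipsteinberg} require choosing $t',t''$ with $\pm t,\pm t',\pm t''$ all distinct), not a vanishing of $H_2$ or $\mathrm{Kill}$; those Lie-algebra cohomology criteria belong to the Cornulier--Tessera argument for Theorem~\ref{theorem:solvable2}, which concerns the discrete group $\SpH_{S,T}(\ZZ)$ and is a separate ingredient. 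If you want to pursue your route, you would essentially have to rederive the triangle reduction and the explicit fiber homotopies anyway; the coning picture is useful as intuition but does not by itself supply the needed constants.
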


\begin{theorem}
\label{theorem:lipfillsl}
Suppose $S\subset \mH$ isotropic with $\#S=p$.  Let $G=P_{S,\emptyset}\cong \SL(p)\ltimes\Sym^{2}\RR^{p}$, and let $X$ be the homogeneous space $G/SO(S)$. There is a constant $c$ such that (for $L\geq 1$) any $L$-Lipschitz map $S^{1}\rightarrow X$ can be extended to a $cL$-Lipschitz map of the disk $D^{2}\rightarrow X$.
\end{theorem}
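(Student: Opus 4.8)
The plan is to adapt the argument of \cite[Lemma 8.8]{RY} (see also \cite{RYlip}), replacing the standard representation of $\SL$ by the symmetric square representation. Write $G=P_{S,\emptyset}(\RR)=\SL(p;\RR)\ltimes V$, where $V=Z_{S}(\RR)\cong\Sym^{2}\RR^{S}$ is the abelian unipotent radical and $\SO(S)\subset\SL(p;\RR)$. Then $X=G/\SO(S)$ is diffeomorphic to $Y\times V$, where $Y=\SL(p;\RR)/\SO(S)$ is a CAT(0) symmetric space, and the $G$-invariant metric on $X$ restricts on a fiber $\{y\}\times V$ to a Euclidean metric warped by the action of $Y$: if $y$ is represented by $g\in\SL(p;\RR)$, the fiber metric at $y$ is the push-forward by $g$ of a fixed Euclidean metric on $V$. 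Since this metric is homogeneous, it suffices (as noted in \S\ref{subsection:RYtheorem} for the analogous $\SL$ statement) to show that every $\ell$-Lipschitz loop bounds an $O(\ell+1)$-Lipschitz disk.

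So let $\gamma=(\bar\gamma,v)\colon S^{1}\to X$ be $L$-Lipschitz. The first step is to fill the projected loop $\bar\gamma\colon S^{1}\to Y$: since $Y$ is CAT(0), the geodesic cone from $\bar\gamma(t_{0})$ gives an $O(L)$-Lipschitz filling $\bar f\colon D^{2}\to Y$ with $\bar f|_{S^{1}}=\bar\gamma$ (the coning construction recalled in \S\ref{subsection:gromov}). The real task is to lift $\bar f$ to an $O(L)$-Lipschitz map $f=(\bar f,w)\colon D^{2}\to X$ with $w|_{S^{1}}=v$. The obstruction to doing this by a naive radial interpolation of $w$ is that $V$ is exponentially distorted in $G$, and that the distortion treats the torus weight spaces
$$V=\bigoplus_{s,s'\in S}V_{s+s'},\qquad V_{s+s'}=\RR\,u_{Z}(z_{s}z_{s'}),$$
differently: $V_{s+s'}$ is scaled by the factor $\exp\bigl((s+s')(\diag_{\fsp}(a_{1},\dots,a_{p}))\bigr)$ under conjugation by $\diag_{\Sp}(e^{a_{1}},\dots,e^{a_{p}})$, and since $\sum a_{i}=0$ no single Cartan ray of $Y$ contracts every weight space at once.

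Following \cite{RY}, one therefore builds $w$ in finitely many stages, one weight space at a time, organized by an adaptive-template decomposition of $D^{2}$ (Lemma \ref{lemma:adaptivetemplates} and \cite[\S 4, \S 5]{RY}). For a weight $\chi=s+s'$, choose a Cartan ray $c_{\chi}$ in $Y$ along which $V_{\chi}$ is contracted exponentially; on the relevant region of the disk, push a distance $O(\log\|v\|_{\infty}+1)$ along $c_{\chi}$ to a place where the $\chi$-component of $w$ has ambient norm $O(1)$, interpolate that component at unit cost, and push back, carrying the remaining components along to be treated at later stages. The stages are glued along geodesics of $Y$, and the Lipschitz control across the seams is provided by the ``skinny rectangle'' estimate used in the proof of Lemma \ref{lemma:blockshortcut} (compare Figure \ref{figure:rectanglehomotopy1} and \cite[figure 4]{RY}): along each prefix of such a geodesic the transported component has ambient norm at most $1$ by the choice of pushing depth, so the intermediate rectangles fill at linear cost.

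I expect the main obstacle to be precisely the bookkeeping in this last step. Unlike the standard representation of $\SL$, where all weights of the unipotent radical lie in a single Weyl orbit, $\Sym^{2}\RR^{S}$ has weights in two orbits --- the long weights $2s$ and the short weights $s+s'$ with $s\neq s'$ --- and contracting one of them forces the expansion of others (contracting $2s$ along a ray with $a_{s}\to-\infty$ expands every $2s'$ with $a_{s'}>0$). One must therefore order the weight spaces and choose the rays $c_{\chi}$ so that, at each stage, the components not yet treated do not grow out of control, and so that the total of all the pushing-and-pulling excursions keeps the assembled map $O(L)$-Lipschitz; verifying that the larger and more entangled collection of weight spaces of $\Sym^{2}\RR^{S}$ can be handled this way is where the extra difficulty over \cite[Lemma 8.8]{RY} lies, and is part of what is meant by the remark in \S\ref{subsection:RYtheorem} that the symplectic Lipschitz-connectivity results are significantly harder than their $\SL$ counterparts.
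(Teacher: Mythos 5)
Your plan is a high-level sketch that diverges from the paper's route and stops at the point where the hard work begins. The paper does not cone the projected loop in $\SL(p;\RR)/\SO(p)$ and then lift; rather it applies Lemma \ref{lemma:nftriangles} (that is, \cite[Proposition 8.14]{RY}) to reduce Lipschitz $1$-connectivity of $X$ to uniformly Lipschitz fillings of $\Omega$-triangles for a normal form built from curves $\hu(vw)=\gamma_{M_{v,w}}\,u(\overline v\,\overline w)\,\gamma_{M_{v,w}}^{-1}$. Such a triangle $\gamma_1\Omega(n_1)\gamma_2\Omega(n_2)\gamma_3\Omega(n_3)$ is then filled by pulling the $\SL(S)$ factors together with the conjugation lemma (Lemma \ref{lemma:lipconjugationsl}), filling $\gamma_1\gamma_2\gamma_3$ via the CAT(0) property of $\SL(S)/\SO(S)$, breaking each $\hu(vw)$ into elementary curves $\he_{s+s'}(x)$ (Corollary \ref{corollary:lipbreaking}), and finishing with the Steinberg-type Lemma \ref{lemma:lipsteinsl}. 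No coning-and-lifting of the whole disk, and no adaptive template, appears here.

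More to the point, you correctly diagnose the central obstacle --- since $\sum a_i=0$, no single Cartan direction in $\SL(p;\RR)$ contracts every weight space of $\Sym^{2}\RR^{p}$ at once --- and then explicitly leave it unresolved (``I expect the main obstacle to be precisely the bookkeeping in this last step''). The paper overcomes this by a device absent from your proposal: the polarization identity $\overline v\,\overline w=\left(\tfrac{\overline v+\overline w}{2}\right)^{2}-\left(\tfrac{\overline v-\overline w}{2}\right)^{2}$, applied in the first move of the proof of Lemma \ref{lemma:lipconjugationsl} at a point where $\overline v,\overline w$ are already of bounded norm. This reduces every conjugation to the rank-one case $u(v^{2})$ treated by Proposition \ref{proposition:lipconjugationsl}, and a single rank-one tensor can always be contracted along a Cartan ray having $v$ as an eigenvector of eigenvalue less than one, irrespective of what happens on the remaining weight spaces. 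Polarization thus sidesteps the ``contracting one weight expands others'' obstruction entirely, so the careful ordering of weight spaces you anticipate never has to be carried out. Without this idea, or a substitute for it, your proposal describes the difficulty rather than resolving it.
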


The section is organized as follows.  Subsection \ref{subsection:rules} details some basic rules for manipulating Lipschitz curves and Lipschitz homotopies in homogeneous spaces.  Subsection \ref{subsection:lipfillsp} will detail the proof of Theorem \ref{theorem:lipfillsp}.  Subsection \ref{subsection:lipfillsl} will prove Theorem \ref{theorem:lipfillsl}.

\subsection{Basic rules.}
\label{subsection:rules}
Let $X=G/K$ be a complete, simply connected Riemannian space homogeneous under $G$, with isotropy subgroup $K\subset G$ a maximal compact subgroup.  We say that $X$ is large-scale Lipschitz 1-connected if every unit speed closed curve of length $\ell\geq 1$ admits a $O(\ell)$-Lipschitz filling, as we wish to show in the theorems above (actually, since $X$ is homogeneous, this property is equivalent to the a priori stronger property of Lipschitz 1-connectedness defined in \cite{RYlip}). Similarly, we say that a class of loops $\mathcal{C}$ in $X$ has Lipschitz fillings if there exists a constant $C$ such that every $\gamma\in \mathcal{C}$ has a filling $f$ with $\Lip(f)\leq C\ell(\gamma)$.  (If $\mathcal{C},\mathcal{D}$ are two sets of curves $[0,1]\rightarrow X$, and we have some function $f:\mathcal{C}\rightarrow\mathcal{D}$ such that $f(\gamma)$ always has the same endpoints as $\gamma$, then we say that we can Lipschitz homotope from $\gamma$ to $f(\gamma)$ if we can always find a $O(\Lip(\gamma))$ filling of the loop formed by concatenating $\gamma$ and the reverse of $f(\gamma)$). In this section we describe the types of manipulations needed to prove that such an $X$ is large-scale Lipschitz 1-connected.  (Essentially all of these ideas are taken from \cite[\S 8.3]{RY}). Throughout this section, $D^{2}(\ell)$ denotes the Euclidean square $[0,\ell]\times[0,\ell]$ and $S^{1}(\ell)$ denotes its boundary (also, $S^{1}$ denotes $S^{1}(1)$).  Here is a brief summary of the results of this subsection.
\begin{itemize}
\item {\it Normal form triangles:\\} To show that $X$ is large-scale Lipschitz $1$-connected, it suffices to show that so-called normal form triangles have Lipschitz fillings (Proposition \ref{lemma:nftriangles}).
\item {\it Combinatorialization:\\} Lipschitz curves and homotopies in $G$ descend to Lipschitz curves and homotopies in $X$ (Lemma \ref{lemma:combinatorialization}).
\item {\it Reparameterization:\\} Any Lipschitz reparameterization of the boundary of a Lipschitz disk admits a Lipschitz filling (Lemma \ref{lemma:reparameterization}).
\item {\it Insertion:\\} Given a concatenation $\alpha\beta\gamma$ of curves in $X$, and an endpoint-fixing Lipschitz homotopy from $\beta$ to $\beta^{\prime}$, one can construct a Lipschitz homotopy from $\alpha\beta\gamma$ to $\alpha\beta^{\prime}\gamma$ (Lemma \ref{lemma:insertion}).
\item {\it Stacking:\\} One can use templates to produce Lipschitz homotopies (Lemma \ref{lemma:liptemplate}), and hence ``stack" a bounded number of Lipschitz homotopies and obtain a Lipschitz homotopy (Corollary \ref{corollary:stacking}).
\end{itemize}

\paragraph{Normal form triangles.} Suppose that $\Omega$ is a normal form for $X$, i.e., for $x,y\in X$, we have that $\Omega(x,y)$ is a unit speed curve in $X$ connecting $x$ and $y$ with length at most some constant times $d(x,y)$ (we do not require $\Omega(x,y)$ to fellow travel with any particular geodesic).
\begin{definition}
An $\Omega$ triangle is a triangle connecting points $x,y,z\in X$ by edges given by $\Omega$.
\end{definition}
Let $\Delta_{0}$ denote the Euclidean equilateral triangle of side length $1$. Then \cite[Proposition 8.14]{RY} says the following.

\begin{lemma}
\label{lemma:nftriangles}
Suppose there is a constant $c$ such that for any $x,y,z\in X$ there is a Lipschitz $f_{x,y,z}: \Delta_{0}\rightarrow X$ with $f$ taking the sides of $\Delta_{0}$ to $\Omega(x,y)$, $\Omega(y,z)$, and $\Omega(x,z)^{-1}$, and $Lip(f)\leq c\diam(x,y,z)+c$.  Then there is a constant $C$ such that any unit speed rectifiable curve mapping the circle of radius $\ell\geq 1$ into $X$ has a $C$-Lipschitz extension over the disk of radius $\ell$.
\end{lemma}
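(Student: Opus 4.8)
This statement is \cite[Proposition~8.14]{RY}, so the plan is to carry out Young's argument in the present setting: approximate the given loop by a sequence of ``normal-form polygons'' produced by repeated halving, observe that two consecutive polygons cobound a ring of normal-form triangles whose diameters in $X$ are controlled by the halving scale, fill each of those triangles using the hypothesis, and glue the pieces into a Lipschitz disk along a fixed combinatorial template. We may assume $\ell$ is large: for $\ell$ bounded the curve has bounded length and, being unit speed, bounded diameter, so it admits a bounded-Lipschitz filling by homogeneity of $X$.

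Write $L=2\pi\ell$ for the length of $\gamma$, and sample points $x_0,x_1,\dots,x_{N}=x_0$ in cyclic order along $\gamma$ with $N\asymp L$ and $d(x_i,x_{i+1})\le 1$; for convenience arrange $N=3\cdot 2^{k}$. For $0\le j\le k$ let $\gamma_j$ be the closed normal-form polygon with vertices $x_0,x_{2^j},x_{2\cdot 2^j},\dots$, i.e.\ the concatenation of the edges $\Omega(x_{m2^{j}},x_{(m+1)2^{j}})$. Because $x_{m2^j}$ and $x_{(m+1)2^j}$ lie at arclength at most $2^j$ apart along $\gamma$, every edge of $\gamma_j$ has length $O(2^{j})$; thus $\gamma_0$ has perimeter $O(L)$ and $\gamma_k$ is a single normal-form triangle, on the vertices $x_0,x_{N/3},x_{2N/3}$, of diameter $O(L)$. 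A collar of bounded Lipschitz constant joins $\gamma$ itself to $\gamma_0$: for each $i$ the subarc of $\gamma$ from $x_i$ to $x_{i+1}$ and the edge $\Omega(x_i,x_{i+1})$ bound a loop of diameter $O(1)$, fillable at $O(1)$ cost by homogeneity (equivalently by a degenerate instance of the hypothesis).

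The region between $\gamma_j$ and $\gamma_{j+1}$ is a ring of $N/2^{j+1}$ triangles: the $m$-th one has vertices $x_{2m\cdot 2^{j}},x_{(2m+1)2^{j}},x_{(2m+2)2^{j}}$, its two ``outer'' sides are consecutive edges of $\gamma_j$, and its ``inner'' side is an edge of $\gamma_{j+1}$. Its diameter in $X$ is $O(2^{j})$, so the hypothesis produces $f:\Delta_0\to X$ with the prescribed boundary and $\Lip(f)=O(2^{j})$; precomposing with the dilation of ratio $2^{j}$ turns this into an $O(1)$-Lipschitz map on a Euclidean triangle of side $2^{j}$. Combinatorially the passage from $\gamma_j$ to $\gamma_{j+1}$ is one round of alternating ear-removal on an $(N/2^{j})$-gon, so the whole halving exhibits a triangulation of a topological disk with $O(N)$ faces, exactly $N/2^{j+1}$ of them at stage $j<k$ together with the central triangle. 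I would realize this abstract disk as a Euclidean complex in which every stage-$j$ triangle is bi-Lipschitz, with a uniform constant, to the Euclidean triangle of side $2^{j}$ (this is Young's ``template''); its total diameter is then $O(N)=O(\ell)$, and it is bi-Lipschitz to a round disk of radius $O(\ell)$. Composing with a bi-Lipschitz reparameterization of the boundary, and with the collar above, identifies it with $D^{2}(\ell)$ carrying the boundary to $\gamma$; mapping each Euclidean triangle into $X$ by its normal-form filling then gives a continuous $O(1)$-Lipschitz extension $D^{2}(\ell)\to X$ of $\gamma$, as desired.

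The one genuinely delicate point is this last assembly: one must build a uniformly bi-Lipschitz Euclidean model of the halving complex that respects simultaneously the prescribed per-stage triangle scales, the round shape of $D^{2}(\ell)$, and the given boundary parameterization. Making this precise is exactly the purpose of the template formalism of \cite[\S 5, \S 8.3]{RY}, and verifying that the halving pattern is an admissible template is the technical crux; this is why I would invoke \cite[Proposition~8.14]{RY} rather than reproduce it, the only input specific to our homogeneous spaces being the normal-form-triangle hypothesis. The remaining ingredients — the collar from $\gamma$ to the sampled polygon and the bounded-scale fillings it uses — are routine consequences of the homogeneity of $X$.
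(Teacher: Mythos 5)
Your proposal takes exactly the same approach as the paper, which simply cites \cite[Proposition 8.14]{RY} for this lemma; your additional sketch of Young's dyadic-halving/template argument is an accurate description of what lies inside that citation, and you correctly identify the template construction as the technical crux that justifies deferring to \cite{RY}.
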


\paragraph{Combinatorialization.} It is generally more convenient with actual matrices in $G$ rather than equivalence classes of matrices (i.e., points of $X$). For instance, there is a canonical way to concatenate a finite sequence of curves in $G$ which are based at the identity.

\begin{definition}
Fix a left-invariant metric on $G$ and a base point $\ast\in X$.  If $\gamma:[0,\ell]\rightarrow G$ is a curve in $G$, let $[\gamma]_{X}$ denote the curve
$$[\gamma]_{X}:[0,\ell]\rightarrow X$$
$$[\gamma]_{X}:t\mapsto \gamma(t)\cdot\ast$$
Often, by abuse of notation we will let $\gamma$ stand for $[\gamma]_{X}$.\\

Given two curves $\gamma_{1}:[0,\ell_{1}]:\rightarrow G$ and $\gamma_{2}:[0,\ell_{2}]\rightarrow G$, let $\gamma_{1}\gamma_{2}$ be the concatenation
$$\gamma_{1}\gamma_{2}:[0,\ell_{1}+\ell_{2}]\rightarrow G$$
given by $t\mapsto \gamma_{1}(t)$ for $t\leq \ell_{1}$ and $t\mapsto \gamma_{1}(\ell_{1})\cdot\gamma_{2}(t-\ell_{1})$ for $t\geq \ell_{1}$.
\end{definition}

We now show that Lipschitz curves and homotopies in $G$ descend to Lipschitz curves and homotopies in $X$.

\begin{lemma}
\label{lemma:combinatorialization}
There exists a constant $C$ such that, if $\gamma:[0,\ell]\rightarrow G$ is a Lipschitz curve in $G$, then
$$\Lip([\gamma]_{X})\leq C\Lip(\gamma),$$
and furthermore, if $f:D^{2}(\ell)\rightarrow G$ is a Lipschitz disk in $G$, then
$$\Lip([f]_{X})\leq C\Lip(f).$$
\end{lemma}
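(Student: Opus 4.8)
The plan is to identify the operation $[\,\cdot\,]_{X}$ with post-composition by the orbit map and then to show that this orbit map is globally Lipschitz. Write $\pi\colon G\to X$ for the map $\pi(g)=g\cdot\ast$, so that $[\gamma]_{X}=\pi\circ\gamma$ and $[f]_{X}=\pi\circ f$. Since post-composing an $L$-Lipschitz map with a $C$-Lipschitz map produces a $CL$-Lipschitz map — irrespective of what the domain is — both inequalities of the lemma follow at once once we exhibit a constant $C$, depending only on the fixed left-invariant Riemannian metric on $G$, the $G$-invariant metric on $X$, and the basepoint $\ast$, with $\Lip(\pi)\le C$.

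To obtain such a $C$ I would exploit equivariance. For every $g\in G$ one has $\pi\circ L_{g}=\tau_{g}\circ\pi$, where $L_{g}\colon G\to G$ is left translation and $\tau_{g}\colon X\to X$ is the action of $g$. By left-invariance of the metric on $G$, each $L_{g}$ is an isometry of $G$; and since $X$ is a Riemannian space homogeneous under $G$ (with isotropy a maximal compact subgroup), each $\tau_{g}$ is an isometry of $X$. The orbit map $\pi$ is smooth, being the composition of the quotient submersion $G\to G/K$ with the defining identification $G/K\cong X$, so we may differentiate the relation $\pi\circ L_{g}=\tau_{g}\circ\pi$ at $e\in G$ to get
$$d\pi_{g}\circ (dL_{g})_{e}=(d\tau_{g})_{\ast}\circ d\pi_{e},$$
hence $d\pi_{g}=(d\tau_{g})_{\ast}\circ d\pi_{e}\circ ((dL_{g})_{e})^{-1}$. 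Both $(dL_{g})_{e}\colon\mathfrak{g}\to T_{g}G$ and $(d\tau_{g})_{\ast}\colon T_{\ast}X\to T_{g\cdot\ast}X$ are linear isometries, so $\lVert d\pi_{g}\rVert_{\mathrm{op}}=\lVert d\pi_{e}\rVert_{\mathrm{op}}$ for all $g$. Setting $C:=\lVert d\pi_{e}\rVert_{\mathrm{op}}$, which is finite because $d\pi_{e}$ is a single linear map between finite-dimensional inner product spaces, we conclude $\lVert d\pi_{g}\rVert_{\mathrm{op}}\le C$ for every $g\in G$.

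Finally, a smooth map whose differential has operator norm at most $C$ at every point multiplies the length of any rectifiable path by at most $C$: for a piecewise-$C^{1}$ path $\sigma$ in $G$ one has $\mathrm{length}(\pi\circ\sigma)=\int\lVert d\pi_{\sigma(t)}\sigma'(t)\rVert\,dt\le C\,\mathrm{length}(\sigma)$, and taking the infimum over paths joining two points gives $d_{X}(\pi(x),\pi(y))\le C\,d_{G}(x,y)$, so $\pi$ is $C$-Lipschitz. Applying $\Lip(\pi\circ\gamma)\le\Lip(\pi)\,\Lip(\gamma)$ and $\Lip(\pi\circ f)\le\Lip(\pi)\,\Lip(f)$ then yields the two displayed bounds. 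I do not expect any genuine obstacle in this argument; the only points deserving a word of justification are the two invariance statements (left translations are isometries of $G$ by the very choice of metric, and the $G$-action on $X$ is isometric by the standing hypothesis that $X$ is Riemannian homogeneous under $G$) together with the remark that $C<\infty$ because $d\pi_{e}$ is just one fixed linear map.
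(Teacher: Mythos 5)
Your argument is correct and is essentially the paper's own proof: both use $G$-equivariance of the projection $\pi:G\rightarrow X$ together with the fact that left translations on $G$ and the $G$-action on $X$ are isometries to conclude that $\Vert d\pi\Vert_{\mathrm{op}}$ is constant, and take $C$ to be that norm. Your write-up simply spells out the differentiation of $\pi\circ L_{g}=\tau_{g}\circ\pi$ and the path-length integration that the paper leaves implicit.
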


\begin{proof}
Consider the projection $\pi:G\rightarrow X$.  For every $g\in G$, the operator norm of the derivative
$$d\pi|_{g}:T_{g}G\rightarrow T_{[g]_{X}}X$$
is the same, because the map $G\rightarrow X$ is $G$-equivariant, and $G$ acts on both $G$ and $X$ by isometries.  Taking $C$ to be this norm, the result follows.\end{proof}

When we say that a curve $\gamma$ in $G$ represents $g\in G$, we mean that $\gamma$ begins at the identity and ends at $g$.\\

\paragraph{Reparameterization.}
The following is \cite[Lemma 8.13]{RY}.
\begin{lemma}
\label{lemma:reparameterization}
There exists a constant $C$ such that, if $\beta:S^{1}(\ell)\rightarrow X$ is a reparameterization of $\gamma:S^{1}(\ell)\rightarrow X$, and $f:D^{2}(\ell)\rightarrow X$ is a filling of $\beta$, then $\gamma$ admits a filling $\tilde{f}:D^{2}(\ell)\rightarrow X$ with
$$\Lip(\tilde{f})\leq C\max\{\Lip(f),\Lip(\gamma)\}+C$$
\end{lemma}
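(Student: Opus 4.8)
The plan is to reduce the general reparameterization lemma to the much easier special case of reparameterizing a curve to \emph{constant speed}, and then to build the filling $\tilde f$ by stacking a bounded number of annuli onto the given disk $f$. Write $\beta=\gamma\circ\phi$ for a monotone degree-one map $\phi:S^1(\ell)\to S^1(\ell)$, and note that, since $f$ restricts to $\beta$ on $S^1(\ell)$, we have $\Lip(\beta)\le\Lip(f)$. Let $L$ be the length of the (common) track of $\gamma$ and $\beta$; since $\phi$ is monotone of degree one the track is traced with the same orientation and no backtracking, so $L=\ell(\beta)=\ell(\gamma)$ and in particular $L/\ell\le\Lip(\gamma)$. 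Let $\gamma^{\ast}:S^1(\ell)\to X$ be the constant-speed (speed $L/\ell$) parameterization of this track based at $\gamma(0)$.

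The key step is the claim: for any rectifiable loop $c:S^1(\ell)\to X$ of length $L$ there is an annulus $H:[0,\ell]\times[0,\ell]\to X$ from $c$ to its constant-speed reparameterization, with $\Lip(H)=O(\Lip(c))$. To build it, write $v_c(u)$ for the speed of $c$ as a function of arc-length position $u\in[0,L]$, set $\tfrac1{v_s(u)}=(1-\tfrac s\ell)\tfrac1{v_c(u)}+\tfrac s\ell\cdot\tfrac\ell L$, and let $c_s$ be the track of $c$ parameterized so that at arc-length position $u$ it has speed $v_s(u)$. Because $\int_0^L v_s(u)^{-1}\,du=\ell$ for every $s$, each $c_s$ is a genuine loop of parameter-length $\ell$ (this is the reason for interpolating the \emph{reciprocal} of the speed), and since $v_s(u)\le\max(v_c(u),L/\ell)\le\Lip(c)$, each $c_s$ is $\Lip(c)$-Lipschitz. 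The annulus $H(t,s)=c_s(t)$ has $|\partial_tH|=v_s\le\Lip(c)$; differentiating the defining relation $t=\int_0^{A(t,s)}v_s(u)^{-1}\,du$ gives $|\partial_sH|=|\partial_sA|=v_s(A)\cdot\tfrac1\ell\bigl|\int_0^A(\tfrac\ell L-\tfrac1{v_c(u)})\,du\bigr|\le 2\,\Lip(c)$, so $\Lip(H)=O(\Lip(c))$. (If $c$ has stationary points one first approximates $c$ by a loop of comparable length with $v_c>0$, a routine step.)

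Now assemble. Applying the claim to $\gamma$ produces an $O(\Lip\gamma)$-Lipschitz annulus from $\gamma$ to $\gamma^{\ast}$; applying it to $\beta$ produces an $O(\Lip\beta)=O(\Lip f)$-Lipschitz annulus from $\beta$ to a constant-speed reparameterization $\beta^{\ast}$. Since $\gamma^{\ast}$ and $\beta^{\ast}$ are both constant-speed, same-orientation parameterizations of the same track, they differ only by a parameter shift $t\mapsto t+c$, which is realized by the annulus $(t,s)\mapsto\beta^{\ast}(t+\tfrac s\ell c)$ of Lipschitz constant $O(L/\ell)=O(\Lip\gamma)$. Concatenating, along their common boundary circles, the annulus $\gamma\rightsquigarrow\gamma^{\ast}$, the shift annulus $\gamma^{\ast}\rightsquigarrow\beta^{\ast}$, the annulus $\beta^{\ast}\rightsquigarrow\beta$, and finally the disk $f$ filling $\beta$, yields a disk filling $\gamma$ whose Lipschitz constant is $O(\max(\Lip f,\Lip\gamma))$. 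The glued domain is a square of side $O(\ell)$; precomposing with an $O(1)$-Lipschitz affine rescaling turns it into $D^2(\ell)$ at the cost of one more bounded factor, and absorbing all implied constants into $C$ (the additive $+C$ handling the degenerate case where $\gamma$ is a constant loop) gives the statement.

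The main obstacle is exactly that $\phi$ may be wildly non-Lipschitz, so that $\beta$ moves arbitrarily fast and the naive interpolation $\gamma\bigl((1-\tfrac s\ell)t+\tfrac s\ell\tilde\phi(t)\bigr)$ has no Lipschitz bound in the $t$-direction; the reciprocal-speed interpolation above is precisely what circumvents this, using that portions of the track where the curve is slow are cheap to cross, while portions where $\beta$ is fast remain controlled by $\Lip(f)$. Everything else is bookkeeping of Lipschitz constants under gluing and rescaling of annuli, done exactly as in \cite[\S 8.3]{RY}.
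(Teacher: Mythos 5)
Your overall plan---pass to a constant-speed reparameterization, build an explicit Lipschitz annulus, and stack it onto $f$---is a legitimate way to prove this, and the assembly (the bound $\Lip(\beta)\le\Lip(f)$, the shift annulus, the final rescaling of the glued domain) is sound. Note, though, that the paper does not supply a proof of this lemma: it is quoted verbatim from Young \cite[Lemma 8.13]{RY}, so you have written an independent proof of a cited fact.

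The one place I would not wave away as ``routine'' is the stationary-point case, which is precisely the case that actually arises here (for instance, the loop one reparameterizes in Corollary \ref{corollary:liphomotopies} is constant on two of its four sides). If $c$ is constant on a parameter interval, then $1/v_c$ is not a locally integrable function of arc-length position, and the identity $\int_0^L v_c(u)^{-1}\,du=\ell$---which is the whole point of the reciprocal-speed device, guaranteeing each $c_s$ is a loop of parameter-length $\ell$---genuinely fails: the Lebesgue integral cannot see parameter time spent at fixed arc-length. Moreover, the proposed patch is circular as stated: a loop $c'$ with the same track as $c$ and $v_{c'}>0$ is itself a reparameterization of $c$, so homotoping $c$ to $c'$ with Lipschitz control is an instance of the very lemma being proved. (Mollifying to a genuinely different nearby loop and using a straight-line homotopy is feasible, but needs more care than ``routine.'')

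A cleaner construction avoids the issue entirely and also skips the detour through $\gamma^{\ast},\beta^{\ast}$: interpolate arc-length functions, not reciprocal speeds. After precomposing with a rotation so that $\phi(0)=0$, let $\tilde c:[0,L]\to X$ be the unit-speed parameterization of the common track, and let $A_\gamma,A_\beta:[0,\ell]\to[0,L]$ be the arc-length functions of $\gamma$ and $\beta$, so that $\gamma=\tilde c\circ A_\gamma$ and $\beta=\tilde c\circ A_\beta$ with $A_\beta=A_\gamma\circ\phi$. Set
$$H(t,s)=\tilde c\left(\left(1-\tfrac{s}{\ell}\right)A_\gamma(t)+\tfrac{s}{\ell}A_\beta(t)\right).$$
Since $A_\gamma,A_\beta$ are nondecreasing with a.e.\ derivatives bounded by $\Lip(\gamma)$ and $\Lip(\beta)\le\Lip(f)$ respectively, one gets $|\partial_t H|\le\max(\Lip(\gamma),\Lip(f))$ a.e., while $|\partial_s H|=|A_\beta(t)-A_\gamma(t)|/\ell\le L/\ell=O(\Lip(\gamma))$; no positivity of speeds is needed anywhere. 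This is a single $O(\max\{\Lip(\gamma),\Lip(f)\})$-Lipschitz endpoint-fixing homotopy directly from $\gamma$ to $\beta$, and gluing it to $f$ and rescaling the domain, exactly as you describe, completes the proof.
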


There are two natural notions of a Lipschitz homotopy between two curves in $X$ having the same endpoints.  A corollary of the lemma is that these are equivalent.
\begin{definition}
If $\beta,\gamma:[0,\ell]\rightarrow X$ are curves in $X$, an endpoint preserving Lipschitz homotopy from $\beta$ to $\gamma$ is a Lipschitz map $f:D^{2}(\ell)\rightarrow X$ such that
\begin{itemize}
\item $f(x,y)=\beta(x)$ if $y=0$,
\item $f(x,y)=\gamma(x)$ if $y=1$,
\item and $f$ is constant on $\{0\}\times [0,\ell]$ and $\{1\}\times [0,\ell]$.
\end{itemize}
\end{definition}

\begin{corollary}
\label{corollary:liphomotopies}
There is some constant $C$ with the following properties.  Given any Lipschitz homotopy $f$ from $\beta$ to $\gamma$, there exists a map $\tilde{f}:D^{2}(2\ell)\rightarrow X$ with $\tilde{f}$ constant on the three sides
$$\{0,1\}\times[0,2\ell]\cup[0,2\ell]\times\{1\}$$
and $\tilde{f}|\{0\}\times[0,2\ell]$ given by $\beta$ followed by the reverse of $\gamma$ and
$$\Lip(\tilde{f})\leq C\Lip(f)$$
\end{corollary}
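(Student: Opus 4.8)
The plan is to recognize Corollary \ref{corollary:liphomotopies} as a reparameterization statement and deduce it from Lemma \ref{lemma:reparameterization}. First I would record that, viewed as a map of the boundary circle, $\partial f\colon S^{1}(\ell)\to X$ traverses the loop $\beta\cdot c_{1}\cdot\overline{\gamma}\cdot c_{0}$, where $\overline{\gamma}$ is $\gamma$ run backwards and $c_{1},c_{0}$ are the constant paths at $\beta(\ell)=\gamma(\ell)$ and $\beta(0)=\gamma(0)$ respectively (these equalities hold because, by the definition of an endpoint preserving Lipschitz homotopy, $f$ is constant on the two vertical sides of its domain). In particular $\beta\cdot\overline{\gamma}$ is a closed loop based at $\beta(0)$, and the assertion of the corollary is exactly that $f$ can be converted, with only a bounded loss in the Lipschitz constant, into a filling of this loop presented on $D^{2}(2\ell)$ with $\beta\cdot\overline{\gamma}$ occupying one side and the other three sides constant.

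To produce this filling I would first pass from $f$ on $D^{2}(\ell)$ to its rescaling $f'\colon D^{2}(2\ell)\to X$, $f'(x,y)=f(x/2,y/2)$, so that $\Lip(f')=\tfrac12\Lip(f)$ and $\partial f'$ traverses $\beta\cdot c_{1}\cdot\overline{\gamma}\cdot c_{0}$ along the perimeter of $D^{2}(2\ell)$. Let $\ell_{0}\colon S^{1}(2\ell)\to X$ be the loop which runs $\beta$ and then $\overline{\gamma}$ along the side $\{0\}\times[0,2\ell]$ (using that $\beta$ and $\gamma$ have domain of length $\ell$) and is constant equal to $\beta(0)$ along the remaining three sides. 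I would then exhibit an explicit monotone surjective Lipschitz reparameterizing map $\phi$ of $S^{1}(2\ell)$ with $\ell_{0}\circ\phi=\partial f'$: on the arc carrying $\beta$ it is an affine rescaling, on the arc carrying $c_{1}$ it is constant (collapsing that arc to the single point where $\beta$ meets $\overline{\gamma}$ in $\ell_{0}$), on the arc carrying $\overline{\gamma}$ it is again affine, and on the arc carrying $c_{0}$ it stretches affinely over the whole constant part of $\ell_{0}$. Thus $\partial f'$ is a Lipschitz reparameterization of $\ell_{0}$.

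Finally I would invoke Lemma \ref{lemma:reparameterization}, taking $f'$ as the given filling of $\partial f'$: it yields a filling $\tilde f\colon D^{2}(2\ell)\to X$ of $\ell_{0}$ with $\Lip(\tilde f)\le C\max\{\Lip(f'),\Lip(\ell_{0})\}+C$. Since $\beta$ and $\gamma$ are restrictions of $f$ we have $\Lip(\beta),\Lip(\gamma)\le\Lip(f)$, hence $\Lip(\ell_{0})\le\Lip(f)$ and $\Lip(f')\le\Lip(f)$, so $\Lip(\tilde f)\le C\Lip(f)+C=O(\Lip(f))$ (assuming, as is standard in this framework, $\Lip(f)\ge1$, or simply absorbing the additive constant). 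By the construction of $\ell_{0}$, the map $\tilde f$ is constant on three of its sides and equals $\beta$ followed by the reverse of $\gamma$ on the side $\{0\}\times[0,2\ell]$, as required. The only real work is the bookkeeping of the middle paragraph — the explicit choice of $\phi$ and of the identification of $S^{1}(2\ell)$ with the square's boundary, so that the constant ``rest stops'' $c_{0},c_{1}$ of $\partial f'$ get absorbed — and this is entirely routine.
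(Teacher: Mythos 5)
Your argument is correct and takes essentially the same route as the paper: the paper's own proof is the one sentence ``this follows immediately from Lemma~\ref{lemma:reparameterization} since the boundary of the homotopy is a reparameterization of the concatenation of $\beta$ with the reverse of $\gamma$.'' You simply spell out the details the paper compresses -- the rescaling of $f$ from $D^2(\ell)$ to $D^2(2\ell)$ so that the domains match what Lemma~\ref{lemma:reparameterization} requires, and the explicit reparameterizing map $\phi$ collapsing the two constant ``rest stops'' of $\partial f$ -- and you correctly track the Lipschitz bounds, noting (as the paper implicitly does) that the additive constant from Lemma~\ref{lemma:reparameterization} is absorbed in this framework.
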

\begin{proof}
This follows immediately from the Lemma \ref{lemma:reparameterization} since the boundary of the homotopy is a reparameterization of the concatenation of $\beta$ with the reverse of $\gamma$.\end{proof}

\paragraph{Insertion.}
The following lemma is obvious, but we state it here because it will be freely used throughout.
\begin{lemma}
\label{lemma:insertion}
Given a concatenation $\alpha\beta\gamma:[0,3\ell]\rightarrow X$ of curves $\alpha,\beta,\gamma:[0,\ell]\rightarrow X$ and an endpoint-fixing Lipschitz homotopy $f:D^{2}(\ell)\rightarrow X$ between $\beta$ and some curve $\tilde{\beta}:[0,\ell]\rightarrow X$, we can produce an endpoint-fixing Lipschitz homotopy $\tilde{f}:D^{2}(3\ell)\rightarrow X$ between $\alpha\beta\gamma$ and $\alpha\tilde{\beta}\gamma$ with
$$\Lip(\tilde{f})\leq\max\{\Lip(\alpha),\Lip(f),\Lip(\gamma)\}.$$
\end{lemma}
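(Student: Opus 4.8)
The plan is to build $\tilde f$ by hand on the square $D^{2}(3\ell)=[0,3\ell]\times[0,3\ell]$, exploiting the fact that $\alpha$ and $\gamma$ need not be moved. Partition the square into three closed vertical strips of width $\ell$. On the leftmost strip $[0,\ell]\times[0,3\ell]$ set $\tilde f(x,y)=\alpha(x)$, independent of $y$; on the rightmost strip $[2\ell,3\ell]\times[0,3\ell]$ set $\tilde f(x,y)=\gamma(x-2\ell)$, again independent of $y$; and on the middle strip $[\ell,2\ell]\times[0,3\ell]$ set $\tilde f(x,y)=f(x-\ell,\,y/3)$, i.e., the given homotopy $f$ with its domain translated in the $x$ direction and rescaled in the $y$ direction so as to span the full height $3\ell$.

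The first thing to check is that these three partial definitions glue to a continuous map. Along the seam $\{x=\ell\}$ the leftmost strip contributes the constant value $\alpha(\ell)$, while the middle strip contributes $f(0,y/3)$; since $f$ is an endpoint-fixing homotopy, $f(0,\cdot)$ is the constant $\beta(0)$, and $\alpha(\ell)=\beta(0)$ because $\alpha\beta\gamma$ is a concatenation, so the two agree. The seam $\{x=2\ell\}$ is handled the same way, using $f(\ell,\cdot)\equiv\beta(\ell)=\gamma(0)$. Then one reads off directly that $\tilde f$ restricted to $y=0$ is $\alpha\beta\gamma$, that $\tilde f$ restricted to $y=3\ell$ is $\alpha\tilde\beta\gamma$, and that $\tilde f$ is constant on the two vertical sides $\{0\}\times[0,3\ell]$ and $\{3\ell\}\times[0,3\ell]$, so it is an endpoint-fixing homotopy of the required kind.

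The remaining point is the Lipschitz bound. On the two outer strips the map factors through $\alpha$ or $\gamma$ and so has Lipschitz constant $\Lip(\alpha)$ or $\Lip(\gamma)$; on the middle strip the reparametrization $(x,y)\mapsto(x-\ell,y/3)$ is $1$-Lipschitz (the $y$-rescaling only shrinks distances), so $\tilde f$ there is $\Lip(f)$-Lipschitz. To pass from ``Lipschitz on each strip'' to ``globally Lipschitz with the same constant'', note that for any two points of the square the straight segment joining them meets the seams $\{x=\ell\}$ and $\{x=2\ell\}$ in at most two points, cutting it into at most three collinear subsegments each contained in a single strip; summing the per-strip estimates along this subdivision and applying the triangle inequality in $X$ yields $\Lip(\tilde f)\le\max\{\Lip(\alpha),\Lip(f),\Lip(\gamma)\}$, as claimed. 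There is no genuine obstacle here, in keeping with the statement being labelled obvious; the only points requiring any care are the two seam-matching verifications and the final collinearity argument that upgrades a piecewise bound to a global one.
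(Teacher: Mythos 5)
Your construction is exactly the one the paper gives: $\tilde f(x,y)=\alpha(x)$ on $[0,\ell]\times[0,3\ell]$, $\tilde f(x,y)=f(x-\ell,y/3)$ on $[\ell,2\ell]\times[0,3\ell]$, and $\tilde f(x,y)=\gamma(x-2\ell)$ on $[2\ell,3\ell]\times[0,3\ell]$. The seam-matching check and the collinear-subdivision argument for the global Lipschitz bound are details the paper suppresses as obvious, but you have supplied them correctly.
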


\begin{proof}
Just take $\tilde{f}(x,y)$ to be $\alpha(x)$ for $x\leq \ell$, $f(x-\ell,\frac{y}{3})$ for $\ell\leq x\leq 2\ell$, and $\gamma(x-2\ell)$ for $x\geq 2\ell$.\end{proof}

\paragraph{Stacking.}
The following lemma says that if we have a finite triangulation (or more general polygonal decomposition) of $D^{2}$ with sides labeled by curves in $X$, together with a Lipschitz filling for each cell, we get a Lipschitz filling of the boundary curve.  Let $\tau$ be a decomposition of $D^{2}=D^{2}(1)$ into a finite collection of Euclidean polygons.  A map $\gamma$ from the one-skeleton of $\tau$ into $X$ induces a loop $\gamma|_{S^{1}}:S^{1}\rightarrow X$ and a loop $\partial\Delta:S^{1}\rightarrow X$ for each 2-cell $\Delta$ of $\tau$.  (For the sake of precision, take $\partial\Delta$ to be constant speed).
\begin{lemma}
\label{lemma:liptemplate}
There is a constant $C=C_{\tau}>0$ such that, if each $\partial\Delta:S^{1}\rightarrow X$ has an $\ell$-Lipschitz filling $f_{\Delta}:D^{2}\rightarrow X$, then $\gamma_{S^{1}}$ has a $C\ell$-Lipschitz filling $\tilde{f}:D^{2}\rightarrow X$.
\end{lemma}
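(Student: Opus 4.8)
The plan is to assemble $\tilde f$ from the given fillings $f_{\Delta}$ one $2$-cell of $\tau$ at a time, after precomposing each $f_{\Delta}$ with a bi-Lipschitz chart that depends only on the fixed complex $\tau$; the pieces will automatically agree on the $1$-skeleton, and the Lipschitz constant of the result will be $\ell$ times a constant coming only from the finitely many cells of $\tau$.

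First I would fix the charts. For each $2$-cell $\Delta$, viewed as a Euclidean polygon inside $D^{2}$, write $\partial_{\mathrm{cell}}\Delta$ for its boundary cycle of edges in the $1$-skeleton of $\tau$, and recall that the loop $\partial\Delta\colon S^{1}\to X$ appearing in the statement is $\gamma$ precomposed with the constant-speed (affine in arc length) parametrization $a_{\Delta}\colon S^{1}\to\partial_{\mathrm{cell}}\Delta$, while $f_{\Delta}|_{S^{1}}=\partial\Delta$. Since $\partial_{\mathrm{cell}}\Delta$ is a polygonal Jordan curve with finitely many vertices, $a_{\Delta}$ is bi-Lipschitz with constant controlled by the vertex angles of $\Delta$, hence by $\tau$; and since $\Delta$ is a convex Euclidean polygon (a Lipschitz Jordan domain in any case), the boundary homeomorphism $a_{\Delta}^{-1}\colon\partial_{\mathrm{cell}}\Delta\to S^{1}$ extends to a bi-Lipschitz homeomorphism $h_{\Delta}\colon\Delta\to D^{2}$, e.g.\ by composing radial conings from an interior point of $\Delta$ and of $D^{2}$, with bi-Lipschitz constant $K_{\Delta}$ again controlled by $\tau$. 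Put $K_{\tau}=\max_{\Delta}K_{\Delta}<\infty$, the maximum over the finitely many cells.

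Next I would define $\tilde f|_{\Delta}:=f_{\Delta}\circ h_{\Delta}$ and verify it glues. On $\partial_{\mathrm{cell}}\Delta$ we compute $f_{\Delta}\circ h_{\Delta}=f_{\Delta}|_{S^{1}}\circ a_{\Delta}^{-1}=\partial\Delta\circ a_{\Delta}^{-1}=\gamma\circ a_{\Delta}\circ a_{\Delta}^{-1}=\gamma$, so each $\tilde f|_{\Delta}$ restricts to $\gamma$ on the edges of $\Delta$. Hence whenever two cells share an edge the two local definitions of $\tilde f$ coincide on that (closed) edge, so by the pasting lemma $\tilde f\colon D^{2}\to X$ is a well-defined continuous map that extends $\gamma$ on the $1$-skeleton; in particular $\tilde f|_{S^{1}}=\gamma_{S^{1}}$, so $\tilde f$ fills $\gamma_{S^{1}}$. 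For the Lipschitz bound, on each cell $\Lip(\tilde f|_{\Delta})\le\Lip(f_{\Delta})\,\Lip(h_{\Delta})\le K_{\tau}\,\ell$; and for arbitrary $x,y\in D^{2}$ the segment $[x,y]$ splits into finitely many subsegments each lying in one closed cell, of total length $|x-y|$, so applying the per-cell estimate to each subsegment and summing (using continuity of $\tilde f$ at the breakpoints) yields $d_{X}(\tilde f(x),\tilde f(y))\le K_{\tau}\,\ell\,|x-y|$. Thus $\tilde f$ is a $C\ell$-Lipschitz filling of $\gamma_{S^{1}}$ with $C=C_{\tau}=K_{\tau}$ depending only on $\tau$.

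The only point requiring care — rather than ingenuity — is the construction of the charts $h_{\Delta}$ with the prescribed boundary values and $\tau$-controlled constants: one uses that arc length along a fixed polygonal curve is bi-Lipschitz equivalent to the circle metric (the only non-isometric behaviour occurs across the finitely many vertices, where it is controlled by the angles) and that any bi-Lipschitz self-homeomorphism of $S^{1}$ extends bi-Lipschitzly over $D^{2}$ (radial coning suffices). Because $\tau$ is a fixed finite complex these constants are uniformly bounded, which is exactly the content of the assertion that $C$ depends only on $\tau$ and not on $\gamma$ or $\ell$.
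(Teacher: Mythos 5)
Your proposal is correct and takes essentially the same approach as the paper's one-line proof: precompose each $f_{\Delta}$ with a bi-Lipschitz chart $\Delta\to D^{2}$ and glue. You have simply spelled out what the paper leaves implicit, most importantly that the chart must restrict on $\partial_{\mathrm{cell}}\Delta$ to the inverse of the constant-speed parametrization so that the cell maps agree with $\gamma$ on shared edges, and that the constants are uniform because $\tau$ is a fixed finite complex.
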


\begin{proof}
Define $\tilde{f}$ on each $\Delta$ by precomposing $f_{\Delta}:D^{2}\rightarrow X$ with a bi-Lipschitz map $\Delta\rightarrow D^{2}$.\end{proof}

We will often use the following obvious corollary:
\begin{corollary}
\label{corollary:stacking}
Fix a natural number $n$.  Suppose we are given a collection of curves
$$\alpha_{i}:[0,1]\rightarrow X\quad\quad(i=0,\ldots,n)$$
and a collection of endpoint-fixing $\ell$ Lipschitz homotopies $\{f_{i}\}_{i=1,\ldots,n}$ from $\alpha_{i}$ to $\alpha_{i+1}$.  Then there exists an $O(\ell)$ Lipschitz homotopy from $\alpha_{0}$ to $\alpha_{n}$.
\end{corollary}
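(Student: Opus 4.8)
The plan is to stack the homotopies vertically inside a single square and then invoke the template lemma (Lemma \ref{lemma:liptemplate}). First I would decompose the unit square $D^{2}(1)=[0,1]\times[0,1]$ into the $n$ horizontal strips $R_{i}=[0,1]\times[\tfrac{i-1}{n},\tfrac{i}{n}]$, $i=1,\ldots,n$; call this polygonal decomposition $\tau$. Since $\tau$ depends only on the fixed natural number $n$, the constant $C_{\tau}$ furnished by Lemma \ref{lemma:liptemplate} is a genuine constant (not depending on $\ell$ or on the curves).

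Next I would define a map $\gamma$ from the $1$-skeleton of $\tau$ into $X$. Because consecutive curves are joined by \emph{endpoint-fixing} homotopies, all of $\alpha_{0},\ldots,\alpha_{n}$ share a common initial point $a$ and a common terminal point $b$. Set $\gamma$ equal to $\alpha_{i-1}$ on the horizontal edge $[0,1]\times\{\tfrac{i-1}{n}\}$ (affinely reparametrized onto that edge), equal to the constant $a$ on the entire left side $\{0\}\times[0,1]$, and equal to the constant $b$ on the entire right side $\{1\}\times[0,1]$. Then for each $2$-cell $R_{i}$ the boundary loop $\partial R_{i}$ is, up to reparametrization, the concatenation of $\alpha_{i-1}$, the constant path at $b$, the reverse of $\alpha_{i}$, and the constant path at $a$. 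By the definition of an endpoint-preserving Lipschitz homotopy, this is precisely the boundary behaviour of the given homotopy $f_{i}\colon D^{2}(1)\to X$ joining $\alpha_{i-1}$ to $\alpha_{i}$, so after precomposing $f_{i}$ with a bi-Lipschitz identification $R_{i}\cong D^{2}(1)$ — and, if one insists on the parametrizations matching exactly, invoking Lemma \ref{lemma:reparameterization} — we obtain an $\ell$-Lipschitz filling of $\partial R_{i}$.

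Applying Lemma \ref{lemma:liptemplate} to $\tau$ together with these fillings produces a $C_{\tau}\ell$-Lipschitz map $\tilde{f}\colon D^{2}(1)\to X$ whose boundary is $\gamma|_{S^{1}}$, namely the concatenation of $\alpha_{0}$, the constant path at $b$, the reverse of $\alpha_{n}$, and the constant path at $a$. Since $C_{\tau}=C_{\tau}(n)$ is a constant, $\tilde{f}$ is an $O(\ell)$-Lipschitz filling, and reparametrizing its boundary via Lemma \ref{lemma:reparameterization} (equivalently, running the equivalence of Corollary \ref{corollary:liphomotopies} in reverse) converts it into an endpoint-fixing Lipschitz homotopy from $\alpha_{0}$ to $\alpha_{n}$ of Lipschitz constant $O(\ell)$, as required. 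The only point requiring care is the bookkeeping of reparametrizations — matching the edges of $\tau$ to the domains $D^{2}(1)$ of the $f_{i}$ and to the standard form of a homotopy — but this is exactly what Lemmas \ref{lemma:liptemplate} and \ref{lemma:reparameterization} are designed to absorb, so I do not anticipate any real obstacle; this is indeed an immediate corollary of the stacking lemma.
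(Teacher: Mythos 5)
Your proof is correct and follows exactly the same route as the paper: decompose the square into $n$ horizontal strips, put $\alpha_{i}$ on the horizontal edges, use the given $f_{i}$ as fillings of the strips, and invoke Lemma \ref{lemma:liptemplate}. You simply spell out the reparametrization bookkeeping that the paper's two-sentence proof leaves implicit.
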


\begin{proof}
Decompose $D^{2}$ as a stack of $n$ rectangles of height $1/n$.  Map each horizontal edge to the appropriate $\alpha_{i}$ and apply the lemma.\end{proof}

\subsection{Proof of Theorem \ref{theorem:lipfillsp}}
\label{subsection:lipfillsp}
We will now prove Theorem \ref{theorem:lipfillsp}.  By Lemma \ref{lemma:nftriangles}, it suffices to define a normal form $\Omega$, then provide uniformly Lipschitz fillings for $\Omega$-triangles.  By abuse of notation, we define $\Omega$ as a map from $G$ to the space of curves in $G$ which start at the identity, with the understanding that for $x,y\in X$, we have $\Omega(x,y)=\Omega(g)\cdot x$ where $g\in G$ is such that $gx=y$ (there are many $g$ with this property, but one should choose $g$ as close as possible to the identity).  It will be reasonably obvious from the way we define $\Omega$ that it is a normal form for elements $g\in G$ in the sense that $\ell(\Omega(g))=O(d(\Id,g))$, so it follows that $\Omega$ is a normal form on $X$ as well.

\paragraph{Defining $\Omega$.} Let $N=N_{S,T}\subset G$ and $Z=Z_{S,T}\subset N$. We will define $\Omega$ via the following steps.
\begin{itemize}
\item {\it Step 1:} Define a curve $\hu(V)$ in $G$ running from the identity to an element of the form $u(z_{s}\otimes v)\in N_{S,T}$ (see \S\ref{section:subgroups}).
\item {\it Step 2:} Define a curve $\huZ(V)$ in $G$ running from the identity to a matrix of the form $u_{Z}(V)\in Z$.
\item {\it Step 3:} Define $\Omega$ as a product of a finite number of $\hu(V)$ and curves in $\Sp(T)$.
\end{itemize}

\paragraph{Defining $\hu(V)$.} We begin by defining a curve in $G$ running from the identity to $u(V)\in N=N_{S,T}$, where $V=w_{s}\otimes v$ for some $s\in S$ and $v\in \RR^{T}$.  Define a curve $\hat{u}(V)$ representing $u(V)$ in $X$ as follows.
\begin{definition}
\begin{itemize}
\item If $\Vert V \Vert\leq 1$, take $\hat{u}(V)$ to be $t\mapsto u(tV)$.  (By abuse of notation, we sometimes write such a curve as $u(V)$).
\item If $\Vert V \Vert > 1$, take $\hat{u}(V)$ to be a curve of the form $\gamma u(\frac{V}{\Vert v \Vert})\gamma^{-1}$ where $\gamma$ is a geodesic in $\Sp(T)$ representing some matrix $D_{v}\in \Sp(T)$ with $v$ an eigenvector of $D_{v}$ with eigenvalue $\Vert v \Vert$ (and $\Vert D_{v} \Vert_{2}=O(\Vert v \Vert)$). 
\end{itemize}
\end{definition}

\paragraph{Defining $\huZ(V)$.} We now We now define a curve from the identity to $\huZ(ww^{\prime})$ (where $w,w^{\prime}\in\RR^{S}$.
\begin{definition}
For $w,w^{\prime}\in \RR^{S}$, with $\|w^{\prime}\|=1$, let $\huZ(ww^{\prime})$ be some $[\hat{u}(w\otimes v_{t}),\hat{u}(w_{\prime}\otimes v_{-t})]$ where $t\in T$.
\end{definition}

For $\alpha=s+s^{\prime}\in\Phi Z$, we let $\he_{\alpha}(x)$ denote $\huZ(xz_{s}\otimes z_{s^{\prime}})$.  Similarly, for $\alpha=s-t\in S-T$, let $\he_{\alpha}(x)$ denote $\hu(xz_{s}\otimes z_{t})$.\\

\paragraph{Defining $\Omega$.}
\begin{itemize}
\item Suppose $z\in Z$, then there are real numbers $x_{s+s^{\prime}}$ such that $z=\prod_{s+s^{\prime}\in S+S}u_{Z}(x_{s+s^{\prime}}w_{s}w_{s^{\prime}})$.  Define $\Omega(z)$ to be 
$$\prod_{s+s^{\prime}\in S+S}\huZ(x_{s+s^{\prime}}w_{s}w_{s^{\prime}}).$$
\item Suppose $n\in N$, and let $Ab(n)=\sum_{s\in S} n_{s} \otimes w_{s}$ for some $n_{s}\in \RR^{T}$.  Then we define $\Omega(n)$ to be
$$\left(\prod_{s\in S}\hat{u}(n_{s}\otimes w_{s})\right)\Omega(z)$$
 where $z=(\prod_{s\in S}u(n_{s}\otimes w_{s}))^{-1}n\in Z$.
\item Suppose $g\in G$.  Define $\Omega(g)$ to be
$$\gamma_{M}\Omega(M^{-1}g),$$ where $M$ is the $\Sp(T)$ part of $g$, so that $M^{-1}g\in N$, and $\gamma_{M}$ is a geodesic in $\Sp(T)$ representing $M$.  Observe that $\Omega$ always defines a curve consisting of a path in $\Sp(T)$ followed by a bounded number curves of the form $\hat{u}(V)$.\\
\end{itemize}

\paragraph{Filling $\Omega$ triangles.}
We will now describe an algorithm which produces an appropriately Lipschitz disk filling any $\Omega$-triangle $\Delta_{X}$ in $X$.  First, observe that we can lift $\Delta_{X}$ to an $\Omega$-triangle $\Delta$ in $G$ with $\diam(\Delta)=O(\diam(\Delta_{X})+1)$ (since $K$ is compact).  We shall produce a $O(\diam(\Delta)+1)$-Lipschitz filling in $G$, which descends to a $O(\diam(\Delta_{X})+1)$-Lipschitz filling in $X$ as desired by Lemma \ref{lemma:combinatorialization}.\\

Observe that $\Delta$ has the form
$$\gamma_{1}\Omega(n_{1})\gamma_{2}\Omega(n_{2})\gamma_{3}\Omega(n_{3})$$
where $\gamma_{i}$ are curves in $\Sp(T)$ and $n_{i}\in N$.  We fill this $\Omega$-triangle via the following procedure, which relies on several lemmas proved below.
\begin{itemize}
\item Use the conjugation lemma (\ref{lemma:lipconjugation}) to homotope to a curve of the form
$$\gamma_{1}\gamma_{2}\gamma_{3}
\prod_{j=1}^{O(1)}\hu(z_{s_{j}}\otimes v_{j})$$
where $s_{j}\in S$, and $v_{j}\in \RR^{T}$, and $\prod_{j=1}^{O(1)}\hu(z_{s_{j}}\otimes v_{j})$ denotes a product of a bounded number of terms (at most
$3(\#S + \#S^{2})$) of the form $\hu(z_{s_{j}}\otimes v_{j})$ with $s_{j}\in S$, and $v_{j}\in \RR^{T}$.
\item Fill $\gamma_{1}\gamma_{2}\gamma_{3}$, since $\Sp(T)/U(T)$ is CAT(0).
\item Homotope $\prod_{j=1}^{O(1)}\hu(z_{s_{j}}\otimes v_{j})$ to a product
$$\prod_{j=1}^{O(1)}\he_{\alpha_{j}}(x_{j})$$
by corollary \ref{corollary:factoring}, which converts each $\hu(z_{s_{j}}\otimes v_{j})$ into a product of at most $3\# T$ words of the form $\he_{\alpha}(x)$ with $\alpha=s-t$ for some $s\in S$ and $t\in T$.
\item Fill this product using the Steinberg relations (Lemma \ref{lemma:lipsteinberg}).  One can do this by enumerating the elements of $S-T$ as $\alpha_{1},\ldots,\alpha_{n}$ and using the lemma to successively move each $\alpha_{i}$ all the way to the left, leaving behind a bounded number of $\he_{\beta}$ (with $\beta\in\Phi Z$), which can be filled in the same way.  (Enumerate the elements of $\Phi Z$, successively move each to the left with Lemma \ref{lemma:lipsteinberg}, and fill with Lemma \ref{lemma:lipsteinberg}).
\end{itemize}

It is clear that, given the following lemmas, the procedure above works to produce the desired filling.
\begin{itemize}
\item {\bf Conjugation lemma: (Lemma \ref{lemma:lipconjugation})} Suppose $v,v^{\prime}\in \RR^{T}$ and $\gamma$ is a geodesic in $\Sp(T)$ representing some matrix $M$ with $Mv=v^{\prime}$.  Then there is a Lipschitz homotopy from
$\gamma\hu(z_{s}\otimes v)\gamma^{-1}$ to $\hu(z_{s}\otimes v^{\prime})$.
\item {\bf Factoring lemma: (Lemma \ref{corollary:factoring})} Suppose $v\in \RR^{T}$ and $s\in S$ with $v=\sum_{t\in T}x_{t}z_{t}$.  Then we can Lipschitz homotope from
$\hu(z_{s}\otimes v)$ to
$$\prod_{{\pm t}\subset T}\he_{s-t}(x_{t})\he_{s+t}(x_{-t})
[\he_{s-t}(\frac{-1}{2}x_{t}x_{-t}),\he_{s+t}(1)]$$
(which is a concatenation of at most $3\#T$ curves of the form $\he_{s-t}(x)$).
\item {\bf Steinberg relations: (Lemma \ref{lemma:lipsteinberg})} We can Lipschitz fill any of the basic relations involving curves of the form $\he_{s-t}(x)$ for $s\in S, t\in T, x\in \RR$.
\end{itemize}

We now proceed to prove these lemmas.  This is more difficult than the special linear case \cite[lemma 8.8]{RY} because $\Sp(T)$ cannot contract an arbitrary pair of vectors in $\RR^{T}$.  That is, if $v,w\in\RR^{T}$ and $\omega(v,w)\neq 0$, then for $M\in \Sp(T)$ we have $\omega(Mv,Mw)=\omega(v,w)$, so if $v,w$ are eigenvectors of $M$ with positive eigenvalues, and $M$ shrinks one of $v,w$, then it must expand the other.  Thus, in addition to the standard strategy of contract-and-fill, we will need ideas from Allcock's proof that higher Heisenberg groups have quadratic isoperimetric inequality \cite{allcock} and Cornulier and Tessera's proof that Abels's group has quadratic isoperimetric inequality \cite{dct2}.

\begin{lemma}
\label{lemma:lipconjugation}
Suppose $v,v^{\prime}\in \RR^{T}$ and $\gamma$ is a geodesic in $\Sp(T)$ representing some matrix $M$ with $Mv=v^{\prime}$.  Then for $s\in S$, we can Lipschitz homotope from 
$\gamma\hu(z_{s}\otimes v)\gamma^{-1}$ to $\hu(z_{s}\otimes v^{\prime})$.
\end{lemma}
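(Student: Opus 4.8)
The plan is to run the standard ``contract and fill'' strategy, but to exploit the fact that the contraction needed is already built into the definition of $\hu$, and to deal with the exponential distortion of $N_{S,T}$ by importing Lipschitz fillings for Heisenberg (or Abels) type groups. First, via Lemma~\ref{lemma:combinatorialization} I pass to curves in $G$ and work there throughout, freely using reparameterization (Lemma~\ref{lemma:reparameterization}), insertion (Lemma~\ref{lemma:insertion}) and stacking (Corollary~\ref{corollary:stacking}) to handle the bookkeeping of moving endpoints. When $\Vert v\Vert>1$ the definition gives $\hu(z_{s}\otimes v)=\gamma_{v}\,u(z_{s}\otimes\bar v)\,\gamma_{v}^{-1}$ with $\bar v=v/\Vert v\Vert$ a unit vector and $\gamma_{v}$ a geodesic in $\Sp(T)$, and likewise for $v'$; hence $\gamma\,\hu(z_{s}\otimes v)\,\gamma^{-1}=(\gamma\gamma_{v})\,u(z_{s}\otimes\bar v)\,(\gamma\gamma_{v})^{-1}$. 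Thus it suffices to compare two conjugates of the \emph{bounded} curves $u(z_{s}\otimes w)$ ($\Vert w\Vert\le 1$) by paths in $\Sp(T)$. Since $\Sp(T)$ acts transitively on $\RR^{T}\setminus\{0\}$ and the unit sphere is compact, any two unit vectors are joined by a bounded-length path in $\Sp(T)$; pre- and post-composing the conjugating paths with such bounded paths (and applying insertion and a finger move to cancel $\gamma_{v'}\gamma_{v'}^{-1}$) reduces everything to the following core statement: if $\delta$ is a Lipschitz path in $\Sp(T)$ from $\Id$ to some $N$ with $Nv=v$ for a unit vector $v$, then $\delta\,u(z_{s}\otimes v)\,\delta^{-1}$ is Lipschitz homotopic rel endpoints to $u(z_{s}\otimes v)$, with constant $O(\ell(\delta))$.

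For the core statement the loop to fill is $\delta\,u(z_{s}\otimes v)\,\delta^{-1}\,\overline{u(z_{s}\otimes v)}$; since $Nv=v$ it represents the identity, and it lies in the subgroup generated by $\Sp(T)$ together with the $\Sp(T)$-orbit of $u(z_{s}\otimes v)$. That subgroup is a quotient of $\Sp(T)\ltimes H$, where $H\cong H_{\#T+1}$ is a Heisenberg group: the orbit $\{u(z_{s}\otimes g v):g\in\Sp(T)\}$ spans a copy of $\RR^{T}$, and $[u(z_{s}\otimes w),u(z_{s}\otimes w')]=u_{Z}(\omega(w,w')\,z_{s}^{2})$ supplies the single central direction $z_{s}^{2}$. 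I would fill this loop by a ``base, then fibre'' argument: its image in the $\mathrm{CAT}(0)$ symmetric space $\Sp(T)/U(T)$ is $\delta\delta^{-1}$, which bounds an $O(\ell(\delta))$-Lipschitz disc by nonpositive curvature; lifting this disc reduces the problem to filling an $O(\ell(\delta))$-length loop in the Heisenberg fibre $H_{\#T+1}$, which has an $O(\ell(\delta))$-Lipschitz filling by Allcock's explicit discs \cite{allcock} (applicable since $\#T\ge 6$, so $\tfrac12\#T\ge 3\ge 2$). Alternatively, applying a Cartan ($KAK$) decomposition to the $\Sp(T)$-path and absorbing the bounded compact factors puts the essential part inside a solvable group of the form (diagonal subgroup of $\Sp(T)$) $\ltimes H_{\#T+1}$, of Abels type, which is Lipschitz $1$-connected by the Cornulier--Tessera method (the technique behind \cite{dct2} and Theorem~\ref{theorem:dctbig}). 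Either way, combined with Lemma~\ref{lemma:nftriangles} this proves the enclosing theorem.

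The main obstacle is precisely that $N_{S,T}$ is exponentially distorted in $G$: one cannot naively ``slide'' the bounded curve $u(z_{s}\otimes w)$ along $\gamma$, since an intermediate conjugate $\gamma(r)\,u(z_{s}\otimes w)\,\gamma(r)^{-1}$ can have norm exponential in $\ell(\gamma)$, destroying the Lipschitz bound. So all of the genuine content sits in the uniform Lipschitz filling inside the exponentially distorted Heisenberg (resp.\ Abels) group, which is why Allcock's discs (resp.\ Cornulier--Tessera's construction) are indispensable here; this is the Lipschitz analogue of the role played by Theorem~\ref{theorem:solvable2} in the word-metric half of the paper. A secondary, routine nuisance is that conjugates $u(z_{s}\otimes v)$ and $u(z_{s}\otimes v')$ fail to commute by the central term $u_{Z}(\omega(v,v')\,z_{s}^{2})$, so the fibre is genuinely Heisenberg rather than abelian (hence the need for $\#T$ large enough that Allcock's result applies), and the many moving-endpoint reductions above must be tracked carefully using Lemmas~\ref{lemma:reparameterization} and \ref{lemma:insertion} and Corollary~\ref{corollary:stacking}.
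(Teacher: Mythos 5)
Your reduction to the ``core statement'' (fill $\delta\,u(z_s\otimes v)\,\delta^{-1}\,u(z_s\otimes v)^{-1}$ for a unit vector $v$ fixed by the endpoint of $\delta$) is sound, and you are right that the exponential distortion of $N_{S,T}$ is the obstacle. But your two proposed proofs of the core statement both have gaps, and you are missing the one idea that actually resolves the obstacle.

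Your ``base, then fibre'' argument does not go through: after filling $\delta\delta^{-1}$ in $\Sp(T)/U(T)$ with an $O(\ell(\delta))$-Lipschitz CAT(0) disc and lifting it to the bundle $G/U(T)\to \Sp(T)/U(T)$, the boundary loop you are left to fill inside the Heisenberg fibre $N_{S,T}(\RR)$ is \emph{not} of length $O(\ell(\delta))$ in general --- it is exponentially long, for exactly the reason you yourself flag (an intermediate conjugate $\delta(t)\,u(z_s\otimes v)\,\delta(t)^{-1}=u(z_s\otimes\delta(t)v)$ can have norm exponential in $\ell(\delta)$). Allcock's discs therefore do not rescue you, because they are handed an exponentially long input. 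Your second alternative, appealing to ``the Cornulier--Tessera method,'' is also not justified: Theorem~\ref{theorem:dctbig} yields a quadratic \emph{Dehn function}, not Lipschitz $1$-connectedness, and the whole of \S\ref{section:lipfill} exists precisely because the paper cannot simply cite a Lipschitz analogue of that theorem. Moreover, a $KAK$ decomposition of $N$ does not put anything inside the solvable group you name, since $KAK'v=v$ gives no control over $Av$.

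The missing idea is the \emph{undistortion of the stabilizer} $G_v\subset\Sp(T)$. The paper fills the conjugation diagram by choosing the vertical connecting path $\delta$ to lie \emph{inside} $G_v$ (or, in the general case, to first rotate $\bar v$ to $\bar{v'}$ by a bounded arc in the compact group $U(T)$, then rescale, then travel in the stabilizer). Because $G_v$ is undistorted in $\Sp(T)$, such a $\delta$ exists with the right length, and because $\delta(t)$ fixes $\bar v$ throughout, the middle rectangle $\,(t,s)\mapsto\delta(t)\,u(s\,z_s\otimes\delta(t)^{-1}\bar v)\,$ is ``skinny'': the Heisenberg coordinate stays bounded along it, so the rectangle is filled by an elementary $O(\ell(\delta))$-Lipschitz map, and the two flanking rectangles are filled by the CAT(0) property of $\Sp(T)/U(T)$. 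No Heisenberg or Abels filling theorem is needed here; Allcock's idea enters only later, in Lemma~\ref{lemma:allcock}, to handle products of $\hu$'s whose symplectic pairings do not vanish.
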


\begin{proof}
The proof is quite similar to that of \cite[Lemma 8.16]{RY}.

\begin{figure}[t]
\labellist
\small\hair 2pt

\pinlabel {$\triangleright$} [c] at 48 223
\pinlabel {$\triangleright$} [c] at 122 223
\pinlabel {$\triangleleft$} [c] at 196 223
\pinlabel {$\triangledown$} [c] at 242 114
\pinlabel {$\triangleleft$} [c] at 196 0
\pinlabel {$\triangleright$} [c] at 122 0
\pinlabel {$\triangleright$} [c] at 48 0
\pinlabel {$\triangledown$} [c] at 1 114
\pinlabel {$\triangledown$} [c] at 98 114
\pinlabel {$\triangledown$} [c] at 147 114

\pinlabel $\gamma_{D_{v^{\prime}}}$ [] at 48 212
\pinlabel $u_{\overline{v^{\prime}}}$ [] at 120 212
\pinlabel $\gamma_{D_{v^{\prime}}}$ [] at 195 212
\pinlabel $\gamma$ [] at 232 114
\pinlabel $\gamma_{D_{v}}$ [] at 195 9
\pinlabel $u_{\overline{v}}$ [] at 120 9
\pinlabel $\gamma_{D_{v}}$ [] at 48 9
\pinlabel $\gamma$ [] at 11 114
\pinlabel $\delta$ [] at 89 114
\pinlabel $\delta$ [] at 157 114

\endlabellist
\centering
\centerline{\psfig{file=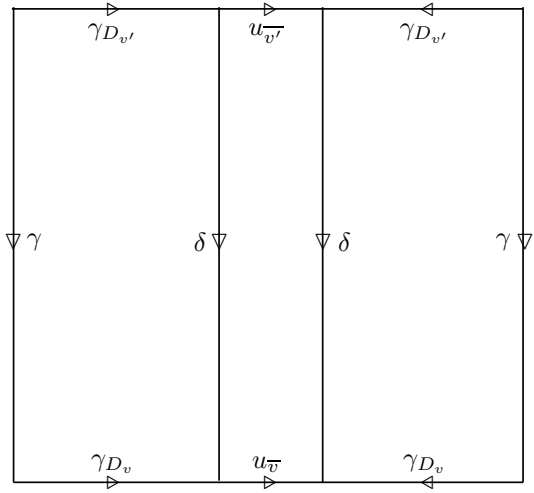,scale=80}}
\caption{A homotopy from $\gamma\hu(z_{s}\otimes v)\gamma^{-1}$ to 
$\hu(z_{s}\otimes v^{\prime})$}
\label{figure:conjugation}
\end{figure}

Recall that $\hu(z_{s}\otimes v)$ has the form $\gamma_{D_{v}}\hu(z_{s}\otimes \bv)\gamma_{D_{v}}^{-1}$, where $\bv$ is $v$ if $\Vert v \Vert \leq 1$ and $\frac{v}{\| v\|}$ if $\| v \| > 1$. Suppose we could choose a curve $\delta$ in $\Sp(T)$ as indicated in the diagram \ref{figure:conjugation} such that $\ell(\delta)=O(\ell(\gamma_{D_{v}}^{-1}\gamma^{-1}\gamma_{D_{v^{\prime}}}))$ and the middle rectangle remains ``skinny", i.e, the distance between $\delta(t)$ and $u(\overline{v})\delta(t)$ is bounded.  Then we could Lipschitz fill the left and right rectangles because $\Sp(T)/U(T)$ is CAT(0), and we could Lipschitz fill the middle rectangle by mapping the horizontal segment at height $t$ to the curve given by
$$s\mapsto\delta(t)u(sw(t))$$
where $w(t)\in\RR^{T}$ is the (bounded) vector $w(t)=\delta(t)^{-1}\bv$.  Hence it suffices to find $\delta$.  We proceed through several cases of increasing difficulty.\\

{\it Case 1:} Observe that if $\bv=\bvp$, we can find $\delta$ as desired, because the stabilizer $G_{v}$ of $v$ in $\Sp(T)$ is undistorted, so there is a curve of length $O(\ell(D_{v}^{-1}\gamma^{-1}D_{v}))$ inside $G_{v}$ representing $ D_{v}^{-1}\gamma^{-1}D_{v^{\prime}}$.\\

{\it Case 2:} If $\bv$ is a positive real multiple of $\bvp$, we can reduce to the previous case by making $\delta$ begin by scaling $v$ to norm $1$ and end by scaling $v$ down to norm $\|\bvp\|$.\\

{\it Case 3:} Finally, we can always reduce to the previous cases by making $\delta$ begin by rotating $v$ to the direction of $v_{\prime}$ (i.e., the initial segment of $\delta$ will be a geodesic in $U(T)$ representing a matrix which carries $v$ to $w$, and the length of this segment is $O(1)$ because $U(T)$ is compact).\end{proof}

The following two lemmas are proved using the usual contraction technique and will be useful throughout.
\begin{lemma}
\label{lemma:isorect}
If $v,w\in\RR^{T}$ with $\omega(v,w)=0$, and $s,s^{\prime}\in S$ then we can Lipschitz homotope from
$\hu(z_{s}\otimes v)\hu(z_{s^{\prime}}\otimes w)$
to
$\hu(z_{s^{\prime}}\otimes w)\hu(z_{s}\otimes v)$.
\end{lemma}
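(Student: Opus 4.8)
This is Lemma~\ref{lemma:isorect}, which asserts that when $\omega(v,w)=0$, the curves $\hu(z_s\otimes v)$ and $\hu(z_{s'}\otimes w)$ commute up to a Lipschitz homotopy. The plan is the standard \emph{contract-and-fill} strategy: since $v$ and $w$ span an isotropic subspace of $\RR^T$ (recall $\#T\geq 6$, so there is plenty of room), I can find a one-parameter subgroup of $\Sp(T)$ which simultaneously shrinks both $v$ and $w$. Conjugating the product $\hu(z_s\otimes v)\hu(z_{s'}\otimes w)$ by a geodesic realizing this contraction pushes us into a region where both factors have bounded norm, and there the two elementary-type matrices commute exactly (this is the relation of Proposition~\ref{proposition:umanip}(d), whose right-hand side is $u_Z(\omega(v,w)\,z_s z_{s'})=u_Z(0)=\Id$ precisely because $\omega(v,w)=0$). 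Undoing the conjugation on the other side gives the swapped product.

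In more detail, first I would reduce to the case where $v$ and $w$ are eigenvectors of a common element of $\TT_T$ with eigenvalue norm $>1$: if $\{v,w\}$ spans an isotropic subspace, Witt's theorem / the symplectic basis extension lemma (\cite[Lemma 2.18]{mcduff}) lets me complete $\{v,w\}$ to a symplectic basis, and then I can choose $D\in\Sp(T)$ diagonal in that basis with $v,w$ both expanded by a factor $\geq \max(\|v\|,\|w\|)$; call $\gamma_D$ a geodesic in $\Sp(T)$ representing $D$, so $\ell(\gamma_D)=O(\log\max(\|v\|,\|w\|)+1)=O(\diam)$. By the conjugation lemma (Lemma~\ref{lemma:lipconjugation}) applied to each factor, I can Lipschitz homotope $\hu(z_s\otimes v)$ to $\gamma_D\, u(z_s\otimes D^{-1}v)\,\gamma_D^{-1}$ and similarly $\hu(z_{s'}\otimes w)$ to $\gamma_D\, u(z_{s'}\otimes D^{-1}w)\,\gamma_D^{-1}$, where now $\|D^{-1}v\|\leq 1$ and $\|D^{-1}w\|\leq 1$. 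The cancelling $\gamma_D^{-1}\gamma_D$ in the middle is removed by a free contraction (i.e. an insertion homotopy, Lemma~\ref{lemma:insertion}), leaving $\gamma_D\big(u(z_s\otimes D^{-1}v)\,u(z_{s'}\otimes D^{-1}w)\big)\gamma_D^{-1}$.

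Now the bracketed word is a product of two bounded-norm elements of $N_{S,T}(\RR)$, and since $\omega(D^{-1}v, D^{-1}w)=\omega(v,w)=0$, Proposition~\ref{proposition:umanip}(d) tells us $u(z_s\otimes D^{-1}v)$ and $u(z_{s'}\otimes D^{-1}w)$ commute on the nose in $G$. So I can swap them by an $O(1)$-Lipschitz homotopy (both curves, and the homotopy between the two orderings, stay in a bounded neighborhood of the identity, so Lipschitz constants are uniformly bounded — concretely, map the square $[0,1]^2$ to $N_{S,T}(\RR)$ by $(x,y)\mapsto u(x z_s\otimes D^{-1}v)\,u(y z_{s'}\otimes D^{-1}w)$ and reparameterize its boundary via Lemma~\ref{lemma:reparameterization}). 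Conjugating this homotopy by $\gamma_D$ scales its Lipschitz constant by $\|D\|^{O(1)}$, which is bad, so instead I should first swap and then re-run the conjugation lemma in reverse on each factor to restore $\hu(z_{s'}\otimes w)\hu(z_s\otimes v)$; stacking these finitely many homotopies (Corollary~\ref{corollary:stacking}) gives the result. The main obstacle is making sure the ``skinny middle rectangle'' bookkeeping in the conjugation lemma genuinely keeps every intermediate curve's Lipschitz constant $O(\diam)$ and not, say, $O(\|v\|)$; this is exactly the point where one cannot naively conjugate the commuting-homotopy by $\gamma_D$, and one must instead do the contraction on each factor separately before swapping, so that the only thing being conjugated by a long geodesic is a relation whose filling was already produced at bounded scale.
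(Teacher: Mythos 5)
Your proposal is correct and follows essentially the same contract-and-fill strategy as the paper: both proofs choose a single symplectic matrix that simultaneously contracts $v$ and $w$ (possible because $\omega(v,w)=0$ makes their span isotropic), use the conjugation lemma to push the two $\hu$ curves into a bounded-norm regime where the swap is an exact relation by Proposition~\ref{proposition:umanip}(d), and then assemble the pieces (the paper via the octagonal template of Figure~\ref{figure:rectanglehomotopy2} plus Lemma~\ref{lemma:liptemplate}, you via stacking the conjugation homotopies with Corollary~\ref{corollary:stacking}). One small remark: the concern that ``conjugating the commuting-homotopy by $\gamma_D$ blows up the Lipschitz constant'' is not really the obstacle — left-translating a filling disk by the fixed element $D$ is an isometry of $X$, so the inner bounded disk keeps its Lipschitz constant; what the template accomplishes is filling the two long trapezoidal collars whose sides are $\gamma_D$ and whose horizontal slices stay uniformly close, which is exactly what you end up doing, so the argument is sound.
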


\begin{proof}
First, note that by Lemma \ref{lemma:reparameterization} it suffices to provide a filling of $\hu(z_{s}\otimes v)\hu(z_{s^{\prime}}\otimes w)\hu(z_{s}\otimes v)^{-1})\hu(z_{s^{\prime}}\otimes w)^{-1}$.\\

\begin{figure}[t]
\labellist
\small\hair 2pt

\pinlabel {$\triangleleft$} [c] at 245 335
\pinlabel {$\triangle$} [c] at 496 168
\pinlabel {$\triangle$} [c] at 497 168
\pinlabel {$\triangleleft$} [c] at 245 0
\pinlabel {$\triangle$} [c] at 1 168
\pinlabel {$\triangle$} [c] at 0 168
\pinlabel {$\nwarrow$} [c] at 67 270
\pinlabel {$\nearrow$} [c] at 429 268
\pinlabel {$\searrow$} [c] at 435 62
\pinlabel {$\swarrow$} [c] at 67 67
\pinlabel {$\triangleleft$} [c] at 245 209
\pinlabel {$\triangle$} [c] at 370 168
\pinlabel {$\triangleleft$} [c] at 245 127
\pinlabel {$\triangle$} [c] at 129 168

\pinlabel {$\hu(z_{s^{\prime}}\otimes w)$} [l] at 5 168
\pinlabel {$\gamma$} at 74 273
\pinlabel {$\hu(z_{s}\otimes v)$} at 245 321
\pinlabel {$\gamma$} at 425 273
\pinlabel {$\hu(z_{s^{\prime}}\otimes w)$} [r] at 487 168
\pinlabel {$\gamma$} at 425 60
\pinlabel {$\hu(z_{s}\otimes v)$} at 245 12
\pinlabel {$\gamma$} at 74 60
\pinlabel {$\hu(z_{s^{\prime}}\otimes Mw)$} [l] at 132 168
\pinlabel {$\hu(z_{s}\otimes Mv)$} at 245 220
\pinlabel {$\hu(z_{s^{\prime}}\otimes Mw)$} [r] at 368 168
\pinlabel {$\hu(z_{s}\otimes Mv)$} at 245 116

\endlabellist
\centering
\centerline{\psfig{file=rectanglehomotopy1,scale=75}}
\caption{A filling of $[\hu(z_{s}\otimes v),\hu(z_{s^{\prime}}\otimes w)]$}
\label{figure:rectanglehomotopy2}
\end{figure}

Choose a geodesic $\gamma$ in $\Sp(T)$ representing $M\in \Sp(T)$ with $\| Mv\|\leq 1$ and $\| Mw\|\leq 1$ and $\ell(\gamma)=O((\log|v|+\log|w|)^{2})$.  Then we apply Lemma \ref{lemma:liptemplate} to the diagram.  The outer trapezoids can all be filled by the conjugation lemma (\ref{lemma:lipconjugation}).  The middle rectangle can be filled because it has four sides of bounded length.\end{proof}

\begin{lemma}
\label{lemma:isotri}
If $v,w\in\RR^{T}$ with $\omega(v,w)=0$, and $s\in S$ then we can Lipschitz homotope from
$\hu(z_{s}\otimes v)\hu(z_{s}\otimes w)$
to
$\hu(z_{s}\otimes (v+w))$.
\end{lemma}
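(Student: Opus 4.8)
The plan is to mimic the contract-and-fill argument of Lemma \ref{lemma:isorect}, using the hypothesis $\omega(v,w)=0$ in an essential way. First observe that this hypothesis makes the statement meaningful: from the defining formula $u(z_{s}\otimes v)z=z+(v^{\text{T}}z)z_{s}+\omega(z_{s},z)J_{0}v$ one computes directly (using $v^{\text{T}}z_{s}=0$, $\omega(z_{s},z_{s})=0$, $v^{\text{T}}J_{0}w=\omega(v,w)=0$, and $\omega(z_{s},J_{0}w)=-z_{s}^{\text{T}}w=0$) that $u(z_{s}\otimes v)u(z_{s}\otimes w)=u(z_{s}\otimes(v+w))$, which also follows from Proposition \ref{proposition:umanip}(d) since $[u(z_{s}\otimes v),u(z_{s}\otimes w)]=u_{Z}(\omega(v,w)z_{s}z_{s})=1$. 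Hence the two concatenations in the statement end at the same point of $X$, and by Lemma \ref{lemma:reparameterization} and Corollary \ref{corollary:liphomotopies} it is enough to produce an $O(\log\|v\|+\log\|w\|+1)$-Lipschitz disk filling the loop formed from $\hu(z_{s}\otimes v)\hu(z_{s}\otimes w)$ and the reverse of $\hu(z_{s}\otimes(v+w))$.

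The technical core is the construction of a geodesic $\gamma$ in $\Sp(T)$ representing a matrix $M\in\Sp(T)$ with $\|Mv\|\leq\tfrac12$, $\|Mw\|\leq\tfrac12$, and $\ell(\gamma)=O(\log\|v\|+\log\|w\|+1)$. This is exactly the step that requires $\omega(v,w)=0$: the subspace $\mathrm{span}\{v,w\}\subseteq\RR^{T}$ is then isotropic of dimension at most two, and since $\#T\geq 6$ there is a fixed isotropic subset $S_{0}\subseteq T^{+}$ of size two together with a relatively compact set $K_{0}\subseteq\Sp(T)$ such that some $R\in K_{0}$ carries $\mathrm{span}\{v,w\}$ into $\RR^{S_{0}}$ (standard symplectic linear algebra, together with compactness of the isotropic Grassmannian). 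Composing $R$ with the diagonal element of $\Sp(T)$ that scales the coordinates indexed by $S_{0}$ by $1/(C\max(\|v\|,\|w\|,1))$ and those indexed by $-S_{0}$ by the reciprocal, for a suitable absolute constant $C$, produces $M$; since $\|R\|=O(1)$ we get $\|Mv\|,\|Mw\|\leq\tfrac12$, and by the triangle inequality together with $\Sp(T)$-invariance of the metric on $\Sp(T)/U(T)$ the distance from the identity to $M$ — hence the length of the geodesic $\gamma$ — is $O(\log\max(\|v\|,\|w\|)+1)$. For a pair with $\omega(v,w)\neq 0$ no such $M$ exists, since $\omega(Mv,Mw)=\omega(v,w)$; this is precisely why the isotropy hypothesis appears, and it is the main obstacle here compared with the special linear situation of \cite[Lemma 8.8]{RY}.

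With $\gamma$ at hand I would finish by applying Corollary \ref{corollary:stacking} to a bounded chain of endpoint-preserving homotopies, each of Lipschitz constant $O(\log\|v\|+\log\|w\|+1)$: (i) by the conjugation lemma (Lemma \ref{lemma:lipconjugation}) and the insertion lemma (Lemma \ref{lemma:insertion}), homotope $\hu(z_{s}\otimes v)\hu(z_{s}\otimes w)$ to $(\gamma^{-1}u(z_{s}\otimes Mv)\gamma)(\gamma^{-1}u(z_{s}\otimes Mw)\gamma)$, noting $\hu(z_{s}\otimes Mv)=u(z_{s}\otimes Mv)$ since $\|Mv\|\leq 1$; (ii) freely contract the interior $\gamma\gamma^{-1}$ to reach $\gamma^{-1}\,u(z_{s}\otimes Mv)u(z_{s}\otimes Mw)\,\gamma$; (iii) since $\omega(Mv,Mw)=0$ the curves $u(z_{s}\otimes Mv)u(z_{s}\otimes Mw)$ and $u(z_{s}\otimes M(v+w))$ share endpoints and lie in a ball of radius $O(1)$ about the basepoint, so there is an $O(1)$-Lipschitz homotopy between them, which the insertion lemma conjugates by $\gamma$ to give a homotopy to $\gamma^{-1}u(z_{s}\otimes M(v+w))\gamma$; and (iv) since $\|M(v+w)\|\leq 1$, applying the conjugation lemma to the geodesic $\gamma^{-1}$ (representing $M^{-1}$) homotopes $\gamma^{-1}\hu(z_{s}\otimes M(v+w))\gamma$ to $\hu(z_{s}\otimes(v+w))$. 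Stacking (i)--(iv) and pushing the resulting filling down from $G$ to $X$ via Lemma \ref{lemma:combinatorialization} yields the asserted Lipschitz homotopy, exactly as in the proof of Lemma \ref{lemma:isorect}.
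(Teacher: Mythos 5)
Your proposal is correct and follows the same contraction-and-conjugate strategy that the paper intends: the paper omits the proof of Lemma \ref{lemma:isotri} with the remark that it is essentially the same as Lemma \ref{lemma:isorect}, i.e., choose a geodesic $\gamma$ in $\Sp(T)$ representing a contracting matrix $M$, conjugate both curves into a bounded neighborhood of the basepoint via Lemma \ref{lemma:lipconjugation}, fill the small piece there, and conjugate back. Your write-up is a faithful elaboration of that outline, and it usefully spells out the one step the paper leaves implicit and which is specific to the symplectic case: the explicit construction of the contracting $M$ (rotate $\mathrm{span}\{v,w\}$ into a fixed isotropic coordinate plane by a bounded $R$, then scale by a diagonal element), together with the observation that this is exactly where the hypothesis $\omega(v,w)=0$ enters, since $\Sp(T)$-invariance of $\omega$ forbids contracting a non-isotropic pair. (As a minor aside, the length bound $\ell(\gamma)=O((\log|v|+\log|w|)^{2})$ appearing in the paper's proof of Lemma \ref{lemma:isorect} must be a typo for $O(\log|v|+\log|w|+1)$, which is what your construction actually produces and what the stacking argument requires.)
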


We omit the proof because it is essentially the same as the previous lemma.  Note however that if we wish to apply this lemma to $\hu(V_{1})\hu(V_{2})$ we must have that $V_{1},V_{2}\in \{z_{s}\}\otimes \RR^{T}$ for some $s\in S$ (the previous lemma had no such restriction).  Following Allcock \cite{allcock}, we will now use these lemmas to obtain fillings for loops obtained by concatenating $\hu(V_{i})$ where all the $V_{i}$ are contained in some set of the form $\{z_{s}\}\otimes \RR^{T^{\prime}}$ where $T^{\prime}$ is a proper symplectic subset of $T$.

\begin{lemma}
\label{lemma:allcock}
Let $T^{\prime}\subset T$ be a proper symplectic subset and let $s\in S$.  Then for fixed $n$, there is a Lipschitz filling of any loop of the form
$$\hu(z_{s}\otimes v_{1})\ldots\hu(z_{s}\otimes v_{n})$$
where each $v_{i}$ is in $\RR^{T^{\prime}}$
\end{lemma}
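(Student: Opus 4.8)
The plan is to induct on $\#T'$, with the base case $T' = \emptyset$ being trivial (the loop is constant). For the inductive step, I would fix a symplectic pair $\{\pm t_0\} \subset T'$ and split the orthogonal complement: write $T'' = T' \setminus \{\pm t_0\}$, which is a proper symplectic subset of $T'$ of smaller cardinality, and decompose each $v_i = w_i + c_i z_{t_0} + c_i' z_{-t_0}$ with $w_i \in \RR^{T''}$ and $c_i, c_i' \in \RR$. The idea, following Allcock \cite{allcock}, is that the ``$\RR^{T''}$-part'' and the ``$\{\pm t_0\}$-part'' of each $\hu(z_s \otimes v_i)$ can be separated and then collected, because $\omega$ vanishes between $\RR^{T''}$ and $\RR^{\{\pm t_0\}}$, so Lemmas \ref{lemma:isorect} and \ref{lemma:isotri} let us commute and combine freely within each part. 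More precisely, I would first Lipschitz-homotope each $\hu(z_s \otimes v_i)$ to $\hu(z_s \otimes w_i)\hu(z_s \otimes (c_i z_{t_0} + c_i' z_{-t_0}))$ using Lemma \ref{lemma:isotri} (valid since $\omega(w_i, c_iz_{t_0}+c_i'z_{-t_0}) = 0$), giving a loop
$$\hu(z_s \otimes w_1)\hu(z_s \otimes u_1)\cdots \hu(z_s \otimes w_n)\hu(z_s \otimes u_n)$$
where $u_i = c_i z_{t_0} + c_i' z_{-t_0}$.

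Next I would shuffle all the $\hu(z_s \otimes w_i)$ to the left, past the $\hu(z_s \otimes u_j)$. Each such swap has a Lipschitz filling by Lemma \ref{lemma:isorect} (again $\omega(w_i, u_j) = 0$), and since $n$ is fixed there are $O(1)$ swaps, so by Corollary \ref{corollary:stacking} the whole rearrangement is Lipschitz. This produces a loop of the form
$$\big(\hu(z_s \otimes w_1)\cdots \hu(z_s \otimes w_n)\big)\big(\hu(z_s \otimes u_1)\cdots \hu(z_s \otimes u_n)\big).$$
The left factor is a loop of the type covered by the inductive hypothesis (all $w_i \in \RR^{T''}$, a proper symplectic subset), hence has a Lipschitz filling — wait, it is not a loop by itself, but I can close it up: insert $\hu(z_s\otimes(w_1+\cdots+w_n))^{-1}$ after the left factor and its inverse before the right factor, fill the left piece plus this new edge by the inductive hypothesis together with Lemma \ref{lemma:isotri}, and note $w_1 + \cdots + w_n = 0$ in $\RR^{T''}$ because the original concatenation is a loop and projects to a loop in the abelianization. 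So the left factor is Lipschitz-fillable, and it remains to fill the right factor, a loop $\hu(z_s \otimes u_1)\cdots\hu(z_s \otimes u_n)$ with all $u_i \in \RR^{\{\pm t_0\}}$.

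The remaining piece — loops built from $\hu(z_s \otimes v)$ with $v$ ranging in a single symplectic plane $\RR^{\{\pm t_0\}}$ — is the main obstacle, and I expect it to require the ideas from Allcock's and Cornulier–Tessera's work alluded to in the text rather than pure contract-and-fill, precisely because $\Sp(\{\pm t_0\})$ cannot simultaneously contract $z_{t_0}$ and $z_{-t_0}$ (the symplectic pairing between them is preserved). I would handle this by observing that $\hu(z_s\otimes z_{t_0})$ and $\hu(z_s \otimes z_{-t_0})$ are the curves $\he_{s-t_0}(\cdot)$ and $\he_{s+t_0}(\cdot)$ up to the conventions of \S\ref{subsection:lipfillsp}, that their commutator lands in $\he_{2s}(\cdot) \subset Z$ by Proposition \ref{proposition:umanip}(d), and that relations among these are exactly the Steinberg relations fillable by Lemma \ref{lemma:lipsteinberg}; since $n$ is fixed, collecting all the $\he_{s-t_0}$ to the left and all the $\he_{s+t_0}$ to the right leaves a bounded product of $\he_{2s}$-curves representing the identity, which Lemma \ref{lemma:lipsteinberg} fills as well. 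Assembling the three fillings (left factor, closing edge, right factor) via Corollary \ref{corollary:stacking} completes the induction.
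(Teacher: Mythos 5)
Your plan runs into a genuine gap precisely at the step where you apply the inductive hypothesis to the ``left factor.''  After splitting each $v_i = w_i + u_i$ (with $w_i \in \RR^{T''}$, $u_i \in \RR^{\{\pm t_0\}}$) and shuffling, you have
$$\big(\hu(z_s\otimes w_1)\cdots\hu(z_s\otimes w_n)\big)\big(\hu(z_s\otimes u_1)\cdots\hu(z_s\otimes u_n)\big),$$
and you argue that $\sum w_i = 0$ makes the left factor a loop to which the IH applies.  This is false: $u$ is only a set-theoretic section of $N_{S,T}\to A_{S,T}$, and by Proposition \ref{proposition:umanip}(d) the product $u(z_s\otimes w_1)\cdots u(z_s\otimes w_n)$ ends at $u_Z\big(\sum_{i<j}\omega(w_i,w_j)\,z_s^2\big)$, which is generally a nontrivial element of $Z_S$ even when $\sum w_i = 0$.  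Since $T''$ is a \emph{symplectic} subset, $\omega|_{\RR^{T''}}$ is nondegenerate and this holonomy has no reason to vanish.  For a concrete witness, take $T'' = \{\pm t_1\}$, $n=4$, and $w_1 = z_{t_1}$, $w_2 = z_{-t_1}$, $w_3 = -z_{t_1}$, $w_4 = -z_{-t_1}$: then $\sum w_i = 0$ but $\sum_{i<j}\omega(w_i,w_j) = 2$, and one can choose $u_i \in \RR^{\{\pm t_0\}}$ with opposite holonomy $-2$ so the original concatenation \emph{is} a loop.  In that example the left factor ends at $u_Z(2z_s^2)\neq 1$, so it is not a loop, the IH (which is stated only for loops) does not apply, and the same problem infects the right factor.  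The constraint from the original being a loop is that the two holonomies \emph{cancel}, not that they individually vanish, so your decomposition cannot be filled piecewise.

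This is exactly the obstruction the paper's proof is designed around: it does \emph{not} peel $T'$ apart one symplectic pair at a time, but instead uses the hypothesis that $T'$ is a \emph{proper} subset to choose a $t\in T$ with $\pm t\notin T'$, and then builds a single template (Figure~\ref{figure:allcock}) whose bottom edge is $\he_{s+t}(1)\he_{s-t}(x_1)\cdots\he_{s-t}(x_{n-2})\he_{s+t}(-1)$, with $x_i = \omega(V_i, v_{i+1})$ chosen so that the running holonomy is carried in the $z_t$-coordinate and each rectangle splits into isotropic triangles fillable by Lemma~\ref{lemma:isotri}.  The freshness of $t$ is what lets the holonomy be parked without interacting with any $v_i$; there is no analogue in your induction because after peeling down to $T'' = \{\pm t_0\}$ there is no room left inside $T'$ to stash it.  If you want to salvage the inductive strategy, you would have to strengthen the statement to non-loops (say, any concatenation $\hu(z_s\otimes w_1)\cdots\hu(z_s\otimes w_n)$ with $\sum w_i = 0$ is Lipschitz homotopic to a single curve of the form $\he_{2s}(h)$) and track the central holonomy explicitly so that the two factors' contributions can be combined and cancelled at the end; but once you do that bookkeeping you have essentially rediscovered the paper's template argument.
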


A typical concatenation of this form will {\it not} form a loop.

\begin{proof}

\begin{figure}[t]
\labellist
\small\hair 2pt

\pinlabel {$\hu(z_{s}\otimes v_{1})$} [] at 16 120
\pinlabel {$\hu(z_{s}\otimes v_{2})$} [] at 61 120
\pinlabel {$\ldots$} [] at 127 120
\pinlabel {$\hu(z_{s}\otimes v_{n-1})$} [] at 197 120
\pinlabel {$\hu(z_{s}\otimes v_{n})$} [] at 241 120
\pinlabel {$\he_{s+t}(-1)$} [l] at 246 60
\pinlabel {$\he_{s-t}(x_{n-2})$} [] at 195 9
\pinlabel {$\ldots$} [] at 128 9
\pinlabel {$\he_{s-t}(x_{1})$} [] at 60 9
\pinlabel {$\he_{s+t}(1)$} [r] at 9 60
\pinlabel {$\hu(z_{s}\otimes V_{1})$} [l] at 33 60
\pinlabel {$\hu(z_{s}\otimes (V_{2}-x_{2}z_{t}))$} [l] at 90 60
\pinlabel {$\ldots$} [r] at 221 60

\endlabellist
\centering
\centerline{\psfig{file=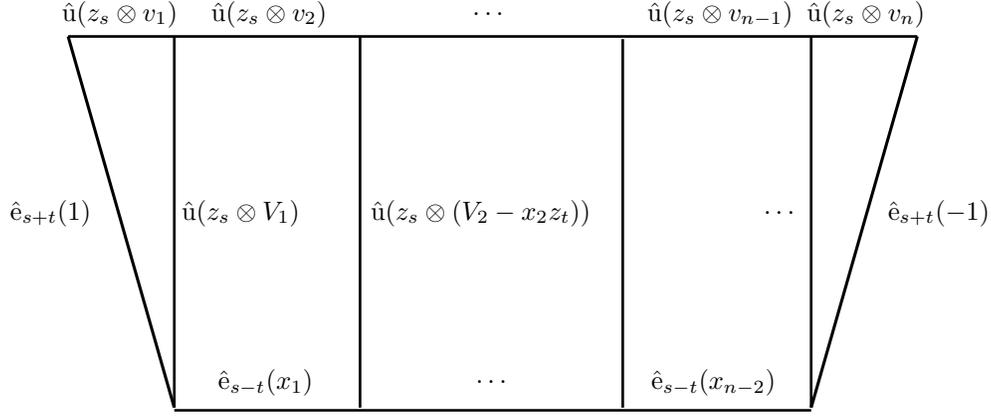,scale=125}}
\caption{A homotopy from  $\hu(z_{s}\otimes v_{1})\ldots\hu(z_{s}\otimes v_{n})$ to $\he_{s+t}(1)\he_{s-t}(x_{1})\ldots\he_{s-t}(x_{n-2})\he_{s+t}(-1).$}
\label{figure:allcock}
\end{figure}

Since $T^{\prime}\neq T$, we can choose $t\in T$ such that $\pm t\notin T^{\prime}$.  Let $V_{i}=\sum_{j\leq i} v_{j}$ and $x_{i}=\omega(V_{i},v_{i+1})$ so that
$$u(z_{s}\otimes v_{1})\ldots u(z_{s}\otimes v_{i})
u(z_{s}\otimes(v_{1}+\ldots+v_{i}))^{-1}
=u((x_{1}+\ldots+x_{i-1})z_{s}^{2})$$
We will start applying Lemma \ref{lemma:liptemplate} to diagram \ref{figure:allcock} in order to homotope from
$\hu(z_{s}\otimes v_{1})\ldots\hu(z_{s}\otimes v_{n})$ to
$$\he_{s+t}(1)\he_{s-t}(x_{1})\ldots\he_{s-t}(x_{n-2})\he_{s+t}(-1).$$
(The use of the lemma is justified by the fact that $n$ is fixed; otherwise, the Lipschitz constant of the resulting homotopy would depend on $n$ as well as the length of the original loop).  In the diagram, take each vertical edge to have the form $\hu(z_{s}\otimes (V_{i}-x_{i}z_{t}))$.  Each rectangle (as well as each of the triangles) in figure \ref{figure:allcock} has the form indicated in diagram \ref{figure:holonomysquare}, where we abbreviate $\hu(z_{s}\otimes v)$ as $v$, and
$$\omega(v,a)=\omega(u,b).$$
(We think of the triangles of figure \ref{figure:allcock} as squares with one degenerate side).

\begin{figure}[t]
\labellist
\small\hair 2pt

\pinlabel{$\triangleright$} [c] at 104 223
\pinlabel{$\triangle$} [c] at 208 115
\pinlabel{$\triangleright$} [c] at 104 0
\pinlabel{$\triangle$} [c] at 1 115
\pinlabel{$\nwarrow$} [c] at 46 175
\pinlabel{$\nearrow$} [c] at 163 176
\pinlabel{$\nwarrow$} [c] at 165 47
\pinlabel{$\nearrow$} [c] at 44 48

\pinlabel {$b$} [] at 104 214
\pinlabel {$v+b-u-a$} [r] at 204 115
\pinlabel {$a$} [] at 104 12
\pinlabel {$v-u$} [l] at 4 115
\pinlabel {$-u$} [l] at 55 176
\pinlabel {$b-u$} [r] at 156 176
\pinlabel {$v-a$} [r] at 156 47
\pinlabel {$v$} [l] at 55 47

\endlabellist
\centering
\centerline{\psfig{file=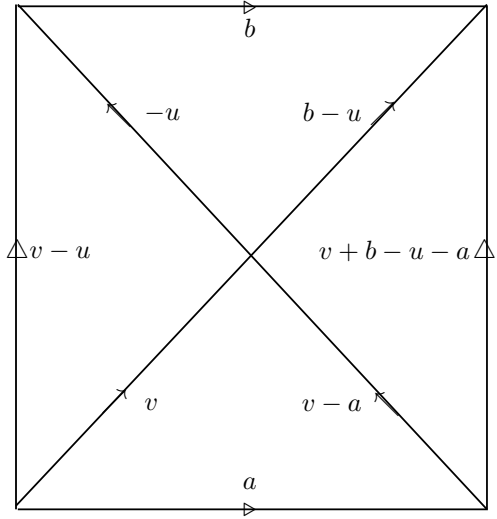,scale=85}}
\caption{A filling for a rectangle from diagram \ref{figure:allcock}. Since $\omega(u,a)=\omega(v,b)$, each triangle is isotropic.}
\label{figure:holonomysquare}
\end{figure}

Now, figure \ref{figure:holonomysquare} shows how to decompose the boundary rectangle into isotropic triangles which can be filled by \ref{lemma:isotri}.  We have thus reduced to the desired product
$$\he_{s+t}(1)\he_{s-t}(x_{1})\ldots\he_{s-t}(x_{n-2})\he_{s+t}(-1).$$
We can fill the product of $\he_{s-t}(x_{i})$ by Lemma \ref{lemma:isotri}, then fill the remaining curve $\he_{s+t}(1)\he_{s+t}(-1)$ because it is small.\end{proof}

\begin{corollary}
\label{corollary:factoring}
Suppose $v\in\RR^{T}$ is equal to $\sum_{\{\pm t\}\subset T}x_{t}z_{t}+x_{-t}z_{-t}$ and $s\in S$.  Then we can Lipschitz homotope from $\hu(z_{s}\otimes v)$ to a product
$$\prod_{{\pm t}\subset T}\he_{s-t}(x_{t})\he_{s+t}(x_{-t})
[\he_{s-t}(\frac{-1}{2}x_{t}x_{-t}),\he_{s+t}(1)]$$
\end{corollary}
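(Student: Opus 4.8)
The plan is to reduce the statement to the ``Allcock'' lemma (Lemma \ref{lemma:allcock}) after verifying one algebraic identity, and to carry this out in three moves. Throughout, write $v=\sum_{\{\pm t\}\subset T}v_{t}$ with $v_{t}:=x_{t}z_{t}+x_{-t}z_{-t}\in\RR^{\{\pm t\}}$, and recall that $\he_{s\mp t}(x)=\hu(xz_{s}\otimes z_{\pm t})$ is a curve of the form $\hu(z_{s}\otimes w)$ with $w\in\RR^{\{\pm t\}}$.

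First I would split across symplectic pairs. Distinct $\RR^{\{\pm t\}}$ and $\RR^{\{\pm t'\}}$ are mutually $\omega$-orthogonal, so $\omega(v_{t_{1}},v_{t_{2}}+\cdots+v_{t_{k}})=0$; running Lemma \ref{lemma:isotri} backwards (legitimate up to reparameterization, Lemma \ref{lemma:reparameterization} and Corollary \ref{corollary:liphomotopies}) and using the insertion lemma \ref{lemma:insertion}, I peel off one pair at a time to Lipschitz homotope $\hu(z_{s}\otimes v)$ to the concatenation $\prod_{\{\pm t\}\subset T}\hu(z_{s}\otimes v_{t})$. There are $\tfrac12\#T=O(1)$ such steps, so the stacking corollary \ref{corollary:stacking} assembles them into a single Lipschitz homotopy, and the problem is reduced to a single pair $\{\pm t\}$.

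Second, I would record the purely algebraic identity
$$u(z_{s}\otimes v_{t})=u(z_{s}\otimes x_{t}z_{t})\,u(z_{s}\otimes x_{-t}z_{-t})\,[u(z_{s}\otimes(-\tfrac12 x_{t}x_{-t})z_{t}),\,u(z_{s}\otimes z_{-t})]$$
in $N_{\{s\},\{\pm t\}}(\RR)$, which comes from the section formula of \S\ref{section:pst} together with the cocycle formula of Proposition \ref{proposition:umanip}(d); the coefficient $-\tfrac12$ is precisely what is needed to cancel the central term produced by $\omega(z_{t},z_{-t})\neq 0$. Since $\he_{s-t}(x_{t})$, $\he_{s+t}(x_{-t})$ and $[\he_{s-t}(-\tfrac12 x_{t}x_{-t}),\he_{s+t}(1)]$ represent the three factors on the right, the curve $\hu(z_{s}\otimes v_{t})$ and the target product have the same endpoints, so by Lemma \ref{lemma:reparameterization} it suffices to Lipschitz-fill the loop obtained by concatenating $\hu(z_{s}\otimes v_{t})$ with the reverse of the target. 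Every arc of this loop is of the form $\hu(z_{s}\otimes w)$ with $w\in\RR^{\{\pm t\}}$ (for the reversals $\hu(z_{s}\otimes w)^{-1}$, replace by $\hu(z_{s}\otimes(-w))$ at bounded cost using Lemma \ref{lemma:isotri} with $\omega(w,-w)=0$); there are $O(1)$ of them, and $\{\pm t\}$ is a proper symplectic subset of $T$ because $\#T\geq 6$. Hence Lemma \ref{lemma:allcock} produces the desired Lipschitz filling. Running this for each pair and stacking (Corollary \ref{corollary:stacking}, Lemma \ref{lemma:insertion}) gives the homotopy claimed.

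The main obstacle is the bookkeeping in the second move: one has to pin down the sign of $\omega(z_{t},z_{-t})$ and the commutator convention so that the central correction is exactly $[\he_{s-t}(-\tfrac12 x_{t}x_{-t}),\he_{s+t}(1)]$ rather than some other scalar multiple of it. Once that identity is in hand, the entire geometric content is packaged in Lemmas \ref{lemma:isotri} and \ref{lemma:allcock}, and the rest is routine application of the basic rules of \S\ref{subsection:rules}.
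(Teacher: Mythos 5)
Your proof is correct and follows essentially the same route as the paper: split $\hu(z_s\otimes v)$ across the mutually orthogonal pieces $v_{\pm t}$ using Lemma \ref{lemma:isotri}, then reduce each $\hu(z_s\otimes v_{\pm t})$ to the target by filling the resulting loop with Lemma \ref{lemma:allcock} applied with $T'=\{\pm t\}$. The only difference is that you make explicit the algebraic identity (with the $-\tfrac12$ coefficient) that the target represents $u(z_s\otimes v_t)$, which the paper asserts with "this follows immediately."
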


\begin{proof}
Let $v_{\pm t}=x_{t}z_{t}+x_{-t}z_{-t}$ be the projection of $v$ on $\RR^{\{\pm t\}}$.  Then by Lemma \ref{lemma:isotri} we can homotope from $\hu(v)$ to
$\prod_{{\pm t}\subset T}\hu(z_{s}\otimes v_{\pm t})$.
Hence it suffices to homotope each $\hu(z_{s}\otimes v_{\pm t})$ to a product of $\he$ of the specified form.  But this follows immediately by applying the previous Lemma \ref{lemma:allcock} with $T^{\prime}=\{\pm t\}$.\end{proof}

We will describe how to fill all relations between $\he$.  This lemma is the only place where we use the fact that $\# T\geq 3$.\\

\begin{lemma}
\label{lemma:lipsteinberg}
Suppose that $s,s^{\prime}\in S$ and $t,t^{\prime}\in T$.  We can Lipschitz homotope as follows.
\begin{itemize}
\item[(a)] From
$\he_{s-t}(x)\he_{s-t}(y)$ to $\he_{s-t}(x+y)$.
\item[(b)] If $t\neq \pm t^{\prime}$, from
$\he_{s-t}(x)\he_{s^{\prime}-t^{\prime}}(y)$ to
$\he_{s^{\prime}-t^{\prime}}(y)\he_{s^{\prime}-t^{\prime}}(x)$.
\item[(c1)] If $t\neq \pm t^{\prime}$, from
$[\he_{s-t}(x),\he_{s^{\prime}+t}(y)]$ to
$[\he_{s^{\prime}-t^{\prime}}(xy),\he_{s+t^{\prime}}(\pm 1)]$.\\
(Here the sign is determined by $s,s^{\prime},t,t^{\prime}$).
\item[(d)] From
$\he_{s-t}(x)[\he_{s^{\prime}-t^{\prime}}(y),\he_{s^{\prime\prime}+t^{\prime}}(z)]$ to
$[\he_{s^{\prime}-t^{\prime}}(y),\he_{s^{\prime\prime}+t^{\prime}}(z)]\he_{s-t}(x)$.
\item[(c2)] From $[\he_{s-t}(x),\he_{s^{\prime}+t}(y)]$ to $[\he_{s-t^{\prime}}(xy),\he_{s^{\prime}+t^{\prime}}(1)]$
\item[(e)] From $[\he_{s-t}(x),\he_{s^{\prime}+t}(y)][\he_{s-t}(x^{\prime}),\he_{s^{\prime}+t}(y^{\prime})]$ to $[\he_{s-t}(xy+x^{\prime}y^{\prime}),\he_{s^{\prime}+t}(1)]$
\item[(f)] From $[\he_{s-t}(x),\he_{s^{\prime}+t}(y)][\he_{s-t^{\prime}}(x^{\prime}),\he_{s^{\prime}+t^{\prime}}(y^{\prime})]$

to $[\he_{s-t^{\prime}}(x^{\prime}),\he_{s^{\prime}+t^{\prime}}(y^{\prime})][\he_{s-t}(x),\he_{s^{\prime}+t}(y)]$
\end{itemize}
\end{lemma}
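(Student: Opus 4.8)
The plan is to treat the six parts (a)--(f) separately, in each case producing an explicit endpoint-fixing Lipschitz homotopy assembled out of a \emph{bounded} (independent of $x,y,\dots$) number of elementary moves and then invoking the stacking and insertion principles (Corollary \ref{corollary:stacking}, Lemma \ref{lemma:insertion}) to conclude that the composite homotopy is $O(\ell)$-Lipschitz, where $\ell$ is the length of the input curve. The elementary moves available are: conjugating a single $\hu$-curve by a geodesic in $\Sp(T)$ (Lemma \ref{lemma:lipconjugation}); the isotropic rectangle and triangle moves (Lemmas \ref{lemma:isorect} and \ref{lemma:isotri}); the Allcock-type holonomy filling (Lemma \ref{lemma:allcock}); the factoring move (Corollary \ref{corollary:factoring}); and filling a bounded-diameter loop in $\Sp(T)/U(T)$ directly, using that this space is CAT(0). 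Throughout one uses Proposition \ref{proposition:umanip}(d), which identifies $[\he_{s-t}(x),\he_{s'\pm t}(y)]$ with an endpoint-to-endpoint path from $\Id$ to the central element $u_{Z}\!\big(\kappa xy\,z_{s}z_{s'}\big)$, so that parts (c1), (c2), (e), (f) amount to producing Lipschitz homotopies between two such paths having the same (central) endpoints. In spirit this duplicates Young's proof of the analogous $\SL$ relation \cite[Lemma 8.8]{RY}, the new feature being that $\Sp(T)$ cannot simultaneously contract a pair of vectors with nonzero $\omega$-pairing, which forces several moves to be routed through Lemma \ref{lemma:allcock}.

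The parts that use only the isotropic lemmas come first. For (a), $\omega(z_{t},z_{t})=0$, so a single application of Lemma \ref{lemma:isotri} with $v=xz_{t},\ w=yz_{t}$ does it. For (b), $t\neq\pm t'$ gives $\omega(z_{t},z_{t'})=0$, so a single application of Lemma \ref{lemma:isorect} suffices. For (d) in the case $t'\neq\pm t$, Lemma \ref{lemma:isorect} lets $\he_{s-t}(x)=\hu(xz_{s}\otimes z_{t})$ slide past each of $\he_{s'-t'}(y)$ and $\he_{s''+t'}(z)$ at Lipschitz cost, hence past their commutator; when $t'=\pm t$ one first rewrites the commutator using an auxiliary $t''\in T$ with $\pm t''\notin\{\pm t\}$ (such $t''$ exists precisely because $\#T\geq 3$, hence $\geq 4$), applies the previous case, and reverses the rewrite using part (c2) below.

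For the reindexing moves underlying (c1), (c2) and (f), I would change the auxiliary half-root from $t$ to $t'$ by picking $M\in\Sp(T)$ permuting the symplectic planes with $Mz_{\pm t}=z_{\pm t'}$; since $M$ is a bounded matrix it is represented by a bounded-length geodesic $\gamma$ in $\Sp(T)$, and conjugating the whole commutator path by $\gamma$ and applying Lemma \ref{lemma:lipconjugation} to each of its two $\hu$-factors changes $z_{\pm t}$ into $z_{\pm t'}$; because $\Sp(T)$ acts trivially on $Z_{S}=Z_{S,T}$ the endpoint is unchanged, and the discrepancy between $\gamma(\,\cdot\,)\gamma^{-1}$ and $(\,\cdot\,)$ is absorbed by a bounded homotopy (inserting $\gamma^{-1}\gamma$ a bounded number of times). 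To absorb a scalar, rewriting $\he_{s-t}(x)=\hu(xz_{s}\otimes z_{t})$ and $\he_{s'+t}(y)=\hu(yz_{s'}\otimes z_{-t})$ and conjugating by a geodesic in $\Sp(T)$ that scales $z_{-t}$ by $y$ (and $z_{t}$ by $y^{-1}$) turns $[\he_{s-t}(x),\he_{s'+t}(y)]$ into $[\he_{s-t}(xy),\he_{s'+t}(1)]$ via two applications of Lemma \ref{lemma:lipconjugation}. Combining reindexing and scalar absorption gives (c1) and (c2) (with the required signs $\kappa=\pm1$ extracted by tracking $\omega(z_{\pm t},z_{\mp t})$ through the conjugations -- a routine computation I would relegate to a remark), and (f) then follows by sliding the two commutator blocks, built on $t$ and $t'$ with $t\neq\pm t'$, past each other via Lemma \ref{lemma:isorect} applied to their constituent $\hu$-curves. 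Part (e), the additivity $[\cdot,\cdot][\cdot,\cdot]\leadsto[\cdot,\cdot]$, is obtained by first normalizing both commutators to the form $[\he_{s-t}(\,\cdot\,),\he_{s'+t}(1)]$ and then recognizing the concatenation, via the definition of $\huZ$ together with Lemma \ref{lemma:isotri}, as $\huZ\!\big((xy+x'y')z_{s}z_{s'}\big)$ up to a bounded homotopy.

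The main obstacle is the central-element manipulation in (c1), (c2) and (e): one must move a curve around inside the two-step nilpotent group $N_{S,T}(\RR)$ while \emph{exactly} preserving the ``symplectic area'' that pins down the central endpoint, and the naive contract-and-fill move is unavailable because contracting one vector of an $\omega$-dual pair expands the other. The way around this is precisely Lemma \ref{lemma:allcock} (the Allcock/Cornulier--Tessera holonomy trick), supplemented by careful bookkeeping of which $\huZ$-presentation of the target central element is used at each stage. The hypothesis $\#T\geq 3$ enters exactly here: it guarantees that for any $t$ occurring in a commutator there is a further half-root $t'\in T$ with $\pm t'\notin\{\pm t\}$, and, when needed, a proper symplectic subset $T'\subsetneq T$ to feed into Lemma \ref{lemma:allcock} -- which is why, as the paper notes, this is the only place that hypothesis is used.
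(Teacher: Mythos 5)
Your proposal handles (a) and (b) exactly as the paper does, and your overall strategy (combine bounded numbers of elementary moves via insertion/stacking, exploit that the commutators are paths to central elements) is the right framework. However, the route you take for (c1), (c2), and (e) has a genuine gap, and it is precisely where the paper's argument is most delicate. Your plan is to reindex $t\to t'$ and absorb scalars by picking a bounded $M\in\Sp(T)$, conjugating the whole commutator by a curve $\gamma$ representing $M$, and invoking Lemma~\ref{lemma:lipconjugation} on each $\hu$-factor. But Lemma~\ref{lemma:lipconjugation} fills the loop $\gamma\hu(V)\gamma^{-1}\hu(MV)^{-1}$, not the loop $c\,(\gamma c\gamma^{-1})^{-1}$, and your claim that the discrepancy between $c$ and $\gamma c\gamma^{-1}$ is ``absorbed by a bounded homotopy (inserting $\gamma^{-1}\gamma$ a bounded number of times)'' does not hold: free insertions give $\gamma^{-1}(\gamma c\gamma^{-1})\gamma$, not $\gamma c\gamma^{-1}$, and the remaining rectangle $(s,t)\mapsto\gamma(t)\,c(s)$ is not Lipschitz, because as $c(s)$ ranges over $N_{S,T}(\RR)$ the conjugate $\mathrm{Ad}(c(s)^{-1})$ blows up the $\mathfrak{sp}(T)$-component of the velocity of $\gamma$ quadratically in $\|\log c(s)\|$. (This is exactly why the ``contract-and-fill'' template works for a single $\hu$-factor but not for an entire commutator whose $\omega$-pairing is nonzero -- which you flag as the main obstruction, but then do not actually circumvent.) The paper sidesteps the issue entirely: to prove (c1) it concatenates the source and the (inverted) target and observes that Lemmas~\ref{lemma:isorect} and~\ref{lemma:isotri} merge the four factors into a single $[\hu(z_s\otimes(xz_t\pm z_{-t'})),\hu(z_{s'}\otimes(yz_{-t}+xyz_{t'}))]$ whose $\omega$-pairing vanishes, precisely because the sign $\pm$ is chosen so that the $t$- and $t'$-contributions cancel. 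That ``balancing'' of the pairing is the key idea and is absent from your argument.

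Your proposal for (e) has a related gap. You normalize both commutators to the same auxiliary root $t$ and then say the concatenation is recognized as $\huZ((xy+x'y')z_sz_{s'})$ ``via Lemma~\ref{lemma:isotri}.'' But after normalizing, all eight $\hu$-factors involve only $z_t,z_{-t}$, and every pair of them has nonzero $\omega$-pairing, so neither Lemma~\ref{lemma:isorect} nor~\ref{lemma:isotri} applies to rearrange or merge them; the additivity of the central endpoints is a group-theoretic fact but does not supply a Lipschitz homotopy. The paper avoids this by first reindexing the three curves to \emph{distinct} symplectic pairs $\pm t,\pm t',\pm t''$ (which is why $\#T\geq 6$ rather than $\geq 2$ is needed), so that the ensuing merge via isorect/isotri encounters only vanishing pairings, and the final commutator is filled by isorect. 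Parts (d) and (f) of your proposal are fine in spirit, though your sliding argument for (f) requires $t\neq\pm t'$ whereas the paper's derivation from (d) carries no such hypothesis. In summary: the decomposition of the lemma is correct and the isotropic-pairing obstruction is correctly identified, but the conjugation-based workaround for (c1)/(c2) and the same-$t$ normalization for (e) do not go through; you need the paper's balance-and-merge trick.
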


\begin{proof}
(a) follows by Lemma \ref{lemma:isotri}.  (b) follows by Lemma \ref{lemma:isorect}.\\

To prove (c1), it suffices to Lipschitz fill curves of the form
$$[\he_{s-t}(x),\he_{s^{\prime}+t}(y)][\he_{s^{\prime}-t^{\prime}}(xy),\he_{s+t^{\prime}}(\pm 1)]^{-1}$$
Noting that 
$[\he_{s^{\prime}-t^{\prime}}(xy),\he_{s+t^{\prime}}(\pm 1)]^{-1}
=[\he_{s+t^{\prime}}(\pm 1),\he_{s^{\prime}-t^{\prime}}(xy)]$, 
we homotope as follows.\\
\begin{tabular}{l l}
$[\he_{s-t}(x),\he_{s^{\prime}+t}(y)][\he_{s+t^{\prime}}(\pm 1),\he_{s^{\prime}-t^{\prime}}(xy)]$ & \\
$[\he_{s-t}(x)\he_{s+t^{\prime}}(\pm 1),\he_{s^{\prime}+t}(y)\he_{s^{\prime}-t^{\prime}}(xy)]$ & Lemma \ref{lemma:isorect}\\
$[\hu(z_{s}\otimes(xz_{t}\pm z_{-t^{\prime}})),\hu(z_{s^{\prime}}\otimes(yz_{-t}+xyz_{t^{\prime}}))]$ & Lemma \ref{lemma:isotri}\\
\end{tabular}\\
This final curve can be filled by Lemma \ref{lemma:isorect}.\\

To prove (d), choose $t^{\prime\prime}\neq \pm t$ and homotope as follows.\\
\begin{tabular}{l l}
$\he_{s-t}(x)[\he_{s^{\prime}-t^{\prime}}(y),\he_{s^{\prime\prime}+t^{\prime}}(z)]$ & \\
$\he_{s-t}(x)[\he_{s^{\prime\prime}+t^{\prime\prime}}(y),\he_{s^{\prime}-t^{\prime\prime}}(z)]$ & part (c1)\\
$[\he_{s^{\prime\prime}+t^{\prime\prime}}(yz),\he_{s^{\prime}-t^{\prime\prime}}(\pm 1)]\he_{s-t}(x)$ & Lemma \ref{lemma:isorect}\\
$[\he_{s^{\prime}-t^{\prime}}(y),\he_{s^{\prime\prime}+t^{\prime}}(z)]\he_{s-t}(x)$ & part (c1)\\
\end{tabular}\\

To prove part (c2), choose $t^{\prime\prime}\notin \{\pm t,\pm t^{\prime}\}$ and homotope as follows.\\
\begin{tabular}{l l}
$[\he_{s-t}(x),\he_{s^{\prime}+t}(y)]$ & \\
$[\he_{s^{\prime}-t^{\prime\prime}}(xy),\he_{s+t^{\prime\prime}}(\pm 1)]$ & \\
$[\he_{s-t^{\prime}}(xy),\he_{s^{\prime}+t^{\prime}}(\pm 1)]$ & \\
\end{tabular}\\

To prove part (e), it suffices to fill the loop
$$[\he_{s-t}(x),\he_{s^{\prime}+t}(y)][\he_{s-t}(x^{\prime}),\he_{s^{\prime}+t}(y^{\prime})]([\he_{s-t}(xy+x^{\prime}y^{\prime}),\he_{s^{\prime}+t}(\pm 1)])^{-1}.$$
Choose $t^{\prime},t^{\prime\prime}$ so that $\pm t,\pm t^{\prime}, \pm t^{\prime\prime}$ are distinct and homotope as follows.\\
\begin{tabular}{l l}
$[\he_{s-t}(x),\he_{s^{\prime}+t}(y)][\he_{s-t}(x^{\prime}),\he_{s^{\prime}+t}(y^{\prime})]([\he_{s-t}(xy+x^{\prime}y^{\prime}),\he_{s^{\prime}+t}(\pm 1)])^{-1}$ & \\
$[\he_{s-t}(x),\he_{s^{\prime}+t}(y)][\he_{s-t^{\prime}}(x^{\prime}),\he_{s^{\prime}+t^{\prime}}(y^{\prime})][\he_{s-t^{\prime\prime}}(xy+x^{\prime}y^{\prime}),\he_{s^{\prime}+t^{\prime\prime}}(\pm 1)]$ & parts (c1), (c2)\\
$[\he_{s-t}(x)\he_{s-t^{\prime}}(x^{\prime})\he_{s-t^{\prime\prime}}(xy+x^{\prime}y^{\prime}),\he_{s^{\prime}+t}(y)\he_{s^{\prime}+t^{\prime}}(y^{\prime})\he_{s^{\prime}+t^{\prime\prime}}(\pm 1)]$ & Lemma \ref{lemma:isorect}\\
$[\hu(z_{s}\otimes(xz_{t}+x^{\prime}z_{t^{\prime}}+xyz_{t^{\prime\prime}}+x^{\prime}y^{\prime}z_{t^{\prime\prime}})),
\hu(z_{s^{\prime}}\otimes(yz_{-t}+y^{\prime}z_{-t}\pm z_{-t^{\prime\prime}}))]$ & Lemma \ref{lemma:isotri}\\
\end{tabular}\\
This loop can be fill via Lemma \ref{lemma:isorect}.\\

Part (f) follows trivially from part (d).\end{proof}

By proving all of the promised lemmas, we have proved Theorem \ref{theorem:lipfillsp}.

\subsection{Proof of Theorem \ref{theorem:lipfillsl}.}
\label{subsection:lipfillsl}
Suppose $S\subset \mH$ isotropic with $\#S=p$.  Let $G=P_{S,\emptyset}\cong \SL(p)\ltimes\Sym^{2}\RR^{p}$, and let $X$ be the symmetric space $G/SO(S)$.  Recall that Theorem \ref{theorem:lipfillsl} asserts the existence of a constant $c$ such that (for $L\geq 1$) any $L$-Lipschitz map $S^{1}\rightarrow X$ can be extended to a $cL$-Lipschitz map of the disk $D^{2}\rightarrow X$.  This subsection is devoted to the proof of this theorem.  As in the previous subsection, we proceed by defining a normal form $\Omega$ for $X$, finding appropriate fillings for $\Omega$-triangles, and appealing to Lemma \ref{lemma:nftriangles}.\\

\paragraph{Defining $\Omega$. }
As before, we define $\Omega$ in $G$ rather than $X$ (see the beginning of \S\ref{subsection:lipfillsp}).  We begin by defining a curve $\hu(vw)$ representing $u(vw)\in Z_{S}$.  Note that the factorization $vw$ is unique up to scaling.
\begin{definition}
For each pair of unit length vectors $v,w\in \RR^{S}$, choose a matrix $M_{v,w}\in \SL(\# S)$ with bounded entries which has $v,w$ as eigenvectors with eigenvalue $\exp(-1)$ and all other eigenvalues positive (so that real powers of $M$ are well defined.

Given $v,w\in \RR^{S}$, define $\hu(vw)$ as follows.  If $\| v \| \|w\|\leq 1$, simply take $\hu(vw)(t)=u(vwt)$ (up to scale).  Otherwise, let $M=M_{v/\|v\|,w/\|w\|}$ and let $\gamma$ be a geodesic in $\SL(S)$ representing $M^{x}$, where $M^{x}\frac{v}{\|v\|}M^{x}\frac{w}{\|w\|}=vw$, then define $\hu(vw)$ to be the concatenation
$$\gamma \hu\left(\frac{vw}{\| v \| \| w\|}\right) \gamma^{-1}$$
\end{definition}

We are now in a position to define $\Omega$.  Suppose $z\in Z_{S}$.  We can write
$$z=u\left(\sum_{s+s^{\prime}\in \Phi Z_{S}}x_{s+s^{\prime}}z_{s}z_{s^{\prime}}\right).$$  \begin{definition}
Define $\Omega(z)$ to be
$$\prod_{s+s^{\prime}\in \Phi Z_{S}}\he_{s+s^{\prime}}(x_{s+s^{\prime}})$$
\end{definition}
Let $\phi_{\SL}:G\rightarrow \SL(S)$ be projection.
\begin{definition} For $g\in G$, define $\Omega(g)$ as follows.  Let $\gamma$ be a geodesic in $\SL(S)$ representing $\phi_{\SL}(g)$, then $\phi_{\SL}(g)^{-1}g\in Z_{S}$ so we can set $\Omega(g)$ to be
$$\gamma \Omega(\phi_{\SL}(g)^{-1}g)$$
\end{definition}

\paragraph{Filling $\Omega$-triangles.} The procedure we use to fill $\Omega$-triangles is similar to that used in the proof of Theorem \ref{theorem:lipfillsp}.  An $\Omega$-triangle has the form
$$\gamma_{1}\Omega(n_{1})\gamma_{2}\Omega(n_{2})\gamma_{3}\Omega(n_{3})$$
We can fill this by the following procedure.
\begin{itemize}
\item Homotope to $\gamma_{1}\gamma_{2}\gamma_{3}\prod^{O(1)}\hu(vw)$ using the conjugation lemma (\ref{lemma:lipconjugationsl}).
\item Fill $\gamma_{1}\gamma_{2}\gamma_{3}$ using the fact that $\SL_{S}/SO(S)$ is CAT(0).
\item Use corollary \ref{corollary:lipbreaking} to break each $\hu(vw)$ into a product of at most $p^{2}$ curves of the form $\he_{\alpha}(x)$, yielding a product of a bounded number of $\he$.
\item Fill this product using the Steinberg relations (Lemma \ref{lemma:lipsteinsl}).
\end{itemize}

To do this we need several lemmas, developed below.  We begin by proving a weak version of Lemma \ref{lemma:lipconjugationsl}.

\begin{proposition}
\label{proposition:lipconjugationsl}
Suppose $M,M^{\prime}\in \SL(S)$ and $v,v^{\prime}\in \RR^{S}$ with $\|v\|,\|v^{\prime}\|\leq 1$ and $Mv=M^{\prime}v^{\prime}$.  Let $\gamma_{M}:[0,1]\rightarrow G$ be a curve connecting $1$ to $M$ and $\gamma_{M^{\prime}}:[0,1]\rightarrow G$ a curve connecting $1$ to $M^{\prime}$.  Then there is a homotopy $f:[0,3]\rightarrow G$ from $\gamma_{M}u(v^{2})\gamma_{M}^{-1}$ to $\gamma_{M^{\prime}}u(v^{\prime2})\gamma_{M^{\prime}}^{-1}$, with
$$\Lip(f)=O(\Lip(\gamma_{M})+\Lip(\gamma_{M^{\prime}})+1).$$
\end{proposition}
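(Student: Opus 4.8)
The plan is to follow the template of the conjugation lemma (Lemma~\ref{lemma:lipconjugation}, itself modelled on \cite[Lemma~8.16]{RY}); the argument is a little easier here because $\SL(S)$ can rescale and rotate a single unit vector freely and because $v,v'$ are already bounded. First I would note that the statement is meaningful: by Proposition~\ref{proposition:umanip}(b), conjugating $u(v^2)$ by $M\in\SL(S)$ gives $u((Mv)^2)$, so, read as curves in $G$ based at the identity, both $\gamma_M u(v^2)\gamma_M^{-1}$ and $\gamma_{M'}u(v'^2)\gamma_{M'}^{-1}$ end at $u(w^2)$, where $w:=Mv=M'v'$. By Lemma~\ref{lemma:combinatorialization} it suffices to produce the homotopy in $G$ and push it down to $X$, and by Corollary~\ref{corollary:stacking} it suffices to homotope each of the two curves, separately and at Lipschitz cost $O(\Lip(\gamma_M)+1)$ resp.\ $O(\Lip(\gamma_{M'})+1)$, to one common curve; by symmetry I only treat $\gamma_M u(v^2)\gamma_M^{-1}$.

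For the common target I would take the normal-form curve $\hu(w^2)=\gamma_0\,u(\bar w^2)\,\gamma_0^{-1}$ of \S\ref{subsection:lipfillsl}, where $\bar w=w/\|w\|$ and $\gamma_0$ is a geodesic in $\SL(S)$ from the identity to a bounded-entry matrix scaling $\bar w$ up to $w$; since $\|w\|\le\|M\|_{\infty}$ and $\log\|M\|_{\infty}=O(d(1,M))=O(\Lip(\gamma_M))$, we get $\ell(\gamma_0)=O(\Lip(\gamma_M))$. Then I would build an endpoint-fixing Lipschitz homotopy from $\gamma_M u(v^2)\gamma_M^{-1}$ to $\gamma_0 u(\bar w^2)\gamma_0^{-1}$ via a template (Lemma~\ref{lemma:liptemplate}), subdividing a rectangle of side $O(\Lip(\gamma_M))$ into three cells as in Figure~\ref{figure:conjugation}: two cells whose boundary loops lie entirely in $\SL(S)$, assembled from $\gamma_M$, $\gamma_0$, a short scaling geodesic, and an auxiliary curve $\delta$, filled using that $\SL(S)/\SO(S)$ is CAT(0) (so these fillings have Lipschitz constant a bounded multiple of their perimeters, hence $O(\Lip(\gamma_M))$); and one ``skinny'' middle cell. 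Here $\delta$ is a path in $\SL(S)$ of length $O(\Lip(\gamma_M)+\ell(\gamma_0))=O(\Lip(\gamma_M))$ joining the two frames along which the vector being dragged by the $u$-part stays of bounded norm --- the direct $\SL$-analogue of the curve $\delta$ in the proof of Lemma~\ref{lemma:lipconjugation}, constructed as in \cite[Lemma~8.16]{RY} (e.g.\ first rotate $v$ into the direction of $\bar w$ inside $\SO(S)$, then interpolate the diagonal scalings while keeping the relevant preimage of $w$ bounded). The skinny cell is filled by the explicit slice map $(s,t)\mapsto\delta(t)\,u(s\,\bar w^2)$ (suitably translated): its $s$-derivative is bounded because left translation in $G$ is an isometry and $\|\bar w\|=1$, and its $t$-derivative is $O(\|\dot\delta(t)\|)$ because right translation by the bounded element $u(s\bar w^2)$ distorts the left-invariant metric by only a bounded factor, so this cell is $O(\ell(\delta))=O(\Lip(\gamma_M))$-Lipschitz. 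Stacking the three cells (Lemma~\ref{lemma:liptemplate}), repeating the argument verbatim for $M',v'$, and applying Corollary~\ref{corollary:stacking} yields the proposition.

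The routine parts are the two CAT(0) fillings and the explicit parametrization of the skinny cell. I expect the main obstacle to be the construction of $\delta$ and the verification that the middle cell really is skinny --- i.e.\ routing the frame path so that the two side cells genuinely close up inside $\SL(S)$ and the preimage of $w$ never grows large --- together with keeping the corner and endpoint bookkeeping in the template consistent; this is precisely the step carried out in \cite[Lemma~8.16]{RY} for $\SL$, and it should go through here with essentially only notational changes.
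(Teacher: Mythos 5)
Your proposal matches the paper's own proof, which omits the details and simply defers to the template argument of Lemma~\ref{lemma:lipconjugation}; your detour through the common normal-form curve $\hu(w^2)$ followed by stacking (Corollary~\ref{corollary:stacking}) is a cosmetic repackaging of the same three-cell construction, and the key step---building $\delta$ in $\SL(S)$ so the middle cell stays skinny---is exactly the rotate-then-rescale construction of that lemma. (One small slip worth fixing: the slice map for the skinny cell should drag the $t$-dependent vector $\delta(t)^{-1}w$ rather than the fixed $\bar w$, so that it agrees with $u(v^2)$ at $t=0$ and $u(\bar w^2)$ at $t=1$; your derivative bound in fact relies on the boundedness of $\delta(t)^{-1}w$, not on $\|\bar w\|=1$, so the argument is sound as intended.)
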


\begin{proof}
The proof is omitted as it follows the same lines as that of Lemma \ref{lemma:lipconjugation}.\end{proof}

\begin{lemma}
\label{lemma:lipconjugationsl}
If $\gamma$ is a curve in $\SL$ representing some matrix $M$, and $v,w,v^{\prime},w^{\prime}\in \RR^{S}$, with $Mv=v^{\prime}$ and $Mw=w^{\prime}$, then there exists a homotopy from $\gamma \hu(vw) \gamma^{-1}$ to $\hu(v^{\prime}w^{\prime})$ with Lipschitz constant $O(\Lip(\gamma\hu(vw)\gamma^{-1})$.
\end{lemma}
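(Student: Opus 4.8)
\textbf{Proof plan for Lemma \ref{lemma:lipconjugationsl}.}

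The plan is to reduce the general statement to the weak version already proved in Proposition \ref{proposition:lipconjugationsl}, exactly mirroring the strategy of Lemma \ref{lemma:lipconjugation} in the symplectic case. Recall that $\hu(vw)$ is by definition of the form $\gamma_{M_0}\hu(v_0 w_0)\gamma_{M_0}^{-1}$, where $v_0 = v/\|v\|$, $w_0 = w/\|w\|$, $M_0$ is a geodesic-representable power of the fixed matrix $M_{v_0,w_0}$ chosen so that $M_0 v_0 \cdot M_0 w_0 = vw$ (up to scale), and similarly $\hu(v'w') = \gamma_{M_0'}\hu(v_0' w_0')\gamma_{M_0'}^{-1}$. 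So the loop we must fill is a concatenation of: (i) the curve $\gamma \gamma_{M_0}$ in $\SL(S)$; (ii) the ``core'' curve $\hu(v_0 w_0)$, which has bounded length; (iii) the curve $(\gamma_{M_0})^{-1}\gamma^{-1}$; and then the reverse traversal through $\gamma_{M_0'}$, $\hu(v_0' w_0')$, $(\gamma_{M_0'})^{-1}$. The picture is the same trapezoid-rectangle-trapezoid decomposition as in Figure \ref{figure:conjugation}: we want a connecting curve $\delta$ in $\SL(S)$ running from the endpoint of $\gamma\gamma_{M_0}$ to the endpoint of $\gamma_{M_0'}$, of length $O(\Lip(\gamma\hu(vw)\gamma^{-1}))$, such that the middle rectangle swept out by $\delta$ conjugating the small element $u(v_0 w_0)$ stays ``skinny'' (bounded pointwise displacement). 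Granting such a $\delta$, the two outer regions are filled because $\SL(S)/SO(S)$ is CAT(0), and the middle rectangle is filled by the standard skinny-rectangle argument (map the horizontal segment at height $t$ to $s\mapsto \delta(t)u(s\cdot w(t))$ where $w(t) = \delta(t)^{-1}(v_0 w_0)$ is a bounded symmetric tensor).

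The first step is therefore to produce $\delta$. Here I would proceed through the same cascade of cases as in Lemma \ref{lemma:lipconjugation}, but now adapted to the $\SL(S)$ action on $\Sym^2\RR^S$ rather than the $\Sp(T)$ action on $\RR^S\otimes\RR^T$. The key structural point is that $\SL(S)$ \emph{can} contract an arbitrary symmetric rank-two tensor to a bounded one (there is no symplectic-form obstruction as in the $\Sp$ case), so the argument is actually \emph{easier} than Lemma \ref{lemma:lipconjugation}: I expect only the analogues of ``Case 1'' (when the target tensor is already a bounded rescaling, so the relevant stabilizer in $\SL(S)$ is undistorted and we can find a short path inside it), ``Case 2'' (when the tensors are positive scalar multiples of each other, handled by prepending/appending a scaling path), and ``Case 3'' (reduce to the previous cases by first rotating with an $O(1)$-length path in $SO(S)$, which is compact). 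Since $M v = v'$ and $M w = w'$, after normalizing we are in precisely the situation where, up to an $O(1)$ correction in $SO(S)$ and a scaling path, the two core curves agree, which is Case 1/Proposition \ref{proposition:lipconjugationsl}. Stacking these $O(1)$ homotopies via Corollary \ref{corollary:stacking} gives the result.

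The second step is the bookkeeping that the total Lipschitz constant is $O(\Lip(\gamma\hu(vw)\gamma^{-1}))$. This is where one must be slightly careful: $\Lip(\gamma\hu(vw)\gamma^{-1})$ controls both $\ell(\gamma)$ and the scale of $vw$ (hence $\ell(\gamma_{M_0})$, which is $O(\log(\|v\|\|w\|))$), and by hypothesis $Mv = v'$, $Mw = w'$ forces $vw$ and $v'w'$ to have comparable scale, so $\ell(\gamma_{M_0'})$ is controlled too; then $\ell(\delta) = O(\ell(\gamma) + \ell(\gamma_{M_0}) + \ell(\gamma_{M_0'})) = O(\Lip(\gamma\hu(vw)\gamma^{-1}))$ as required. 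I would invoke Lemma \ref{lemma:combinatorialization} to pass between $G$ and $X$ as needed. I expect the main obstacle is not any single case of the $\delta$-construction but rather getting the ``skinny rectangle'' bound to hold uniformly — one must verify that along $\delta$ the conjugate $\delta(t)u(v_0 w_0)\delta(t)^{-1}$ stays at bounded distance from $\delta(t)$, which relies on choosing $\delta$ to move only through matrices whose action on the bounded tensor $v_0 w_0$ has bounded operator norm; this is exactly the role of restricting $\delta$ to (conjugates of) the stabilizer and to compact $SO(S)$, and is the content one must extract carefully from the $\SL$ analogue in \cite[Lemma 8.16]{RY}. Since all the genuinely new symplectic difficulties (the form obstruction, Allcock-style holonomy sequences) occur only in Theorem \ref{theorem:lipfillsp} and not here, I would keep this proof short and largely defer to the cited $\SL$ arguments.
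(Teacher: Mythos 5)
Your proposal diverges from the paper's proof in a way that leaves a real gap. The paper does not run the $\delta$-curve / trapezoid-rectangle-trapezoid argument directly on $\hu(vw)$. Instead it first polarizes the rank-two symmetric tensor, using
\[
vw \;=\; \left(\tfrac{v+w}{2}\right)^{2} - \left(\tfrac{v-w}{2}\right)^{2},
\]
so that the core piece $u(\overline{v}\,\overline{w})$ is rewritten (via Lemma \ref{lemma:insertion}) as a concatenation of two \emph{squares} $u\bigl((\tfrac{\overline{v}+\overline{w}}{2})^{2}\bigr)$ and $u\bigl((\tfrac{\overline{v}-\overline{w}}{2})^{2}\bigr)$, each individually conjugated by $\gamma\gamma_{M_{v,w}}$. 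Only after this split does the paper invoke Proposition \ref{proposition:lipconjugationsl} --- twice, once per square --- and the hypothesis ``$Mv=v'$ and $Mw=w'$'' is used precisely to ensure that the same $M$ carries $\tfrac{v\pm w}{2}$ to $\tfrac{v'\pm w'}{2}$, which is what makes the rank-one proposition applicable to each factor.

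Your plan skips this polarization step. You propose to keep the core as $u(v_0 w_0)$ and to argue ``the two core curves agree, which is Case 1/Proposition \ref{proposition:lipconjugationsl}.'' But Proposition \ref{proposition:lipconjugationsl} is a statement only about elements of the form $u(v^2)$ (rank-one symmetric tensors); it says nothing about $u(vw)$ with $v,w$ linearly independent. Applying the trapezoid-rectangle picture directly to $u(v_0 w_0)$ would require an undistortedness claim for the stabilizer in $\SL(S)$ of a general rank-two symmetric form, which is a different (and unproven-in-this-paper) assertion than the one underlying Case~1 of Lemma \ref{lemma:lipconjugation}, where the stabilizer of a single vector suffices. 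The polarization identity is exactly the device the paper uses to avoid this, and as written your proposal has no substitute for it. You would either need to insert the polarization reduction (at which point you essentially recover the paper's proof), or prove a rank-two analogue of Proposition \ref{proposition:lipconjugationsl} from scratch.

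One smaller point: you are right that the overall Lipschitz bookkeeping and the passage between $G$ and $X$ via Lemma \ref{lemma:combinatorialization} work as you describe, and your observation that the $\SL$ case has no symplectic-form obstruction is correct and is indeed why the argument here is shorter than Theorem \ref{theorem:lipfillsp}. But the shortness comes from polarization plus Proposition \ref{proposition:lipconjugationsl}, not from a direct transplant of Lemma \ref{lemma:lipconjugation}'s $\delta$-curve construction.
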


\begin{proof}
We homotope as follows.  (Here $\overline{v}$ denotes $\frac{v}{\|v\|}$ if $\|v\|\geq 1$ and $v$ otherwise).\\

\begin{tabular}{l l}
$\gamma \gamma_{M_{v,w}} u(\overline{v}\overline{w})\gamma_{M_{v,w}}^{-1}\gamma^{-1}$ & \\
$\gamma \gamma_{M_{v,w}} u((\frac{\overline{v}+\overline{w}}{2})^{2})u((\frac{\overline{v}-\overline{w}}{2})^{2})\gamma_{M_{v,w}}^{-1}\gamma^{-1}$ & \\
$\gamma \gamma_{M_{v,w}} u((\frac{\overline{v}+\overline{w}}{2})^{2})
\gamma_{M_{v,w}}^{-1}\gamma^{-1}\gamma\gamma_{M_{v,w}}
u((\frac{\overline{v}-\overline{w}}{2})^{2})\gamma_{M_{v,w}}^{-1}\gamma^{-1}$ & Lemma \ref{lemma:insertion}\\
$\gamma_{M_{v^{\prime},w^{\prime}}} u((\frac{\overline{v^{\prime}}+\overline{w^{\prime}}}{2})^{2})
\gamma_{M_{v^{\prime},w^{\prime}}}^{-1}\gamma_{M_{v^{\prime},w^{\prime}}}
u((\frac{\overline{v^{\prime}}-\overline{w^{\prime}}}{2})^{2})\gamma_{M_{v^{\prime},w^{\prime}}}^{-1}$ & proposition \ref{proposition:lipconjugationsl}\\
$\gamma_{M_{v^{\prime},w^{\prime}}} u((\frac{\overline{v^{\prime}}+\overline{w^{\prime}}}{2})^{2})u((\frac{\overline{v^{\prime}}-\overline{w^{\prime}}}{2})^{2})\gamma_{M_{v^{\prime},w^{\prime}}}^{-1}$ & Lemma \ref{lemma:insertion}\\
$\hu(u^{\prime}v^{\prime})$ & \\
\end{tabular}\\

The use of the proposition is justified by the fact that
$$MM_{v,w}\frac{\overline{v}+\overline{w}}{2}=M\frac{v+w}{2}$$
\[=\frac{v^{\prime}+w^{\prime}}{2}=
M_{v^{\prime},w^{\prime}}\frac{\overline{v^{\prime}}+\overline{w^{\prime}}}{2}.\qedhere\]
\end{proof}

\begin{lemma}
\label{lemma:lipbreakingsl}
We can lipschitz homotope from
$\hu(vw)\hu(vw^{\prime})$
to $\hu(v(w+w^{\prime}))$.
\end{lemma}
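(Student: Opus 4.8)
The statement to prove is Lemma \ref{lemma:lipbreakingsl}: we can Lipschitz homotope from $\hu(vw)\hu(vw^{\prime})$ to $\hu(v(w+w^{\prime}))$, for $v,w,w^{\prime}\in\RR^{S}$. The plan is to mimic the proof of the isotropic triangle lemma (Lemma \ref{lemma:isotri}) from the symplectic case, but now in the much simpler setting where the center $Z_{S}\cong\Sym^{2}\RR^{S}$ is \emph{abelian}, so there is no holonomy obstruction at all. First I would observe that $u(vw)u(vw^{\prime})$ and $u(v(w+w^{\prime}))$ represent the \emph{same} element of $Z_{S}(\RR)$: indeed $u(vw)u(vw^{\prime})=u_{Z}(vw+vw^{\prime})=u_{Z}(v(w+w^{\prime}))$ since $u_{Z}$ is a homomorphism on $\Sym^{2}\RR^{S}$ (here I use the description of $Z_{S}$ via $u_{Z}$ from \S\ref{section:pst}; note $\hu(vw)$ as defined just above the lemma is $\hu$ of the product $vw\in\Sym^{2}\RR^{S}$). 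So the concatenation $\hu(vw)\hu(vw^{\prime})\hu(v(w+w^{\prime}))^{-1}$ is a genuine loop, and by Lemma \ref{lemma:reparameterization} it suffices to produce a Lipschitz filling of this loop.

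The key step is the standard contract-and-fill maneuver. Choose $v',w,w'$ appropriately normalized: pick a geodesic $\gamma$ in $\SL(S)$ representing some $M\in\SL(S)$ with $\|Mv\|\le 1$, $\|Mw\|\le 1$, $\|M(w+w')\|\le 1$ (equivalently $\|Mw'\|\le 1$ up to a constant), and $\ell(\gamma)=O(\log\|v\|+\log\|w\|+\log\|w'\|)$; such an $M$ exists because $\SL(S)/SO(S)$ acts with the usual diagonalizing flexibility on $\RR^{S}$. Then build a diagram of the shape used in Figure \ref{figure:rectanglehomotopy1}: the outer region consists of trapezoids of the form $\gamma\,\hu(vw)\,\gamma^{-1}$ versus $\hu(Mv\cdot Mw)$, each filled by the conjugation Lemma \ref{lemma:lipconjugationsl}, and the inner region is a triangle with all three sides of the form $\hu(V)$ with $\|V\|$ bounded (namely the small-scale pieces $u(Mv\cdot Mw)$, $u(Mv\cdot Mw')$, $u(Mv\cdot M(w+w'))$), which can be filled because it is a bounded loop in $G$ and $G$ is simply connected (alternatively, on this bounded scale one simply uses the explicit path $t\mapsto u(Mv(Mw+tMw'))\cdots$ — the abelianity of $Z_{S}$ makes the small triangle trivially fillable at bounded Lipschitz cost). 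Finally, assemble via Lemma \ref{lemma:liptemplate} (stacking), which turns these finitely many uniformly Lipschitz pieces into one Lipschitz filling of the whole loop; the total Lipschitz constant is $O(\Lip(\gamma))+O(1)=O(\log(\|v\|\|w\|\|w'\|))$, which is within the required bound since each $\hu$ curve already has length of that order.

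The main obstacle — and it is a mild one — is bookkeeping the normalization: the definition of $\hu(vw)$ involves $M_{v/\|v\|,w/\|w\|}$, so $\hu(vw)$ is literally $\gamma_{M}\hu(\tfrac{vw}{\|v\|\|w\|})\gamma_{M}^{-1}$, and to make the trapezoids line up one must match the conjugating matrices coming from $\hu(vw)$, $\hu(vw')$, $\hu(v(w+w'))$ with the single $\gamma$ chosen above. This is handled exactly as in Lemma \ref{lemma:lipconjugationsl}: one first Lipschitz-homotopes each $\hu(vw)$ so that its conjugating geodesic is replaced by (a segment of) $\gamma$, using that the relevant stabilizers in $\SL(S)$ are undistorted and $SO(S)$ is compact, so rotating/rescaling $v,w$ into the desired position costs only $O(\log\|\cdot\|)$. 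I do not expect any essential difficulty beyond this; unlike the symplectic case there is no symplectic-form constraint forcing one direction to expand when another contracts, so the isotropy hypothesis present in Lemmas \ref{lemma:isotri} and \ref{lemma:allcock} is simply absent here.
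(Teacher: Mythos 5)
Your proposal is correct and takes essentially the same contract-and-fill route as the paper: both choose a geodesic $\gamma$ in $\SL(S)$ representing an $M$ that simultaneously shrinks $v,w,w^{\prime}$ to bounded norm, apply the conjugation Lemma \ref{lemma:lipconjugationsl} to push $\gamma$ past each $\hu$, fill the resulting bounded piece, and undo the conjugation. The only difference is presentational — you assemble via a template diagram and stacking, while the paper writes out the same sequence of homotopies linearly using free insertion — so there is no substantive gap.
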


\begin{proof}
Let $\gamma$ be a geodesic of length $O(\ell(\hu(vw)\hu(vw^{\prime})))$ in $\SL(S)$ representing a matrix $M$ such that $Mv,Mw,Mw^{\prime}$ all have norm less than $1$.  Homotope as follows.\\

\begin{tabular}{l l}
$\hu(vw)\hu(vw^{\prime})$ & \\
$\gamma^{-1}\gamma\hu(vw)\gamma^{-1}\gamma\hu(vw^{\prime})\gamma^{-1}\gamma$ & free insertion\\
$\gamma^{-1}\hu(MvMw)\hu(MvMw^{\prime})\gamma$ & Lemma \ref{lemma:lipconjugationsl}\\
$\gamma^{-1}\hu(Mv(Mw+Mw^{\prime}))\gamma$ & \\
$\hu(v(w+w^{\prime}))$ & Lemma \ref{lemma:lipconjugationsl}\\
\end{tabular}
\end{proof}

As usual, let $\he_{s+s^{\prime}}(x)$ denote $\hu(x_{s}x_{s^{\prime}})$.

\begin{corollary}
\label{corollary:lipbreaking}
If $v=\sum_{s\in S}a_{s}z_{s}$
and $w=\sum_{s\in S}b_{s}z_{s}$
then we can homotope from
$\hu(vw)$ to $\prod_{s+s^{\prime}}\he_{s+s^{\prime}}(c_{s+s^{\prime}})$
where $c_{s+s^{\prime}}$ is $a_{s}b_{s^{\prime}}+a_{s^{\prime}}b_{s}$ is $s\neq s^{\prime}$ and $a_{s}b_{s}$ otherwise.
\end{corollary}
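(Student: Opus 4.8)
The statement to prove is Corollary \ref{corollary:lipbreaking}: factoring $\hu(vw)$ into a product of $\he_{s+s'}(c_{s+s'})$ curves. This is the $\SL$ analogue of Corollary \ref{corollary:factoring} from the symplectic case, and the plan is to mimic that proof using the two tools already established: Lemma \ref{lemma:lipbreakingsl} (which lets us split $\hu(vw)\hu(vw')$ into $\hu(v(w+w'))$, i.e. bilinearity in the second slot up to Lipschitz homotopy) and, by symmetry of the product $vw$, bilinearity in the first slot as well.

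\textbf{Sketch of the argument.} First I would expand $w = \sum_{s'\in S} b_{s'} z_{s'}$ and apply Lemma \ref{lemma:lipbreakingsl} repeatedly (at most $\#S$ times, a bounded number, so the Lipschitz constants stay controlled) to Lipschitz homotope $\hu(vw)$ to the product $\prod_{s'\in S}\hu(v \cdot b_{s'}z_{s'})$. Next, using that $u(vw)=u(wv)$ (since the symmetric product is commutative, as is visible from the defining formula $u_Z(vw)z = z + \omega(v,z)w + \omega(w,z)v$, which is symmetric in $v,w$) — so that $\hu(v\cdot b_{s'}z_{s'})$ and $\hu(b_{s'}z_{s'}\cdot v)$ represent the same element and can be Lipschitz homotoped into one another via Lemma \ref{lemma:lipconjugationsl} with $\gamma$ trivial, or simply by reparameterization — I would apply Lemma \ref{lemma:lipbreakingsl} again in the other slot, expanding $v = \sum_{s\in S} a_s z_s$, to break each $\hu(b_{s'} z_{s'}\cdot v)$ into $\prod_{s\in S}\hu(a_s b_{s'} z_s z_{s'})$. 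Collecting terms, and using Lemma \ref{lemma:isotri}-style or Lemma \ref{lemma:lipbreakingsl}-style merging for the pairs $\{s,s'\}$ with $s\neq s'$ (combining $\hu(a_s b_{s'} z_s z_{s'})$ with $\hu(a_{s'} b_s z_{s'} z_s)$ into $\hu((a_sb_{s'}+a_{s'}b_s) z_s z_{s'}) = \he_{s+s'}(a_sb_{s'}+a_{s'}b_s)$, again a bounded number of merges), one arrives at $\prod_{s+s'}\he_{s+s'}(c_{s+s'})$ with $c_{s+s'}$ as stated. All homotopies used are among a bounded collection, so the stacking corollary \ref{corollary:stacking} (or Lemma \ref{lemma:liptemplate}) gives the overall Lipschitz bound.

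\textbf{What to be careful about.} The one subtlety — and the main thing to check rather than a genuine obstacle — is that every intermediate homotopy involves only a \emph{bounded} number of steps (bounded in terms of $p=\#S$, not in terms of $\|v\|,\|w\|$), so that the Lipschitz constants compose to $O(\ell)$ rather than accumulating badly; this is automatic here since $\#S = p$ is fixed. One must also take care of the ordering of the factors in the products: Lemma \ref{lemma:lipbreakingsl} as stated reorders and merges adjacent $\hu$'s sharing a slot, so to merge the cross terms $\hu(a_s b_{s'} z_s z_{s'})$ and $\hu(a_{s'} b_s z_{s'} z_s)$ one first commutes them into adjacency (these are both central, living in $Z_S$, so they commute on the nose and commuting curves past each other is a bounded homotopy — this is implicit already in the proof of $\Omega$-triangle filling). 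Since $Z_S$ is abelian the final product $\prod_{s+s'}\he_{s+s'}(c_{s+s'})$ is well-defined regardless of the order chosen.

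\begin{proof}
Write $v = \sum_{s\in S} a_s z_s$ and $w = \sum_{s'\in S} b_{s'} z_{s'}$. Applying Lemma \ref{lemma:lipbreakingsl} a bounded number of times (once for each element of $S$), we Lipschitz homotope $\hu(vw)$ to the concatenation $\prod_{s'\in S} \hu(v\otimes b_{s'}z_{s'})$, interpreting $\hu(v\cdot b_{s'}z_{s'})$ as $\hu(b_{s'}z_{s'}\cdot v)$ since the symmetric product and hence $u_Z$ is symmetric in its two arguments. Applying Lemma \ref{lemma:lipbreakingsl} again to each factor, expanding $v$ in the $z_s$, we obtain a Lipschitz homotopy to $\prod_{s'\in S}\prod_{s\in S}\hu(a_s b_{s'} z_s z_{s'})$. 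Since all these curves have image in the abelian group $Z_S$, consecutive factors commute at bounded cost, so we may reorder and, using Lemma \ref{lemma:lipbreakingsl} once more, merge the pair of terms indexed by $\{s,s'\}$ with $s\neq s'$ into $\hu((a_s b_{s'}+a_{s'}b_s)z_s z_{s'}) = \he_{s+s'}(a_s b_{s'}+a_{s'}b_s)$, while the term indexed by $\{s\}$ is $\he_{s+s}(a_s b_s) = \he_{2s}(a_sb_s)$. This yields $\prod_{s+s'}\he_{s+s'}(c_{s+s'})$ with $c_{s+s'} = a_s b_{s'} + a_{s'} b_s$ for $s\neq s'$ and $c_{2s} = a_s b_s$. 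All homotopies used belong to a bounded collection, so by Corollary \ref{corollary:stacking} the composite homotopy is $O(\ell(\hu(vw)))$-Lipschitz.
\end{proof}
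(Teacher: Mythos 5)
Your proof takes essentially the same route as the paper, which disposes of the corollary in two lines: first apply Lemma~\ref{lemma:lipbreakingsl} to split $\hu(vw)$ into $\prod_{s\in S}\hu(b_sv z_s)$ by expanding $w$, then apply it again in the other slot and merge. You have fleshed out the second step, and in doing so correctly noticed a detail the paper glosses over: after expanding both slots one obtains $\prod_s\prod_{s'}\hu(a_{s'}b_s z_s z_{s'})$, and the two contributions to a given root $s+s'$ (namely the $(s,s')$ and $(s',s)$ terms) are not adjacent, so some reordering is required before Lemma~\ref{lemma:lipbreakingsl} can merge them.

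The one place your justification is loose is the claim that the curves ``are both central, living in $Z_S$, so they commute on the nose.'' The curves $\he_{\alpha}(x)$ are curves in $G$, not in the abelian subgroup $Z_S$: for large $x$ they pass through $\SL(S)$ via the conjugating geodesic $\gamma_{M_{v,w}}$, so two such curves do not commute pointwise. What is true, and what the paper records as Lemma~\ref{lemma:lipsteinsl}(a), is that $\he_{\alpha}(x)\he_{\beta}(y)$ can be Lipschitz homotoped to $\he_{\beta}(y)\he_{\alpha}(x)$ for $\alpha,\beta\in\Phi Z_S$; the proof is the usual contract--commute--expand argument, identical in structure to the proof of Lemma~\ref{lemma:lipbreakingsl}. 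Citing that (or proving it by the same template) rather than asserting on-the-nose commutativity closes the small gap. With that repair the argument is correct and matches the paper's.
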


\begin{proof}
Using the Lemma \ref{lemma:lipbreakingsl}, we break $\hu(vw)$ into
$$\prod_{s\in S}\hu(b_{s}vz_{s})$$
then break this into the desired product.\end{proof}

\begin{lemma}
\label{lemma:lipsteinsl}
We have the following Lipschitz homotopies.
\begin{itemize}
\item[(a)] From $\he_{\alpha}(x)\he_{\beta}(y)$ to $\he_{\beta}(y)\he_{\alpha}(x)$ for $\alpha,\beta\in\Phi Z_{S}$.
\item[(b)] From $\he_{\alpha}(x)\he_{\alpha}(y)$ to $\he_{\alpha}(x+y)$.
\end{itemize}
\end{lemma}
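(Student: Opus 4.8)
The plan is to exploit the fact that $Z_{S}\cong\Sym^{2}\RR^{S}$ is abelian, so that both statements hold trivially at the level of group elements and the entire content is the production of uniformly Lipschitz fillings. These I will obtain by the contract-and-fill method already used for Lemma~\ref{lemma:isorect}, Lemma~\ref{lemma:lipbreakingsl}, and Corollary~\ref{corollary:lipbreaking}. Throughout, recall that a curve $\hu(vw)$ with $\|v\|\,\|w\|$ bounded is a bounded-length path into $Z_{S}$, that $Z_{S}$ (identified via $u_{Z}$ with $\Sym^{2}\RR^{S}$) is flat, and that $\hu(vw)$ depends only on the element $vw\in\Sym^{2}\RR^{S}$ up to a bounded-cost adjustment absorbing the dependence of the construction on the choice of the matrices $M_{v,w}$ (in particular on sign conventions).

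Part~(b) should be immediate: writing $\he_{s+s'}(x)=\hu(z_{s}\cdot xz_{s'})$ and $\he_{s+s'}(y)=\hu(z_{s}\cdot yz_{s'})$ (for a long root $\alpha=2s$, take $\he_{2s}(x)=\hu(z_{s}\cdot xz_{s})$), the two curves share the common first factor $z_{s}$, so Lemma~\ref{lemma:lipbreakingsl} homotopes $\hu(z_{s}\cdot xz_{s'})\hu(z_{s}\cdot yz_{s'})$ to $\hu(z_{s}\cdot(x+y)z_{s'})=\he_{s+s'}(x+y)$ at Lipschitz cost $O(\log|x|+\log|y|+1)$.

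For part~(a), let $\alpha=s_{1}+s_{2}$ and $\beta=s_{3}+s_{4}$, and write $\he_{\alpha}(x)=\hu(v_{1}v_{2})$, $\he_{\beta}(y)=\hu(v_{3}v_{4})$ with the $v_{i}$ suitable multiples of standard basis vectors (with the obvious modifications when $\alpha$ or $\beta$ is long). First I would choose a geodesic $\gamma$ in $\SL(S)$ representing a matrix $M\in\SL(S;\RR)$ with $\|Mv_{i}\|\le 1$ for $i=1,\dots,4$ and $\ell(\gamma)=O(\log|x|+\log|y|+1)$; this is possible because $M$ need only contract boundedly many coordinate lines and $\SL(S;\RR)$ is subject to no constraint beyond $\det=1$. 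Then, by Lemma~\ref{lemma:lipconjugationsl}, I would homotope $\he_{\alpha}(x)\he_{\beta}(y)$ to $\gamma^{-1}\bigl(\hu(Mv_{1}\cdot Mv_{2})\,\hu(Mv_{3}\cdot Mv_{4})\bigr)\gamma$ by inserting $\gamma^{-1}\gamma$ and conjugating each factor inward. Since $Mv_{1},\dots,Mv_{4}$ are all bounded, $\hu(Mv_{1}\cdot Mv_{2})$ and $\hu(Mv_{3}\cdot Mv_{4})$ are bounded-length curves into the flat space $Z_{S}$, so the bounded loop $[\hu(Mv_{1}\cdot Mv_{2}),\hu(Mv_{3}\cdot Mv_{4})]$ has a bounded Lipschitz filling and the two factors can be swapped. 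Finally, undoing the conjugation (Lemma~\ref{lemma:lipconjugationsl} again) and contracting $\gamma\gamma^{-1}$ reaches $\he_{\beta}(y)\he_{\alpha}(x)$; composing the homotopies via Corollary~\ref{corollary:stacking} and passing to loop form via Lemma~\ref{lemma:reparameterization} gives the claim.

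All of these steps are routine given the machinery already in place. The one point meriting care — and the main, if mild, obstacle — is verifying that the contracting geodesic $\gamma$ can be chosen with length $O(\log|x|+\log|y|+1)$ while $M$ simultaneously contracts every $v_{i}$; this reduces to a bounded amount of linear algebra in $\SL(S;\RR)$, and the only degenerate cases (very small $\#S$) are trivial because then $Z_{S}$ is low-dimensional and every $\hu$-curve is essentially a straight segment in Euclidean space.
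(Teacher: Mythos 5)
Your proposal is correct and follows essentially the same route the paper intends: the paper's proof is the one-line remark that both parts ``can be proved in the same manner as Lemma~\ref{lemma:lipbreakingsl},'' i.e.\ by the contract-and-fill method, and your argument spells this out — part~(b) by a direct application of Lemma~\ref{lemma:lipbreakingsl} (since the two factors share a common vector $z_s$), and part~(a) by conjugating by a geodesic $\gamma$ into $\SL(S)$ that contracts all four coordinate vectors at once, commuting the resulting bounded curves in the flat $Z_S$, and conjugating back via Lemma~\ref{lemma:lipconjugationsl} and Corollary~\ref{corollary:stacking}.
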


\begin{proof}
Each part can be proved in the same manner as Lemma \ref{lemma:lipbreakingsl}.\end{proof}

By the outline above, this suffices to prove Theorem \ref{theorem:lipfillsl}.

\bibliographystyle{plain}
\bibliography{bibliography}

\noindent
David Bruce Cohen\\
Department of Mathematics\\
Rice University, MS 136 \\
6100 Main St.\\
Houston, TX 77005\\
E-mail: {\tt dc17@rice.edu}\\

\end{document}